\documentclass[11pt]{amsart}
\usepackage{amscd,amsmath,amssymb,amsfonts,verbatim}
\usepackage[cmtip, all]{xy}
\usepackage{MnSymbol}
\usepackage[a4 paper, left=3cm, right=3cm, top = 2.5cm, bottom = 2.5cm ]{geometry}
\usepackage{comment}
\usepackage{tikz-cd}
\newtheorem{thm}{Theorem}[section]
\newtheorem{prop}[thm]{Proposition}
\newtheorem{lem}[thm]{Lemma}
\newtheorem{cor}[thm]{Corollary}
\newtheorem{conj}[thm]{Conjecture}
\theoremstyle{definition}
\newtheorem{ques}[thm]{Question}

\newtheorem{defn}[thm]{Definition}
\theoremstyle{remark}
\newtheorem{remk}[thm]{Remark}
\newtheorem{remks}[thm]{Remarks}

\newtheorem{exm}[thm]{Example}
\newtheorem{exms}[thm]{Examples}
\newtheorem{notat}[thm]{Notation}
\numberwithin{equation}{section}

{\hfill$\square$\end{defn}}
{\hfill$\square$\end{remk}}
{\hfill$\square$\end{remks}}
{\hfill$\square$\end{exm}}
{\hfill$\square$\end{exms}}
{\hfill$\square$\end{notat}}

\newcommand{\thmref}{Theorem~\ref}
\newcommand{\propref}{Proposition~\ref}
\newcommand{\corref}{Corollary~\ref}

\newcommand{\lemref}{Lemma~\ref}

\newcommand{\sC}{{\mathcal C}}
\newcommand{\sD}{{\mathcal D}}

\newcommand{\sF}{{\mathcal F}}
\newcommand{\sG}{{\mathcal G}}
\newcommand{\sH}{{\mathcal H}}
\newcommand{\sI}{{\mathcal I}}
\newcommand{\sJ}{{\mathcal J}}
\newcommand{\sK}{{\mathcal K}}
\newcommand{\sL}{{\mathcal L}}
\newcommand{\sM}{{\mathcal M}}

\newcommand{\sO}{{\mathcal O}}

\newcommand{\sS}{{\mathcal S}}

\newcommand{\sU}{{\mathcal U}}

\newcommand{\sX}{{\mathcal X}}

\newcommand{\sZ}{{\mathcal Z}}
\newcommand{\A}{{\mathbb A}}

\newcommand{\F}{{\mathbb F}}
\newcommand{\G}{{\mathbb G}}
\renewcommand{\H}{{\mathbb H}}

\newcommand{\N}{{\mathbb N}}
\renewcommand{\P}{{\mathbb P}}
\newcommand{\Q}{{\mathbb Q}}

\newcommand{\Z}{{\mathbb Z}}

\newcommand{\dm}{{\mathbf{DM}}}
\newcommand{\dr}{{\mathbf{D}}}

\newcommand{\fm}{{\mathfrak m}}

\newcommand{\fp}{{\mathfrak p}}

\newcommand{\ff}{{\mathfrak f}}

\newcommand{\hs}{\heartsuit}
\newcommand{\lw}{{\rm lw}}

\newcommand{\Lci}{{\rm lci}}

\newcommand{\Ker}{{\rm Ker}}
\newcommand{\gr}{{\rm gr}}

\newcommand{\Alb}{{\rm Alb}}
\newcommand{\CH}{{\rm CH}}

\newcommand{\surj}{\twoheadrightarrow}
\newcommand{\inj}{\hookrightarrow}
\newcommand{\red}{{\rm red}}

\newcommand{\Pic}{{\rm Pic}}

\newcommand{\Hom}{{\rm Hom}}

\newcommand{\Spec}{{\rm Spec \,}}
\newcommand{\sing}{{\rm sing}}
\newcommand{\Char}{{\rm char}}

\newcommand{\Tr}{{\rm Tr}}

\newcommand{\ab}{\rm ab}
\newcommand{\nr}{\rm nr}

\newcommand{\divf}{{\rm div}}

\newcommand{\id}{{\operatorname{id}}}

\newcommand{\Sch}{{\operatorname{\mathbf{Sch}}}} 
\newcommand{\Reg}{{\operatorname{\mathbf{Reg}}}}

\newcommand{\<}{\langle}
\renewcommand{\>}{\rangle}

\newcommand{\Sm}{{\mathbf{Sm}}}

\newcommand{\cyc}{{\operatorname{\rm cyc}}}

\newcommand{\et}{{\text{\'et}}}

\newcommand{\ds}{{/\kern-3pt/}}

\newcommand{\Nsch}{{\operatorname{\mathbf{NSch}}}}

\newcommand{\Br}{{\operatorname{Br}}}
\newcommand{\sm}{{\operatorname{sm}}}

\newcommand{\tr}{{\operatorname{tr}}}

\newcommand{\un}{\underline}
\newcommand{\ov}{\overline}

\renewcommand{\dim}{\text{\rm dim}}

\newcommand{\tuborg}{\left\{\begin{array}{ll}}
\newcommand{\sluttuborg}{\end{array}\right.}

\newcommand{\zar}{{\rm zar}}

\newcommand{\reg}{{\rm reg}}

\newcommand{\tor}{{\rm tor}}

\newcommand{\Cores}{{\rm Cores}}
\newcommand{\inv}{{\rm inv}}
\newcommand{\tm}{{\rm t}}

\newcommand{\wt}{\widetilde}
\newcommand{\wh}{\widehat}
\newcommand{\cont}{{\rm cont}}

\newcommand{\coker}{{\rm Coker}}
\newcommand{\Fil}{{\rm fil}}

\newcommand{\htt}{{\rm ht}}

\newcommand{\etl}{{\acute{e}t}}

\newcounter{elno}

\newcounter{elno-abc}   

\newcounter{elno-abc-prime}

\begin{document}
\title[Unramified cohomology and B-M pairing]{Unramified cohomology and Brauer--Manin pairing}
\author{Amalendu Krishna, Jitendra Rathore}
\address{Department of Mathematics, South Hall, Room 6607, University of California
  Santa Barbara, CA, 93106-3080, USA.}
\email{amalenduk@math.ucsb.edu}
\address{Department of Mathematics, South Hall, Room 6607, University of California
  Santa Barbara, CA, 93106-3080, USA.}
\email{jitendra@math.ucsb.edu}


\keywords{Milnor $K$-theory, Brauer group, zero-cycles, local fields}        

\subjclass[2020]{Primary 14C25, 14F22; Secondary 14F30, 19D45}

\maketitle
\vskip .4cm

\begin{quote}\emph{Abstract.}
  We show that the Tate module of the $\ell$-adic unramified cohomology
  $H^3_{\nr}(X, {\Q_\ell}/{\Z_\ell})$ of
  a smooth projective surface $X$ over a local or strictly local field  $k$ of
  residue characteristic prime to $\ell$ vanishes under suitable conditions.
  We apply this to answer some questions concerning the left and
  right kernels of the Brauer--Manin pairing for open subvarieties of $X$.
  To prove the vanishing theorem, we establish Bloch's formula for the Chow group of
  codimension-two cycles on a semistable model of $X$, and derive applications to
  the restriction map for such cycles to the closed
  fiber of the model.
\end{quote}

\setcounter{tocdepth}{1}
\tableofcontents

\vskip .4cm

\section{Introduction}\label{sec:Intro}
\subsection{Objective of the paper}\label{sec:Background}
The Chow group of zero-cycles and the Brauer group of algebraic varieties are
fundamental objects of study in arithmetic geometry.
An important tool for investigating their
relationship is the Brauer--Manin pairing, which directly links the group of
zero-cycles with the Brauer group of a smooth variety over a non-archimedean local
field. This pairing is well understood for curves, whereas for smooth projective
surfaces, it has been the subject of extensive study, leading to several deep results.

The main objective of this paper is to extend these results from smooth projective
surfaces to smooth quasi-projective surfaces. To this end, we prove a vanishing
theorem for the Tate module of the third unramified cohomology
of smooth projective surfaces and establish
Bloch's formula for the Chow group of codimension-two cycles on their semistable
models. As part of the proof, we carry out a detailed study of the Gersten complexes
for the $K_2$-sheaves with integral and finite coefficients on a given model,
obtaining new results on their cohomology. In this section, we provide the necessary
background and state our main results.

\subsection{Vanishing of unramified cohomology}\label{sec:VRC}
Let $R$ be an excellent henselian discrete valuation ring whose residue field $\F$ is
either finite or separably closed of exponential characteristic $p > 0$. Let $k$ denote
the quotient field of $R$. Let $X$ be a connected, smooth, projective surface
over $k$ and let $\Alb_X$ denote the albanese variety of $X$.
Given a prime $\ell \neq p$ and  $i \ge 1$,  let $\sH^i({\Z}/{\ell^n}(i-1))$ be
the Zariski sheaf on $X$ associated to the presheaf
$U \mapsto H^{i}_\et(U, \mu^{\otimes {i-1}}_{\ell^n})$.
We let $H^i_{\nr}(X, {\Z}/{\ell^n}) := H^0_\zar(X, \sH^i({\Z}/{\ell^n}(i-1)))$ denote the
$i$-th unramified cohomology of $X$ with ${\Z}/{\ell^n}$-coefficients
(cf. \cite[\S~4.1]{CT-SB}). We let
$H^i_{\nr}(X, \Z_\ell) = \varprojlim_n  H^i_{\nr}(X, {\Z}/{\ell^n})$.
This coincides with the Tate module of the unramified cohomology
$H^i_{\nr}(X, {\Q_\ell}/{\Z_\ell}) := \varinjlim H^i_{\nr}(X, {\Z}/{\ell^n})$
(cf. \lemref{lem:Chow-finite}).

Let $\sX$ be a connected, regular, proper scheme of finite type over $R$ whose generic
fiber is $X$ and whose reduced closed fiber $Y$ is a simple normal crossings 
divisor on $\sX$ (cf. \S~\ref{sec:Gersten-model}).
To state our main results, we introduce the following notation.

{\sl {\bf Notation:}} Given an integer $n \ge 1$, we shall say that the
$(\star_n)$-condition holds for $\sX$
if either of the following conditions is satisfied.
\begin{enumerate}
  \item
$n$ is odd.
  \item
    $\sqrt{-1} \in \F$.
 \item
  $\sX$ is a smooth $R$-scheme.
\item
  $R$ is equicharacteristic.
  \end{enumerate}

Given a prime $\ell$, we shall say that the $(\star\star_\ell)$-condition holds
for $\sX$ if either of the following conditions is satisfied.
  \begin{enumerate}
  \item
  $\F$ is finite, the $(\star_\ell)$-condition holds,
  $\Alb_X$ has potentially good reduction and the irreducible
    components of $Y$ satisfy the Tate conjecture.
  \item
    $\F$ is finite, $\Alb_X$ has potentially good reduction and the irreducible
    components of $Y_{\F'}$ satisfy the Tate conjecture, where $\F' = \F(\sqrt{-1})$.
  \end{enumerate}

\vskip.2cm

  We now fix a prime $\ell \neq p$.
The first two results of this paper are the following refinements of 
the seminal theorems of Esnault--Wittenberg from \cite{Esnault-Wittenberg}.

\begin{thm}\label{thm:Main-0}
  Assume that the $(\star\star_\ell)$-condition holds.
  Then $H^3_{\nr}(X, \Z_\ell) =0$.
\end{thm}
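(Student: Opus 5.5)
We plan to follow the strategy of Esnault--Wittenberg, but to replace their use of the Bloch--Kato/Merkurjev--Suslin input on $X$ by an analysis on the model $\sX$. The starting point is the exact sequence of Colliot-Th\'el\`ene--Kahn type relating unramified cohomology to cycles of codimension two on $X$:
\[
0 \to H^3_{\nr}(X,\Z/\ell^n) \to \CH^2(X) \otimes \Z/\ell^n \ \text{-type terms},
\]
more precisely, after taking $\varprojlim_n$, the vanishing of $H^3_{\nr}(X,\Z_\ell)$ reduces to the injectivity of the $\ell$-adic cycle class map
\[
\CH^2(X)\otimes \Z_\ell \longrightarrow H^4_{\et}(X,\Z_\ell(2)),
\]
restricted to the torsion-free part, together with the finiteness of $\CH^2(X)\{\ell\}$ (this is where \lemref{lem:Chow-finite} enters, identifying the inverse limit with the Tate module). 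So the first step is to write $H^3_{\nr}(X,\Z_\ell)$ as the kernel of the cycle map on $\varprojlim_n \CH^2(X)/\ell^n$, using the Bloch--Ogus/Gersten resolution of $\sH^3$ and Merkurjev--Suslin for $\sH^2$. Since $\CH_0(X)$ has degree part and $A_0(X)$, and the Albanese kernel $T(X)$ is (by Saito/Colliot-Th\'el\`ene over local fields) the sum of a divisible group and a finite group, the problem becomes showing the cycle map is injective on $A_0(X)\otimes\Z_\ell$ modulo divisible elements.

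Second, we pass to the model. Let $\CH^2(\sX)$ be the Chow group of codimension-two cycles on $\sX$ (i.e.\ one-cycles, dimension of $\sX$ being three). We would prove Bloch's formula $\CH^2(\sX)\cong H^2_\zar(\sX,\sK_2)$ and its finite-coefficient analogue via the Gersten complex for $\sK_2$ and $\sK_2/\ell^n$ on $\sX$; this requires Gersten exactness for $K_2$ at points of the closed fiber, which is where the $(\star_\ell)$-condition is used (controlling the $\{-1,-1\}$-type symbols in mixed characteristic at singular points of $Y$). Then $\CH^2(\sX)/\ell^n\to \CH^2(X)/\ell^n$ is surjective (specialization of cycles by closure) and we compare with the restriction $\CH^2(\sX)/\ell^n\to \CH_0(Y)/\ell^n$ (restriction to the closed fiber, defined via Bloch's formula). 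By proper base change and the Kato--Saito/Saito--Sato theorem, $\CH_0(Y)/\ell^n\cong H^4_\et(Y,\Z/\ell^n(2))\cong H^4_\et(\sX,\Z/\ell^n(2))$ for $\F$ finite, giving a commutative diagram in which the cycle class map on $\sX$ factors through $Y$.

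Third, we identify the kernel of $\CH^2(\sX)\to \CH^2(X)$ with the image of $\bigoplus_i \CH^1(Y_i)=\bigoplus_i\Pic(Y_i)$ (localization sequence), and show that the induced map $\varprojlim_n\Pic(Y_i)/\ell^n \to \CH_0(Y)\otimes\Z_\ell$ is controlled by intersection numbers. Here the Tate conjecture for the components $Y_i$ (over $\F$ or $\F'$, the latter handling the case where $(\star_\ell)$ fails by a trace argument after the quadratic extension, which is harmless since $\ell$ odd or $\sqrt{-1}$ present) gives that $\Pic(Y_i)\otimes\Z_\ell\to H^2(Y_i,\Z_\ell(1))^{G}$ is surjective, and potentially good reduction of $\Alb_X$ ensures the weight/monodromy argument of Esnault--Wittenberg: the relevant part of $H^3$ of $X$ has no weight-$(-1)$ invariants contribution, so $A_0(X)\otimes\Z_\ell$ maps injectively modulo divisibles. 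Chasing the diagram then shows every element of $\varprojlim_n\ker(\CH^2(X)/\ell^n\to H^4(X,\Z/\ell^n(2)))$ is zero.

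The main obstacle is the second step: establishing Bloch's formula and the Gersten-type exactness for $\sK_2$ with $\Z/\ell^n$ coefficients on the semistable (non-smooth) model in mixed characteristic, because Gersten's conjecture is unknown there; I would attempt it by hand at codimension-two points using Milnor $K$-theory of two-dimensional regular local rings with normal crossings, where the symbol $\{-1,-1\}$ is exactly the obstruction removed by $(\star_n)$.
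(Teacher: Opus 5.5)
There is a genuine gap, and it is in your first step. On $X$, the Bloch--Ogus resolution together with the Leray spectral sequence gives, after passing to the limit (all groups are finite), the exact sequence
\[
0 \to \varprojlim_n H^1_{\zar}(X, \sH^2(\ell^n,2)) \xrightarrow{\beta_1} H^3_{\et}(X,\Z_\ell(2)) \to H^3_{\nr}(X,\Z_\ell) \to \CH_0(X)^{(\ell)} \xrightarrow{\cyc_X} H^4_{\et}(X,\Z_\ell(2)).
\]
So $H^3_{\nr}(X,\Z_\ell)$ is an extension of $\Ker(\cyc_X)$ by $\coker(\beta_1)$. Here $\coker(\beta_1)$ is the cokernel of the cycle class map $H^3(X,\Z_\ell(2)) \to H^3_{\et}(X,\Z_\ell(2))$. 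The group $H^3_{\nr}(X,\Z_\ell)$ is not the kernel of the cycle map on $\varprojlim_n \CH^2(X)/\ell^n$.

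Injectivity of $\cyc_X$ on $\CH_0(X)^{(\ell)}$, under exactly your hypotheses on $\Alb_X$ and the Tate conjecture for the $Y_i$, is Esnault--Wittenberg's theorem. The paper takes it as a black box to kill the right-hand term. Your Steps 2 and 3 (specialization to the model, the Tate conjecture on $\Pic(Y_i)$, the weight argument from potentially good reduction) end by showing that $\varprojlim_n\Ker(\CH^2(X)/\ell^n\to H^4)$ vanishes, which is that same injectivity. Even if carried out completely, they say nothing about $\coker(\beta_1)$. The new content of the theorem is precisely that $\beta_1$ is surjective, and your plan does not address it.

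The paper obtains this surjectivity as follows. Put $\sF_n=\sH^2(\ell^n,2)$. The map $\beta_1$ sits in the localization ladder
\[
H^1_{\zar}(\sX,\sF_n) \to H^1_{\zar}(X,\sF_n) \to H^2_Y(\sX,\sF_n) \to H^2_{\zar}(\sX,\sF_n)
\]
over the corresponding \'etale sequence for $(\sX,X,Y)$. One passes to $\varprojlim_n$ and applies the four lemma, using three inputs:
\begin{enumerate}
\item $\alpha_1$ is surjective. Indeed $H^1_\zar(\sX,\sF_n)\cong H^1_\zar(\sX,\sH^2_{\nr}(\ell^n,2))$, which is the $E^{1,2}_2$-term of the coniveau spectral sequence, and the Kato homology $KH_3(\sX,\Z/\ell^n)$ vanishes by Saito--Sato.
\item $\alpha_2$ is injective, via $H^2_\zar(\sX,\sF_n)\cong\CH_1(\sX)/\ell^n$ and Saito--Sato.
\item $\gamma_2$ is surjective, by the Tate conjecture for the $Y_i$ together with Esnault--Wittenberg's injectivity of $H^3_\et(Y,\Q_\ell/\Z_\ell(1))\to\bigoplus_i H^3_\et(Y_i,\Q_\ell/\Z_\ell(1))$.
\end{enumerate}
The Gersten and Bloch--Ogus results for $\sK_2/\ell^n$ on the non-smooth model are needed exactly for these identifications of $H^1$ and $H^2$ of $\sF_n$ on $\sX$. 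They are not used to study $\CH^2(X)$.

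Two smaller corrections. The integral Bloch formula on $\sX$ needs no $(\star_\ell)$ hypothesis; only the mod-$\ell^n$ Gersten exactness does, through surjectivity of $\partial_0$ on $K_2[n]$ via Hoobler's lemma. In the case $\ell=2$, $\sqrt{-1}\notin\F$, descent from $\F(\sqrt{-1})$ only shows that $H^3_{\nr}(X,\Z_2)$ is killed by $2$. You then need the fact that it is torsion-free, being a Tate module, to conclude that it vanishes.
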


A result of Parimala--Suresh (cf. \cite[Cor.~7.6]{Parimala-Suresh}) shows that the
assumption on $\Alb_X$ in the above theorem can not be omitted.
We do not know to what extent the assumption on $Y$ can be weakened.

\vskip.2cm

 Before we state the analogous result when $\F$ is separably closed, recall from
\cite[\S~4]{Esnault-Wittenberg} that if $V$ is a variety over a separably closed
field, then one says that $H^2_\et(V, {\Q_\ell}(1))$ is algebraic if the natural
injection $\Pic(V)^{(\ell)}\otimes_{\Z_\ell} \Q_\ell \to H^2_\et(V, {\Q_\ell}(1))$, induced
by the Kummer sequence, is an isomorphism, where $A^{(\ell)} =
\varprojlim_m {A}/{\ell^m}$ for any abelian group $A$. For a $k$-scheme $Z$, we let
$\ov{Z}$ denote the base change of $Z$ by a separable closure of $k$.

\begin{thm}\label{thm:Main-12}
  Assume that $\F$ is separably closed and $H^2_{{\rm {\etl}}}(\ov{X}, {\Q_\ell}(1))$
  is algebraic. Then $H^3_{\nr}(X, \Z_\ell) =0$.
  
\end{thm}

\vskip.2cm

We expect Theorems~\ref{thm:Main-0} and ~\ref{thm:Main-12}
to have a number of applications, especially in the
study of motivic and {\'e}tale cohomology of open surfaces over local and strictly
local fields. Our results on the Brauer--Manin pairing (see below) rely heavily on
both the statement and the proof of \thmref{thm:Main-0}.

\subsection{Bloch's formula for $\CH_1(\sX)$ and applications}\label{sec:BF}
Let $R$ be an excellent discrete valuation ring with an arbitrary residue field $\F$.
Let $\sX$ be a
finite type, connected, regular $R$-scheme of dimension three which is faithfully
flat over $R$ and whose reduced closed fiber $Y$ is a simple normal crossings divisor
on $\sX$. Let $\sK_{i,\sX}$ (resp. $\sK^M_{i, \sX}$) denote the Zariski sheaf of $i$-th
Quillen (resp. Milnor) $K$-theory on $\sX$. In \cite{Kerz-JAG}, Kerz introduced
an improved version of $\sK^M_{i, \sX}$ which is denoted by $\wh{\sK}^M_{i, \sX}$.
However, the canonical map $\wh{\sK}^M_{i, \sX} \to \sK_{i,\sX}$ is an isomorphism
for $i \le 2$ (cf. Prop.~14 of op. cit.). Let  $\CH_1(\sX)$ denote the Chow
group of 1-cycles on $\sX$ (cf. \cite[Chap.~1]{Fulton}). As a key ingredient for proving
Theorems~\ref{thm:Main-0} and ~\ref{thm:Main-12}, we prove the
following Bloch--Kato formula for $\CH_1(\sX)$. This formula is well-known
when $R$ has equal characteristic (cf. \cite{Panin}) but has remained unknown
otherwise.

\begin{thm}\label{thm:Main-1}
  There are canonical isomorphisms
  \[
  H^2_\zar(\sX, \sK^M_{2,\sX}) \xrightarrow{\cong} H^2_\zar(\sX, \sK_{2,\sX})
  \xrightarrow{\cong}  \CH_1(\sX).
  \]
\end{thm}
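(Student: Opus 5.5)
The plan is to compare the Gersten-type complexes computing both sides. $\sX$ is regular of dimension three, so its points have codimension $0,1,2,3$. The Chow group $\CH_1(\sX)$ is the cokernel of the tame symbol map
\[
\bigoplus_{x\in \sX^{(1)}} k(x)^\times \xrightarrow{\partial} \bigoplus_{x\in \sX^{(2)}} \Z,
\]
that is, divisors of rational functions on surfaces in $\sX$ modulo rational equivalence on those surfaces. For both $\sK_{2,\sX}$ and $\sK^M_{2,\sX}$ there is a natural complex of sheaves
\[
\sK_{2} \to \eta_*K_2(k(\sX)) \to \bigoplus_{x\in\sX^{(1)}} (i_x)_* K_1(k(x)) \to \bigoplus_{x\in \sX^{(2)}} (i_x)_*K_0(k(x)) \to 0.
\]
Its last two terms are flasque, and the $H^2$ of its global sections is exactly $\CH_1(\sX)$. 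The two maps in Theorem~\ref{thm:Main-1} come from the Brown--Gersten/Quillen coniveau spectral sequence and from the canonical map $\sK^M_2 \to \sK_2$. By Kerz's result quoted above (Prop.~14 of \cite{Kerz-JAG}), $\wh{\sK}^M_{2}\cong \sK_2$. The first arrow is then an isomorphism once I show that $\sK^M_{2,\sX}\to\sK_{2,\sX}$ is an isomorphism of Zariski sheaves, or at least induces an isomorphism on $H^2$. This amounts to comparing $\sK^M_2$ with Kerz's improved $\wh{\sK}^M_2$, which agree whenever residue fields are large. The small residue fields can be handled by the standard norm trick: pass to finite étale extensions of coprime degrees $\ell_1,\ell_2$, which leaves $H^2$ unchanged because the transfer composed with restriction is multiplication by the degree. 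So the heart of the matter is the second isomorphism.

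For the second isomorphism it suffices to prove that the complex above is exact (i.e.\ is a resolution) in degrees $\ge 1$, or at least that its cohomology sheaves do not contribute to $H^2$. Concretely, I need Gersten's conjecture for $\sK_2$ on the local rings $\sO_{\sX,x}$ at points of codimension $\ge 2$, at least at the two rightmost stages:
\begin{enumerate}
\item surjectivity of $\bigoplus_{y\in \Spec\sO^{(1)}}K_1(k(y))\to \bigoplus_{\Spec \sO^{(2)}}K_0$ on local schemes, which is automatic; and
\item exactness at the $K_1$-stage of the local Gersten complex of $\sO_{\sX,x}$ for $x$ of codimension $2$ and $3$.
\end{enumerate}
The coniveau spectral sequence gives $E_2^{p,-q}=H^p(\sX,\sK_q)$-type terms. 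The edge map $H^2(\sX,\sK_2)\to E_2^{2,-2}=\CH_1(\sX)$ is an isomorphism once the local sheaves $\sH^p(\sK_2)$ vanish for $p=1,2$ at points of codimension $\ge 2$. Equivalently, I need the local groups $H^p_x(\sO_{\sX,x}, K_2)$ to be concentrated as Gersten predicts.

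Where $\sX\to \Spec R$ is smooth, Gersten holds by Gillet--Levine/Panin's mixed-characteristic results for smooth schemes over a DVR, so the problem is concentrated at singular points of $Y$, where $\sX$ is locally $\Spec R[x,y,z]/(x^ay^bz^c-\pi)$ up to étale localization. There I will localize further. Since $\sX$ is regular, Quillen's dévissage/localization sequence for $Y\subset\sX$ and $U=\sX\setminus Y$ (the generic fiber $X$ up to localization, where Gersten holds since $X$ is smooth over a field) reduces exactness to a statement about $G$-theory of the snc divisor $Y$. $Y$ is a union of smooth components $Y_i$ over $\F$, and I will use the Mayer--Vietoris/dévissage for $G_*$ of snc divisors together with Gersten (Quillen) on each $Y_i$ and on their intersections. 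The main obstacle is the precise bookkeeping here: I must show that the boundary maps from $K_2$ of $U$ into $G_1$ of the strata are compatible and surjective at stalks of codimension $\ge 2$. This step carries a diagram chase through the localization sequence; I expect it to be the hardest, and I would first attempt it in the strictly henselian local case, where $K_1$ and $K_0$ of the strata are explicit.
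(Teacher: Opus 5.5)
The genuine gap is the step you defer as ``bookkeeping'': exactness at the $K_1$-stage of the local Gersten complex $K_2(K)\to\bigoplus_{\htt(\mathfrak p)=1}k(\mathfrak p)^\times\to\bigoplus_{\htt(\mathfrak p)=2}\Z$ for $A=\sO_{\sX,x}$ with $x\in Y_\sing$. This is Lemma~\ref{lem:Gersten-5} in the paper, and it is the whole mixed-characteristic content of the theorem. Your proposal gives no argument for it.

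Moreover, the reduction you sketch does not make the problem smaller. If you delete all of $Y$, comparing the Gersten complexes of $\Spec A$ and $\Spec A\setminus Y$ requires two things: the tame symbol from unramified $K_2$ of the non-local scheme $\Spec A\setminus Y$ must hit every $K_1$-cycle of the $G$-theoretic Gersten complex of $Y\times_{\sX}\Spec A$, and the boundary on $H^1_\zar(\Spec A\setminus Y,\sK_2)$ must be injective. Neither is easier than the statement itself.

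The missing idea is to delete only $Y_\sing$, which has codimension two, and argue as follows.
\begin{itemize}
\item On $\sU_x=\Spec A\setminus Y_\sing$ the Gersten resolution of Lemma~\ref{lem:Gersten-4} is available.
\item The Gersten complexes of $\Spec A$ and $\sU_x$ have the same first two terms. The third term for $\Spec A$ surjects onto that for $\sU_x$ with kernel $\bigoplus_i K_0(k(\eta_i))\cong\Z^r$, where the $\eta_i$ are the generic points of the components of $Y_\sing$ through $x$.
\item The defect $F$ of exactness therefore fits into $0\to F\to H^1_\zar(\sU_x,\sK_2)\to\Z^r\to 0$.
\item The Brown--Gersten spectral sequence gives $SK_1(\sU_x)\surj H^1_\zar(\sU_x,\sK_2)$.
\item Hartogs ($A^\times\cong H^0_\zar(\sU_x,\sK_1)$) and the localization sequence for $Z_x=\Spec A\setminus\sU_x$ give $SK_1(\sU_x)\cong K'_0(Z_x)$.
\item Since $Y$ is snc, the normalization of $Z_x$ is a disjoint union of spectra of DVRs, so $K'_0(Z_x)\cong\Z^r$.
\item Surjections $\Z^r\surj H^1_\zar(\sU_x,\sK_2)\surj\Z^r$ must be isomorphisms, hence $F=0$.
\end{itemize}

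Three further points need correcting.

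(i) Your item (1) is not automatic at closed points. Surjectivity of $\divf$ on $\Spec\sO_{\sX,x}$ for $x$ closed is the statement $\CH^2(A)=0$ for a three-dimensional regular local ring. Cutting the curve by a regular hypersurface fails when its ideal lies in $\mathfrak m^2$. The paper proves this in Lemma~\ref{lem:Gersten-1} via Claborn--Fossum, Nori's isomorphism $C_i(A)\cong\CH^i(A)$ and Smoke's theorem. It is also exactly what makes $H^1_\zar(\sU_x,\sK_2)\to\Z^r$ surjective above.

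(ii) Injectivity of $K_2(A)\to K_2(K)$ is not known at points of $Y_\sing$. So the Gersten complex resolves only $\sK^{\nr}_{2,\sX}=\Ker(\partial_0)$, and your identification of $E_2$-terms with $H^p(\sX,\sK_q)$ presupposes Gersten. The correct mechanism is a stalk-vanishing argument:
\begin{itemize}
\item the kernel of $\sK_{2,\sX}\to i_{\eta *}K_2(K)$ has zero stalks at points of codimension $\le 1$;
\item the cokernel of its image in $\sK^{\nr}_{2,\sX}$ has zero stalks at points of codimension $\le 2$, by Lemma~\ref{lem:Gersten-2} and Corollary~\ref{cor:Gersten-3};
\item by Lemma~\ref{lem:Kerz-Strunk}, neither sheaf affects $H^2$.
\end{itemize}

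(iii) For the first isomorphism, the norm trick is unnecessary and delicate. It needs norms on $\sK^M_2$ of local rings with small residue fields, which is precisely the problematic point. Instead, $\sK^M_{2,\sX}\surj\wh{\sK}^M_{2,\sX}$ is an isomorphism at every point with infinite residue field. Its kernel is therefore supported on closed points, and Lemma~\ref{lem:Kerz-Strunk} gives isomorphisms on $H^i$ for $i\ge 1$.
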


\vskip.2cm

With finite coefficients, we prove the following improved version of
\thmref{thm:Main-1}
which provides a Gersten type presentation of $H^1_\zar(\sX, {\sK_{2, \sX}}/n)$ as
well. This plays a key role in the proof of \thmref{thm:Main-0}.
Similar to \thmref{thm:Main-1}, this is a new result when $R$ has 
mixed characteristic. 

Let $n \in \F^\times$ and let 
\begin{equation}\label{eqn:Ger*}
  {K_2(k(\eta))}/n \xrightarrow{\partial_0} {\underset{x \in \sX^{(1)}}\bigoplus}
  {k(x)^\times}/n \xrightarrow{\partial_1} {\underset{x \in \sX^{(2)}}\bigoplus}
  {\Z}/n \to 0
\end{equation}
denote the complex obtained by taking the global sections of
sheaves in the Gersten complex for ${\sK_{2, \sX}}/n$, where $\eta$ denotes the
generic point of $\sX$. 

\begin{thm}\label{thm:Main-11}
  Assume that the $(\star_n)$-condition holds. Then there exist exact sequences
  \[
    {K_2(k(\eta))}/n \xrightarrow{\partial_0}  \Ker(\partial_1) \to
    H^1_\zar(\sX, {\sK_{2, \sX}}/n) \to 0;
\]
\[
{\underset{x \in \sX^{(1)}}\bigoplus}
  {k(x)^\times}/n \xrightarrow{\partial_1} {\underset{x \in \sX^{(2)}}\bigoplus}
  {\Z}/n \to H^2_\zar(\sX, {\sK_{2, \sX}}/n) \to 0.
    \]
\end{thm}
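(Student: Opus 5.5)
The plan is to compute the Zariski cohomology of ${\sK_{2,\sX}}/n$ via a flasque resolution given by the Gersten complex. Everything then reduces to proving that the sheafified Gersten complex
\[
0 \to {\sK_{2,\sX}}/n \to \iota_{\eta*}{K_2(k(\eta))}/n \to \bigoplus_{x \in \sX^{(1)}} \iota_{x*}{k(x)^\times}/n \to \bigoplus_{x\in \sX^{(2)}} \iota_{x*}\Z/n \to \bigoplus_{x \in \sX^{(3)}} \iota_{x*}(0) \to 0
\]
is exact in the relevant degrees. Equivalently, for every local ring $A=\sO_{\sX,x}$ (regular, of dimension $\le 3$), the local Gersten complex for $K_2/n$ should be exact except at the first spot. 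The terms are skyscraper sheaves, hence flasque, so exactness gives $H^i_\zar(\sX,{\sK_{2,\sX}}/n)$ as the cohomology of \eqref{eqn:Ger*} in degrees $1$ and $2$. These are precisely the two stated exact sequences. The last term vanishes because $K_0/n$ appears only in codimension $2$ for $K_2$.

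For the local exactness, I would identify $K_2/n$ with étale cohomology. Since $n\in\F^\times$, $n$ is invertible on $\sX$, so Merkurjev--Suslin applies to the fields $k(y)$, giving $K_2(k(y))/n\cong H^2_\et(k(y),\mu_n^{\otimes 2})$. Similarly $k(y)^\times/n\cong H^1_\et(k(y),\mu_n)$. Under these identifications, the Gersten complex for $K_2/n$ becomes the tail of the Bloch--Ogus/Gersten complex for étale cohomology $\sH^*(\mu_n^{\otimes 2})$ in weight $2$, truncated in degree $\le 2$. The tail of the Gersten complex for $H^q$ with $q$ equal to the weight is controlled by Milnor $K$-theory. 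The needed exactness in low degrees (at the $k(x)^\times/n$ and $\Z/n$ spots) then follows from:
\begin{enumerate}
\item the Gersten conjecture for étale cohomology with $\mu_n$-coefficients on regular local rings over a DVR in which $n$ is invertible. This is available via Gabber's purity together with the results of Gillet/Panin in the equicharacteristic and smooth cases, and in the unramified or semistable case via Gabber's local acyclicity;
\item Kerz's Gersten-type results for $\wh{\sK}^M_2\cong\sK_2$ in the equicharacteristic case.
\end{enumerate}

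The codimension-two exactness, i.e. surjectivity onto $\Z/n$ at closed points locally, is elementary: a height-two prime lies on a regular curve through $x$, so a uniformizer-type function has the required divisor. For exactness at the $k(x)^\times/n$ spot, I would combine the integral statement of \thmref{thm:Main-1} with a dévissage. The integral Gersten complex computes $H^2(\sX,\sK_2)\cong \CH_1(\sX)$, and the analogous local statement gives acyclicity of the integral complex in the middle degree for $A$. Tensoring the integral Gersten complex with $\Z/n$ then yields the mod-$n$ one. The discrepancy is governed by $n$-torsion in the integral terms and by $\Tor(\Z/n,-)$ of its cohomology. The only problematic torsion is $\mu_n(k(y))$ inside $k(y)^\times$, together with the symbol $\{-1,-1\}$-type torsion in $K_2$. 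I would try to show that the resulting Tor term maps to zero in the relevant cohomology.

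I expect the main obstacle to be exactly this torsion analysis in mixed characteristic for non-smooth $\sX$. One needs the boundary of the residue symbols $\{-1,a\}$ along components of the semistable fiber to vanish modulo $n$, and the $(\star_n)$-condition is designed to handle this. If $n$ is odd, $-1$ is an $n$-th power up to sign issues and the $2$-torsion disappears. If $\sqrt{-1}\in\F$, then $\{-1,-1\}$ and $\{-1,a\}$ become divisible after henselization. In the smooth or equicharacteristic case, Gersten holds outright (Panin, Gillet--Levine). So I would first treat these four cases separately. I would reduce the semistable mixed-characteristic case to a computation at the singular points of $Y$, where $A$ is étale-locally $R[x,y,z]/(xyz-\pi)$ or a similar ring, and verify the missing exactness there by explicit symbol calculus using the local structure of the normal crossings.
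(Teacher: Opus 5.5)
Your reduction is where the proposal breaks down. To read off $H^1_\zar$ and $H^2_\zar$ of ${\sK_{2,\sX}}/n$ from \eqref{eqn:Ger*}, you need two things at every point of $\sX$, in particular at closed points of $Y_\sing$, where $A=\sO_{\sX,x}$ is a three-dimensional non-smooth regular local ring of mixed characteristic:
\begin{itemize}
\item injectivity of ${K_2(A)}/n\to{K_2(F)}/n$;
\item exactness at ${K_2(F)}/n$, i.e.\ every class with trivial tame symbols comes from ${K_2(A)}/n$.
\end{itemize}
Your local-exactness paragraphs only treat the ${k(x)^\times}/n$ and ${\Z}/n$ spots. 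The one source you name for the rest, a Gersten/Bloch--Ogus theorem for {\'e}tale cohomology on semistable regular local rings ``via Gabber's local acyclicity'', is not available. Local acyclicity is about smooth morphisms, and the Bloch--Ogus resolution is known here only in the smooth or equicharacteristic cases (\lemref{lem:BO-exact}). You would also need Hoobler's norm-residue isomorphism for regular local rings, not just Merkurjev--Suslin for fields, to compare the sheaf ${\sK_{2,\sX}}/n$ with $\sH^2(n,2)$.

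The paper never proves purity at closed points. Instead it argues as follows.
\begin{itemize}
\item It proves exactness only for the complex resolving ${\sK^{\nr}_{2,\sX}}/n$ (\thmref{thm:Unram-3}).
\item It shows ${\sK_{2,\sX}}/n\to{\sK^{\nr}_{2,\sX}}/n$ is injective (Hoobler), with cokernel having no higher Zariski cohomology (Sakagaito plus \lemref{lem:Kerz-Strunk}). So the two sheaves have the same $H^i$ for $i\ge 2$, and $H^1$ of the first surjects onto $H^1$ of the second.
\item The remaining defect is the image of $H^0$ of the cokernel sheaf in $H^1_\zar(\sX,{\sK_{2,\sX}}/n)$. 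It is killed by a global argument. Using the vanishing in \lemref{lem:Unram-0} and \lemref{lem:Unram-8}(2), both $H^1_\zar(\sX,\sH^2(n,2))$ and $H^1_\zar(\sX,\sH^2_{\nr}(n,2))$ inject into $H^3_\et(\sX,\Lambda_n(2))$ as first steps of the Leray and coniveau filtrations. These injections are compatible by \propref{prop:LC-SS}, so the comparison map is injective (\lemref{lem:Unram-7}).
\end{itemize}
Nothing in your plan plays this role, so you never reach the first exact sequence of the theorem.

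The torsion step is also left open (``I would try'', ``explicit symbol calculus''), although you correctly identify it as where the $(\star_n)$-condition enters. Start from the integral resolution of $\sK^{\nr}_{2,\sX}$ (\thmref{thm:Gersten-10}; its middle exactness at points of $Y_\sing$ is the nontrivial \lemref{lem:Gersten-5}). Then the mod-$n$ complex is exact if and only if $\partial_0\colon K_2(F)[n]\to\bigoplus_{\htt(\fp)=1}k(\fp)^\times[n]$ is locally surjective. Concretely, roots of unity in the residue fields $k(\fp)$, which need not come from $A$, must lift to $n$-torsion classes in $K_2(F)$ with prescribed tame symbols. The paper supplies exactly this:
\begin{itemize}
\item Hoobler's injectivity $H^2_\et(A,\Lambda_n(2))\hookrightarrow H^2_\et(F,\Lambda_n(2))$, valid for $n$ odd or $\sqrt{-1}\in A$, forces $E^{1,1}_\infty=0$ in the coniveau spectral sequence of $\Spec(A)$.
\item Hence $\partial^{\et}_0\colon H^1_\et(F,\Lambda_n(2))\to\bigoplus_{\htt(\fp)=1}H^0_\et(k(\fp),\Lambda_n(1))$ is onto (\lemref{lem:Unram-2}).
\item Suslin's compatibility diagram transports this surjectivity to $K_2(F)[n]$.
\item In the smooth and equicharacteristic cases, Gillet's injectivity of $K_2(A,\Lambda_n)\to K_2(F,\Lambda_n)$ is used instead.
\end{itemize}

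Finally, your ``elementary'' codimension-two argument is wrong as stated. A height-two prime of a three-dimensional regular local ring need not lie in any height-one prime $\fp$ with $A/\fp$ regular, e.g.\ when it is contained in $\fm^2$. The needed surjectivity is $\CH^2(A)=0$ (\lemref{lem:Gersten-1}), which relies on Claborn--Fossum, Nori and Smoke.
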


By assigning a rational function (or a pair of rational functions) its graph,
it is not hard to see using \thmref{thm:Main-11} that there exists a canonical map
$H^1_\zar(\sX, {\sK_{2, \sX}}/n) \to \CH^2(\sX, 1; {\Z}/n)$, where
the latter is one of the higher Chow groups of $\sX$ with finite coefficients,
defined by Levine \cite{Levine-JAG}. Following the strategy of \cite[\S~2.3]{Landsburg},
it should be possible to prove that this map is an isomorphism.
We do not pursue this question here.

\vskip.3cm

\subsection{Restriction map for relative zero-cycles}\label{sec:Res-map}
We give an application of \thmref{thm:Main-1} to a question of
Esnault--Kerz--Wittenberg \cite{EKW}.
Let $n$ be an integer not divisible by $\Char(\F)$.
If $\sX$ is  smooth and projective over $R$ and $\F$ is either finite or
separably closed, Saito--Sato (cf. \cite[Cor.~0.10]{Saito-Sato-Ann}) showed without
any condition on $\dim(\sX)$ that the intersection of a cycle with $Y$ defines
a restriction map
$\rho \colon {\CH_1(\sX)}/n \to {\CH_0(Y)}/n$. Subsequently, Esnault--Kerz--Wittenberg
\cite{EKW} showed that the restriction map of Saito--Sato exists more generally,
namely, if $\F$ is either finite
or algebraically closed, or $(d-1)!$ is prime to $n$ (where $d = \dim(Y)$), or $\sX$
is smooth over $R$ or $R$ has equal characteristic, provided one replaces
${\CH_0(Y)}/n$ by 
the Friedlander--Voevodsky \cite{Friedlander-Voevodsky} motivic cohomology
$H^{2d}(Y, {\Z}/n(d))$ (cf. Definition~\ref{defn:MC-defn}).
In particular, unlike \cite{Saito-Sato-Ann}, the result of
\cite{EKW} assumes no condition on $\F$ when $d \le 2$.

In the words of Esnault--Kerz--Wittenberg (cf. \cite[\S~1]{EKW}), it is
expected that the restriction map $\rho$,
which they constructed with ${\Z}/n$-coefficients, exists integrally.
We note that, although Saito--Sato and Esnault--Kerz--Wittenberg proved
that $\rho$ is an isomorphism, the principal difficulty in their arguments lies in the
construction of the map $\rho$, rather than proving its bijectivity.
As a first application of \thmref{thm:Main-1}, we establish the following integral
version of the map $\rho$, thereby answering to the
expectation of  Esnault--Kerz--Wittenberg in the case $d =2$. 

Let $\sX$ be as in \thmref{thm:Main-1} (in particular, $\F$ is
arbitrary). Let $Y_\reg$ denote the regular locus of $Y$.
Let $Z^g_1(\sX)$ denote the subgroup of $Z_1(\sX)$ generated by integral cycles
which are finite (hence flat) over $R$ and do not meet the singular locus of $Y$
(such cycles will be said to be in good position). Let $\CH^{\Lci}_0(X)$ denote the
(cohomological) Chow group of 0-cycles on $Y$
(cf. \cite{Binda-Krishna-Comp}, \S~\ref{sec:Prf-2}).

\begin{thm}\label{thm:Main-2}
  There exists a commutative diagram
\begin{equation}\label{eqn:Main-2-0}
\xymatrix@C.8pc{
Z^g_1(\sX) \ar[r]^-{\wt{\rho}} \ar[d] & 
Z_0(Y_\reg) \ar@{->>}[d] \\
\CH_1(\sX) \ar[r]^-{\rho} & \CH^{\Lci}_0(Y).}
\end{equation}
Here, the vertical arrows are the canonical projections, and $\wt{\rho}$ is the group
homomorphism given by taking an $1$-cycle in good position and intersecting it with
$Y$. If $\sX$ is quasi-projective over $R$, then the left vertical arrow is
also surjective.
\end{thm}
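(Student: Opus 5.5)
The plan is to build $\rho$ through the Bloch formula of \thmref{thm:Main-1} rather than through cycles directly. By \thmref{thm:Main-1}, $\CH_1(\sX)\cong H^2_\zar(\sX,\sK_{2,\sX})$. On the other side, for the surface $Y$ (a simple normal crossings divisor, so it is Cohen--Macaulay and reduced), I would use the Levine--Weibel type description from \cite{Binda-Krishna-Comp}: $\CH^{\Lci}_0(Y)\cong H^2_\zar(Y,\sK^M_{2,Y})$ or $H^2_\zar(Y,\sK_{2,Y})$. This isomorphism is compatible with the map $Z_0(Y_\reg)\to\CH^{\Lci}_0(Y)$, which sends a regular closed point to its class, and it is known for surfaces.

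The restriction map $\iota^*\colon \sK_{2,\sX}\to \iota_*\sK_{2,Y}$ along the closed immersion $\iota\colon Y\inj\sX$ then induces $H^2_\zar(\sX,\sK_{2,\sX})\to H^2_\zar(Y,\sK_{2,Y})$. I define $\rho$ as this map composed with the two isomorphisms above. This settles the construction, and the surjectivity of the right vertical arrow is exactly the definition of $\CH^{\Lci}_0(Y)$ as a quotient of $Z_0(Y_\reg)$.

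The main work is commutativity of \eqref{eqn:Main-2-0}. Take an integral curve $C\subset\sX$, finite over $R$, meeting $Y$ only in $Y_\reg$. I need to compare two things:
\begin{itemize}
\item the class of $[C]$ in $H^2(\sX,\sK_2)$, which via Gersten is the image of $\Z$ at the point $C\in\sX^{(2)}$ under the coboundary;
\item the classes of the closed points of $C\cap Y$, counted with intersection multiplicities.
\end{itemize}
First I would localize. Cohomology with supports gives $[C]$ as the image of the fundamental class in $H^2_C(\sX,\sK_2)$. Since $C$ is semilocal near $Y$, I can choose an open neighbourhood $U$ of $C\cap Y$ with $U\cap Y\subset Y_\reg$; then $U$ is regular and $U\cap Y$ is a regular Cartier divisor. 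On such a pair, restriction of $K$-theoretic cycle classes along a regular embedding is the Gysin map, which is compatible with intersection product. For regular schemes this follows from Quillen's $K'$-theory and the Gersten resolution; it is essentially Fulton's refined Gysin map $\CH_1(U)\to\CH_0(U\cap Y)$. Hence $\iota^*[C]=[C\cdot Y]$, and the resulting class in $Z_0(Y_\reg)$ is $\wt\rho(C)$. Excision then transports this to $H^2(Y,\sK_2)$.

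I expect the main obstacle to be justifying that restriction of sheaf cohomology agrees with the Gysin/cycle map. This needs the compatibility of the Gersten-type isomorphism of \cite{Binda-Krishna-Comp} for $Y$ with the cycle class of smooth points, and of \thmref{thm:Main-1} with supports. I would check both via the boundary maps in the coniveau spectral sequences, restricted to the local rings at the points of $C\cap Y$.

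For surjectivity of $Z^g_1(\sX)\to\CH_1(\sX)$ when $\sX$ is quasi-projective, I would use a moving lemma. Every 1-cycle is rationally equivalent to one consisting of horizontal curves (finite flat over $R$) avoiding the codimension-two set $Y_\sing$. Vertical curves lie in $Y$, and I would move them using Bertini in the quasi-projective $\sX$: choose a surface $S$ through a given curve, built as a complete intersection of high-degree hypersurfaces that is regular at generic points, and move the curve on $S$ via a principal divisor. This is a standard Chow moving argument, with avoidance of the finite set $Y_\sing\cap S$ arranged by general position.
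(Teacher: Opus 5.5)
Your construction of $\rho$ and your commutativity argument match the paper's. The paper also defines $\rho$ as the composite $\CH_1(\sX)\cong H^2_\zar(\sX,\sK_{2,\sX})\xrightarrow{\rho'}H^2_\zar(Y,\sK_{2,Y})\cong\CH^{\Lci}_0(Y)$, using \thmref{thm:Main-1} and Bloch's formula for $\CH^{\Lci}_0(Y)$, which it takes from \cite[Thm.~8.1]{Binda-Krishna-Saito}. It obtains commutativity by comparing classes with supports along $C$ and along the points $y\in C\cap Y$, then applying excision. Your local Gysin/intersection-multiplicity discussion fills in what the paper calls clear. The problem is the last assertion, surjectivity of $Z^g_1(\sX)\to\CH_1(\sX)$ for quasi-projective $\sX$. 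The paper gets this from the Gabber--Liu--Lorenzini moving lemma \cite[Thm.~2.3]{GLL}. The ``standard Chow moving argument'' you sketch in its place does not work as written.

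There are three concrete problems.

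\emph{Finite residue field.} The residue field $\F$ is arbitrary, in particular possibly finite. So ``general position'' choices of hypersurfaces over $R$ (Bertini-type statements, avoidance of finite sets and components) are not available in naive form. Overcoming exactly this is what GLL's result does.

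\emph{Curves inside $Y_\sing$.} Suppose the vertical curve $C$ you want to move is itself a component of $Y_\sing$, i.e.\ a double curve in some $Y_i\cap Y_j$. Then $S\cap Y_\sing\supseteq C$ is not a finite set, for any surface $S$ through $C$. Worse, take $S$ proper and regular along $C$. A divisor on $S$ linearly equivalent to $C$ with support disjoint from $C$ would force $C\cdot C=0$ on $S$. But by Zariski's lemma $C\cdot C<0$ unless $C$ is, up to multiplicity, a whole connected component of the special fibre of $S$. So no move on a single surface through $C$ gets off $Y_\sing$. You would first have to move $C$ inside the smooth surface $Y_i$ to curves not contained in $Y_\sing$, and only then pass to horizontal curves.

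\emph{Horizontal curves through $Y_\sing$.} These also have to be moved, and your sketch does not treat them. For such a curve the auxiliary surface can be forced to be singular along $C$. An integral horizontal curve with a point of embedding dimension $3$ lies on no surface that is regular at that point. Then $C$ need not be Cartier on $S$. Moreover, the residual of $C$ in a hypersurface section of $S$ containing $C$ must pass through that point, so the ``principal divisor'' move needs a genuinely different argument.

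Either cite \cite[Thm.~2.3]{GLL}, as the paper does, or supply these steps.
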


\vskip.2cm

If we let $\Lambda = \Z$ if $\Char(\F) = 0$, and
$\Lambda = \Z[\tfrac{1}{p}]$ if $\Char(\F) = p > 0$, then there is
a canonical homomorphism $\CH^{\Lci}_0(Y) \otimes_{\Z} \Lambda \to H^{4}(Y, \Lambda(2))$
by \cite[Thm.~1.6]{Binda-Krishna-SNS}.
We can therefore replace $\CH^{\Lci}_0(Y)$ by $H^{4}(Y, \Lambda(2))$ in
~\eqref{eqn:Main-2-0} if we work with $\Lambda$-coefficients
(cf. \corref{cor:Main-2*}).

\vskip.2cm

Combining \thmref{thm:Main-2} with \cite[Cor.~1.2]{Binda-Krishna-SNS}, we obtain the
following corollary which yields a quick proof of the $d =2$ case of the main result
of \cite{EKW}.
\begin{cor}\label{cor:Main-3}
  Assume in \thmref{thm:Main-2} that $\F$ is perfect and
  $\sX$ is projective over $R$. Then
  the map $\rho \colon {\CH_1(\sX)}/n \to H^{4}(Y, {\Z}/n(2))$
  is an isomorphism.
  \end{cor}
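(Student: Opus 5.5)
The plan is to reduce mod $n$ the integral map $\rho$ of \thmref{thm:Main-2} and compare it with the known isomorphism on the special fiber. Since $\F$ is perfect and $Y$ is a simple normal crossings surface, \cite[Cor.~1.2]{Binda-Krishna-SNS} gives that the cycle class map ${\CH^{\Lci}_0(Y)}/n \to H^4(Y, {\Z}/n(2))$ is an isomorphism for $n \in \F^\times$. Tensoring the diagram \eqref{eqn:Main-2-0} with ${\Z}/n$ and composing with this isomorphism, the map $\rho$ of the corollary becomes identified with
\[
\rho \otimes {\Z}/n \colon {\CH_1(\sX)}/n \to {\CH^{\Lci}_0(Y)}/n .
\]
It therefore suffices to show that this last map is bijective.

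\emph{Surjectivity.} The right vertical arrow $Z_0(Y_\reg) \to \CH^{\Lci}_0(Y)$ in \eqref{eqn:Main-2-0} is surjective, so it is enough to lift every closed point $y \in Y_\reg$ to a cycle in $Z^g_1(\sX)$ meeting $Y$ exactly in $y$, up to multiplicity. I would choose a regular system of parameters at $y$ in $\sX$ of the form $(\pi', t)$, where $\pi'$ is a local equation of the unique component of $Y$ through $y$ and $t$ cuts out $y$ in $Y$. Taking $Z$ to be the closure of a curve $V(t, s)$, with $s$ a further local parameter transverse to the fiber, and using that $\sX$ is projective to globalize, yields an $R$-finite integral 1-cycle with $\wt{\rho}(Z) = [y]$. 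Perfectness of $\F$ is convenient here, since it makes such lifts exist with the right multiplicities. This is a henselian-lifting argument: $R$ is excellent and we may pass to the henselization, and Zariski points of the regular locus lift.

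\emph{Injectivity.} This is the main obstacle. I would use \thmref{thm:Main-1} to write ${\CH_1(\sX)}/n = H^2_\zar(\sX, \sK_{2,\sX})/n$, which surjects onto $H^2_\zar(\sX, {\sK_{2,\sX}}/n)$. By \thmref{thm:Main-11} the latter is the cokernel of $\partial_1$ with ${\Z}/n$ coefficients. On $Y$, ${\CH^{\Lci}_0(Y)}/n$ has a similar Gersten-type presentation by points of $Y_\reg$ modulo divisors of functions on Cartier curves. The strategy is then to construct an inverse $\CH^{\Lci}_0(Y)/n \to {\CH_1(\sX)}/n$ by sending $[y]$ to the class of any good lift $Z$ as above. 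Well-definedness has two parts:
\begin{enumerate}
\item Independence of the lift: two lifts of $y$ differ by a cycle that, after adding $n$-divisible terms, is a divisor of a function on a surface in $\sX$ through $y$. I would deduce this from rigidity: the henselian local ring at $y$ gives $\sK_2/n$-cohomology isomorphic to that of the fiber, via Gabber's affine analogue of proper base change.
\item Compatibility with rational equivalence on $Y$: lift a Cartier curve $C \subset Y$ with a function $f$ to a surface $S \subset \sX$ with a function $\tilde f$ whose divisor restricts to $\divf(f)$ modulo $n$.
\end{enumerate}
The $(\star_n)$-type issues do not arise at the level of $\CH_1$, since the corollary only uses \thmref{thm:Main-1}.

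Alternatively, and more economically, I would invoke \cite{EKW}: for $d=2$ their map agrees with $\rho \bmod n$ on good cycles, which generate by \thmref{thm:Main-2}, and they prove bijectivity. The "quick proof" then consists of observing that the construction difficulty is removed by \thmref{thm:Main-2}, while bijectivity follows from the lifting arguments above together with the Kato–Saito-type finiteness for ${\CH_1(\sX)}/n$ when $\F$ is perfect.
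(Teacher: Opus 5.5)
Your reduction to ${\CH_1(\sX)}/n \to {\CH^{\Lci}_0(Y)}/n$ and your surjectivity argument are fine. The closure of the local curve $V(t,s)$ through $y \in Y_\reg$ is integral and proper over the henselian $R$, and quasi-finite, hence finite. It therefore meets $Y$ only at $y$, and $\wt{\rho}$ of it equals $[y]$ because $\sO_{\sX,y}/(\pi',t,s) = k(y)$. Perfectness of $\F$ plays no role here.

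The gap is injectivity. Everything there rests on the lifting map $\lambda \colon {\CH^{\Lci}_0(Y)}/n \to {\CH_1(\sX)}/n$, $[y] \mapsto [Z_y]$, being well defined, and neither of your two steps establishes this.
\begin{enumerate}
\item Step (1) only restates what has to be proved: that any two good lifts of $y$, and more generally any good $Z$ and $\lambda(\wt{\rho}(Z))$, agree modulo $n$ and rational equivalence. The proposed mechanism does not supply it. Gabber's affine analogue of proper base change concerns \'etale cohomology of henselian pairs, and you give no way of transporting it to Zariski cohomology of ${\sK_{2,\sX}}/n$ on $\sX$ versus $Y$.
\item Step (2) has the same problem. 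Good curves are finite lci maps $\nu \colon C \to Y$, not embeddings. The divisor of a lifted function $\tilde f$ on a lifted surface will in general contain vertical components, and horizontal components meeting $Y$ non-transversally. These can only be compared with your chosen lifts after rigidity is already known.
\item Your closing appeal to a ``Kato--Saito-type finiteness'' of ${\CH_1(\sX)}/n$ fails. No such finiteness is available for an arbitrary perfect $\F$; the paper only has it for finite or separably closed $\F$. Even with finiteness, a surjection of finite groups need not be injective.
\end{enumerate}

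The paper does not prove injectivity by hand. Theorem~\ref{thm:Main-2} supplies what was hard in \cite{EKW}: an integral restriction map $\rho$ compatible with $\wt{\rho}$ on good cycles, which generate $\CH_1(\sX)$ because $\sX$ is projective. The bijectivity with ${\Z}/n$-coefficients, and the passage to $H^4(Y,{\Z}/n(2))$, are then taken from \cite[Cor.~1.2]{Binda-Krishna-SNS}.

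If you want to complete your own route, the formal part is easy. Given a well-defined and surjective $\lambda$, the square of Theorem~\ref{thm:Main-2} gives $\rho \circ \lambda = \id$, which forces bijectivity. Surjectivity of $\lambda$ itself needs rigidity for non-transversal good cycles plus the moving lemma. That rigidity input is precisely what your proposal is missing.

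Your fallback of quoting the $d=2$ case of \cite{EKW} and matching maps on good cycles is logically admissible. However, it assumes the very result this corollary is meant to reprove quickly.
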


\begin{remk}\label{remk:Finiteness}
  An immediate consequence of Theorems~\ref{thm:Main-1} and ~\ref{thm:Main-11}
  is a simple proof of the
  fact that if $\sX$ is a proper $R$-scheme and $\F$ is either finite or separably
  closed, then $\CH_1(\sX)[n]$ is finite for any integer $n \in \F^\times$. This
  is likely to be known to experts and can be independently derived using the
  corresponding results for the generic (due to Suslin \cite[Cor.~4.4]{Suslin-KT})
  and closed fibers.
  \end{remk}

\subsection{Cycle class map and left kernel of Brauer--Manin pairing for open surfaces}\label{sec:BML}
We now state our applications of \thmref{thm:Main-0} to the Brauer--Manin pairing and
the cycle class map for open surfaces over local and strictly local fields.
Let $R$ be an excellent henselian discrete valuation ring whose residue field $\F$
is finite of characteristic $p > 0$. Let $k$ denote the quotient field of $R$.

Recall (cf. \cite{Manin}) that, for a smooth projective variety $X$ over $k$,
evaluating Brauer classes at the closed points of $X$ defines a pairing
\begin{equation}\label{eqn:BM-pair-0}
  \CH_0(X) \times \Br(X) \to {\Q}/{\Z}
\end{equation}
between the Chow group of 0-cycles $\CH_0(X)$ and the Brauer group $\Br(X)$.
Known as the Brauer--Manin pairing, ~\eqref{eqn:BM-pair-0}
has been studied extensively by many
authors (see, for instance, \cite{CT-LNM, CT-Saito, Parimala-Suresh, Saito-Sato-ENS,
  Yamazaki-MA}). A central problem in the study of Brauer--Manin pairing and
its applications is the description of its left and right kernels.

When $X$ is a curve, the Brauer--Manin pairing has trivial left and right kernels,
thanks to the pioneering work of Lichtenbaum \cite{Lichtenbaum} (see 
\cite{Saito-Invent-1} for the corresponding result in positive characteristic).
However,  Parimala--Suresh \cite{Parimala-Suresh} showed that the Brauer--Manin pairing
has a non-zero left kernel, even modulo the maximal prime-to-$p$ divisible subgroup
of $\CH_0(X)$, when $\dim(X) \ge 2$. In the latter case, Colliot-Th{\'e}l{\`e}ne--Saito
\cite{CT-Saito} showed that the Brauer--Manin pairing also has a non-zero right kernel
in general. An interesting question to ask now is: how large these kernels
are in general, and under what conditions, either of these kernels has the
expected size. 

Suppose that there is a regular scheme $\sX$ which is finite type, proper and flat
over $R$ whose generic fiber is $X$. In this case,
Saito--Sato showed in \cite{Saito-Sato-ENS} that the right kernel of
~\eqref{eqn:BM-pair-0} coincides with $\Br(\sX)$. The prime-to-$p$ part of this
result was earlier shown by Colliot-Th{\'e}l{\`e}ne--Saito \cite{CT-Saito}.
Regarding the left kernel of ~\eqref{eqn:BM-pair-0}, Saito showed in
\cite{Saito-Invent} that it is trivial modulo the maximal
prime-to-$p$ divisible subgroup of $\CH_0(X)$ if $\Char(k) = 0, \
H^2_\zar(X, \sO_X) = 0$ and $\Alb_X$ has potentially good reduction.

In \cite{Esnault-Wittenberg}, Esnault--Wittenberg obtained a substantial improvement
of Saito's theorem by showing that the result remains valid (in any characteristic)
when $\Alb_X$ has
potentially good reduction and $\sX$ is a semistable model of $X$ whose irreducible
components of the reduced closed fiber satisfy the Tate conjecture. The latter
condition is implied by $H^2_{\zar}(X,\sO_X)=0$ when $\Char(k)=0$. The aforesaid
result of Parimala--Suresh implies that the assumption on $\Alb_X$ in the results of
Saito and Esnault--Wittenberg can not be removed.

In \cite{Yamazaki-Nagoya}, Yamazaki showed that, when $\Char(k)=0$, the Brauer--Manin
pairing extends to open varieties. More precisely, if $U$ is a smooth quasi-projective
variety over $k$, then evaluating Brauer classes at the closed points of $U$ defines a
pairing
\begin{equation}\label{eqn:BM-pair-1}
  H^S_0(U) \times \Br(U) \to {\Q}/{\Z},
\end{equation}
where $H^S_0(U)$ is the Suslin homology of $U$ (see \cite[Defn.~10.8]{MVW}, where
it is called the 0-th algebraic singular homology,
see also \cite[Thm.~5.1]{Schmidt-ANT}).
When $\Char(k) = 0$ and $U$ is a curve,  Scheiderer-van Hamel \cite{Sh-Hamel}
constructed the pairing ~\eqref{eqn:BM-pair-1} 
before Yamazaki considered the general case. They also showed that
~\eqref{eqn:BM-pair-1} has trivial left and right kernels for open curves.
The positive characteristic version of \cite{Sh-Hamel} was
established in \cite{KRS}.

In higher dimensions, the only known result is due to
Yamazaki in characteristic zero (cf. \cite[Thm.~6.3]{Yamazaki-Nagoya}).
He proved that the pairing~\eqref{eqn:BM-pair-1} has
trivial left kernel modulo the maximal divisible subgroup of $H^S_0(U)$, provided that
$\dim(U)=2$ and $U$ admits a smooth compactification $X$ such that $X$ is
geometrically rational and the irreducible components of $\ov{X} \setminus \ov{U}$
generate the N{'e}ron--Severi group of $\ov{X}$, where $\ov{X}$ is the base change of
$X$ by an algebraic closure of $k$. Beyond this, no general results are
known about the pairing~\eqref{eqn:BM-pair-1}.
Our goal in this paper is to extend Yamazaki's construction to positive characteristic,
and to investigate whether the known results for smooth projective surfaces continue
to hold for open surfaces.

We now state the new results concerning the kernels of
~\eqref{eqn:BM-pair-1}. We begin with the left kernel.
Let $R$ be an excellent henselian discrete valuation ring whose residue field $\F$ is
either finite or separably closed of exponential characteristic $p > 0$. Let $k$ denote
the quotient field of $R$. Let $X$ be a connected, smooth, projective surface
over $k$ and let $U \subset X$ be any open surface.

In this paper, we define an idele class group $C_0(U)$. When $U$ is projective, this
group coincides with $\CH_0(U)$. Moreover, there is a canonical surjection
$C_0(U) \surj H^S_0(U)$ whose kernel is prime-to-$\Char(k)$ divisible
(cf. \propref{prop:Tame-part}).
As with the Chow group of zero-cycles, $C_0(U)$ admits a canonical cycle class map
to {\'e}tale cohomology with compact support.
We fix a prime $\ell \neq p$. 
Our main results on the Brauer--Manin pairing for $U$ and its left kernel are as
follows.

\begin{thm}\label{thm:Main-4}
  Assume that $\F$ is finite. Then we have the following.
  \begin{enumerate}
  \item
    There exists a pairing $C_0(U) \times \Br(U) \to {\Q}/{\Z}$.
  \item
    Assume that the $(\star\star_\ell)$-condition holds.
    Then the cycle class map
  \[
  \cyc_U \colon C_0(U)^{(\ell)} \to H^4_{{{\rm {\etl}}},c}(U, \Z_\ell(2))
  \]
  is injective. In particular, the Brauer--Manin pairing for $U$ has trivial left
  kernel modulo the maximal $\ell$-divisible subgroup of $C_0(U)$.
  \end{enumerate}
\end{thm}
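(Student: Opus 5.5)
We construct the pairing in (1) through the cycle class map into compactly supported étale cohomology. The injectivity in (2) we reduce to the projective case, which is settled by \thmref{thm:Main-0} together with the Esnault--Wittenberg argument.

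\emph{Part (1).} We define $C_0(U)$ as a relative (idele-type) Chow group, a quotient of $Z_0(U)$. Its relations come from rational functions on curves $C\subset X$ that are congruent to $1$ along $C\cap (X\setminus U)$, suitably ``modulo'' the boundary. For each closed point $x\in U$ we have a class in $H^4_{\et,c}(U,\Z/m(2))$ via the Gysin map from $x$. We then check that the relations die, using the localization sequence for $\ov{C}\cap U\subset U$ and the vanishing of $H^2_{\et,c}$ of the normalized open curve on the image of the defining units. This gives $\cyc_U\colon C_0(U)\to H^4_{\et,c}(U,\widehat{\Z}(2))$, working prime-to-$p$ with $\Z/m$-coefficients and separately for the $p$-part, where we use logarithmic de Rham--Witt sheaves and compact supports.

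The pairing is the composite of $\cyc_U$ with the cup product
\[
H^4_{\et,c}(U,\Z/m(2))\times H^2_\et(U,\mu_m)\to H^6_{\et,c}(U,\Z/m(3))\cong H^2(k,\Z/m(1))\cong \Z/m\subset \Q/\Z,
\]
where the trace uses local Tate duality and Poincaré duality. Passing to the limit over $m$ and using $\Br(U)$ torsion gives the pairing. For closed points $x\in U$ it agrees with evaluation $\mathrm{Br}(U)\to \mathrm{Br}(k(x))\to\Q/\Z$ by the standard compatibility of cup product with the Gysin map.

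\emph{Part (2).} By duality the ``in particular'' follows from injectivity of $\cyc_U$ on $C_0(U)^{(\ell)}$, since $H^4_{\et,c}(U,\Z_\ell(2))$ is Pontryagin dual to $H^2(U,\Q_\ell/\Z_\ell(1))\supset \Br(U)\{\ell\}$ modulo divisible parts. Let $D=X\setminus U$ and put $D=D_1\cup D_0$, with $D_1$ of pure dimension one and $D_0$ finite. We compare the sequences
\[
\bigoplus_{C\subset D_1}\CH_0\text{-type terms}\ \to C_0(U)\to \CH_0(X)\to \CH_0(D)\ \ \text{and}\ \ H^3_\et(D,\Z_\ell(2))\to H^4_{\et,c}(U)\to H^4_\et(X)\to H^4_\et(D).
\]
Here the left term of the first sequence is a relative Picard/idele group of $D$, namely the cokernel of the units of $D$ in the semilocal ring along $D$. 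We then argue by a diagram chase with the five lemma:
\begin{itemize}
\item On $\CH_0(X)^{(\ell)}$ we use that $\cyc_X$ is injective. Esnault--Wittenberg show that $\ker(\cyc_X)^{(\ell)}$ is controlled by $H^3_{\nr}(X,\Z_\ell)$, via Kahn's/Colliot-Thélène's exact sequence relating $\CH^2$, $H^3_{\nr}$ and the cycle map. The latter group vanishes by \thmref{thm:Main-0} under $(\star\star_\ell)$.
\item On the boundary term we use the injectivity of the class map for curves. This is class field theory for curves over local fields (Lichtenbaum/Saito, and \cite{KRS} in positive characteristic), identifying the relative Picard term with its image in $H^3_\et(D,\Z_\ell(2))$, equivalently $H^1$ of $D$ with compact support over $k$.
\end{itemize}

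The main obstacle is exactness after $\ell$-completion. The functor $(-)^{(\ell)}$ is not exact, so the chase needs the kernel and cokernel terms to be finitely generated or finite modulo divisible parts. It also needs the image of $C_0(U)$ in $\CH_0(X)$ to interact correctly with $\lim^1$. I would first prove that the relevant $\ell$-primary torsion groups, namely $\CH_0(X)[\ell^\infty]$ and the torsion in the idele term of $D$, are finite. I would do this via \remk{remk:Finiteness}-type finiteness and the Mordell--Weil type finiteness of the Albanese over local fields, which makes Mittag-Leffler hold. I would also need to check that the connecting maps in the two sequences are compatible, which is a sign/trace verification for the Gysin maps.
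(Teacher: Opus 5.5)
\textbf{Gap in part (2).} Your overall strategy is the right one: compare the motivic and \'etale localization sequences for $D\subset X\supset U$, then reduce to $X$ and the curve $D$. But the chase as you set it up does not close, because you never control $H^3(X,\Z_\ell(2))$.

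Work with $\Z/\ell^n$-coefficients, where all groups are finite; you can then pass to the limit with no Mordell--Weil or $\CH_0(X)[\ell^\infty]$ input. By Propositions~\ref{prop:Tame-part} and~\ref{prop:Suslin-hom-0} one has $C_0(U)^{(\ell)}\cong H^S_0(U)^{(\ell)}\cong H^4_c(U,\Z_\ell(2))$. The relevant row is
\[
H^3(X,\Z_\ell(2))\to H^3(D,\Z_\ell(2))\to H^4_c(U,\Z_\ell(2))\to H^4(X,\Z_\ell(2)),
\]
together with its \'etale analogue. Suppose $\cyc_U(z)=0$.
\begin{itemize}
\item Injectivity at $H^4(X)$ (Esnault--Wittenberg) puts $z$ in the image of some $w\in H^3(D,\Z_\ell(2))$.
\item Then $\cyc_D(w)$ is the restriction of some $y\in H^3_{\et}(X,\Z_\ell(2))$.
\item Injectivity of $\cyc_D$ (class field theory for curves, \cite[Cor.~7.3]{GKR}) gives $z=0$ only if $y$ can be taken to be the class of a motivic element.
\end{itemize}
So you need surjectivity of $H^3(X,\Z_\ell(2))\to H^3_{\et}(X,\Z_\ell(2))$. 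Its cokernel injects into $H^3_{\nr}(X,\Z_\ell)$, and this is exactly where \thmref{thm:Main-0} enters in the paper. It is the genuinely new input needed for open $U$.

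You instead use \thmref{thm:Main-0} for injectivity of $\cyc_X$ on $\CH_0(X)^{(\ell)}$. That deduction is valid, via $H^3_{\et}(X,\Z_\ell(2))\to H^3_{\nr}(X,\Z_\ell)\to\CH_0(X)^{(\ell)}\to H^4_{\et}(X,\Z_\ell(2))$, but it only recovers Esnault--Wittenberg's theorem. The surjectivity at $H^3(X)$ is left unaddressed.

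Your ``relative Picard'' left term does not repair this. The kernel of $H^4_c(U)\to H^4(X)$ is $\coker(H^3(X)\to H^3(D))$. This maps to $\coker(H^3_{\et}(X)\to H^3_{\et}(D))$, not into $H^3_{\et}(D,\Z_\ell(2))$. Injectivity of that map is again equivalent to the missing surjectivity.

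\textbf{Problems in part (1) and the deduction.} $C_0(U)$ is not a quotient of $Z_0(U)$. It is the cokernel of $\bigoplus_C k(C)^\times\to Z_0(U)\oplus\bigoplus_C k(C)^\times_\infty$, so the idelic components must also be paired, by local symbols. What you describe (functions $\equiv 1$ along the boundary) is $H^S_0(U)$. In characteristic $p$ the evaluation pairing with $\Br(U)\{p\}$ does not in general factor through $H^S_0(U)$ (Remark~\ref{remk:BM-pair-def-2}). The compactly supported logarithmic de Rham--Witt duality you invoke for the wild part is not available in the form you need. The paper instead defines the pairing by evaluation at closed points, and reduces well-definedness on $C_0(U)$ to the curve case, \cite[Prop.~9.4]{KRS}.

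Two smaller points:
\begin{itemize}
\item $H^2_{\et}(U,\Q_\ell/\Z_\ell(1))$ surjects onto $\Br(U)\{\ell\}$; it does not contain it.
\item The ``in particular'' also needs $\ker(C_0(U)\to C_0(U)^{(\ell)})$ to be $\ell$-divisible. This is Lemma~\ref{lem:Divisible-class-grp}, which uses finiteness of $H^S_0(U)[n]$.
\end{itemize}
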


\begin{remk}\label{remk:Main-4-char-0}
  As stated above, item (1) of \thmref{thm:Main-4} was shown by Yamazaki 
  \cite{Yamazaki-Nagoya} when $\Char(k) = 0$.
\end{remk}

\begin{thm}\label{thm:Main-5}
  Assume that $\F$ is separably closed and $H^2_{\rm {\etl}}(\ov{X}, {\Q_\ell}(1))$ is
  algebraic. Then the cycle class map
  \[
  \cyc_U \colon C_0(U)^{(\ell)}  \to H^4_{{\rm {\etl}},c}(U, \Z_\ell(2))
  \]
  is injective.
\end{thm}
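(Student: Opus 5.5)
Our plan is to reduce the injectivity for $U$ to the projective case $X$, and to use \thmref{thm:Main-12} to control the difference. Let $D = X \setminus U$ with its reduced structure, and let $C$ be the smooth locus (after removing finitely many closed points, which does not change $C_0(U)$ up to the relevant completions). We would compare $C_0(U)$ with $\CH_0(X)$ via a localization-type exact sequence. By construction as an idele class group,
\[
\bigoplus_{x \in D^{(0)}} \wh{K}^M_1(k(X)_x)\text{-type local terms} \to C_0(U) \to \CH_0(X)\text{-type quotient},
\]
and the analogous sequence on the \'etale side is the long exact sequence
\[
H^3_\et(D, \Z_\ell(2)) \to H^4_{\et,c}(U, \Z_\ell(2)) \to H^4_\et(X, \Z_\ell(2)) \to H^4_\et(D,\Z_\ell(2)).
\]
The first step is to verify that $\cyc_U$ is compatible with $\cyc_X$ and with the local cycle maps on the boundary terms, so that we get a map of (pro-$\ell$ completed) exact sequences.

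The second step is the projective case: $\cyc_X \colon \CH_0(X)^{(\ell)} \to H^4_\et(X, \Z_\ell(2))$ is injective. Here we would use the standard Bloch--Kato/Merkurjev--Suslin argument of Colliot-Th\'el\`ene and Kahn: the kernel of the cycle map on $\CH^2(X)$ with $\Z/\ell^n$ coefficients is controlled by $H^3_{\nr}(X, \Z/\ell^n)$, and passing to the limit the kernel of $\cyc_X$ on $\CH_0(X)^{(\ell)}$ injects into a quotient of $H^3_{\nr}(X,\Z_\ell)$ (using the finiteness from \lemref{lem:Chow-finite} to make the Mittag-Leffler conditions hold). By \thmref{thm:Main-12}, $H^3_{\nr}(X,\Z_\ell) = 0$, so $\cyc_X$ is injective on $\CH_0(X)^{(\ell)}$.

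The third step is the boundary: the local contributions along $D$ come from the curves $C$ over the strictly local field $k$, where zero-cycle and units class maps are governed by class field theory of curves over strictly henselian-residue fields; we would show the map from the boundary term of $C_0(U)^{(\ell)}$ to $H^3_\et(D,\Z_\ell(2))$ (or rather its image in $H^4_{\et,c}(U)$) is injective modulo what dies in $C_0(U)$, essentially via Kummer theory $k(x)^\times/\ell^n \cong H^1(k(x),\mu_{\ell^n})$ and Hilbert 90, plus the fact that residue maps are compatible. A diagram chase (four-lemma) then gives injectivity of $\cyc_U$. The main obstacle I expect is the exactness on the left after $\ell$-adic completion: taking $\varprojlim$ of $\Z/\ell^n$-sequences is only left exact, so one must establish finiteness of the relevant $\Z/\ell^n$-groups (or vanishing of $\varprojlim^1$), and also handle the failure of injectivity of $H^3_\et(D) \to H^4_{\et,c}(U)$, which is exactly where the image of $H^3_\et(X,\Z_\ell(2))$ intervenes; I would try to absorb that term by showing it maps into the image of the boundary units on the cycle side, again using the algebraicity of $H^2_\et(\ov{X},\Q_\ell(1))$ to identify the relevant Galois-cohomological classes with divisor classes supported on $D$ and elsewhere.
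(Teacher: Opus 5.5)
There is a genuine gap, and it is at the term you yourself flagged as the main obstacle. Once set up correctly, the proof is a four-lemma on the ladder
\[
\begin{array}{ccccccc}
H^3(X,\Z_\ell(2)) & \to & H^3(D,\Z_\ell(2)) & \to & H^4_c(U,\Z_\ell(2)) & \to & H^4(X,\Z_\ell(2))\\
\downarrow & & \downarrow & & \downarrow & & \downarrow\\
H^3_\et(X,\Z_\ell(2)) & \to & H^3_\et(D,\Z_\ell(2)) & \to & H^4_{\et,c}(U,\Z_\ell(2)) & \to & H^4_\et(X,\Z_\ell(2))
\end{array}
\]
This ladder is the limit of the finite $\Z/\ell^n$-level sequences of \eqref{eqn:MC-03}. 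Injectivity of the third vertical map requires the first vertical map to be \emph{surjective} and the second and fourth to be injective. The fourth is \cite[Thm.~4.1]{Esnault-Wittenberg}. The second is the injectivity of \'etale realization for the curve $D$ in degree $3\le 2+1$ \cite[Cor.~7.3]{GKR}. These roughly match your Steps~2 and~3.

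The surjectivity of $H^3(X,\Z_\ell(2))\to H^3_\et(X,\Z_\ell(2))$ is what you leave to ``absorb that term \dots\ using the algebraicity of $H^2_\et(\ov X,\Q_\ell(1))$ to identify the relevant classes with divisor classes.'' That is not an argument. Algebraicity concerns degree-two classes and does not by itself make degree-three \'etale classes motivic. In the paper this surjectivity is the hard content of \thmref{thm:Main-12}: under the hypotheses, \lemref{lem:EW-main-3} and \lemref{lem:Chow-finite}(1) show that $H^3_{\nr}(X,\Z_\ell)=0$ is equivalent to bijectivity of $H^3(X,\Z_\ell(2))\to H^3_\et(X,\Z_\ell(2))$ (\thmref{thm:Main-0-0}). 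Proving it needs the semistable model $\sX$:
\begin{itemize}
\item Saito--Sato's vanishing of Kato homology, for the surjectivity of $\alpha_1$;
\item \thmref{thm:Main-11} together with Saito--Sato's cycle map for $\sX$, for the injectivity of $\alpha_2$;
\item Esnault--Wittenberg's surjectivity of $\gamma_2$ (\propref{prop:EW-main}), which is where algebraicity actually enters.
\end{itemize}

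So you have the right tool but apply it to the wrong map. You use $H^3_{\nr}(X,\Z_\ell)=0$ only to deduce injectivity of $\cyc_X$ on $\CH_0(X)^{(\ell)}$. That deduction is valid, since the kernel of $\cyc_X$ is the image of $H^3_{\nr}(X,\Z_\ell)$ by the limit of \eqref{eqn:EW-main-3-2}. But it only recovers \cite[Thm.~4.1]{Esnault-Wittenberg}, which is an input to the proof of \thmref{thm:Main-12}, while the half you actually need is left unproved.

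A second, smaller gap is the cycle-side localization sequence. Your ``local terms $\to C_0(U)\to\CH_0(X)$'' is not a defined exact sequence with a realization map to $H^3_\et(D,\Z_\ell(2))$. The paper obtains it by identifying $C_0(U)^{(\ell)}\cong H^S_0(U)^{(\ell)}\cong H^4_c(U,\Z_\ell(2))$, using \propref{prop:Tame-part} (the kernel of $C_0(U)\to H^S_0(U)$ is $\ell$-divisible) and \propref{prop:Suslin-hom-0}. The boundary term is then the motivic cohomology $H^3(D,\Z_\ell(2))$ of the possibly singular curve $D$. No class field theory of curves is needed, nor any restriction to the smooth locus of $D$. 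The chase uses plain injectivity on that term, with no ``modulo what dies in $C_0(U)$.'' Passing to limits is harmless because all the $\Z/\ell^n$-level groups are finite.
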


\begin{remk}\label{remk:Main-5-char-0}
  When $\F$ is separably closed, $\Char(k) = 0$ and $H^2_\zar(X, \sO_X) = 0$, then the
algebraicity condition in \thmref{thm:Main-5} is satisfied. In particular, the theorem
applies in this case. 
\end{remk}


\subsection{Cycle class map for normal projective surfaces}\label{sec:BML-Nor}
 For a singular variety $X$
over a field, let $\CH^{\lw}_0(X)$ denote the Levine--Weibel Chow group of 0-cycles on
$X$ (cf. \cite{Levine-Weibel}, \S~\ref{sec:Prf-2}, \S~\ref{sec:Normal-surface}).
This Chow group for normal projective varieties has been extensively
studied by several authors. Its structure over algebraically closed fields is
well understood (cf. \cite{BPW}, \cite{Krishna-Srinivas}).
If $X$ is a normal projective surface over a finite field, then Ghosh--Krishna
\cite{Ghosh-Krishna-Jussieu} showed that the cycle class map
${\CH^{\lw}_0(X)}/n \to H^4_{\et}(X, {\Z}/n(2))$ is an isomorphism of finite groups
if $n$ is invertible in the field. The nature of the cycle class map for normal
surfaces over other fields is yet unknown. 

As applications of Theorems~\ref{thm:Main-4} and ~\ref{thm:Main-5}, we obtain the
following extensions of the main results of Esnault--Wittenberg
\cite{Esnault-Wittenberg} to normal projective surfaces over local fields.

\begin{thm}\label{thm:Main-6}
  Assume that $\F$ is finite and let $X$ be a connected, normal projective surface
  over $k$. Then we have the following.
  \begin{enumerate}
  \item
    There exists a pairing $\CH^{\lw}_0(X) \times \Br(X_\reg) \to {\Q}/{\Z}$.
  \item
    Assume that $X$ admits a desingularization $\wt{X}$ such that the hypotheses of
    \thmref{thm:Main-4} are satisfied for $U = X_\reg \subset \wt{X}$.
    Then the cycle class map
  \[
  \cyc_X \colon \CH^{\lw}_0(X)^{(\ell)} \to H^4_{{\rm {\etl}}}(X, \Z_\ell(2))
  \]
  is injective. Equivalently, the Brauer--Manin pairing for $X$ has trivial left kernel
  modulo the maximal $\ell$-divisible subgroup of $ \CH^{\lw}_0(X)$.
  \end{enumerate}
\end{thm}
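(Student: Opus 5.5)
The plan is to reduce \thmref{thm:Main-6} to \thmref{thm:Main-4}, applied to the open surface $U = X_\reg$ inside a resolution $\wt{X}$. Let $\pi \colon \wt{X} \to X$ be a desingularization that is an isomorphism over $X_\reg$. Let $E = \pi^{-1}(X_\sing)$ be the exceptional divisor, with its reduced structure. Since $X$ is normal, $X_\sing$ is a finite set of closed points and $U := X_\reg = \wt{X}\setminus E$.

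The first step is to compare $\CH^{\lw}_0(X)$ with $C_0(U)$. By definition, $\CH^{\lw}_0(X)$ is generated by closed points of $X_\reg$. Its relations are divisors of rational functions on Cartier curves, i.e.\ curves meeting $X_\sing$ only at points where they are Cartier. I would construct a canonical map $\CH^{\lw}_0(X) \to C_0(U)$, sending a point of $U$ to its class. To see that relations go to zero, I would use the idele-class description of $C_0(U)$ relative to the compactification $\wt{X}$. For a Cartier curve $C \subset X$ and $f \in k(C)^\times$ that is regular and invertible along $C \cap X_\sing$, the strict transform $\wt{C}$ meets $E$. At these boundary points $f$ is a unit, and in fact it is congruent to a constant modulo a suitable power of the ideal of $E$, which is the Levine--Weibel condition. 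This is exactly the kind of "modulus" condition killed in $C_0(U)$. I expect this map to be surjective, and it should be an isomorphism after $\ell$-adic completion; at least I need that its kernel is $\ell$-divisible. This step — matching the Levine--Weibel relations with the relations defining $C_0(U)$ — is the main obstacle. I would first try to use the explicit definition of $C_0(U)$ from \propref{prop:Tame-part}. Failing an isomorphism, I would prove only what is needed: a map $\CH^{\lw}_0(X) \to C_0(U)$ that is compatible with cycle class maps, together with injectivity after $(\cdot)^{(\ell)}$.

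Next, I would compare the cycle class maps. Since $X_\sing$ is finite, the localization sequence
\[
H^3_\et(X_\sing, \Z_\ell(2)) \to H^4_{\et,c}(U, \Z_\ell(2)) \to H^4_\et(X, \Z_\ell(2)) \to H^4_\et(X_\sing,\Z_\ell(2))
\]
applies. Points have cohomological dimension at most $2$ over $k$, so the outer terms vanish. This gives $H^4_{\et,c}(U,\Z_\ell(2)) \cong H^4_\et(X,\Z_\ell(2))$. I would then check that $\cyc_X$ factors as $\CH^{\lw}_0(X)^{(\ell)} \to C_0(U)^{(\ell)} \xrightarrow{\cyc_U} H^4_{\et,c}(U,\Z_\ell(2)) \cong H^4_\et(X,\Z_\ell(2))$. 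This is a check on generators: both maps send a point of $U$ to its Gysin class. Injectivity of $\cyc_X$ then follows from \thmref{thm:Main-4}(2) and the comparison in the first step.

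For part (1), I would define the pairing $\CH^{\lw}_0(X) \times \Br(X_\reg) \to \Q/\Z$ by pulling back the pairing $C_0(U)\times \Br(U) \to \Q/\Z$ of \thmref{thm:Main-4}(1) along the map of the first step. Concretely, a closed point $x \in X_\reg$ and a class $\alpha \in \Br(U)$ are sent to $\inv_{k(x)}(\alpha|_x)$, after corestriction to $k$. The remaining claim is the equivalence in (2): trivial left kernel modulo $\ell$-divisible elements. By local Tate duality, the pairing is identified, up to the above isomorphisms, with the duality pairing between $H^4_\et(X,\Z_\ell(2))$ and $H^2(U,\Q_\ell/\Z_\ell(1))$, whose $\ell$-primary part maps to $\Br(U)\{\ell\}$. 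I would deduce this equivalence exactly as in the corresponding statement of \thmref{thm:Main-4}(2), using that the Picard term is killed by evaluation on degree-compatible cycles.
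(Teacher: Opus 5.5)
\textbf{There is a genuine gap at the one non-formal step.} Your outline matches the paper's: map $\CH^{\lw}_0(X)$ to $C_0(U)$ with $U=X_\reg\subset\wt{X}$, use the localization sequence to get $H^4_{\et,c}(U,\Z_\ell(2))\cong H^4_{\et}(X,\Z_\ell(2))$ (the finitely many singular points have $\mathrm{cd}_\ell=2$), and invoke \thmref{thm:Main-4}. However, $\cyc_X$ factors through $\CH^{\lw}_0(X)^{(\ell)}\to C_0(U)^{(\ell)}$, and \thmref{thm:Main-4} only gives injectivity on $C_0(U)^{(\ell)}$. So injectivity of $\CH^{\lw}_0(X)^{(\ell)}\to C_0(U)^{(\ell)}$ is exactly what part (2) adds beyond \thmref{thm:Main-4}. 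You flag this as ``the main obstacle'' and give no argument for it. The same comparison is needed for the ``equivalently'' clause, to identify the kernel of $\CH^{\lw}_0(X)\to\CH^{\lw}_0(X)^{(\ell)}$ with the maximal $\ell$-divisible subgroup.

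\textbf{The heuristic you offer for this step is also wrong.} Boundary data is not killed in $C_0(U)$. For $C'\subset U$ and $f\in k(C')^\times$, the defining relation identifies $\divf(f)$ with $-\delta_{C'}(f)\in k(C')^\times_\infty$, and these idele classes survive in $C_0(U)$. Only their principal-unit part dies, and only in $H^S_0(U)$; the kernel of $C_0(U)\to H^S_0(U)$ is merely $\ell$-divisible (\propref{prop:Tame-part}). Even in $H^S_0(U)$, a relation needs $f\equiv 1$ modulo the reduced boundary. A Levine--Weibel function $f\in\sO^\times_{C,x}$ is only congruent to the arbitrary constant $f(x)\in k(x)^\times$ at the points of $\wt{C}\cap E$. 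So ``congruent to a constant'' is not a modulus condition, and nothing you wrote shows these relations vanish, even $\ell$-adically.

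\textbf{The missing input is a theorem about zero-cycles on normal surfaces.} The paper factors the comparison through Chow groups with modulus:
\[
\CH^{\lw}_0(X)\to\CH_0(\wt{X}|mD)\surj\CH_0(\wt{X}|D)\to H^S_0(U),
\]
with the first map due to Ghosh--Krishna and the last to Schmidt. It then uses Binda--Krishna (characteristic $0$) and Ghosh--Krishna (positive characteristic) to show that $\CH^{\lw}_0(X)^{(\ell)}\to H^S_0(U)^{(\ell)}$ is an isomorphism. Combined with \propref{prop:Tame-part}, this gives $\CH^{\lw}_0(X)^{(\ell)}\cong C_0(U)^{(\ell)}\cong H^S_0(U)^{(\ell)}$. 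Without that input or a substitute, your argument does not prove part (2).
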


\begin{thm}\label{thm:Main-7}
  Assume that $\F$ is separably closed. Let $X$ be a connected, normal
  projective surface over $k$. Assume that $X$ admits a desingularization $\wt{X}$
  which satisfies the
  hypothesis of \thmref{thm:Main-5}. Then the cycle class map
  \[
  \cyc_X \colon \CH^{\lw}_0(X)^{(\ell)} \to H^4_{\rm {\etl}}(X, \Z_\ell(2))
  \]
  is injective.
\end{thm}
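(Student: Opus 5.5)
The plan is to deduce \thmref{thm:Main-7} from \thmref{thm:Main-5}, applied to the open surface $U = X_\reg$ viewed inside the desingularization $\pi\colon \wt{X} \to X$. We may take $\pi$ to be an isomorphism over $X_\reg$ and choose $\wt X$ so that $E = \pi^{-1}(X_\sing)$ is a simple normal crossings divisor with $\wt{X}\setminus E \cong U$. The argument then consists of comparing $\CH^{\lw}_0(X)$ with $C_0(U)$, and $H^4_{\et}(X,\Z_\ell(2))$ with $H^4_{\et,c}(U,\Z_\ell(2))$, compatibly with the cycle class maps.

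The first step is to construct a canonical homomorphism $\phi\colon \CH^{\lw}_0(X) \to C_0(U)$. Levine--Weibel 0-cycles are supported on $X_\reg = U$, so the only thing to check is that relations coming from Cartier curves $C \subset X$ and functions $f \in \sO^\times_{C,X_\sing\cap C}$ map to zero. Such a relation is $\divf(f)$ with $f$ a unit near the singular points. Its strict transform $C'$ on $\wt{X}$ meets $E$, and there $f$ is congruent to $1$ modulo the pullback of the conductor. The idele class group $C_0(U)$ is presumably defined via such modulus conditions along $E$ (a limit over multiples of $E$, as in the tame/wild class groups and \propref{prop:Tame-part}). I would therefore verify that $\divf(f)$ lies in the relation group of $C_0(U)$, using that $f$ is a unit, equal to $1$ up to an $\ell$-divisible element after passing to $\ell$-completion, possibly after enlarging the modulus. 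This gives $\phi$, and $\phi$ is in fact surjective, since both groups are generated by closed points of $U$.

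The second step is to compare cohomology. The localization sequence for $U \subset X$ with complement $Z = X_\sing$, which is finite, reads
\[
H^3_\et(Z,\Z_\ell(2)) \to H^4_{\et,c}(U,\Z_\ell(2)) \to H^4_\et(X,\Z_\ell(2)) \to H^4_\et(Z,\Z_\ell(2)).
\]
Because $Z$ is zero-dimensional and $\F$ is separably closed, $k$ has $\ell$-cohomological dimension $1$. Hence $H^i(Z,-)=0$ for $i\ge 2$, and the map $H^4_{\et,c}(U) \to H^4_\et(X)$ is an isomorphism. I would then check that the square formed by $\cyc_X$, $\cyc_U$, $\phi^{(\ell)}$ and this isomorphism commutes. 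This is a local computation at closed points of $U$: both cycle classes are obtained by pushing forward the Gysin class of a point. Granting commutativity, if $\alpha \in \CH^{\lw}_0(X)^{(\ell)}$ satisfies $\cyc_X(\alpha)=0$, then $\cyc_U(\phi(\alpha))=0$, and \thmref{thm:Main-5} gives $\phi(\alpha)=0$.

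The third step, which I expect to be the main obstacle, is proving that $\phi^{(\ell)}$ is injective, i.e.\ that $\phi$ is an isomorphism up to $\ell$-divisible kernel. My first approach would be an inverse map $C_0(U) \to \CH^{\lw}_0(X)$ defined on generators by the identity. This requires showing that each relation of $C_0(U)$, namely $\divf(g)$ on a curve $C'\subset \wt X$ with $g$ satisfying a modulus condition along $E$, gives the zero class in $\CH^{\lw}_0(X)$. Pushing $C'$ forward to $X$ yields a curve through $X_\sing$ that need not be Cartier, so I would use the lci/Cartier-curve moving results (as in Binda--Krishna, where $\CH^{\lw}_0 = \CH^{\Lci}_0$ for surfaces) to replace $C'$ by Cartier curves. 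Any residual discrepancy should be controlled by $\sO^\times$ of the completed local rings at $X_\sing$. These units are $\ell$-divisible modulo torsion from the residue field, and that torsion vanishes after $\ell$-completion because $\F$ is separably closed. Alternatively, I could compare both groups with $\CH_0(\wt X|mE)$ via Binda--Krishna's result that $\CH^{\lw}_0(X) \cong \CH_0(\wt X|mE)$ for $m \gg 0$, and then use the identification of $C_0(U)$ with the limit of these groups.
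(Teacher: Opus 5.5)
Your reduction is the paper's: apply \thmref{thm:Main-5} to $U = X_\reg \subset \wt{X}$, and use that $H^4_{\et,c}(U,\Z_\ell(2)) \to H^4_{\et}(X,\Z_\ell(2))$ is an isomorphism because $X_\sing$ is finite. Your Step~2 is correct. Step~1 is fine in outcome, but for a simpler reason than the one you give. With the presentation used in the paper, Levine--Weibel relations are $\divf(f)$ on integral curves $C \subset U$ that are closed in $X$. For such curves $\Delta(C)=\emptyset$, so they are relations in $C_0(U)$ verbatim. Your claim that $f \equiv 1$ modulo a conductor is not true for Levine--Weibel relations, since $f$ need only be a unit along $X_\sing$; but you do not need it. The genuine gap is Step~3. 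Injectivity of $\CH^{\lw}_0(X)^{(\ell)} \to C_0(U)^{(\ell)}$ carries all the content, and neither of your two sketches proves it.

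In (a), ``the identity on generators'' does not define a map out of $C_0(U)$. That group contains the boundary ideles $k(C)^\times_\infty$, and its relations are $(\divf(g),\delta_C(g))$ for arbitrary $g \in k(C)^\times$, not modulus conditions on curves in $\wt X$. You also never pin down the ``residual discrepancy''. The relevant units are higher units at boundary points of curves, not $\wh{\sO}^\times_{X,x}$. Their $\ell$-divisibility has nothing to do with $\F$ being separably closed; the same mechanism must also work for finite $\F$ in \thmref{thm:Main-6}.

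In (b), the target is wrong. By the Binda--Krishna isomorphism you quote, the system $\{\CH_0(\wt X|mE)\}_m$ is eventually constant with value $\CH^{\lw}_0(X)$. Identifying $C_0(U)$ with its limit would therefore give $C_0(U)\cong \CH^{\lw}_0(X)$ integrally. That is not available, since the boundary ideles of $C_0(U)$, and especially their higher-unit parts, are only accounted for by $0$-cycles after $\ell$-completion. It is also not what is needed.

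The missing idea is to go down to the tame level rather than up to a limit. The paper uses the chain
\[
\CH^{\lw}_0(X) \to \CH_0(\wt X|mD) \surj \CH_0(\wt X|D) \to H^S_0(U),
\]
where the first map comes from Ghosh--Krishna and the last from Schmidt. It then uses that the kernel of this composite dies after $\ell$-completion: by Binda--Krishna when $\Char(k)=0$ and Ghosh--Krishna when $\Char(k)=p$, roughly because it is built from higher unit groups, which are divisible, respectively $p$-primary. Finally, \propref{prop:Tame-part} gives $C_0(U)^{(\ell)}\cong H^S_0(U)^{(\ell)}$, because $\Ker(\kappa_U)$ is the image of the $\ell$-divisible groups $U_1K_1(k(C)_\infty)$. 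Together these show that $\CH^{\lw}_0(X)^{(\ell)} \to C_0(U)^{(\ell)} \to H^S_0(U)^{(\ell)}$ are isomorphisms, which is exactly what your Step~3 needs.
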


\begin{remk}\label{remk:Main-7-remk}
Assume that $\Char(k) = 0$ and $X$ is a connected, normal
projective surface over $k$ such that $H^2_\zar(X, \sO_X) = 0$.
One can then show that the hypothesis of \thmref{thm:Main-7} is satisfied, and
consequently the theorem applies in this setting.
\end{remk}

\subsection{Right kernel of Brauer--Manin pairing for open surfaces}\label{sec:BMR}
We shall state two results concerning the right kernel of ~\eqref{eqn:BM-pair-1}.
We assume in this subsection that  $\F$ is finite.
We let $D = X \setminus U$ with the reduced closed subscheme structure.
Let $\sD \subset \sX$ denote the closure of $D$ in $\sX$ with the reduced closed
subscheme structure. We let $\sU = \sX \setminus \sD$.
Then one checks that $\sU$ is the
largest open subscheme of $\sX$ whose generic fiber is $U$.
We let $\Br(U)^{\perp} = \Ker(\Br(U) \to \Hom(C_0(U), {\Q}/{\Z}))$ denote the
right kernel of ~\eqref{eqn:BM-pair-1}. Let
$A\{p'\}$ denote the prime-to-$p$ torsion subgroup of an abelian group $A$.

By the main theorem of \cite{Saito-Sato-ENS}, we
have $\Br(\sX) = \Br(X)^{\perp}$. In particular, there is an inclusion
$\Br(\sX) \subseteq \Br(U)^{\perp}$.
By Corollary~3.2.3 of op. cit. (when $\Char(k) = 0$) and
\cite[Thm.~1.1]{KM-0} (when $\Char(k) > 0$), there is an inclusion
$\Br(U)^{\perp} \subseteq \Br(\sU)$.
We remark here that in \cite[Cor.~3.2.3]{Saito-Sato-ENS}, it is assumed that
$\sU$ is proper over $R$. However, what it actually proves without the assumption of
properness of $\sX$ is that if the evaluation of a Brauer class in $X$ is zero
at every closed point in $X$ whose closure in $\sX$ is finite over $R$, then the
Brauer class lies in $\Br(\sU)$. This finiteness condition is automatic when $\sX$
is proper over $R$.

As a consequence of the above, we have the inclusions of Brauer groups
\begin{equation}\label{eqn:RK-0}
\Br(\sX) \subseteq \Br(U)^\perp \subseteq \Br(\sU).
\end{equation}

Our first result concerning the right kernel of the Brauer--Manin pairing of
$U$ is the following.
\begin{thm}\label{thm:Main-8}
  Each of the inclusions $\Br(\sX)\{p'\} \subseteq \Br(U)^\perp\{p'\} \subseteq
  \Br(\sU)\{p'\}$ is strict in general.
  \end{thm}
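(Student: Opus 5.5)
This is a statement about examples, so the plan is to build two explicit situations: one in which $\Br(\sX)\{p'\} \neq \Br(U)^\perp\{p'\}$, and one in which $\Br(U)^\perp\{p'\} \neq \Br(\sU)\{p'\}$. In both I would use ~\eqref{eqn:RK-0}, where the outer groups are computable by purity, and locate $\Br(U)^\perp$ through the pairing of \thmref{thm:Main-4}(1) evaluated on explicit idele classes.

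\emph{Strictness of the second inclusion.} Take $X = \P^2_k$ and $U = X \setminus D$, with $D$ a smooth conic, or even a line, and $\sX = \P^2_R$, $\sD$ its closure. Purity for the Brauer group on the regular scheme $\sU$ makes $\Br(\sU)\{p'\}$ sit in a residue sequence along the generic point of $\sD$:
\[
0 \to \Br(\sX)\{p'\} \to \Br(\sU)\{p'\} \to H^1(\sD,{\Q}/{\Z})\{p'\}.
\]
For $D$ a line, $\sD \cong \P^1_R$, which has no prime-to-$p$ étale covers beyond those of $R$. Hence $\Br(\sU)\{p'\}$ contains the classes $\{\chi, t\}$, where $t$ is the equation of $D$ and $\chi$ is an unramified character of $k$. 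Such a class is nonzero, because its residue $\chi$ is nonzero. I would then show $\{\chi,t\}$ is not in $\Br(U)^\perp$. The idele class group $C_0(U)$ contains data near $D$, namely relative classes pairing against residues through local symbols on the curve $D$. Pairing $\{\chi,t\}$ with a class supported near a point $x \in D$ gives, by reciprocity on $D$, the value $\chi(N_{k(x)/k}(u))$ for a local unit $u$. This is nonzero for suitable $x$ and $u$, since the local norms of units generate an open subgroup on which the unramified character may still be nontrivial once the ramified extension term is included. If this computation fails, I would instead exploit the simpler fact that closed points of $U$ may specialize into $\sD$. Such points have no integral model in $\sU$, so evaluating $\{\chi,t\}$ at a point $x\in U(k)$ with $v(t(x))=1$ gives $\chi(\pi)\neq0$.

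\emph{Strictness of the first inclusion.} I need a class in $\Br(U)^\perp\{p'\}$ that is not in $\Br(\sX)$. The natural candidates are classes whose residues along $\sD$ are nonzero but become "invisible" to the pairing because the residue lies in the image of the horizontal part, that is, it is ramified only along the vertical part of $\sD$'s boundary. Concretely, I would take $D$ so that its closure $\sD$ meets the closed fiber badly, for example $\sD$ a non-regular curve over $R$, or a curve whose normalization has a nontrivial étale cover that is trivial on the generic fiber. I would then take a class $\alpha \in \Br(U)$ whose residue on $D$ is a character of $k(D)$ unramified at every place of $D$ and trivial on the images of all local norms. By the pairing formula, $\alpha$ pairs trivially with every $C_0(U)$-class. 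Its nonvanishing residue shows $\alpha\notin \Br(X)$, hence $\alpha\notin\Br(\sX)$.

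The main obstacle is the explicit computation of the pairing on the components of $C_0(U)$ supported at the boundary, that is, showing that it reduces to local reciprocity on $D$. Only with that in hand can one decide membership in $\Br(U)^\perp$. I would first try to prove that the pairing of $\alpha$ with boundary ideles equals the pairing of the residue $\partial_D\alpha\in H^1(D)$ with the idele class group of the curve $D$ (or of $\sD$). I would then invoke the known curve case (\cite{Sh-Hamel}, \cite{KRS}) to read off both examples.
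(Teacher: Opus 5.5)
Both halves have genuine gaps.

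\emph{Second inclusion.} Your example fails. If $D$ is a line, then $\sD\cong\P^1_R$ and $\sU\cong\A^2_R$. Since $\Pic(\A^2_R)=0$, and $H^2_{\et}(\A^2_R,\Q_\ell/\Z_\ell(1))\cong H^2_{\et}(\F,\Q_\ell/\Z_\ell(1))=0$ by homotopy invariance and because $R$ is henselian with finite residue field, we get $\Br(\A^2_R)\{p'\}=0$. So all three groups vanish and nothing is separated.

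What you dropped is the next term of your residue sequence. It continues with the Gysin map $H^1_{\et}(\sD,\Q_\ell/\Z_\ell)\to H^3_{\et}(\sX,\Q_\ell/\Z_\ell(1))\cong H^1(\F,\Q_\ell/\Z_\ell)$, which on $H^1(R,\Q_\ell/\Z_\ell)$ is multiplication by $\deg D$. Concretely, there is no rational function on $\P^2_R$ with divisor $\sD$. For $t=L/L'$, the symbol $\{\chi,t\}$ also has residue $-\chi$ along $L'=0$. A conic only helps for $\ell=2$ with $p$ odd, via $\{\chi,q/L^2\}$ with $\chi$ the unramified quadratic character.

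Your first pairing computation is also wrong: an unramified character of $k$ vanishes on norms of units, so $\chi(N_{k(x)/k}(u))=0$.

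Your fallback is the right mechanism: evaluate at a $k$-point of $U$ that specializes into $\sD$. It is more elementary than what the paper does, but it needs a surface where an unramified character really is the residue of a class in $\Br(\sU)$. The paper's Example~2 is such a case: $\sU=\G_{m,R}\times_R\P^1_R\subset\P^1_R\times_R\P^1_R$. There $\{\chi,t\}$, with $t$ the coordinate on $\G_m$, does lie in $\Br(\sU)$, and evaluating at $t=\pi$ gives $\chi(\pi)\neq 0$. The paper instead computes $\Br(\sU)\{p'\}\cong\Q/\Z\{p'\}$ (\lemref{lem:Exm-2**}) and gets $\Br(U)^\perp\{p'\}=0$ from \thmref{thm:Main-10*} with $r=0$.

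\emph{First inclusion.} You give no example, and the orthogonality claim does not follow from what you state.

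(i) A nonzero residue $\chi\in\Ker(\rho'_{D})$, i.e.\ a character of $D$ split at every closed point, exists only when the integer $r_\ell$ of \thmref{thm:CFT-reg} is positive. If the components of $D$ are smooth with Jacobians of good reduction, then $r=0$ and your construction produces nothing (\corref{cor:Main-10*-1}). This is why the paper's Example~1 uses a specific elliptic curve over $\Q_2$ with $r_\ell=1$.

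(ii) Such a $\chi$ is the residue of a class of $\Br(U)$ if and only if its Gysin image in $H^3_{\et}(X^o,\Q_\ell/\Z_\ell(1))$ vanishes. The paper obtains this vanishing in \lemref{lem:EW-injection} from $H^3_{\nr}(X,\Z_\ell)=0$.

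(iii) The residue alone cannot force orthogonality. Replacing $\alpha$ by $\alpha+j^*\beta$ with $0\neq\beta\in\Br(k)\{\ell\}$ keeps the residue but changes the value on a degree-one zero-cycle. The compatibility you single out is \lemref{lem:Br-Chow-diag}, which rests on the duality \thmref{thm:SD-1} and the class field theory \thmref{thm:CFT-reg}. It only gives that $\Phi_U(\alpha)$ lies in the image of $\CH_0(X)^\star\{\ell\}$. You must then subtract a class of $\Br(X)$, using surjectivity of $\Phi_X$ (\lemref{lem:EW-surjection}; immediate for $\P^2_k$).

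Assembling (i)--(iii) is precisely \thmref{thm:Main-10*}. The paper proves this half by applying it to Example~1, where $\Br(\sX)=\Br(R)=0$ but $\Br(U)^\perp\{\ell\}\cong\Q_\ell/\Z_\ell$ for every $\ell\neq 2$.
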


For the second result, we let $D^o \subset D$ be any dense open regular
subscheme, and  set $X^o = X \setminus (D \setminus D^o)$.
We let $\rho^\star_{D^o} \colon H^1_{\et}(D^o, {\Q}/{\Z}) \to \Hom(C_1(D^o), {\Q}/{\Z})$
denote the dual of the reciprocity map in the class field theory of
$D^o$ (cf. \cite[\S~4]{GKR}, \S~\ref{sec:Et-realn}). There is also a push-forward map
$\iota_* \colon  H^1_{\et}(D^o, {\Q}/{\Z}) \to H^3_\et(X^o, {\Q}/{\Z}(1))$,
where $\iota \colon D^o \inj X^o$ is the inclusion.
We define $H^1_{\et}(D^o)^{\perp} = \Ker(\rho^\star_{D^o}) \bigcap \Ker(\iota_*)$.

Unlike the case of $\Br(X)^{\perp}$, we do not know whether there is an explicit
description of $\Br(U)^{\perp}$. We propose the following.

\begin{conj}\label{conj:Main-9}
There exists a short exact sequence
  \[
  0 \to \Br(\sX)\{p'\} \to \Br(U)^{\perp}\{p'\} \to H^1_{{\rm {\etl}}}(D^o)^{\perp}\{p'\} \to 0.
  \]
  \end{conj}

\vskip.3cm

Our final result is the following evidence of this conjecture.

\begin{thm}\label{thm:Main-10}
  Assume that the $(\star\star_\ell)$-condition holds for every $\ell \neq p$.
  Then Conjecture~\ref{conj:Main-9} is true.
  \end{thm}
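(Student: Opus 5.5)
We work one prime $\ell \neq p$ at a time. Since all groups involved are torsion, it suffices to prove the sequence is exact on $\ell$-primary parts for each $\ell \neq p$. We will obtain it from the localization (Gysin) sequence in étale cohomology for the pair $(X^o, D^o)$, combined with the duality statements already available for $X$ and for $D^o$.

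\textbf{Step 1 (a sequence of Brauer groups).} Shrink $U$ if necessary; $\Br(U)^\perp$ only grows under shrinking, and any class of $\Br(U)^\perp$ unramified along a divisor extends across it. So we may compare $\Br(U)$ with $\Br(X^o)$, where $X^o \setminus U = D^o$ is regular of codimension one. Purity gives
\[
0 \to \Br(X^o)\{\ell\} \to \Br(U)\{\ell\} \xrightarrow{\partial} H^1_\et(D^o, {\Q_\ell}/{\Z_\ell}) \xrightarrow{\iota_*} H^3_\et(X^o, {\Q_\ell}/{\Z_\ell}(1)).
\]
Since $D \setminus D^o$ has codimension two in $X$, we have $\Br(X^o) = \Br(X)$. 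Hence the image of $\partial$ is exactly $\Ker(\iota_*)$.

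\textbf{Step 2 (compatibility of pairings).} Next we show that the Brauer--Manin pairing on $C_0(U)$ is compatible with the reciprocity pairing on $D^o$. Concretely, there should be a boundary map $C_1(D^o) \to C_0(U)$ (or a map $C_0(U)\to$ relative idele data) such that for $\beta\in \Br(U)$ and $\gamma \in C_1(D^o)$ we have $\langle \beta, \partial^* \gamma\rangle = \langle \partial\beta, \gamma\rangle$. We also need that $\CH_0(X)$ pairs with $\Br(X)$ compatibly with $C_0(U) \to \CH_0(X)$. From this:
\begin{itemize}
\item[(a)] if $\beta \in \Br(U)^\perp$, then $\partial\beta \in \Ker(\rho^\star_{D^o})$, so $\partial\beta \in H^1_\et(D^o)^\perp$;
\item[(b)] $\Br(U)^\perp \cap \Br(X) = \Br(X)^\perp = \Br(\sX)$ by Saito--Sato, provided $C_0(U) \to \CH_0(X)$ has image pairing faithfully. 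This holds because $C_0(U)$ surjects onto $\CH_0(X)$ by moving lemmas.
\end{itemize}
These give the injectivity on the left and exactness in the middle.

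\textbf{Step 3 (surjectivity, the main obstacle).} Given $\chi \in H^1_\et(D^o)^\perp\{\ell\}$, Step 1 gives a lift $\beta \in \Br(U)$ with $\partial\beta = \chi$. The functional $\langle \beta, -\rangle$ kills the image of $C_1(D^o)$, so it descends to the cokernel, which is a subquotient of $\CH_0(X)$. We must then modify $\beta$ by an element of $\Br(X)$ so that it becomes orthogonal to $C_0(U)$. This requires that every continuous character of $\CH_0(X)^{(\ell)}$ arising this way be realized by $\Br(X)$. This is where the $(\star\star_\ell)$-hypothesis enters, through \thmref{thm:Main-4} and ultimately \thmref{thm:Main-0}. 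By \thmref{thm:Main-4}(2), the cycle class map for $U$ is injective on $\ell$-completions, and $H^3_{\nr}(X,\Z_\ell)=0$ identifies the dual of $H^4_{\et,c}(U,\Z_\ell(2))$ with $H^2_\et$-type Brauer data via Poincaré–Tate duality. Then any character of $C_0(U)^{(\ell)}$ that factors through the cycle class map comes from $\Br(U)$ or $H^1(D^o)$.

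The delicate point is showing that $\langle\beta,-\rangle$ factors through the cycle class image, and that the correction class can be taken in $\Br(X)$ rather than merely in $\Br(U)$. I would attempt this by a diagram chase in the duality between the localization sequence
\[
H^3_\et(X,\Q_\ell/\Z_\ell(1)) \to \cdots
\]
and its compact-support dual. The vanishing $H^3_{\nr}(X,\Z_\ell)=0$ is what ensures the relevant $\lim^1$ and torsion-divisible obstructions disappear.
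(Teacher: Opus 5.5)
\textbf{Verdict: there is a genuine gap.} Your skeleton is the paper's. You compare the localization sequence $0\to\Br(X)\{\ell\}\to\Br(U)\{\ell\}\to H^1_{\et}(D^o,\Q_\ell/\Z_\ell)\xrightarrow{\iota_*}H^3_{\et}(X^o,\Q_\ell/\Z_\ell(1))$ with the dual of $H^3_c(D^o,\Z_\ell(2))\xrightarrow{\partial}H^4_c(U,\Z_\ell(2))\to H^4(X,\Z_\ell(2))\to 0$. No shrinking of $U$ is needed, since $X^o\setminus U=D^o$ by construction. Your left injectivity and middle exactness via Saito--Sato and $C_0(U)\twoheadrightarrow\CH_0(X)$ are fine.

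The compatibility in Step 2 is only asserted, and it is a real input. The paper takes $C_1(D^o):=H^3_c(D^o,\Lambda(2))$ with the motivic boundary map. It needs the identification $C_0(U)^{(\ell)}\cong H^4_c(U,\Z_\ell(2))$. It also needs Saito--Tate duality extended to the regular, possibly non-smooth, curve $D^o$, compatibly with localization sequences (Theorem~\ref{thm:SD-1}(3)). Together these give the commutative diagram of Lemma~\ref{lem:Br-Chow-diag}.

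The gap itself is Step 3, which you leave as ``I would attempt this by a diagram chase'' and support with the wrong inputs. You correctly isolate the requirement: every $\ell$-primary continuous character of $\CH_0(X)$ must come from $\Br(X)$. But injectivity of $\cyc_U$ for the open $U$ only yields surjectivity of $\Phi_U\colon\Br(U)\{\ell\}\to C_0(U)^\star\{\ell\}$. A preimage of $(j_*)^\star(y)$ under $\Phi_U$ need not lie in $\Br(X)$: its residue lies in $\Ker(\rho^\star_{D^o})\cap\Ker(\iota_*)$, which can be nonzero.

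What is needed is surjectivity of $\Phi_X\colon\Br(X)\{\ell\}\to\CH_0(X)^\star\{\ell\}$. By Saito--Tate and Pontryagin duality this is equivalent to injectivity of $\cyc_X\colon\CH_0(X)^{(\ell)}\to H^4_{\et}(X,\Z_\ell(2))$. That is Theorem~\ref{thm:Main-4}(2) for $U=X$, i.e.\ the Esnault--Wittenberg theorem (Lemma~\ref{lem:EW-surjection}); its hypotheses follow from the $(\star\star_\ell)$-condition. With it the chase is short:
\begin{enumerate}
\item Lift $\chi\in H^1_{\et}(D^o)^\perp\{\ell\}$ to $\beta\in\Br(U)\{\ell\}$.
\item $\Phi_U(\beta)$ kills $\partial\bigl(H^3_c(D^o,\Z_\ell(2))\bigr)$. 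By exactness of $0\to H^4(X,\Z_\ell(2))^\vee\to H^4_c(U,\Z_\ell(2))^\vee\to H^3_c(D^o,\Z_\ell(2))^\vee$, it equals $(j_*)^\star(y)$ for some $y$. This sequence comes from the finite-level localization sequences, whose terms are finite, so no $\lim^1$ issue arises.
\item Choose $b\in\Br(X)\{\ell\}$ with $\Phi_X(b)=y$. Then $\beta-j^*b\in\Br(U)^\perp$ has residue $\chi$.
\end{enumerate}

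This surjectivity is also not where $H^3_{\nr}(X,\Z_\ell)=0$ enters. In the paper, Theorem~\ref{thm:Main-0-0} makes $H^3_{\et}(X^o,\Q_\ell/\Z_\ell(1))\to H^3_c(X^o,\Z_\ell(2))^\vee$ an isomorphism (Lemma~\ref{lem:EW-injection}). This forces $\Ker(\rho^\star_{D^o})\subseteq\Ker(\iota_*)$, so $H^1_{\et}(D^o)^\perp=\Ker(\rho^\star_{D^o})$. Theorem~\ref{thm:CFT-reg} then gives its shape $\bigoplus_{\ell}(\Q_\ell/\Z_\ell)^{r_\ell}$.
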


\subsection{Overview of proofs}\label{sec:Outline}
In this paper, Theorems~\ref{thm:Main-0} and~\ref{thm:Main-12} are at the heart of
the proofs of all our results concerning the Brauer--Manin pairing and the cycle
class map for open surfaces. Theorems~\ref{thm:Main-1} and~\ref{thm:Main-11}, in
turn, are at the heart of the proofs of Theorems~\ref{thm:Main-0} and
~\ref{thm:Main-12}. This explains how we were led to the question of the existence
of Gersten and Bloch--Ogus resolutions for the $K$-theory and {\'e}tale cohomology
sheaves on non-smooth schemes over discrete valuation rings of mixed characteristic.

The proofs of Theorems~\ref{thm:Main-1} and~\ref{thm:Main-11} are based on results
of Gillet--Levine \cite{Gillet-Levine}, Geisser \cite{Geisser-MZ}, and Sakagaito
\cite{Sakagaito} concerning the Gersten and Bloch--Ogus resolutions. We combine
these results with Hoobler's proof of the norm--residue isomorphism theorem for
arbitrary regular local rings \cite{Hoobler}, together with a comparison theorem
of Deligne \cite{Hodge-2} relating the coniveau and Leray filtrations on étale
cohomology. This ultimately reduces the
problem to a careful analysis of the maps in the Quillen localization sequence for
$K$-theory and the induced differentials in the Gersten complex.

To prove Theorems~\ref{thm:Main-0} and~\ref{thm:Main-12}, we use the localization
sequences for motivic and étale cohomology and compare them via the {\'e}tale
realization maps. We then combine the injectivity of the cycle class map for $X$,
proved by Esnault--Wittenberg \cite{Esnault-Wittenberg}, with the fact that the
cycle class map for the model $\sX$ is an isomorphism, as shown by Saito--Sato
\cite{Saito-Sato-Ann}. This reduces the proofs to establishing the existence of
Gersten resolutions for the Zariski sheaves  $\sK_2$ and ${\sK_2}/n$ on the model.
The latter follows from Theorems~\ref{thm:Main-1} and~\ref{thm:Main-11}.

To prove Theorems~\ref{thm:Main-4} and~\ref{thm:Main-5}, we first identify
the class group $C_0(U)$ with the motivic cohomology of $U$ with compact support.
This allows us to reduce the problem to proving the injectivity of the
{\'e}tale realization map. We achieve this by comparing the localization
sequences for
motivic and {\'e}tale cohomology and using Theorems~\ref{thm:Main-0} and
~\ref{thm:Main-12} to reduce to the case of projective surfaces. The latter case
is a result of Esnault--Wittenberg \cite{Esnault-Wittenberg}. 

We prove \thmref{thm:Main-10} by following a similar strategy and using
Theorem~\ref{thm:Main-0} once again.
An additional ingredient here is a class field theory for
regular, but not necessarily smooth, curves over local fields, which we establish in
this paper. Another key ingredient for proving \thmref{thm:Main-10}
is an extension of the duality theorem of
Saito \cite{Saito-Duality} to regular but not necessarily smooth schemes over
local fields, which we also establish in this paper.

We prove \thmref{thm:Main-1} in \S~\ref{sec:BFM}, \thmref{thm:Main-11} in
\S~\ref{sec:K-fin} and Theorems~\ref{thm:Main-0} and~\ref{thm:Main-12} in
\S~\ref{sec:Unram-V}. In \S~\ref{sec:Suslin-review}, we recall Suslin homology and
establish its identification with the motivic cohomology with compact support.
In \S~\ref{sec:BM-left}, we prove Theorems~\ref{thm:Main-4} and~\ref{thm:Main-5}.
In \S~\ref{sec:ST-duality}, we establish an extension of the Saito-Tate
duality \cite{Saito-Duality} to regular, not necessarily smooth, schemes over
local fields. 
We prove a class field theory for regular curves over local fields and
complete the proof of \thmref{thm:Main-10} in \S~\ref{sec:R-ker}.
Finally, we complete the proofs of Theorems~\ref{thm:Main-6},
~\ref{thm:Main-7} and ~\ref{thm:Main-8} in \S~\ref{sec:Remarks}.

\subsection{Notation}\label{sec:Notn}
For a Noetherian scheme $X$, we shall denote the category of
separated and finite type schemes over $X$ by $\Sch_X$.
We shall let $\Sm_X$ denote the full subcategory $\Sch_X$ consisting
of smooth schemes over $X$. If $A \to A'$ is a ring homomorphism and $X$ is an
$A$-scheme, we shall let $X_{A'}$ denote the base change $X \times_{\Spec(A)} \Spec(A')$.
 Unless mentioned otherwise, all intersections of the
closed subschemes of a scheme will be assumed to be scheme-theoretic intersections. 
A Zariski closed subset of a scheme will be assumed to be
endowed with the reduced closed subscheme structure unless we mention otherwise.

For a Noetherian scheme $X$, we shall let $X_\red$ denote the reduced part of $X$ and
$X_n$ denote the normalization of $X_\red$. We let $k(X)$ be the total ring of fractions
for $X$.
If $X$ is reduced, $X_\sing$ will denote the singular locus of $X$
and $X_\reg$ the complementary open subscheme.
We shall let $X^{(i)}$ (resp. $X_{(i)}$) denote
the set of codimension (resp. dimension) $i$ points on $X$. 
We let $\sC(X)$ denote the set of closed integral curves in $X$.
We let $X_\zar$ (resp. $X_\et$) denote the Zariski (resp.
{\'e}tale) site of $X$. 

For abelian groups $A, B$, the notation $A \otimes B$ will indicate
tensor product of $A$ and $B$ over $\Z$ and $\Hom(A,B)$ will indicate
the set of $\Z$-linear maps from $A$ to $B$.
For $m\in \N$, we let 
$A[m]$ (resp. $A/m$) denote the kernel (resp. cokernel)
of the multiplication map $A \xrightarrow{m} A$. For a prime $p$,
we shall let $A\{p\}$ (resp. $A\{p'\}$) denote the $p$-primary (resp. prime-to-$p$
primary) component of $A$. We let $A_\tor$ denote the torsion subgroup of $A$.
We let $A^\star = \Hom(A, {\Q}/{\Z})$. For any prime $\ell$, we let
$A^{(\ell)} = {\varprojlim}_n {A}/{\ell^n}$ denote the $\ell$-completion of $A$.

If $A$ is a topological abelian group, we let $A^\vee = \Hom_{\cont}(A, {\Q}/{\Z})$
denote the subgroup of $A^\star$ consisting of continuous homomorphisms,
where ${\Q}/{\Z}$ has discrete topology.
If $F$ is a functor from a category of schemes to another category, and $X = \Spec(A)$
is an affine scheme, we shall usually write $F(X)$ as $F(A)$.

\section{Bloch's formula for the regular model}\label{sec:BFM}
In this section, we recall the Gersten complex for the Zariski sheaf
$\sK_{2, \sX}$ on an essentially of finite type regular scheme over a discrete
valuation ring and prove some preliminary results about this complex.
We conclude this section with the proofs of
Theorems~\ref{thm:Main-1} and ~\ref{thm:Main-2}.
We begin by proving some $K$-theoretic results for regular local rings.

\subsection{Some results for regular local rings}\label{sec:Local*}
We will use the following general construction from \cite{Quillen}, together with some
related results, to study the exactness of the Gersten complex for regular
schemes over a discrete valuation ring (dvr).

Let $Y$ be a Noetherian scheme. For $i \ge 0$, we let $M^i(Y)$ denote the Serre
   subcategory of the abelian category of coherent sheaves on $Y$ which are supported
   on closed subsets of codimension at least $i$ in $Y$. By Quillen's
   localization and d{\'e}vissage theorems, there is a homotopy fiber sequence of
   $K$-theory spectra
   \begin{equation}\label{eqn:Quillen-0}
     K\left(M^{i+1}(Y)\right) \to K\left(M^i(Y)\right) \to
     K\left({M^i(Y)}/{M^{i+1}(Y)}\right)
   \end{equation}
   and a weak equivalence $K\left({M^i(Y)}/{M^{i+1}(Y)}\right)
\simeq {\underset{y \in Y^{(i)}}\coprod} K(k(y))$.

From the above, we get a commutative diagram of homotopy groups
\begin{equation}\label{eqn:Quillen-1}
\xymatrix@C1pc{
  & &  K_{j-1}\left({M^{i+1}(Y)}\right) \ar[d]^-{\psi_i} & \\
  K_j\left(M^{i-1}(Y)\right) \ar[r]^-{\phi_{i-1}} & K_j\left({M^{i-1}(Y)}/{M^{i}(Y)}\right)
  \ar[r] \ar[dr]_-{\partial_{i-1}} &
  K_{j-1}\left({M^{i}(Y)}\right) \ar[dr]^-{\delta_i} \ar[d]^-{\phi_i} & \\
  & &  K_{j-1}\left({M^{i}(Y)}/{M^{i+1}(Y)}\right) \ar@{->>}[r] \ar[d] &
  \coker(\partial_{i-1}) \\
  & & K_{j-2}\left({M^{i+1}(Y)}\right) &}
\end{equation}
for $i \ge 0$ and $j \in \Z$, in which the second row from the top and the third column
from the left are exact.

We let $Z_i(Y)$ denote the (free abelian) group of dimension $i$
cycles on $Y$ and $\CH_i(Y)$ its quotient by the rational equivalence
\cite[Chap.~1]{Fulton}. If $Y$ is equidimensional, we let $Z^i(Y)$ denote the
(free abelian group) of codimension $i$ cycles on $Y$ and $\CH^i(Y)$ its
quotient by the rational equivalence.

\begin{lem}\label{lem:Gersten-1}
  Let $A$ be a regular local ring. Then the map
  $\divf \colon {\underset{{\htt}(\fp) =1}\bigoplus} k(\fp)^\times
  \to  {\underset{{\rm ht}(\fp) =2}\bigoplus} \Z$ is surjective.
\end{lem}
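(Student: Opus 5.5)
The plan is to deduce the surjectivity formally from Quillen's localization sequences \eqref{eqn:Quillen-0} for $Y = \Spec(A)$, using the diagram \eqref{eqn:Quillen-1}. I would use two standard facts. First, the composite
\[
K_1\left(M^1(Y)/M^2(Y)\right) \xrightarrow{\partial} K_0\left(M^2(Y)\right) \to K_0\left(M^2(Y)/M^3(Y)\right)
\]
is identified, under d\'evissage $K_1(M^1/M^2) \simeq \bigoplus_{\htt(\fp)=1} k(\fp)^\times$ and $K_0(M^2/M^3) \simeq \bigoplus_{\htt(\fp)=2}\Z$, with $\divf$ up to sign. This is the usual computation of the Gersten differential in this degree, via the length of $A_{\fq}/(\fp, f)$. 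Second, $A$ is a UFD, being regular local.

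\textbf{Step 1: lift and push to $K_0(A)$.} Take $z \in \bigoplus_{\htt(\fp)=2}\Z = K_0(M^2/M^3)$. Since negative $K$-groups of these abelian categories vanish, $K_0(M^2) \to K_0(M^2/M^3)$ is surjective, so $z$ lifts to some $u \in K_0(M^2)$. The image of $u$ in $K_0(M^0) = K_0(A) \cong \Z$ is a rank, and it vanishes because $u$ is a class of modules supported in codimension $\ge 2$.

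\textbf{Step 2: show $u$ already dies in $K_0(M^1)$.} Let $u_1$ be the image of $u$ in $K_0(M^1)$. By the localization sequence $K_1(k(A)) \to K_0(M^1) \to K_0(A)$, we can write $u_1 = \partial(f)$ for some $f \in k(A)^\times$. Now push to $K_0(M^1/M^2) = \bigoplus_{\htt(\fp)=1}\Z$: the element $\partial(f)$ maps to $\divf(f)$, while $u_1$ maps to $0$ because it comes from $K_0(M^2)$. Hence $\divf(f) = 0$, so $f \in A^\times$ because $A$ is a normal domain (indeed a UFD). For a unit $f$, the element $\partial(f)$ is the class of $A/fA = 0$, which is zero. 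Therefore $u_1 = 0$.

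\textbf{Step 3: conclude.} By exactness of $K_1(M^1/M^2) \to K_0(M^2) \to K_0(M^1)$, we get $u = \partial(g)$ for some $g \in \bigoplus_{\htt(\fp)=1}k(\fp)^\times$. Applying the first standard fact, $z = \pm\,\divf(g)$, which proves the surjectivity.

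The main obstacle is the identification in the first standard fact, together with the claim that $\partial$ on $K_1(k(A))$ and $K_1(M^1/M^2)$ is the order/length map. I would cite Quillen/Gillet for this rather than reprove it. The fact that $\partial(\text{unit}) = 0$ also needs the explicit description of $\partial$ on $K_1$ of a field, namely $f = a/b \mapsto [A/a] - [A/b]$.
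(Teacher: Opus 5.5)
Your proof is correct, but it takes a different and more elementary route than the paper.

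\textbf{The paper's route.} The paper also starts from the surjectivity of $\delta_2 \colon K_0(M^2(A)) \to \coker(\partial_1)$. It then identifies this cokernel with $\CH^2(A)$ and with the Claborn--Fossum group $C^2(A)$, i.e.\ codimension-two cycles modulo cycles of height-two complete intersections. This uses Nori's theorem $C_i(A) \cong \CH^i(A)$ and $\Pic(A)=0$. Finally it shows $\delta_2 = 0$ by invoking Smoke's theorem that $K_0(M^2(A))$ is generated by the classes $[A/J]$ of such complete intersections.

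\textbf{Your route.} You never need generators of $K_0(M^2(A))$. With $F$ the fraction field, your Step~2 amounts to the following. The localization sequence and $K_0(A) \cong \Z$ (via rank) give $K_0(M^1(A)) \cong F^\times/A^\times$. This group injects into $Z^1(A)$ via $\divf$ because $A$ is normal. Hence $K_0(M^2(A)) \to K_0(M^1(A))$ is zero, so $\partial \colon K_1(M^1(A)/M^2(A)) \to K_0(M^2(A))$ is onto. Quillen's identification of this differential with $\divf$ then finishes the proof.

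\textbf{Comparison.} You use only the Gersten facts in $K$-degrees $\le 1$ (rank on $K_0$ and normality), localization and d\'evissage, and the standard boundary computation on $K_1$ of a field. This avoids Nori, Claborn--Fossum and Smoke altogether. The paper's route gives the finer information that $\CH^2(A)$ is controlled by complete-intersection cycles, but the lemma does not need it. Your chase also shows why codimension two is the accessible case. Pushing it one step further would require exactness of the $K_2$-Gersten complex in the middle, which is exactly what \lemref{lem:Gersten-5} has to work for.

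\textbf{Two small simplifications.} The lift in Step~1 needs no negative $K$-theory, since $[A/\mathfrak{q}]$ already lifts $[\mathfrak{q}]$. Also, $\partial(f)=0$ for $f \in A^\times$ follows directly from exactness of $K_1(A) \to K_1(F) \to K_0(M^1(A))$.
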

\begin{proof}
The lemma is equivalent to the statement that $\CH^2(A) = 0$.
 To show the latter, we shall use ~\eqref{eqn:Quillen-1} with $Y = \Spec(A), \ j = 1$
 and $i = 2$. Since a Serre subcategory of an abelian category
is abelian, and the first negative $K$-theory of an abelian category is trivial
(cf. \cite{Schlichting-MZ}), it follows that $\delta_2$  in ~\eqref{eqn:Quillen-1}
is surjective. 

We let $W^2(A)$ denote the cokernel of $\partial_1$ in ~\eqref{eqn:Quillen-1}.
We let $C^2(A)$ denote the quotient of $Z^2(A)$ by
the subgroup generated by $\cyc(Z)$, where $J \subset A$ is a complete intersection
ideal of height two and $Z = \Spec(A/J)$. Here, $\cyc(Z)$ is the cycle associated to
the closed subscheme $Z \subset \Spec(A)$ in $Z^2(A)$ (cf. \cite[\S~1.5]{Fulton}).
These groups were defined by Claborn--Fossum \cite{Claborn-Fossum}
who denoted them by $W_2(A)$ (resp. $C_2(A)$).
They defined $W_i(A)$ and $C_i(A)$ more generally for every $i \ge 1$.

Nori showed that the canonical map $C_i(A) \to \CH^i(A)$ is an isomorphism
for all $i \ge 1$ (cf. \cite[Thm.~5.4]{Murthy-Survey}).
In particular, $C_1(A) \cong \Pic(A) = 0$. 
It follows therefore from \cite[Prop.~3.2]{Claborn-Fossum} that the identity map of
$Z^2(A)$ induces canonical isomorphisms
\begin{equation}\label{eqn:Gersten-1-0}
  C^2(A) \xrightarrow{\cong} W^2(A) \xrightarrow{\cong} \CH^2(A).
  \end{equation}

Using these isomorphisms, we are reduced to showing that $W^2(A) = 0$. By the
surjectivity of $\delta_2$, this is equivalent to showing that $\delta_2$ is the zero
map. But this follows from \cite[Thm.~5.2]{Smoke} which implies that
$K_0\left({M^{2}(A)}\right)$ (which is denoted by $K^2(A)$ in op. cit.) is generated by
elements $[A/J]$, where $J \subset A$ is a height two complete intersection ideal. 
In particular, $\delta_2([A/J]) = 0$ by the first isomorphism in
~\eqref{eqn:Gersten-1-0}. This completes the proof.
\end{proof}

\begin{lem}\label{lem:Gersten-2}
  Let $A$ be a regular local ring of dimension $d$. Then the map
  \[
  \phi_{d-1} \colon K_j\left(M^{d-1}(A)\right) \to {\underset{\htt(\fp) = d-1}\coprod}
  K_j(k(\fp))
  \]
  is injective for $j \le 2$.
  \end{lem}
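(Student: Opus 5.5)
By the localization sequence \eqref{eqn:Quillen-0} with $Y=\Spec(A)$ and $i=d-1$, the kernel of $\phi_{d-1}$ is the image of $K_j(M^d(A)) \to K_j(M^{d-1}(A))$. Since $A$ is local of dimension $d$, $M^d(A)$ consists of modules of finite length, supported at the closed point $\fm$. D\'evissage therefore gives $K_j(M^d(A)) \cong K_j(k(\fm))$, and the lemma reduces to showing that the map
\[
\psi_{d-1}\colon K_j(k(\fm)) \to K_j(M^{d-1}(A))
\]
is zero for $j \le 2$. For $j<0$ the source vanishes, since the negative $K$-theory of a field (a regular ring, or equivalently a noetherian abelian category) is zero. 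So only $j=0,1,2$ need to be treated.

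The plan is to factor $\psi_{d-1}$ through a one-dimensional situation where a height $d-1$ prime can absorb the class. If $d=0$ there is nothing to prove, since then $M^{-1}=M^0$ and $\phi_{-1}$ is the identity; so assume $d\ge1$. Choose a regular system of parameters $x_1,\dots,x_d$ and put $\fp=(x_1,\dots,x_{d-1})$. This is a height $d-1$ prime, and $B=A/\fp$ is a dvr with uniformizer $x_d$ and residue field $k(\fm)$. Pushforward along $\Spec B\to \Spec A$ is exact and sends $M(B)$ into $M^{d-1}(A)$, because $\dim B=1$. It also sends $M^1(B)$ (finite length $B$-modules) into $M^d(A)$. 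Hence $\psi_{d-1}$ factors as
\[
K_j(k(\fm)) \xrightarrow{\ \iota_*\ } K_j(M(B)) = K'_j(B)=K_j(B) \to K_j(M^{d-1}(A)).
\]
The first map comes from the localization sequence of the dvr,
\[
K_j(k(\fm)) \to K_j(B) \to K_j(k(\fp)) \xrightarrow{\partial} K_{j-1}(k(\fm)).
\]
So it suffices to show that $\iota_*\colon K_j(k(\fm))\to K_j(B)$ is zero. This is equivalent to surjectivity of the boundary map $\partial\colon K_{j+1}(\mathrm{Frac}\,B)\to K_j(k(\fm))$.

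For a dvr, $\partial$ is surjective in low degrees. For $j=0$, $\partial$ is the valuation, which is onto $\Z$. For $j=1$, the tame symbol gives $\partial\{x_d,u\}=\pm\bar u$, so every unit of $k(\fm)$ is hit. For $j=2$, $K_2(k(\fm))$ is generated by symbols $\{\bar u,\bar v\}$ (Matsumoto), and $\partial\{x_d,u,v\}=\pm\{\bar u,\bar v\}$. The compatibility of Quillen's boundary with the Milnor residue on symbols, and the fact that symbols in $K_3$ land where expected, are standard for dvrs. Equivalently, $K_j(B)\to K_j(\mathrm{Frac}\,B)$ is injective for $j\le 2$; for arbitrary dvrs this is at least known in these degrees via the symbol computation. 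Consequently $\iota_*=0$, so $\psi_{d-1}=0$, and $\phi_{d-1}$ is injective.

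The main obstacle is the $j=2$ step: making sure Quillen's boundary $K_3(F)\to K_2(k)$ agrees, up to sign, with the Milnor tame symbol on the products $\{x_d,u,v\}$ for a possibly mixed characteristic dvr. I would cite the standard compatibility of the localization boundary with products, namely that $\partial(\{x_d\}\cdot\alpha)=\bar\alpha$ for $\alpha$ in the image of $K_*(B)$. A secondary point is to check that d\'evissage identifies $K_j(M^d(A))$ with $K_j(k(\fm))$ compatibly with the pushforward from $B$; this is immediate because both are induced by the inclusions of finite length modules.
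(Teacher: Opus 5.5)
Your argument is correct and is the paper's proof. You reduce to $\psi_{d-1}=0$, factor through the push-forward $K_j(k(\fm))\to K_j(A/\fp)$ for the dvr $A/\fp$, and use injectivity of $K_j(A/\fp)\to K_j(\mathrm{Frac}(A/\fp))$ for $j\le 2$; the paper simply cites this last fact from Weibel's $K$-book (Chap.~V, Cor.~6.6.2), whereas you reprove it via surjectivity of the boundary map on symbols. One small correction: for $d=0$ the map $\phi_{-1}$ is not the identity but the zero map to an empty coproduct, so the statement actually fails there ($K_0$ of a field is $\Z$), and you should simply assume $d\ge 1$, as the paper tacitly does.
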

\begin{proof}
By ~\eqref{eqn:Quillen-1}, the lemma is equivalent to the assertion that
  the map $\psi_{d-1} \colon K_j\left(M^d(A)\right) \to K_j\left(M^{d-1}(A)\right)$ is
  zero for $j \le 2$. To show this assertion, we let $\fm = (x_1, \ldots , x_d)$
  be the maximal ideal of $A$ and let $\fp = (x_1, \ldots , x_{d-1})$ if $d \ge 2$
  and $\fp = (0)$ if $d =1$. We set $A' = A/{\fp}$ and let $K'$ denote the
  quotient field of $A'$.
We now note by \cite[(5.1)]{Quillen} that $K_j\left(M^d(A)\right) \cong K_j(\ff)$
  and the map $\psi_{d-1}$ has a factorization by the push-forward maps
  $K_j(\ff) \to K_j(A') \to K_j\left(M^{d-1}(A)\right)$, where $\ff$ is the residue field
  of $A$. It suffices therefore to show that the push-forward map
  $K_j(\ff) \to K_j(A')$ is zero for $j \le 2$.

We let $Y = \Spec(A')$ and $Y^o = Y \setminus \{y\}$, where $y \in Y$ is the closed
  point. By the Quillen localization theorem, it suffices to show that the
  map $K_j(A') \to K_j(Y^o)$ is injective for $j \le 2$. But this is true because 
  $A'$ is a dvr, and therefore, the composite map
  $K_j(A') \to K_j(Y^o) \to K_j(K')$ is injective for $j \le 2$ by
  \cite[Chap.~V, Cor.~6.6.2]{Weibel-K-book}. This completes the proof.
\end{proof}

\begin{cor}\label{cor:Gersten-3}
  Let $A$ be as in \lemref{lem:Gersten-2} and let $j \le 3$. Then we have an
  exact sequence
  \[
  K_j\left(M^{d-2}(A)\right) \xrightarrow{\phi_{d-2}}
  {\underset{\htt(\fp) = d-2}\coprod}
   K_{j}(k(\fp)) \xrightarrow{\partial_{d-2}} {\underset{\htt(\fp) = d-1}\coprod}
   K_{j-1}(k(\fp)).
   \]
\end{cor}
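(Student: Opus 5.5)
The plan is to read off the exactness from the diagram \eqref{eqn:Quillen-1} with $Y = \Spec(A)$ and $i = d-1$, taking the index $j$ of the corollary in the role of $j$ there. The map $\partial_{d-2}$ factors as
\[
\coprod_{\htt(\fp)=d-2} K_j(k(\fp)) \to K_{j-1}(M^{d-1}(A)) \xrightarrow{\phi_{d-1}} \coprod_{\htt(\fp)=d-1} K_{j-1}(k(\fp)),
\]
where the first arrow is the boundary map of the localization fiber sequence \eqref{eqn:Quillen-0} for the pair $M^{d-1}(A) \subset M^{d-2}(A)$.

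The second row of \eqref{eqn:Quillen-1} is exact. Hence
\[
K_j(M^{d-2}(A)) \xrightarrow{\phi_{d-2}} \coprod_{\htt(\fp)=d-2} K_j(k(\fp)) \to K_{j-1}(M^{d-1}(A))
\]
is exact, meaning the image of $\phi_{d-2}$ is exactly the kernel of the boundary map. It therefore suffices to show that $\Ker(\partial_{d-2})$ equals the kernel of this boundary map. That in turn follows if $\phi_{d-1}$ is injective on $K_{j-1}(M^{d-1}(A))$.

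Since $j \le 3$, we have $j-1 \le 2$, so \lemref{lem:Gersten-2} gives exactly the required injectivity of $\phi_{d-1}$ in degree $j-1$. This completes the argument.

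One small point needs checking. If $d-2 < 0$, the statement is vacuous or trivial: when $d \le 1$ the relevant index sets are empty and every term is zero. For $d \ge 2$ the indices are legitimate and \lemref{lem:Gersten-2} applies as stated.

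I do not expect a real obstacle. The only care needed is to confirm that the diagonal map $\partial_{d-2}$ in \eqref{eqn:Quillen-1} is by definition the composite of the boundary map with $\phi_{d-1}$; the diagram's triangle, which commutes by construction, says precisely this.
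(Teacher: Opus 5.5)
Your argument is correct and is exactly the paper's proof: the paper also takes $i=d-1$ in \eqref{eqn:Quillen-1}, uses the exactness of its second row, and uses the injectivity of $\phi_{d-1}$ in degree $j-1\le 2$ from \lemref{lem:Gersten-2}, with the details you supply left implicit. One small correction to your degenerate case: for $d\le 1$ it is only the middle term that vanishes, not every term, but that alone makes exactness trivial.
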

\begin{proof}
  Follows immediately from ~\eqref{eqn:Quillen-1} and \lemref{lem:Gersten-2}.
  \end{proof}

\subsection{Gersten complex in general}\label{sec:Gersten}
Let $Y$ be a Noetherian separated scheme. We let $K'_i(Y)$ (resp. $K_i(Y)$) denote
the Quillen $K$-theory of the exact category of coherent (resp. finite rank locally
free)
sheaves on $Y$.  Recall that the canonical map $K_i(Y) \to K'_i(Y)$ is an isomorphism
for every $i \in \Z$ when $Y$ is regular. We let $\sK'_{i,Y}$ denote the Zariski sheaf
on $Y$ associated to the presheaf $U \mapsto K'_i(U)$. We define the sheaf $\sK_{i,U}$
similarly. For $i \ge 0$, we let $\sK^M_{i,Y}$ denote the Milnor $K$-theory sheaf,
and let $\wh{\sK}^M_{i,Y}$ denote the improved Milnor $K$-theory sheaf introduced in
\cite{Kerz-JAG}.

The following result is \cite[Lem.~4]{Kerz-Strunk} which we shall use
frequently.
\begin{lem}\label{lem:Kerz-Strunk}
    Let $r\ge 0$ be an integer. Let be $\sF$ a Zariski sheaf on $Y$ such that
    $\sF_x = 0$ for all points $x \in Y$ with $\dim\left(\ov{\{x\}}\right) > r$. Then
    $H^i_\zar(Y, \sF) = 0$ for $i > r$.
    \end{lem}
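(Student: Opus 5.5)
The plan is to compute Zariski cohomology through the Godement (canonical flasque) resolution and to exploit the hypothesis on stalks. Write $\sF \to \sG^0 \to \sG^1 \to \cdots$ for this resolution. The support of $\sF$ is contained in the set $S_r = \{x \in Y : \dim(\ov{\{x\}}) \le r\}$, which is stable under specialization. Rather than work with the Godement resolution directly, however, I would argue by induction on $r$. The inductive step reduces to knowing that a sheaf supported on points of "dimension $\le r$" has cohomological dimension $\le r$. This is Grothendieck's vanishing theorem applied on the subspace where the sheaf lives. The difficulty is that $S_r$ need not be closed, so the reduction needs some care.

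Concretely, I would set $\sF' = \varinjlim_{Z} \Gamma_Z(\sF)$, where $Z$ ranges over closed subsets with $\dim Z \le r$. First, I would check that $\sF' = \sF$. A local section $s$ over $U$ has support $\mathrm{supp}(s) = \{x : s_x \neq 0\}$, which is closed in $U$ and contained in $S_r$. Since $U$ is Noetherian, $\mathrm{supp}(s)$ has finitely many generic points, each of dimension $\le r$. Hence its closure $Z$ in $Y$ has $\dim Z \le r$, and $s \in \Gamma_Z(\sF)(U)$. It follows that $\sF$ is the filtered colimit of the subsheaves $\sF_Z = \underline{\Gamma}_Z(\sF)$, each of which is supported on the closed subset $Z$ of dimension $\le r$. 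On a Noetherian space, Zariski cohomology commutes with filtered colimits, so $H^i(Y, \sF) = \varinjlim_Z H^i(Y, \sF_Z)$. Finally, $H^i(Y, \sF_Z) = H^i(Z, i_Z^{-1}\sF_Z)$ because $\sF_Z = i_{Z*} i_Z^{-1}\sF_Z$ and $i_{Z*}$ is exact for a closed immersion. This group vanishes for $i > \dim Z$, hence for $i > r$, by Grothendieck's vanishing theorem on the Noetherian space $Z$ (Hartshorne III.2.7).

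The main obstacle is that "dimension" in the hypothesis means the Krull dimension of $\ov{\{x\}}$. I need $\dim Z \le r$ for $Z$ a finite union of such closures, which holds since $\dim Z$ is the maximum over its components. I also need the topological dimension of $Z$ as a Noetherian space to equal its Krull dimension, so that Grothendieck's theorem applies; the two agree for schemes. The remaining checks are routine: that the support of a section is closed, which follows because sections of a sheaf vanish on an open set around any point where the germ is zero, and that cohomology commutes with colimits on Noetherian spaces.
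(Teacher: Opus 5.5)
Your argument is correct and complete. The paper does not prove this lemma at all; it simply quotes \cite[Lem.~4]{Kerz-Strunk}. Your write-up therefore supplies a self-contained proof of the cited fact, and it is the standard one:
\begin{itemize}
\item every local section has closed support, and the closure of that support in $Y$ has dimension $\le r$;
\item hence $\sF$ is the filtered union of the subsheaves $\underline{\Gamma}_Z(\sF)$, over closed $Z$ with $\dim Z \le r$;
\item Zariski cohomology commutes with this colimit on the Noetherian space $Y$ (Hartshorne, III.2.9);
\item each $\underline{\Gamma}_Z(\sF) = i_{Z*} i_Z^{-1}\underline{\Gamma}_Z(\sF)$ has vanishing $H^i$ for $i > r$, by Grothendieck's vanishing theorem on $Z$.
\end{itemize}
The Noetherian hypothesis on $Y$ (the standing assumption of \S~\ref{sec:Gersten}) is used in exactly three places: finitely many generic points of the support, commuting cohomology with colimits, and Grothendieck vanishing.

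Two editorial points. Your first paragraph announces a Godement resolution and an induction on $r$, neither of which your argument uses; delete it. Also, the ``obstacle'' about Krull versus topological dimension is not a real issue, since the dimension of a scheme is by definition the dimension of its underlying topological space.
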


By \lemref{lem:Kerz-Strunk} and a combination of
Prop.~10, Thm.~13 and \cite[Prop.~14]{Kerz-JAG}, we get the following.

\begin{lem}\label{lem:Kerz-0}
For all $j \ge 0$, there are canonical maps of Zariski sheaves
\begin{equation}\label{eqn:K-groups}
  \sK^M_{j,Y} \surj \wh{\sK}^M_{j,Y} \to \sK_{j,Y},
  \end{equation}
in which the second map is an isomorphism for $j \le 2$.
The first map is an isomorphism if either $Y$ is a spectrum of a field or
if the residue fields of $Y$ are infinite. In particular,
the canonical maps $H^i_\zar(Y, \sK^M_{j,Y}) \to H^i_\zar(Y, \wh{\sK}^M_{j,Y}) \to
H^i_\zar(Y, \sK_{j,Y})$ are isomorphisms for $j \le 2$ and $i \ge 1$. 
\end{lem}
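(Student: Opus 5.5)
The statement to prove is \lemref{lem:Kerz-0}. It is assembled from Kerz's results in \cite{Kerz-JAG} together with \lemref{lem:Kerz-Strunk}, in three steps.

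\emph{Step 1: the maps.} I would first recall the construction in \cite{Kerz-JAG}. For any scheme $Y$ there is a natural surjection $\sK^M_{j,Y} \surj \wh{\sK}^M_{j,Y}$: the improved Milnor $K$-group of a local ring is a quotient of the naive one, defined via rational functions in several variables (Prop.~10 of op. cit.). The map $\wh{\sK}^M_{j,Y} \to \sK_{j,Y}$ is induced by the product structure on Quillen $K$-theory. Kerz shows this map is well defined on the improved groups and is an isomorphism for $j \le 2$ on local rings (Prop.~14 of op. cit.). Since all of these are statements about stalks, they give the maps of Zariski sheaves in \eqref{eqn:K-groups}, with the second map an isomorphism for $j\le 2$.

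\emph{Step 2: when the first map is an isomorphism.} If $Y$ is the spectrum of a field, the two theories agree by construction. If all residue fields of $Y$ are infinite, then every local ring has infinite residue field, and Thm.~13 of op. cit. gives $K^M_j(A) \cong \wh{K}^M_j(A)$ on stalks.

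\emph{Step 3: the cohomological statement.} This is the only real point. Let $\sF$ be the kernel of the surjection $\sK^M_{j,Y} \surj \wh{\sK}^M_{j,Y}$. The kernel is controlled by the size of residue fields. For a point $x$ with $\dim\overline{\{x\}} \ge 1$, the local ring $\sO_{Y,x}$ has residue field $k(x)$, which is the function field of a positive-dimensional integral scheme and hence infinite. Thus $\sF_x = 0$ at every point of positive dimension. Applying \lemref{lem:Kerz-Strunk} with $r=0$ gives $H^i_\zar(Y,\sF)=0$ for $i\ge 1$.

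To pass from this vanishing to an isomorphism, I would use the long exact sequence of $0\to\sF\to\sK^M_{j,Y}\to\wh{\sK}^M_{j,Y}\to 0$. For $i\ge 2$ it immediately gives $H^i_\zar(Y,\sK^M_{j,Y}) \cong H^i_\zar(Y,\wh{\sK}^M_{j,Y})$. For $i = 1$ the map is injective because $H^1(\sF)=0$, and it is surjective because $H^2(\sF)=0$. Composing with the isomorphism $\wh{\sK}^M_{j,Y}\cong\sK_{j,Y}$ for $j \le 2$ yields the final claim.

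The one step needing care is the stalkwise vanishing of $\sF$ at points with infinite residue field. This requires that the residue field of the local ring $\sO_{Y,x}$ is $k(x)$, and that Kerz's comparison theorem applies to arbitrary local rings with infinite residue field, not only regular ones. I expect Thm.~13 of \cite{Kerz-JAG} to be stated in that generality, so that no regularity assumption on $Y$ is needed.
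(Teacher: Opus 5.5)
Your proposal is correct and follows the same route as the paper. The paper cites Prop.~10, Thm.~13 and Prop.~14 of \cite{Kerz-JAG} for the stalkwise statements, then applies \lemref{lem:Kerz-Strunk} with $r=0$ to the kernel of $\sK^M_{j,Y}\surj\wh{\sK}^M_{j,Y}$, as you do; your remark that points with $\dim\ov{\{x\}}\ge 1$ have infinite residue field (because a finite domain is a field) is the implicit step. One small correction: $K^M_j(F)\cong\wh{K}^M_j(F)$ for a field $F$ is a result proved in \cite{Kerz-JAG}, using Milnor's computation of $K^M_*(F(t))$, not something that holds by construction.
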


By \lemref{lem:Kerz-0} and
\cite[Prop.~1]{Kato-contem-86}, we have a chain complex of Zariski sheaves
\begin{equation}\label{eqn:Gersten-0}
  \sK_{2,Y} \xrightarrow{\epsilon} {\underset{x \in Y^{(0)}}\bigoplus} i_{x *} K_2(k(x))
    \xrightarrow{\partial_0} {\underset{x \in Y^{(1)}}\bigoplus} i_{x *} k(x)^\times
      \xrightarrow{\partial_1} {\underset{x \in Y^{(2)}}\bigoplus} i_{x *} \Z \to 0,
\end{equation}
where $\epsilon$ is the canonical pull-back map, $\partial_0$ is the Tame symbol
and $\partial_1$ is the valuation map. The latter coincides with the map which
locally associates to a rational function on a codimension one integral closed
subscheme of $Y$, its divisor class on the closed subscheme.
We shall therefore write $\partial_1(f)$ also as $\divf(f)$.

The claim that $\partial_1 \circ \partial_0 = 0$ in ~\eqref{eqn:Gersten-0}
can be checked directly (e.g., see \cite[Prop.~1]{Kato-contem-86}).
To see that $\partial_0 \circ \epsilon$ is zero, we first note that $\epsilon$ factors
through $\sK_{2, Y_n}$, where $Y_n$ is the normalization of $Y_\red$
(cf. \S~\ref{sec:Notn}). This allows
us to assume that $Y$ is normal and integral. Now, to check that
$\partial_0 \circ \epsilon =0$, we can check it by localizing $Y$ at each
codimension one point, which allows us to further assume that $Y$ is the spectrum
of a dvr. In the latter case, the claim follows from
the definition of the Tame symbol. We shall refer to ~\eqref{eqn:Gersten-0} as the
Gersten complex for $\sK_{2,Y}$. Our goal in \S~\ref{sec:BFM} is to investigate the
exactness of this complex in a special case. One case where this is already known is
the following.

\begin{lem}\label{lem:Gersten-4}
  Let $A$ be a regular local ring which is smooth and essentially of
  finite type algebra over a dvr. Let $K$ denote the quotient
  field of $A$. Then the Gersten complex
  \[
  0 \to K_j(A) \to K_j(K) \to  {\underset{\htt(\fp) = 1}\coprod}
   K_{j-1}(k(\fp)) \to {\underset{\htt(\fp) = 2}\coprod} K_{j-2}(k(\fp)) \to 0
   \]
   is exact for $j \le 2$.
\end{lem}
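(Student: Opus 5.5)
The plan is to deduce \lemref{lem:Gersten-4} from the Gersten conjecture for $K$-theory of rings smooth over a dvr, which is the theorem of Gillet--Levine \cite{Gillet-Levine} (the equicharacteristic part being Quillen/Panin). It asserts that for such $A$ with $Y = \Spec(A)$, the map $\psi_i \colon K_j(M^{i+1}(Y)) \to K_j(M^{i}(Y))$ in ~\eqref{eqn:Quillen-1} is zero for every $i \ge 0$ and every $j$. Equivalently, each localization sequence ~\eqref{eqn:Quillen-0} splits into short exact sequences
\[
0 \to K_j(M^i(Y)) \xrightarrow{\phi_i} K_j\left({M^i(Y)}/{M^{i+1}(Y)}\right) \to K_{j-1}(M^{i+1}(Y)) \to 0.
\]
Splicing these for $i = 0, 1, 2, \ldots$ and using $K_j(M^0(Y)) = K'_j(A) = K_j(A)$ (as $A$ is regular) together with dévissage gives the exact Gersten complex
\[
0 \to K_j(A) \to K_j(K) \to {\underset{\htt(\fp) = 1}\coprod} K_{j-1}(k(\fp)) \to {\underset{\htt(\fp) = 2}\coprod} K_{j-2}(k(\fp)) \to \cdots .
\]
The second step is to truncate this complex for $j \le 2$. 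For $\htt(\fp) \ge 3$ the terms are $K_{j-i}(k(\fp))$ with $j - i < 0$, and negative $K$-theory of a field vanishes, so the long complex ends at the height-two term. The term $K_{j-2}(k(\fp))$ is $\Z$ when $j = 2$ and $0$ when $j < 2$. In particular, surjectivity onto the last displayed term is part of the exactness of the infinite complex, because the next term is zero. For $j = 2$ one can also see surjectivity directly from \lemref{lem:Gersten-1}.

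The third step is to check that the differentials in the spliced complex agree with those in ~\eqref{eqn:Gersten-0}, up to sign. The edge maps $\partial_i = \phi_{i+1} \circ \delta$ coming from Quillen's construction are, in degrees $K_2 \to K_1$ and $K_1 \to K_0$, the tame symbol and the valuation map. This is standard (Quillen, and \cite[Prop.~1]{Kato-contem-86}), and signs do not affect exactness.

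The main obstacle is making sure that the cited Gersten theorem really covers an arbitrary dvr, in particular the mixed characteristic case, and rings essentially of finite type rather than of finite type. Gillet--Levine proves the conjecture for local rings of schemes smooth over a dvr, and passage to localizations is harmless because $K$-theory commutes with filtered colimits. If only the geometric case were available, I would reduce to it via Popescu-type approximation. For the low-degree range $j \le 2$ needed here, one could alternatively use \corref{cor:Gersten-3} and \lemref{lem:Gersten-2} to handle the top two heights, so only the vanishing of $\psi_0$ and $\psi_1$ requires the Gillet--Levine input.
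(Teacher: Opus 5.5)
Your architecture matches the paper's (Quillen's diagram \eqref{eqn:Quillen-1} together with \cite{Gillet-Levine}), but there is a genuine gap: you attribute to Gillet--Levine more than they prove, and the one non-formal ingredient of the proof is missing. You assert that Gillet--Levine give $\psi_i = 0$ for all $i \ge 0$ and all $j$, for every local ring essentially smooth over an arbitrary dvr. Take $A = V$ to be the dvr itself, with fraction field $F$ and residue field $\kappa$. Your claim would then say that $K_j(V) \to K_j(F)$ is injective for every $j$. That is Gersten's conjecture for a dvr of mixed characteristic, which is not known in general. What \cite[Cor.~6]{Gillet-Levine} and its proof provide, as the paper uses it, is a \emph{relative} statement: it reduces exactness of the Gersten complex of $A$ to the case where $A$ is a dvr. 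So your step ``$\psi_0 = 0$ by Gillet--Levine'', i.e.\ injectivity of $K_j(A) \to K_j(K)$, is unjustified as written. Relatedly, nothing in your argument explains why the lemma is restricted to $j \le 2$, beyond the truncation.

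The missing step is the dvr case in degrees $\le 2$. For $j \le 1$ it is elementary: $K_0(V) \cong \Z \cong K_0(F)$, and $0 \to V^\times \to F^\times \to \Z \to 0$ is exact. For $j = 2$ it is the Dennis--Stein theorem that $0 \to K_2(V) \to K_2(F) \to \kappa^\times \to 0$ is exact \cite[Chap.~V, Cor.~6.6.2]{Weibel-K-book}. This is exactly where $j \le 2$ enters, and it is how the paper concludes after the Gillet--Levine reduction.

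Your fallback via \lemref{lem:Gersten-2} and \corref{cor:Gersten-3} does not close the gap. For $d = \dim(A) \ge 2$ these only control $\psi_{d-1}$ and the term of height $d-2$, and \lemref{lem:Gersten-2} itself rests on the same Dennis--Stein input. The vanishing of $\psi_0$ in degree $2$ is precisely where the dvr input is needed, and you still assign it to Gillet--Levine.

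Once the citation is corrected and the dvr case is supplied, the rest of your argument goes through: the splicing, the truncation via vanishing of negative $K$-theory of fields, surjectivity from \lemref{lem:Gersten-1}, and the identification of the differentials.
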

\begin{proof}
  By \cite[Cor.~6]{Gillet-Levine} and its proof (cf. ~\eqref{eqn:Quillen-1}),
  one only has to verify the lemma when $A$ is a dvr.
  But the latter case is obvious when $j \le 1$, and follows from
  \cite[Chap.~V, Cor.~6.6.2]{Weibel-K-book} when $j =2$.
  \end{proof}

\subsection{The Gersten complex for a regular model}\label{sec:Gersten-model}
For the rest of \S~\ref{sec:BFM}, the following is our setting.
We let $R$ be an excellent dvr with quotient field $k$ and residue
field $\F$ (which is allowed to be arbitrary).
Let $\sX$ be a connected, regular, essentially of
finite type $R$-scheme of Krull dimension three. We assume that
$\sX$ is faithfully flat over $R$ and its reduced closed fiber $Y$ is a simple normal
crossings (snc) divisor on $\sX$. By the latter condition, we shall mean that
for any finite collection of pairwise distinct irreducible components
$Y_1, \ldots , Y_r$ of $Y$, the scheme-theoretic intersection
${\underset{1 \le i \le r}\bigcap} Y_i$ is a smooth $\F$-scheme and has
codimension $r-1$ in $Y$ (cf. \cite[\S~1]{Esnault-Wittenberg}).

We let $X$ denote the generic fiber of $\sX$.
We let $\eta$ denote the generic point of $\sX$ and write $K = k(\eta)$.
We let $\{Y_1, \ldots , Y_m\}$ be the set of irreducible components of $Y$ and let
$\Sigma = {\underset{i \neq j \neq l \neq i}\bigcup} (Y_i \bigcap Y_j \bigcap Y_l)$.
Then $\Sigma$ is a finite set of closed points in $Y_\sing$.
We let $\sX^o = \sX \setminus \Sigma$ and $\sX_\sm = \sX \setminus Y_\sing$
so that there are open immersions $\sX_\sm \subset \sX^o \subset \sX$.
Note that $\sX_\sm$ may not be a smooth $R$-scheme.

We write out the Gersten complex (cf. ~\eqref{eqn:Gersten-0}) separately for $\sX$
as we shall need it repeatedly.
\begin{equation}\label{eqn:Quillen-2}
  0 \to \sK_{2,\sX} \xrightarrow{\epsilon} i_{\eta *} K_2(k(\eta))
  \xrightarrow{\partial_0}
  {\underset{x \in \sX^{(1)}}\bigoplus} i_{x *} k(x)^\times
      \xrightarrow{\partial_1} {\underset{x \in \sX^{(2)}}\bigoplus} i_{x *} \Z \to 0
\end{equation}

We begin with the following result.

\begin{lem}\label{lem:Panin*}
  The Gersten complex ~\eqref{eqn:Gersten-0} is exact for $X$ and $Y_\reg$.
If $R$ has equal characteristic, then ~\eqref{eqn:Quillen-2}
is exact even if $X$ is not smooth over $k$ or $Y$ is not an snc divisor on $\sX$.
\end{lem}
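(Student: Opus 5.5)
Exactness is a local statement: the Gersten complex is a complex of Zariski sheaves, so it suffices to check that its stalk at each point is exact. I will therefore prove that for every regular local ring $A$ arising as a local ring of $X$, of $Y_\reg$, or (in the equicharacteristic case) of $\sX$, the complex
\[
0 \to K_2(A) \to K_2(K) \to {\underset{\htt(\fp) = 1}\bigoplus} k(\fp)^\times \to {\underset{\htt(\fp) = 2}\bigoplus} \Z \to 0
\]
is exact, where $K$ is the fraction field of $A$. Here the stalk of $\sK_{2}$ is $K_2(A)$, and by \lemref{lem:Kerz-0} this agrees with the Milnor/improved Milnor version.

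For $X$ the local rings are smooth essentially of finite type over the field $k$. For $Y_\reg$ the local rings are regular and essentially of finite type over $\F$; reducedness together with regularity and the snc hypothesis give smoothness over $\F$ on the locus where only one component passes. If $\F$ is imperfect, I will instead appeal directly to the regular equicharacteristic case below. In both settings a field is a dvr-free base, so I can regard the ring as essentially smooth over the prime field after a limit argument. Alternatively I can simply invoke Gersten's conjecture for regular local rings containing a field: Quillen's theorem in the geometric case, extended by Panin to arbitrary equicharacteristic regular local rings via Popescu's theorem. This gives the exactness of the full $K$-theory Gersten complex, hence in degrees $j\le 2$.

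The exact complex in low degrees, via the identification of $\partial_0$ with the tame symbol and $\partial_1$ with div, is precisely ~\eqref{eqn:Gersten-0} localized. The identification of the boundary maps from Quillen's spectral sequence with the tame symbol and the valuation (up to sign) is standard (Gillet's/Kato's comparison), so no new argument is needed there. Surjectivity at the right end also follows from \lemref{lem:Gersten-1} in any case.

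For the last assertion, when $R$ has equal characteristic every local ring of $\sX$ is a regular local ring containing a field. Panin's theorem therefore applies verbatim, without needing $X$ smooth or $Y$ snc, only regularity of $\sX$. The main point to be careful about is the compatibility of differentials (Quillen's $\partial$ versus tame symbol), together with making sure Panin's result covers non-geometric (excellent but not finite type over a field) local rings. Since $\sX$ is essentially of finite type over an excellent $R$, Popescu's desingularization applies and this is fine.
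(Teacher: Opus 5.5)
Your proposal is correct and is essentially the paper's argument: the paper proves this lemma simply by citing Panin's Theorem~A, which is the Gersten conjecture for all regular local rings containing a field (via Popescu), with Kerz's results on improved Milnor $K$-theory as an alternative reference. Your stalkwise reduction, together with the identification of Quillen's differentials with the tame symbol and the valuation map, is exactly what that citation implicitly uses; the detours through smoothness over $\F$ or over the prime field are unnecessary, and $K_2(A)$ agrees only with the \emph{improved} Milnor $K_2$ (the unimproved one can differ when residue fields are finite), but neither point affects the argument.
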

\begin{proof}
  See \cite[Thm.~A]{Panin} (see also \cite[Prop.~10, 14]{Kerz-JAG}).
\end{proof}

In view of this lemma, we shall assume in this subsection that
$\Char(k) = 0$ and $\Char(\F) = p > 0$. This implies in particular that
$X$ is a smooth $k$-scheme and $\sX_\sm$ is a smooth $R$-scheme. 
We fix a point $x \in \sX$ and let $A = \sO_{\sX,x}, \ \sX_x = \Spec(A), \
\sX^o_x = \sX_x \times_R \sX^o, \ \sU_x = \sX_x \times_R \sX_\sm$ and
$Z_x = \sX_x \setminus \sU_x$.

\begin{lem}\label{lem:Gersten-5}
  The complex
  \begin{equation}\label{eqn:Gersten-5-6}
  K_2(K) \xrightarrow{\partial_0}
  {\underset{\htt(\fp) =1}\bigoplus} k(\fp)^\times
  \xrightarrow{\partial_1} {\underset{\htt(\fp) = 2}\bigoplus} \Z
  \end{equation}
  is exact in the middle, where the prime ideals are taken in $A$.
\end{lem}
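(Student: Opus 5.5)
The plan is to read off exactness in the middle of \eqref{eqn:Gersten-5-6} from Quillen's coniveau diagram \eqref{eqn:Quillen-1} for $Y = \Spec(A)$. The steps are to identify the kernel of $\partial_1$ with the image of $K_1(M^1(A))$, and then to show that this image is exactly $\partial_0(K_2(K))$. Write $d = \dim(A)$. If $d \le 1$ there are no height-two primes. Then $\partial_1 = 0$, and the claim says that the tame symbol $K_2(K) \to k(\fm)^\times$ is surjective for the dvr $A$ (or that the target is zero if $d=0$). This follows from the localization sequence $K_2(K) \to K_1(k(\fm)) \to K_1(A) \to K_1(K)$ together with the injectivity of $A^\times \to K^\times$. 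So the substantive cases are $d = 2$ and $d = 3$; the argument does not use the snc hypothesis.

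\emph{Step 1: identify $\Ker(\partial_1)$.} For $d = 3$, apply \corref{cor:Gersten-3} with $j = 1$. The index $d-2 = 1$ gives an exact sequence
\[
K_1(M^1(A)) \xrightarrow{\phi_1} {\underset{\htt(\fp)=1}\bigoplus} k(\fp)^\times \xrightarrow{\partial_1} {\underset{\htt(\fp)=2}\bigoplus} \Z ,
\]
so $\Ker(\partial_1) = \phi_1(K_1(M^1(A)))$. For $d = 2$ we have $M^3(A) = 0$ and $K_0(M^2(A)) = \bigoplus_{\htt(\fp)=2}\Z$ by d\'evissage. The same exact sequence is then simply the localization sequence for $M^2(A) \subset M^1(A)$, namely the column in \eqref{eqn:Quillen-1}. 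Here $\partial_1$ is identified with the divisor map, and this identification is compatible with how \eqref{eqn:Gersten-0} was set up.

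\emph{Step 2: $K_2(K) \to K_1(M^1(A))$ is surjective.} Use the localization sequence for $M^1(A) \subset M^0(A)$:
\[
K_2(K) \to K_1(M^1(A)) \to K_1(A) \to K_1(K).
\]
The last map is $A^\times \to K^\times$, since $A$ is local and hence $SK_1(A)=0$; this map is injective. So the boundary map $K_2(K) \to K_1(M^1(A))$ is surjective. Composing with $\phi_1$, the map $K_2(K) \to \bigoplus_{\htt(\fp)=1} k(\fp)^\times$ given by the diagram \eqref{eqn:Quillen-1} has image exactly $\Ker(\partial_1)$.

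\emph{Step 3: compare with the tame symbol.} It remains to see that this composite $\partial_0^{\rm Q}$ coincides, up to a universal sign on each component, with the tame symbol $\partial_0$ of \eqref{eqn:Gersten-0}. Since both maps are componentwise, the check reduces, after localizing at each height-one prime $\fp$, to the dvr $A_\fp$. There it is the classical fact that Quillen's boundary $K_2(K) \to K_1(k(\fp))$ is the tame symbol up to sign (cf.\ \cite[Chap.~V, \S 6]{Weibel-K-book}). A sign change on each component does not alter images, so $\Ker(\partial_1) = \partial_0(K_2(K))$. I expect this compatibility step to be the main point needing care: one has to check that the localization at $\fp$ commutes with the relevant part of the coniveau diagram. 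Everything else is formal from \lemref{lem:Gersten-2} and \corref{cor:Gersten-3}.
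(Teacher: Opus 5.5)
Your proof is correct, and it takes a genuinely different route from the paper's.

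\textbf{The paper's route.} The paper argues case by case, according to where $x$ sits relative to the snc fiber $Y$.
\begin{itemize}
\item At points of $\sX_\sm$ it invokes Gillet--Levine (\lemref{lem:Gersten-4}).
\item At points of $Y_\sing$ it compares $\Spec(A)$ with the smooth locus $\sU_x$. Using $\CH^2(A)=0$ (\lemref{lem:Gersten-1}), it places the defect $F=\Ker(\partial_1)/{\rm Image}(\partial_0)$ in an exact sequence $0\to F\to H^1_\zar(\sU_x,\sK_{2,\sU_x})\to\bigoplus_i K_0(k(\eta_i))\to 0$.
\item It then kills $F$ by showing that $SK_1(\sU_x)\cong K'_0(Z_x)\cong\Z^r$ surjects onto the middle term, and finishing with a rank count. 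This uses the Brown--Gersten spectral sequence, Hartogs, and the normalization of $Z_x$.
\end{itemize}

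\textbf{Your route.} You run a single uniform chase in Quillen's diagram \eqref{eqn:Quillen-1}.
\begin{itemize}
\item Since $K_1(A)=A^\times\inj K^\times$, the boundary $K_2(K)\to K_1(M^1(A))$ is surjective.
\item The injectivity of $\phi_2$ on $K_0(M^2(A))$ gives $\Ker(\partial_1)=\phi_1(K_1(M^1(A)))$. This is \lemref{lem:Gersten-2} with $j=0$; for $d=3$ it reduces to $[A/\fm]=[A'/x_3A']=0$ in $K_0(A')$.
\end{itemize}
This is the same mechanism the paper itself uses one degree higher, via \corref{cor:Gersten-3} in \lemref{lem:Bloch-0}.

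\textbf{Comparison.} Your route is shorter and more general. It proves middle exactness for every regular local ring of dimension at most three, with no snc hypothesis, no restriction on characteristics, and no appeal to \lemref{lem:Gersten-1} or Gillet--Levine. The paper's route produces extra information, namely the computation of $SK_1(\sU_x)$ and $H^1_\zar(\sU_x,\sK_{2,\sU_x})$, but nothing the lemma needs.

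\textbf{On Step 3.} You flag the identification of Quillen's differentials with the tame symbol and the divisor map. The paper uses this identification implicitly throughout as well (for instance when it applies \corref{cor:Gersten-3} to the tame-symbol complex). Your reduction to $A_\fp$ by flat localization is the right way to justify it. One correction: what makes images and kernels agree is that the sign is a single global constant. Your sentence ``a sign change on each component does not alter images'' is false as stated, since signs varying from component to component could move the image.
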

\begin{proof}
  If $x \in (Y_\reg)_{(0)}$, then $x \in \sX_\sm$ and we can apply
  \lemref{lem:Gersten-4} to conclude.
  We now consider the second case in which we assume $x \in \Sigma$.
  Since $Y$ is an snc divisor on
  $\sX$, there exist in this case, exactly three distinct irreducible components
  $C_1, C_2$ and $C_3$ of $Y_\sing$ such that
  $\{x\} = \sX_x \times_R (C_1 \bigcap C_2 \bigcap C_3)$. Let $\eta_i$ denote the
  generic point of $C_i$.

 We look at the morphism of chain complexes
  \begin{equation}\label{eqn:Gersten-5-0}
    \xymatrix@C1.6pc{
      K_2(K) \ar[r]^-{\partial_0} \ar[d]_-{\id} & 
      {\underset{\htt(\fp) =1}\bigoplus} k(\fp)^\times \ar[r]^-{\partial_1}
      \ar[d]^-{\id} & 
      {\underset{\htt(\fp) = 2}\bigoplus} K_0(k(\fp)) \ar[d] \\
      K_2(K) \ar[r]^-{\partial_0|_{\sU_x}} &
      {\underset{y \in \sU^{(1)}_x}\bigoplus} k(y)^\times \ar[r]^-{\partial_1|_{\sU_x}} &
      {\underset{y \in \sU^{(2)}_x}\bigoplus} K_0(k(y)),}
    \end{equation}
 where the vertical arrows are obtained by restriction via the open immersion
  $\sU_x \inj \sX_x$.

The left and the middle vertical arrows in the above diagram are bijective while the
  right vertical arrow is surjective whose kernel is
  $\stackrel{3}{\underset{i = 1}\bigoplus} K_0(k(\eta_i))$. 
  Since $\partial_1$ is surjective by \lemref{lem:Gersten-1}, it follows that
  $\partial_1|_{\sU_x}$ is also surjective. We therefore get a commutative diagram

  \begin{equation}\label{eqn:Gersten-5-1}
    \xymatrix@C1.6pc{
      0 \ar[r] & {\rm Image}(\partial_0) \ar[r] \ar[d]_{\cong} & \Ker(\partial_1) \ar[r]
      \ar@{^{(}->}[d] & F \ar[r] \ar[d] & 0 \\
      0 \ar[r] & {\rm Image}(\partial_0|_{\sU_x}) \ar[r] & \Ker(\partial_1|_{\sU_x})
      \ar[r] & H^1_\zar(\sU_x, \sK_{2, \sU_x}) \ar[r] & 0,}
    \end{equation}
  where $F$ is defined so that the top row becomes exact. The bottom row can be
  seen to be exact by
  considering the sheafified version of \lemref{lem:Gersten-4}.

  By a diagram chase of
  ~\eqref{eqn:Gersten-5-0} and  ~\eqref{eqn:Gersten-5-1}, we obtain an exact
  sequence
 \begin{equation}\label{eqn:Gersten-5-2} 
   0 \to F \to H^1_\zar(\sU_x, \sK_{2, \sU_x}) \xrightarrow{\partial} \
   \stackrel{3}{\underset{i = 1}\bigoplus} K_0(k(\eta_i)) \to 0.
   \end{equation}

We now recall that the Brown-Gersten spectral sequence (cf. \cite[Rem.~3.5]{Quillen},
\cite[Thm.~10.3]{TT}) provides a natural map
$K_1(\sU_x) \to H^0_\zar(\sU_x, \sK_{1, \sU_x})$ which is a split surjection
(cf. the proof of \cite[Lem.~2.1]{Krishna-ANT}). Letting $SK_1(\sU_x)$ denote the
kernel of this map, another application of the same spectral sequence yields a
natural surjection $SK_1(\sU_x) \surj H^1_\zar(\sU_x, \sK_{2, \sU_x})$.

We next look at the diagram
 \begin{equation}\label{eqn:Gersten-5-3}
   \xymatrix@C1pc{
 & & 0 \ar[d] & & \\     
& & K_1(A) \ar[r]^-{\cong} \ar[d] & A^\times \ar[d]^-{\cong} & \\
0 \ar[r] & SK_1(\sU_x) \ar[r]  \ar[d]_-{\partial'} & K_1(\sU_x) \ar[r] \ar[d] &
H^0_\zar(\sU_x, \sK_{1, \sU_x}) \ar[r] & 0 \\
& \stackrel{3}{\underset{i = 1}\bigoplus} K_0(k(\eta_i)) & K'_0(Z_x) \ar[d]
\ar[l]_-{j^*} & & \\
& & 0 & &}
 \end{equation}

The middle column is the localization  exact sequence in which the 
map $K_1(\sU_x) \to K'_0(Z_x)$ is surjective because the next map
$K_0(A) \to K_0(\sU_x)$ in the long exact sequence is clearly injective.
The middle long row is exact as explained above.
The right vertical arrow is an isomorphism by the Hartogs Lemma
 (cf. \cite[Lem.~4.4]{Ghosh-Krishna-Jussieu}) because $Z_x$ has codimension at least
 two in $\sX_x$. A diagram chase shows that the composite map
 $SK_1(\sU_x) \to  K_1(\sU_x) \to  K'_0(Z_x)$ is an isomorphism.

We now claim that the map $K'_0(Z_x) \to
 \stackrel{3}{\underset{i = 1}\bigoplus} K_0(k(\eta_i)) = K_0(k(Z_x))$,
 induced by the flat pull-back to the generic points, is an isomorphism.
To show this, we first note that $(Z_x)_n = \stackrel{3}{\underset{i = 1}\amalg}
\sX_x \times_R C_i$ is regular since $Y$ is an snc divisor on $\sX$.
We therefore get isomorphisms
\begin{equation}\label{eqn:Gersten-5-4}
  K_0((Z_x)_n) \cong \ \stackrel{3}{\underset{i = 1}\bigoplus}K_0(\sX_x \times_R C_i)
  \xrightarrow{\cong}
 \ \stackrel{3}{\underset{i = 1}\bigoplus} K_0(k(\eta_i)) \cong  K_0(k(Z_x)).
\end{equation}

We now let $\pi \colon (Z_x)_n \to Z_x$ be the normalization map. Let
$E = \pi^{-1}((Z_x)_\sing) = \pi^{-1}(\{x\})$ and $Z^o_x = Z_x \setminus \{x\} =
(Z_x)_\reg$. Since $\Spec(k((Z_x)_n)) = \pi^{-1}(Z^o_x) \to Z^o_x = \Spec(k(Z_x))$
is an isomorphism, the commutativity of the localization sequence with the
proper forward maps (cf. \cite[Prop.~3.18]{TT})
yields a commutative diagram of localization sequences
\begin{equation}\label{eqn:Gersten-5-5}
 \xymatrix@C1pc{
   K_0(E) \ar[r]^-{\iota'_*} \ar[d]_-{\pi_*} & K_0((Z_x)_n) \ar[r]^-{j'^*}
   \ar[d]^-{\pi_*} & K_0(k((Z_x)_n)) \ar[r] \ar[d]^-{\pi_*} & 0 \\
   K_0(k(x)) \ar[r]^-{\iota_*} & K'_0(Z_x) \ar[r]^-{j^*} & K_0(k(Z_x)) \ar[r] & 0.}
\end{equation}

Since $E = \stackrel{3}{\underset{i = 1}\amalg} \Spec(k(x))$ and
$\pi \colon E \to \Spec(k(x))$ is the projection, the left vertical arrow in
~\eqref{eqn:Gersten-5-5} is the summation map $\Z^3 \to \Z$. In particular, it is
surjective. Since $(Z_x)_n$ is a disjoint union of the spectra of three discrete
valuation rings, the pull-back map $j'^*$ is an isomorphism. In particular, $\iota'_*$
is zero. This implies that $\iota_*$ is also zero, i.e., $j^*$ is an isomorphism.
Returning back to ~\eqref{eqn:Gersten-5-3}, we conclude that the boundary map
$\partial'$ is an isomorphism. In particular, $SK_1(\sU_x) \cong \Z^3$.

Putting everything together, we find that there are surjective  maps
\begin{equation}\label{eqn:Gersten-5-7}
SK_1(\sU_x) \surj H^1_\zar(\sU_x, \sK_{2, \sU_x}) \stackrel{\partial}{\surj} \
\stackrel{3}{\underset{i = 1}\bigoplus} K_0(k(\eta_i))
\end{equation}
between abelian groups in which the first and the third groups are free and have
the same rank (three in this case).
An easy exercise then shows that both maps must be isomorphisms. We have thus shown
that the map $\partial$ in ~\eqref{eqn:Gersten-5-2} is bijective. Equivalently,
$F = 0$. By ~\eqref{eqn:Gersten-5-1}, this is equivalent to saying that 
~\eqref{eqn:Gersten-5-6} is exact in the middle.

The third case is when $x \in (Y_\sing)_{(0)} \setminus \Sigma$. In this case,
there exist unique irreducible components $Y_i \neq Y_j$ of $Y$ such that
$Y_i \bigcap Y_j = C_1 \amalg \cdots \amalg C_s$ is a disjoint union of
irreducible curves and $x \in C_1$. This implies that $Z_x = \sX_x \times_R C_1$
is regular. Letting $\eta_1$ denote the generic point of $C_1$ and repeating the
above proof verbatim, we arrive at the same situation as in
~\eqref{eqn:Gersten-5-7} (except that the rank of the corner free abelian groups is
now one). We conclude again that ~\eqref{eqn:Gersten-5-6} is exact in the middle.

The fourth case to consider is when $x \in \sX^{(2)}$. If $x \in X \bigcup Y_\reg$, then
the exactness of ~\eqref{eqn:Gersten-5-6} follows from
Lemmas~\ref{lem:Gersten-4} and ~\ref{lem:Panin*}.
If $x \in Y_\sing$, then $x$ is the generic point of
an irreducible component $C$ of $Y_\sing$ which is regular and $Z_x = \{x\}$.
We repeat the argument of the second case to get surjective maps
as in  ~\eqref{eqn:Gersten-5-7}, where now the first map is an isomorphism
and all groups are isomorphic to $\Z$. It follows that $\partial$ is bijective whence
$F = 0$. We conclude that ~\eqref{eqn:Gersten-5-6} is exact in the middle.
Finally, we are left with the case $x \in \sX^{(i)}$ with $i \le 1$ when the
desired exactness follows from \lemref{lem:Gersten-4}.
This completes the proof of the lemma.
\end{proof}

\subsection{Proof of Theorem~\ref{thm:Main-1}}\label{sec:Prf-1}
We continue to work in the setting of \S~\ref{sec:Gersten-model} but do not assume
anymore that $\Char(k) =0$ or $\Char(\F) >0$.
We let $\ov{\sK}_{2,\sX}$ denote the image of the map $\epsilon \colon \sK_{2,\sX} \to
i_{\eta *} K_2(k(\eta))$ and let $\sK^{\nr}_{2,\sX}$ denote the kernel of the
map $\partial_0 \colon i_{\eta *} K_2(k(\eta)) \to
{\underset{x \in \sX^{(1)}}\bigoplus}  i_{x*} k(x)^\times$ in the Gersten complex
~\eqref{eqn:Quillen-2} for $\sX$. On $\sX$, we then have the maps of Zariski sheaves
\begin{equation}\label{eqn:Gersten-8}
  {\sK}_{2,\sX} \surj \ov{\sK}_{2,\sX} \inj \sK^{\nr}_{2,\sX}
  \end{equation}
and a complex of Zariski sheaves
\begin{equation}\label{eqn:Gersten-9}
  0 \to \sK^{\nr}_{2,\sX} \to i_{\eta *} K_2(k(\eta)) \xrightarrow{\partial_0}
  {\underset{x \in \sX^{(1)}}\bigoplus} i_{x*} k(x)^\times \xrightarrow{\partial_1}
  {\underset{x \in \sX^{(2)}}\bigoplus} i_{x*} \Z \to 0.
\end{equation}

By combining Lemmas~\ref{lem:Gersten-1}, ~\ref{lem:Panin*} and ~\ref{lem:Gersten-5},
we get the following.

\begin{thm}\label{thm:Gersten-10}
  The complex ~\eqref{eqn:Gersten-9} is a flasque resolution of $\sK^{\nr}_{2,\sX}$.
\end{thm}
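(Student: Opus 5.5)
Two things have to be checked: each term of \eqref{eqn:Gersten-9} is flasque, and the complex is exact as a complex of Zariski sheaves. Flasqueness is standard. The sheaves $i_{\eta *} K_2(k(\eta))$, ${\underset{x \in \sX^{(1)}}\bigoplus} i_{x*} k(x)^\times$ and ${\underset{x \in \sX^{(2)}}\bigoplus} i_{x*} \Z$ are direct sums of skyscraper-type pushforwards of constant sheaves from points. Since $\sX$ is Noetherian, such sums are flasque: the restriction from an open $V$ to an open $V' \subset V$ is the projection onto the summands indexed by points of $V'$, which is surjective.

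For exactness, I will argue on stalks at each point $x \in \sX$. Put $A = \sO_{\sX,x}$. Taking stalks of \eqref{eqn:Gersten-9} at $x$ gives
\[
0 \to (\sK^{\nr}_{2,\sX})_x \to K_2(K) \xrightarrow{\partial_0} {\underset{\htt(\fp)=1}\bigoplus} k(\fp)^\times \xrightarrow{\partial_1} {\underset{\htt(\fp)=2}\bigoplus} \Z \to 0,
\]
with primes taken in $A$; this uses that stalks commute with the direct sums and with $\partial_0,\partial_1$. The positions then go as follows.
\begin{itemize}
\item Exactness at $(\sK^{\nr}_{2,\sX})_x$ and at $K_2(K)$ is tautological, since $\sK^{\nr}_{2,\sX}$ is defined as the subsheaf $\Ker(\partial_0)$ and taking kernels commutes with stalks.
\item Exactness at the last term is surjectivity of $\divf$ for the regular local ring $A$. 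This is exactly \lemref{lem:Gersten-1}.
\item Exactness in the middle is the content of \lemref{lem:Gersten-5} when $\Char(k) = 0$ and $\Char(\F) = p > 0$.
\end{itemize}

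It remains to treat the other characteristic configurations. If $R$ has equal characteristic, \lemref{lem:Panin*} gives exactness of \eqref{eqn:Quillen-2} at $\sX$. The complex \eqref{eqn:Gersten-9} agrees with \eqref{eqn:Quillen-2} in every term after $\sK^{\nr}_{2,\sX}$, so its middle exactness follows.

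The remaining mixed case is $\Char(\F) = 0$ with $k$ of characteristic $0$. There $Y$ is an snc divisor over a characteristic-$0$ field, so I expect the proof of \lemref{lem:Gersten-5} to go through verbatim. That proof only used that $\sX_\sm$ is smooth over $R$ (true here) and the snc structure. If needed, I will note this explicitly.

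The only step I expect to require care is this bookkeeping about the characteristic assumptions. The real difficulty, middle exactness at points of $Y_\sing$, is already handled in \lemref{lem:Gersten-5}. Once exactness holds at every stalk, \eqref{eqn:Gersten-9} is an exact complex of flasque sheaves starting with $\sK^{\nr}_{2,\sX}$, i.e. a flasque resolution.
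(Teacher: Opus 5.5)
Your proposal is correct and is the paper's argument: the paper proves the theorem by combining Lemma~\ref{lem:Gersten-1} (surjectivity at the last term), Lemma~\ref{lem:Gersten-5} (middle exactness in mixed characteristic) and Lemma~\ref{lem:Panin*} (the equal characteristic case), which is what you do stalkwise. The only slip is your ``remaining mixed case'': if $\Char(\F)=0$, every nonzero integer is a unit in $R$, so $\Q\subset R$ and $R$ is equicharacteristic; that case is already covered by Lemma~\ref{lem:Panin*}, and there is no need to rerun the proof of Lemma~\ref{lem:Gersten-5}.
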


\begin{lem}\label{lem:Bloch-0}
  The canonical map $H^i_\zar(\sX, {\sK}_{2,\sX}) \to  H^i_\zar(\sX, {\sK}^{\nr}_{2,\sX})$
  is surjective for $i = 1$ and bijective for $i \ge 2$. In particular,
  $H^i_\zar(\sX, \sK_{2, \sX}) = 0$ for $i \ge 3$.
\end{lem}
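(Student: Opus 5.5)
The map factors as $\sK_{2,\sX} \surj \ov{\sK}_{2,\sX} \inj \sK^{\nr}_{2,\sX}$ (cf.~\eqref{eqn:Gersten-8}). I will analyze the two pieces separately: the kernel $\sF := \Ker(\epsilon)$ of the surjection and the cokernel $\sG := \sK^{\nr}_{2,\sX}/\ov{\sK}_{2,\sX}$ of the injection. Both are Zariski sheaves on $\sX$. The claim then reduces to support estimates plus \lemref{lem:Kerz-Strunk}.

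First consider $\sF$. Its stalk at $x$ is the kernel of $K_2(\sO_{\sX,x}) \to K_2(K)$. At the generic point this kernel is zero. At a codimension one point $x$, the ring $\sO_{\sX,x}$ is a dvr, and $K_2$ of a dvr injects into $K_2$ of its fraction field by \cite[Chap.~V, Cor.~6.6.2]{Weibel-K-book}, as already used in \lemref{lem:Gersten-2}. Hence $\sF_x = 0$ whenever $\dim \ov{\{x\}} \ge 2$. Since $\dim \sX = 3$, \lemref{lem:Kerz-Strunk} with $r=1$ gives $H^i_\zar(\sX,\sF)=0$ for $i\ge 2$. The long exact sequence of $0\to\sF\to\sK_{2,\sX}\to\ov{\sK}_{2,\sX}\to 0$ then shows that $H^i(\sK_{2,\sX})\to H^i(\ov{\sK}_{2,\sX})$ is surjective for $i=1$ (using $H^2(\sF)=0$) and bijective for $i\ge 2$.

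Next consider $\sG$. At points of codimension $\le 1$, the Gersten complex for a field or a dvr is exact at $K_2$, by the dvr case just cited together with the tame symbol sequence. So $\sG_x=0$ whenever $\dim\ov{\{x\}}\ge 2$, and \lemref{lem:Kerz-Strunk} gives $H^i(\sG)=0$ for $i\ge 2$. From $0\to\ov{\sK}_{2,\sX}\to\sK^{\nr}_{2,\sX}\to\sG\to 0$ we get that $H^i(\ov{\sK})\to H^i(\sK^{\nr})$ is bijective for $i\ge 3$ and surjective for $i=2$. The main obstacle is injectivity for $i=2$, which needs $H^1(\sG)=0$; this does not follow from support considerations alone. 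My approach would be to show that $\sG$ is in fact supported only at the codimension three (closed) points, i.e.\ that $K_2(\sO_{\sX,x})\to \Ker(\partial_0)$ is surjective at codimension two points $x$. Such $\sO_{\sX,x}$ is a two-dimensional regular local ring, and purity for $K_2$ in dimension two holds by Quillen's argument with \eqref{eqn:Quillen-1} and \lemref{lem:Gersten-2} (the $d=2$ case of Gersten exactness at $K_2$; cf.\ \lemref{lem:Gersten-4} where smooth, and Gillet--Levine/Dennis--Stein in general). With that, \lemref{lem:Kerz-Strunk} with $r=0$ gives $H^i(\sG)=0$ for $i\ge1$, so $H^i(\ov{\sK})\cong H^i(\sK^{\nr})$ for $i\ge 2$, and surjectivity for $i=1$ follows as before. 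Composing with the first step proves the surjectivity and bijectivity claims.

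Finally, by \thmref{thm:Gersten-10} the sheaf $\sK^{\nr}_{2,\sX}$ has a flasque resolution of length two, namely \eqref{eqn:Gersten-9}. Hence $H^i_\zar(\sX,\sK^{\nr}_{2,\sX})=0$ for $i\ge 3$, and by the bijectivity just established, $H^i_\zar(\sX,\sK_{2,\sX})=0$ for $i\ge3$.
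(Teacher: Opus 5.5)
Your proposal is correct and is essentially the paper's own proof: the same factorization $\sK_{2,\sX} \surj \ov{\sK}_{2,\sX} \inj \sK^{\nr}_{2,\sX}$, the vanishing of the stalks of $\sF$ and $\sG$ at points of codimension at most one, the vanishing of $\sG$ at codimension-two points via \eqref{eqn:Quillen-1} and \lemref{lem:Gersten-2} (which is precisely \corref{cor:Gersten-3}), followed by \lemref{lem:Kerz-Strunk} and \thmref{thm:Gersten-10}. The only difference is that the paper first disposes of the equicharacteristic case using the exactness of \eqref{eqn:Quillen-2}; your argument does not need this reduction, since its local inputs hold for arbitrary regular local rings.
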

\begin{proof}
  In view of ~\eqref{eqn:Quillen-2}, we can assume that $\Char(k) = 0$ and
  $\Char(\F) >0$.
  If we now let $\sF$ denote the kernel of ${\sK}_{2,\sX} \surj \ov{\sK}_{2,\sX}$ and
  $\sG$ denote the cokernel of $\ov{\sK}_{2,\sX} \inj \sK^{\nr}_{2,\sX}$, then
  $\sF_x = 0 = \sG_x$ for every $x \in \sX_{(i)}$ with $i \ge 2$ by
  \lemref{lem:Gersten-4}. Furthermore, $\sG_x = 0$ for every $x \in \sX_{(1)}$
  by \corref{cor:Gersten-3}. We now apply
  \lemref{lem:Kerz-Strunk} to get $H^i_\zar(\sX, \sF) = 0$ for $i \ge 2$ and
  $H^i_\zar(\sX, \sG) = 0$ for $i \ge 1$. From this, we deduce
  $H^1_\zar(\sX, {\sK}_{2,\sX}) \surj  H^1_\zar(\sX, \ov{\sK}_{2,\sX})
  \surj  H^1_\zar(\sX, {\sK}^{\nr}_{2,\sX})$ and
  $H^i_\zar(\sX, {\sK}_{2,\sX}) \xrightarrow{\cong}  H^i_\zar(\sX, \ov{\sK}_{2,\sX})
  \xrightarrow{\cong}  H^i_\zar(\sX, {\sK}^{\nr}_{2,\sX})$ for $i \ge 2$.
  The last part of the lemma now follows from \thmref{thm:Gersten-10}.
  \end{proof}

The following result proves \thmref{thm:Main-1}.

\begin{cor}\label{cor:Bloch-2}
  There are canonical isomorphisms
  \[
  H^2_\zar(\sX, {\sK}^M_{2,\sX}) \xrightarrow{\cong} 
  H^2_\zar(\sX, \wh{\sK}^M_{2,\sX}) \xrightarrow{\cong}
  H^2_\zar(\sX, {\sK}_{2,\sX}) \xrightarrow{\cong} \CH_1(\sX).
  \]
\end{cor}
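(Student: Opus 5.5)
The plan is to treat the three isomorphisms one at a time. The first two are already covered by \lemref{lem:Kerz-0}: on the regular scheme $\sX$, the maps $H^i_\zar(\sX, \sK^M_{2,\sX}) \to H^i_\zar(\sX, \wh{\sK}^M_{2,\sX}) \to H^i_\zar(\sX, \sK_{2,\sX})$ are isomorphisms for $i \ge 1$, and in particular for $i = 2$. So the real content is the last isomorphism $H^2_\zar(\sX, \sK_{2,\sX}) \cong \CH_1(\sX)$.

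For that isomorphism I would go through $\sK^{\nr}_{2,\sX}$. By \lemref{lem:Bloch-0}, the map $H^2_\zar(\sX, \sK_{2,\sX}) \to H^2_\zar(\sX, \sK^{\nr}_{2,\sX})$ is bijective. By \thmref{thm:Gersten-10}, the complex \eqref{eqn:Gersten-9} is a flasque resolution of $\sK^{\nr}_{2,\sX}$, so its global sections compute this cohomology. Since the resolution stops in degree $2$, this gives
\[
H^2_\zar(\sX, \sK^{\nr}_{2,\sX}) \cong \coker\Bigl({\underset{x \in \sX^{(1)}}\bigoplus} k(x)^\times \xrightarrow{\partial_1} {\underset{x \in \sX^{(2)}}\bigoplus} \Z\Bigr).
\]

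It remains to identify this cokernel with $\CH_1(\sX)$. The scheme $\sX$ is connected, regular, of dimension three, and finite type over $R$. I would check that $\sX^{(2)}$ is exactly the set of points of dimension one, i.e.\ that the scheme is catenary and equidimensional over the excellent dvr $R$. This makes ${\underset{x \in \sX^{(2)}}\bigoplus}\Z = Z_1(\sX)$. As noted after \eqref{eqn:Gersten-0}, the map $\partial_1$ sends a rational function $f$ on an integral surface $\ov{\{x\}}$ to $\divf(f)$. Rational equivalence of $1$-cycles in \cite[Chap.~1]{Fulton} is generated by exactly these divisors, so the cokernel is $\CH_1(\sX)$.

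The expected obstacle is this dimension bookkeeping. Fulton's theory over a regular base $R$ uses relative dimension, so one must verify that codimension-two points correspond to cycles of Fulton-dimension one. One must also check that the order-of-vanishing map agrees with $\partial_1$, which requires Fulton's definition of $\divf$ via lengths on normalizations. Everything else is formal from the cited results.

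Finally, I would check canonicity. The composite is the Gersten edge map. It is independent of choices because all the maps used (the maps in \lemref{lem:Kerz-0}, the map in \lemref{lem:Bloch-0}, and the resolution in \thmref{thm:Gersten-10}) are canonical.
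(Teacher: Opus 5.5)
Your argument is the paper's: \lemref{lem:Kerz-0} gives the first two isomorphisms. \lemref{lem:Bloch-0} together with the flasque resolution of \thmref{thm:Gersten-10} then identifies $H^2_\zar(\sX, \sK_{2,\sX})$ with $\coker\bigl(\bigoplus_{x \in \sX^{(1)}} k(x)^\times \xrightarrow{\divf} \bigoplus_{x \in \sX^{(2)}} \Z\bigr)$, which the paper simply declares to be $\CH_1(\sX)$. The one step to fix is your proposed justification of that last identification.

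Catenarity and irreducibility of $\sX$ do not give $\sX^{(2)} = \sX_{(1)}$ when $\sX$ is not proper over $R$. Take $\sX = \A^2_R = \Spec R[x,y]$ with $\pi$ a uniformizer; it satisfies all the hypotheses of \S~\ref{sec:Gersten-model}.
\begin{itemize}
\item The maximal ideal $(\pi x - 1, y)$, with residue field $k$, has height $2$. So it is a point of $\sX^{(2)}$ of dimension $0$.
\item The height-one prime $(\pi x - 1)$ has closure $\cong \A^1_k$, of dimension $1$. This $1$-cycle can never occur in a relation among $1$-cycles, since the only prime strictly inside $(\pi x - 1)$ is $0$.
\end{itemize}
So in general the Gersten cokernel is the codimension-two Chow group $\CH^2(\sX)$. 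Catenarity is what you need only to see that $\partial_1$ is the usual divisor map landing in $\sX^{(2)}$.

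This group agrees with $\CH_1(\sX)$, taken in the sense of Krull dimension, once every closed point of $\sX$ has height $3$. That holds, for example, when $\sX$ is proper over $R$, as in \S~\ref{sec:Unram-V}. There every closed point lies on the special fibre, and flatness plus equidimensionality of that fibre gives height $3$. Hence $\operatorname{codim}(x) + \dim \ov{\{x\}} = 3$ for all $x$.

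Also, Fulton's relative dimension over a regular base is not the convention to invoke here. With it, $\dim_R \sX = 2$, and the codimension-two points (vertical curves, horizontal curves finite over $R$) have relative dimension $0$, not $1$.
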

\begin{proof}
  The first two isomorphisms follow from \lemref{lem:Kerz-0}.
  This last isomorphism follows directly from \lemref{lem:Bloch-0} since it is clear
  from \thmref{thm:Gersten-10} and ~\eqref{eqn:Gersten-9} that
  \begin{equation}\label{eqn:Bloch-2-0}
  H^2_\zar(\sX, {\sK}^{\nr}_{2,\sX}) \cong \coker
  \left({\underset{x \in \sX^{(1)}}\bigoplus} k(x)^\times \xrightarrow{\divf}
       {\underset{x \in \sX^{(2)}}\bigoplus} \Z\right) = \CH_1(\sX).
       \end{equation}
       \end{proof}

\vskip.2cm

\subsection{Proof of \thmref{thm:Main-2}}\label{sec:Prf-2} 
Let the setting be as in \S~\ref{sec:Gersten-model}.
We begin by recalling the modified version of the Levine--Weibel Chow group of $Y$
from \cite{Binda-Krishna-Comp}.
Let $C$ be a reduced curve over $\F$. One says that $C$ is a good curve
(relative to $Y_\sing$) if there is a finite local complete
intersection (lci) morphism $\nu \colon C \to Y$ such that $\nu^{-1}(Y_\sing)$ is
nowhere dense in $C$. For a good curve $C$, we let $k(C, \nu^{-1}(Y_\sing))^{\times}$
be the image of the natural map
$\sO^{\times}_{C,S} \to \stackrel{s}{\underset{i =1}\oplus}
\sO^{\times}_{C, \eta_i}$, where $\{\eta_1, \ldots , \eta_s\}$ is the set of generic
points of $C$ and $S$ is the union of the closed subset $\nu^{-1}(Y_\sing)$ and the
set of generic points $\eta_i$ of $C$ such that $\ov{\{\eta_i\}}$ is disjoint from
$\nu^{-1}(Y_\sing)$. 

The lci Chow group of 0-cycles $\CH^{\Lci}_0(Y)$ is the quotient of
$Z_0(Y_\reg)$ by the subgroup $R^{\Lci}_0(X)$ generated by $\nu_*(\divf(f))$, where
$\nu \colon C \to Y$ is a good curve and $f \in k(C, \nu^{-1}(Y_\sing))^{\times}$.
When we replace $R^{\Lci}_0(X)$ by the subgroup generated by $\nu_*(\divf(f))$, where
$\nu \colon C \inj Y$ is an inclusion of a good curve, the corresponding quotient
is the original Levine--Weibel Chow group $\CH^{\lw}_0(Y)$, introduced in
\cite{Levine-Weibel}. We thus have surjections
$\CH^{\lw}_0(Y) \surj \CH^{\Lci}_0(Y) \stackrel{\pi^*}{\surj} \CH_0(Y_n)
\stackrel{\pi_*}{\surj} \CH_0(Y)$,
where $\pi \colon Y_n \to Y$ is the normalization map.

Let $Z^g_1(\sX)$ denote the subgroup of $Z_1(\sX)$ generated by integral cycles
which are finite (hence flat) over $R$ and do not meet $Y_\sing$.
Using \corref{cor:Bloch-2} and ~\eqref{eqn:Bloch-2-0}, we get a diagram
\begin{equation}\label{eqn:Rest-0}
  \xymatrix@C1pc{
    \sZ^g_1(\sX) \ar[d]_-{\wt{\rho}} \ar[r]^-{\cong} &
       {\underset{C \in Z^g_1(\sX)}\bigoplus}
       H^2_C(\sX, \sK^{\nr}_{2,\sX}) \ar[d] \ar[r] &
       H^2_\zar(\sX, \sK^{\nr}_{2,\sX})
       & H^2_\zar(\sX, \sK_{2,\sX}) \ar[l]_-{\cong} \ar[d]^-{\rho'} \\
Z_0(Y_\reg) \ar[r]^-{\cong} & {\underset{y \in Y^{(2)}_\reg}\bigoplus}
H^2_y(Y, \sK_{2,Y}) \ar[rr] & & H^2_\zar(Y, \sK_{2,Y}).}
\end{equation}

The left horizontal arrow in the top row is an isomorphism by \thmref{thm:Gersten-10}
and the middle horizontal arrow is the sum of the forget support maps.
The right horizontal arrow on the top is an isomorphism by \lemref{lem:Bloch-0}.
The left horizontal arrow in the bottom row is
an isomorphism by using excision and Gersten resolution for $\sK_{2, Y_\reg}$ and the
right horizontal arrow is the sum of the forget support maps. The arrow
$\wt{\rho}$ is the group homomorphism given by taking an irreducible $1$-cycle in
$Z^g_1(\sX)$ and intersecting it with $Y$. The middle vertical arrow and $\rho'$
are the restriction maps. Having the description of all maps at hand, it is clear that
~\eqref{eqn:Rest-0} is commutative.

We now look at the diagram
\begin{equation}\label{eqn:Rest-1}
  \xymatrix@C1pc{
    Z^g_1(\sX) \ar[d]_-{\wt{\rho}} \ar[r] &
    H^2_\zar(\sX, \sK_{2,\sX}) \ar[d]^-{\rho'} \ar[r]^-{\cong}
    & \CH_1(\sX) \ar@{.>}[d]^-{\rho} \\
    Z_0(Y_\reg) \ar[r] & H^2_\zar(Y, \sK_{2,Y}) \ar[r]^-{\cong} & \CH^{\Lci}_0(Y).}
\end{equation}

The left square is commutative by ~\eqref{eqn:Rest-0} and the right horizontal arrow
in the bottom row is an isomorphism by \cite[Thm.~8.1]{Binda-Krishna-Saito}.
It follows that there is a unique restriction map $\rho \colon  \CH_1(\sX) \to
\CH^{\Lci}_0(Y)$ which makes the outer square
commute.  If $\sX$ is quasi-projective over $R$, then
the composite horizontal arrow in the top row of ~\eqref{eqn:Rest-1} is surjective
by the moving lemma \cite[Thm.~2.3]{GLL} of Gabber-Liu-Lorenzini.
This completes the proof of \thmref{thm:Main-2}.
\qed

Letting $\Lambda = \Z$ if $\Char(\F) = 0$ and $\Lambda = \Z[\tfrac{1}{p}]$ if
$\Char(\F) = p > 0$, we get the following.

\begin{cor}\label{cor:Main-2*}
  In the setting of \thmref{thm:Main-2}, we have a commutative diagram
 \begin{equation}\label{eqn:Main-2*-0}
\xymatrix@C.8pc{
Z^g_1(\sX)_\Lambda \ar[r]^-{\wt{\rho}} \ar[d] & 
Z_0(Y_\reg)_\Lambda \ar@{->>}[d] \\
\CH_1(\sX)_\Lambda \ar[r]^-{\rho} & H^4(Y, \Lambda(2)).}
 \end{equation}
 whose left vertical arrow is surjective if $\sX$ is quasi-projective over $R$.
\end{cor}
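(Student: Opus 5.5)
The plan is to tensor \thmref{thm:Main-2} with $\Lambda$ and then post-compose its bottom row with the canonical homomorphism
\[
\CH^{\Lci}_0(Y)_\Lambda \to H^4(Y, \Lambda(2))
\]
of \cite[Thm.~1.6]{Binda-Krishna-SNS}. Concretely, tensoring the commutative square \eqref{eqn:Main-2-0} with $\Lambda$ (exact, since $\Lambda$ is a localization of $\Z$) gives a commutative square
\[
Z^g_1(\sX)_\Lambda \to Z_0(Y_\reg)_\Lambda \to \CH^{\Lci}_0(Y)_\Lambda, \qquad Z^g_1(\sX)_\Lambda \to \CH_1(\sX)_\Lambda \to \CH^{\Lci}_0(Y)_\Lambda .
\]
Because tensoring is right exact, the right vertical arrow remains surjective. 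If $\sX$ is quasi-projective over $R$, the left vertical arrow also remains surjective. I then define $\rho \colon \CH_1(\sX)_\Lambda \to H^4(Y,\Lambda(2))$ as the composite of $\rho_\Lambda$ from \thmref{thm:Main-2} with the map of \cite{Binda-Krishna-SNS}. The right vertical arrow $Z_0(Y_\reg)_\Lambda \to H^4(Y,\Lambda(2))$ is defined as the composite through $\CH^{\Lci}_0(Y)_\Lambda$. Commutativity of the resulting square is then immediate.

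Two points need checking. First, the right vertical arrow should be the natural cycle class map, sending a closed point $y \in Y_\reg$ to its class in $H^4(Y,\Lambda(2))$, for example via the Gysin map $\Lambda = H^0(y,\Lambda(0)) \to H^4(Y,\Lambda(2))$ for a point of the smooth locus. I will confirm this from the construction in \cite[Thm.~1.6]{Binda-Krishna-SNS}, where the map is defined exactly on generators $[y]$, $y \in Y_\reg$, by such cycle classes.

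Second, the double-headed arrow in \eqref{eqn:Main-2*-0} asserts that $Z_0(Y_\reg)_\Lambda \to H^4(Y,\Lambda(2))$ is surjective. This is the main obstacle, because surjectivity of $\CH^{\Lci}_0(Y)_\Lambda \to H^4(Y,\Lambda(2))$ is not formal. I would first try to extract it from \cite{Binda-Krishna-SNS}, where for surfaces one expects $H^4(Y,\Lambda(2))$ to be generated by classes of points in $Y_\reg$, using that $Y_\sing$ has dimension $\le 1$. The argument would use the localization sequence
\[
H^4_{Y_\sing}(Y,\Lambda(2)) \to H^4(Y,\Lambda(2)) \to H^4(Y_\reg,\Lambda(2)) \to 0 .
\]
Here $H^4(Y_\reg,\Lambda(2)) \cong \CH_0(Y_\reg)_\Lambda$ by smoothness, which is a quotient of $Z_0(Y_\reg)_\Lambda$. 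For the contribution supported on the curve $Y_\sing$, I would use a moving argument to show it lies in the image of points of $Y_\reg$, pushing points of $Y_\sing$ along good curves off the singular locus.

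If that surjectivity turns out not to be available, the fallback is to read the corollary's diagram with the right arrow merely as the canonical map. The substantive content of the corollary is the existence of $\rho$ and commutativity, which the argument above already gives.
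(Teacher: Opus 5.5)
Your proposal is correct and is essentially the paper's proof, which is the single line ``combine \thmref{thm:Main-2} with \cite[Thm.~1.6]{Binda-Krishna-SNS}'': tensor the square \eqref{eqn:Main-2-0} with $\Lambda$ and compose with the cycle class map $\CH^{\Lci}_0(Y)_\Lambda \to H^4(Y,\Lambda(2))$. The paper also takes the surjectivity of the right vertical arrow from that same citation ($Z_0(Y_\reg)\to\CH^{\Lci}_0(Y)$ is surjective by definition), which is your primary plan, so neither the localization/moving argument nor the fallback of dropping surjectivity is used.
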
 
\begin{proof}
Combine \thmref{thm:Main-2} with \cite[Thm.~1.6]{Binda-Krishna-SNS}.
\end{proof}

\section{Cohomology of ${\sK_{2, \sX}}/n$}\label{sec:K-fin}
In this section, our goal is to study the Gersten complex for the Zariski sheaf
${\sK_{2, \sX}}/n$ when $n \in \sO^{\times}_{\sX}$ and use it to prove
\thmref{thm:Main-11}.
We begin by proving a general result concerning the relation between the Leray
and coniveau spectral sequences for the {\'e}tale cohomology.

\subsection{Coniveau and Leray spectral sequences for {\'e}tale cohomology}
\label{sec:LCSS}
Let $Z$ be an integral Noetherian scheme of Krull dimension $d$
which we assume to be regular. We let $q \ge 0$ be an integer and let $n$ be an
integer such that $1/n \in \sO_Z$. We let $\Lambda_n(q)$ denote the
{\'e}tale  sheaf $\mu^{\otimes q}_n$ on $Z$. For integers $i, q$, we let $\sH^i(n,q)$
denote the Zariski sheaf on $Z$ associated to the presheaf
$U \mapsto H^i_\et(U, \Lambda_n(q))$.

Recall that by Gabber's  absolute purity theorem \cite{Fujiwara},
the coniveau spectral sequence for the {\'e}tale cohomology of
$\Lambda_n(q)$ (cf. \cite[\S~1]{CHK}) has the form
\begin{equation}\label{eqn:Unram-2-0}
    E^{i,j}_1(c) = {\underset{x \in Z^{(i)}}\bigoplus} H^{j-i}_\et(k(x), \Lambda_n(q-i))
    \Rightarrow H^{i+j}_\et(Z, \Lambda_n(q)).
\end{equation}
On the other hand, associated to the morphism of
sites $\epsilon \colon Z_\et \to Z_\zar$, one has the Leray spectral sequence
  \begin{equation}\label{eqn:LSS-0}
    E^{i,j}_2(l) = H^i_\zar(Z, \sH^j(n,q)) \Rightarrow H^{i+j}_\et(Z, \Lambda_n(q)).
\end{equation}

  Let $F^\bullet_C H^{i}_\et(Z, \Lambda_n(q))$
  (resp. $F^\bullet_L H^{i}_\et(Z, \Lambda_n(q))$) denote the filtration of
  $H^{i}_\et(Z, \Lambda_n(q))$ arising from the coniveau (resp. Leray) spectral
  sequence.

Recall the Bloch-Ogus complex 
\begin{equation}\label{eqn:BO-0}
  0 \to \sH^j(n,q) \xrightarrow{\epsilon^{\et}} i_{\eta *} H^j_\et(k(\eta), \Lambda_n(q))
  \xrightarrow{\partial^{\et}_0} \hspace*{4cm}
\end{equation}
\[
\hspace*{2cm} {\underset{x \in Z^{(1)}}\bigoplus}
  i_{x *} H^{j-1}_\et(k(x), \Lambda_n(q-1)) \xrightarrow{\partial^{\et}_1}
  \cdots \xrightarrow{\partial^{\et}_{q-1}}
  {\underset{x \in Z^{(q)}}\bigoplus} i_{x *} H^{j-q}_\et(k(x), \Lambda_n(0)) \to 0
  \]
  of Zariski sheaves on $Z$, where $\eta$ is the generic point of $Z$.
This complex is constructed in the same manner as Quillen constructed his Gersten
  complex for $K$-theory using the localization sequence, which is valid also for the
  {\'e}tale cohomology. We refer to \cite[\S~1, Cor.~2.1.3]{CHK}
  (see also \cite[Defn.~2.1]{Saito-Sato-Ann}) for its construction.
We let $\sH^j_{\nr}(n,q)$ denote the kernel of $\partial^{\et}_0$.

If we let $\sG^j_\bullet(n,q)$ denote the complex of sheaves obtained by leaving out
$\sH^j(n,q)$ in ~\eqref{eqn:BO-0} and consider the global sections, we get a
complex
 \begin{equation}\label{eqn:BO-1}             
G^j_\bullet(n,q):= \left(H^j_\et(k(\eta), \Lambda_n(q))
  \xrightarrow{\partial^{\et}_0} \cdots \xrightarrow{\partial^{\et}_{q-1}}
 {\underset{x \in \sX^{(q)}}\bigoplus} H^{j-q}_\et(k(x), \Lambda_n(0))\right)
 \end{equation}
 which coincides with the complex $(E^{\bullet, j}_1(c), d^{\bullet, j}_1(c))$.
 Since the terms of the complex $\sG^j_\bullet(n,q)$ are all flasque,
the morphism of complexes
of Zariski sheaves $\epsilon^{\et} \colon \sH^j(n,q) \to \sG^j_\bullet(n,q)$
therefore defines a canonical homomorphism of abelian groups
\begin{equation}\label{eqn:BO-2} 
  \lambda_Z(i,j) \colon E^{i,j}_2(l) = H^{i}_\zar(Z, \sH^j(n,q)) \to
  H_{i}(G^j_\bullet(n,q)) = E^{i,j}_2(c).
\end{equation}

The following result is a refinement of \cite[Cor.~4.4]{Paranjape}, and 
also an abelian version of \cite[Thm.~2]{Gillet-Soule}.

\begin{prop}\label{prop:LC-SS}
  Given $q \ge 0$, there exists a natural morphism of spectral sequences
  $E^{i,j}_2(l) \to E^{i,j}_2(c)$ which induces the identity map on their abutment
  $H^{i+j}_{\rm {\etl}}(Z, \Lambda_n(q))$ and induces the map $\lambda_Z(i,j)$ on the
  $(i,j)$-th terms of the spectral sequences. In particular, we have a natural
  inclusion $F^\bullet_L H^{i+j}_{\rm {\etl}}(Z, \Lambda_n(q)) \subseteq
  F^\bullet_C H^{i+j}_{\rm {\etl}}(Z, \Lambda_n(q))$.
\end{prop}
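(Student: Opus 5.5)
The plan is to realize both spectral sequences as spectral sequences of filtered complexes built from a single injective resolution, and to compare the filtrations. Choose an injective resolution $\Lambda_n(q) \to I^\bullet$ on $Z_\et$. Then $\epsilon_* I^\bullet$ is a complex of flasque (indeed injective) Zariski sheaves computing $R\epsilon_*\Lambda_n(q)$. The Leray spectral sequence \eqref{eqn:LSS-0} is, from the $E_2$-page on, the spectral sequence of the canonical (Postnikov) filtration $\tau_{\le \bullet}$ on $R\Gamma(Z_\zar, \epsilon_*I^\bullet)$ (Deligne's \emph{décalage}, \cite{Hodge-2}). The coniveau spectral sequence \eqref{eqn:Unram-2-0} is the spectral sequence of the coniveau filtration
\[
N^p\Gamma(I^\bullet) = \varinjlim_{\mathrm{codim}\, W\ge p} \Gamma_W(Z, I^\bullet).
\]
This filtration can be sheafified on $Z_\zar$ as $N^p\epsilon_*I^\bullet$ by taking supports Zariski locally. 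In this way both spectral sequences become spectral sequences of filtered objects in the filtered derived category of Zariski sheaves, followed by $R\Gamma(Z_\zar,-)$.

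The key step is to produce a morphism of filtered complexes of Zariski sheaves
\[
(\epsilon_*I^\bullet, \mathrm{Dec}(\tau)) \to (\epsilon_*I^\bullet, N),
\]
that is, to show that $\tau_{\le j}$ shifted appropriately lands in the coniveau filtration in the derived sense. Following Deligne's comparison of the Leray filtration with the coniveau filtration in \cite{Hodge-2} (and \cite{Paranjape}), I would use the fact that the graded pieces $\mathrm{gr}^p_N \epsilon_*I^\bullet$ are quasi-isomorphic to $\bigoplus_{x\in Z^{(p)}} i_{x*}R\Gamma_x$. By Gabber's absolute purity \cite{Fujiwara}, these are in turn identified with $\bigoplus_{x\in Z^{(p)}} i_{x*}H^{*-2p}_\et(k(x),\Lambda_n(q-p))$ placed in the expected degrees, with the local cohomology sheaves concentrated in degrees $\ge 2p$. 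The whole point is this degree bound: it says precisely that the identity morphism is filtered from $\mathrm{Dec}(\tau)$ to $N$. Here the conditions are the Zariski-local vanishings $\sH^j_x = 0$ for $j<2\,\mathrm{codim}(x)$, and these come from purity.

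Given the filtered morphism, applying $R\Gamma(Z_\zar,-)$ gives a morphism of spectral sequences from the Leray spectral sequence, reindexed from $E_2$, to the coniveau spectral sequence, again from $E_2$. On the abutment this morphism is the identity, since the underlying complex is unchanged, and the inclusion $F^\bullet_L \subseteq F^\bullet_C$ follows formally. The remaining identification, that on $E_2^{i,j}$ the map is $\lambda_Z(i,j)$, uses the following observation. The $E_1$-row $j$ of the coniveau spectral sequence, sheafified, is exactly the complex $\sG^j_\bullet(n,q)$, and the edge map from $\sH^j(n,q) = \mathrm{gr}^{\mathrm{Dec}\tau}$ into it is the augmentation $\epsilon^\et$ of the Bloch--Ogus complex \eqref{eqn:BO-0} (cf.\ \cite{CHK}). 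Since the terms of $\sG^j_\bullet(n,q)$ are flasque, taking $H^i_\zar$ recovers \eqref{eqn:BO-2}.

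I expect the main obstacle to be the careful construction of the filtered morphism at the level of complexes, rather than merely in the derived category, together with the matching of indices between the décalage and the coniveau filtrations. In particular, one has to check that the induced map on $E_2$ agrees with $\lambda_Z(i,j)$ on the nose, including signs and the identification of the boundary maps with the $\partial^\et$. I would handle this by working with the Cousin complex (Godement/Hartshorne) of $\epsilon_*I^\bullet$ with respect to the codimension filtration, which realizes $N$ explicitly.
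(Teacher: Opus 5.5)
\textbf{There is a genuine gap at the step you call the key one.}

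You want the identity of $K=\epsilon_*I^\bullet$ to be a filtered morphism of Zariski complexes from a (décalé) canonical filtration into the coniveau filtration $N$, and you deduce this from the purity bound $\sH^j_x=0$ for $j<2\operatorname{codim}(x)$. Three things go wrong.
\begin{enumerate}
\item A lower bound on the degrees of $\mathrm{gr}_N$ only gives $N^pK\in D^{\ge 2p}$. That produces maps \emph{out of} $N^pK$ (into $\tau_{\ge 2p}K$), never maps into it.
\item The containment you need is false at the sheaf level. For $p\ge 1$ the stalk of $N^pK$ at the generic point $\eta$ is zero. But $\tau_{\le m}K\to K$ induces the identity on $\sH^j(n,q)_\eta=H^j_\et(k(\eta),\Lambda_n(q))$ for $j\le m$. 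So $\tau_{\le m}K$ cannot factor through $N^pK$ once these generic groups are nonzero. Note also that $\mathrm{Dec}(\tau)$ is just $\tau$ with its indices doubled, so it gives no extra room.
\item The indexing is wrong even if you grant some filtered map into $(K,N)$. It would compare with the coniveau spectral sequence from $E_1$, not from $E_2$. The Leray filtration degree $i$ in $H^i_\zar(Z,\sH^j(n,q))$ is a Zariski cohomological degree, and no filtration of the sheaf complex sitting inside $N$ can produce it.
\end{enumerate}
The inclusion $F^\bullet_L\subseteq F^\bullet_C$ is a global phenomenon, resting on a Zariski cohomological-dimension bound rather than on any stalkwise containment. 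Purity plays no role in the comparison; it is only used to rewrite the coniveau $E_1$-terms as Galois cohomology of residue fields.

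\textbf{How to repair it.} The décalage must be placed on the coniveau side. Let $T$ be the trivial filtration ($T^0=K$, $T^1=0$). Then $T\subseteq N$ holds on the nose because $N^0=K$, and $\mathrm{Dec}(T)=\tau$, so $\tau\subseteq\mathrm{Dec}(N)$ trivially.

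The actual content is that derived global sections are compatible with décalage. You need $R\Gamma(Z_\zar,(K,\tau))$ to give the Leray spectral sequence from $E_2$, and $R\Gamma(Z_\zar,(K,\mathrm{Dec}\,N))$ to give the coniveau spectral sequence from $E_2$. For the latter one uses that the sheaf-level $E_1$-terms $\bigoplus_x i_{x*}H^*_x$ are flasque.

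This is exactly what the paper's proof does in Paranjape's language. The trivially filtered complex $(\sJ^\bullet,F)$ has the Cartan--Eilenberg resolution, filtered by resolution degree, as a level-$2$ filtered injective resolution. Its spectral sequence from $E_2$ is the Leray one. The evident filtered map $(\sJ^\bullet,F)\to(\sJ^\bullet,H)$ then induces the desired morphism of spectral sequences. On sheaf-level $E_1$ this map is the augmentation $\sH^j(n,q)\to\sG^j_\bullet(n,q)$ you describe, so its $E_2$-component is $\lambda_Z(i,j)$.
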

\begin{proof}
We let $\Lambda_n(q) \to \sI^\bullet$ be an injective resolution of $\Lambda_n(q)$
on $Z_\et$. We let $\sJ^\bullet = \epsilon_*(\sI^{\bullet})$ and let
$\sJ^\bullet \to \sL^{\bullet, \bullet}$ be a
  Cartan--Eilenberg resolution of $\sJ^{\bullet}$ on $Z_\zar$. Let
  $\sJ^\bullet \to \sL^\bullet$ be the induced map, where $\sL^\bullet$
  denotes the totalization of $\sL^{\bullet, \bullet}$. For $r \ge 0$, we let
  $\sL^{\ge r, \bullet} \subset \sL^{\bullet, \bullet}$ be the subcomplex obtained
  by taking the brutal truncation of $\sL^{\bullet, \bullet}$ along the first
  coordinate and let $\sL^{\ge r} \subset \sL^{\bullet}$ denote the totalization of
  $\sL^{\ge r, \bullet}$. We let $F = F^\star \sJ^{\bullet}$ denote the
  trivial filtration of $\sJ^{\bullet}$ (see
  \cite[\S~1.1]{Hodge-2} for the definition of filtered complexes and
  morphisms between them) given by $F^0\sJ^{\bullet} = \sJ^{\bullet}$
  and $F^1\sJ^{\bullet} = 0$. We let $G = G^\star \sL^{\bullet}$ denote the
  filtration of $\sL^{\bullet}$ given by $G^r \sL^{\bullet} = \sL^{\ge r}$.

 By \cite[Fact~3.4]{Paranjape}, the canonical morphism
  $\phi \colon (\sJ^{\bullet}, F) \to (\sL^{\bullet}, G)$ of filtered complexes is a
  level-2 filtered injective resolution in the sense of Definition~3.2 of
  op. cit.. In particular, it follows from Theorem~1.1 of op. cit. that
  we have a $E_0$-spectral sequence
\begin{equation}\label{eqn:LC-SS-0}
  E^{i,j}_0(\sJ^\bullet, F) \Rightarrow H^{i+j}_\et(Z, \Lambda_n(q))
\end{equation}
such that the resulting $E_2$-spectral sequence $E^{i,j}_2(\sJ^\bullet, F)$
coincides with the Leray spectral sequence $E^{i,j}_2(\sJ^\bullet, G) = E^{i,j}_2(l)$ of
~\eqref{eqn:LSS-0}.

We next let $H = H^\star \sJ^{\bullet}$ be the coniveau filtration of
$\sJ^{\bullet}$, where $H^r \sJ^{\bullet}$ is the homotopy colimit
of the homotopy fibers of the unit of adjunction maps
$\sJ^{\bullet} \to \sJ^\bullet|_{Z \setminus W}$ as
$W \subset Z$ runs through all closed subsets of $Z$ of codimension at least $r$.
We then have the canonical inclusion
$\psi \colon (\sJ^{\bullet}, F) \to (\sJ^{\bullet}, H)$ of filtered
complexes. This induces a morphism of spectral sequences
\begin{equation}\label{eqn:LC-SS-1}
\xymatrix@C1.6pc{
  E^{i,j}_0(\sJ^\bullet, F) \ar[d]_-{\psi} & \Rightarrow  & H^{i+j}_\et(Z, \Lambda_n(q))
  \ar@{=}[d] \\
  E^{i,j}_0(\sJ^{\bullet}, H) & \Rightarrow & H^{i+j}_\et(Z, \Lambda_n(q)).}
\end{equation}

It is clear from the definition of the {\'e}tale cohomology with support that
the $E_1$-spectral sequence arising from the bottom row of ~\eqref{eqn:LC-SS-1}
is the coniveau spectral sequence of ~\eqref{eqn:Unram-2-0}.

Considering the $E_2$-spectral sequences arising from ~\eqref{eqn:LC-SS-1}, we
therefore get a morphism of spectral sequences
\begin{equation}\label{eqn:LC-SS-2}
\xymatrix@C1.6pc{
  E^{i,j}_2(l) \ar[d]_-{\psi} & \Rightarrow & H^{i+j}_\et(Z, \Lambda_n(q))
  \ar@{=}[d] \\
  E^{i,j}_2(c) & \Rightarrow & H^{i+j}_\et(Z, \Lambda_n(q)),}
\end{equation}
and this furnishes the desired morphism of spectral sequences.
The remaining part of the proposition is clear from the above construction
of the spectral sequences and the morphisms between them.
\end{proof}

\subsection{Gersten resolution for ${\sK^{\nr}_{2,\sX}}/n$}\label{sec:Ger-n-1}
We return to the setting of \S~\ref{sec:Gersten-model} for the remainder of
\S~\ref{sec:K-fin}.
In this subsection, we prove several results concerning the cohomology of
${\sK_{2,\sX}}/n$ and $\sH^j(n,2)$ on $\sX$. Recall that $\F$ is an arbitrary
field, $n \in \F^\times$ and $\Lambda_n = {\Z}/n$.
We begin with the following general result.

\begin{lem}\label{lem:Unram-2}
  Let $A$ be a regular local ring with quotient field $F$ such that $n \in A^\times$.
  Assume that either $n$ is odd or $\sqrt{-1} \in A$. Then the map
  \[
  \partial^{{\rm {\etl}}}_0 \colon H^1_{\rm {\etl}}(F, \Lambda_n(2))
  \to {\underset{\htt(\fp) = 1}\bigoplus} H^{0}_{\rm {\etl}}(k(\fp), \Lambda_n(1))
  \]
  is surjective.
\end{lem}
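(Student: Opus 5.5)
Since $\mu_n \subset A$ is not assumed, I will first twist the map down to Kummer theory. The strategy is to reduce the surjectivity of $\partial^{\etl}_0$ to a statement about the divisor map on units, using Kummer theory and the explicit description of the residue map in low degrees. Concretely, $H^1_\et(F, \Lambda_n(1)) \cong F^\times/n$ and $H^0_\et(k(\fp), \Lambda_n(0)) = \Z/n$, and under these identifications the residue map is the valuation map $F^\times/n \to \bigoplus_{\htt(\fp)=1} \Z/n$. This map is surjective because $A$ is a UFD (a regular local ring): each height one prime $\fp = (\pi)$ gives $\pi \in F^\times$ with divisor exactly $[\fp]$. Our map is the twist of this one by $\mu_n$. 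The difficulty is that $\mu_n$ need not be constant over $A$, so we cannot simply cup with a generator of $\mu_n(A)$.

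To handle this, I will pass to $A' = A[\zeta_n]$, which is finite étale over $A$ because $n \in A^\times$; in particular $A'$ is semilocal and regular. Over $A'$ the sheaf $\mu_n$ is constant, so cup product with $\zeta_n$ identifies $H^1(F', \Lambda_n(2))$ with $H^1(F', \Lambda_n(1)) \otimes \mu_n$, and likewise for the targets. The residue map is compatible with these identifications, up to sign conventions that do not affect surjectivity. Since a regular semilocal domain has trivial Picard group, hence is a UFD, the surjectivity argument above works for each component of $A'$, so the statement holds over $A'$. To descend, I will use the corestriction $N \colon H^1(F', \Lambda_n(2)) \to H^1(F, \Lambda_n(2))$, which commutes with residues via the norm maps $\bigoplus_{\fp'|\fp} H^0(k(\fp'), \Lambda_n(1)) \to H^0(k(\fp), \Lambda_n(1))$. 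Composing restriction with corestriction is multiplication by $[F':F]$, which need not be prime to $n$. So instead I will check that the norm on the targets is surjective. For $\fp'$ over $\fp$, the extension $k(\fp')/k(\fp)$ is obtained by adjoining $\zeta_n$, and corestriction $H^0(k(\fp'), \mu_n) \to H^0(k(\fp), \mu_n)$ is the norm map on $\mu_n$, namely $\zeta \mapsto \prod_\sigma \sigma(\zeta)$.

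The main obstacle is exactly this norm surjectivity on $\mu_n$, and it is where the hypothesis enters. Write $G = \mathrm{Gal}(k(\fp)(\zeta_n)/k(\fp)) \subset (\Z/n)^\times$. The norm sends $\zeta$ to $\zeta^{s}$ with $s = \sum_{a \in G} a$, and it lands onto $\mu_n(k(\fp))$ only if $s$ behaves well. I would try first to avoid this computation by a cleaner route: choose generators directly. Given $\fp$ and $\zeta \in \mu_n(k(\fp))$, I want a class in $H^1(F, \mu_n^{\otimes 2})$ whose residue is $\zeta$ at $\fp$ and trivial at every other height one prime. By the Bloch--Ogus/Gersten exactness for the regular ring $A_\fp$, a dvr, it suffices to lift $\zeta$ to $\mu_n(A/\fp)$ and then to a global section of $\mu_n$ over a suitable open set. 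The symbol $\pi \cup \tilde\zeta$ then has the right residues, where $\pi$ generates $\fp$.

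The real issue is therefore whether $\zeta \in \mu_n(k(\fp))$ extends to $\mu_n(A)$. If $\mu_n(k(\fp)) = \mu_n(A/\fp)$, which holds since $A/\fp$ is a normal domain, it remains to lift from $A/\fp$ to $A$. Here I will use the hypothesis. When $n$ is odd, or when $\sqrt{-1} \in A$, the $n$-power cyclotomic extension behaves cyclically (the group $(\Z/\ell^r)^\times$ is cyclic, or the relevant $2$-adic part is controlled). In that case the decomposition group of $\fp$ in $A'/A$ can be compared with the whole Galois group, so a nontrivial class in $\mu_n(A/\fp)$ lying outside $\mu_n(A)$ can be corrected via the norm argument above. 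I expect to work prime by prime on $\ell \mid n$, with the exceptional case $\ell = 2$ requiring $\sqrt{-1}$ precisely to make $\mathrm{Gal}(\Q(\zeta_{2^r})/\Q(\sqrt{-1}))$ cyclic and the norm onto.
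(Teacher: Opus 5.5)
\textbf{Verdict: genuine gap.} Your first route is viable, but its decisive step is left unproved, and the alternative route you offer rests on false claims.

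Your first route differs from the paper's. The paper runs the coniveau spectral sequence of $\Spec(A)$ to identify $\coker(\partial^{\et}_0)$ with $\Ker(H^2_\et(A,\Lambda_n(2))\to H^2_\et(F,\Lambda_n(2)))$, and then quotes Hoobler for that injectivity. You instead realize residues by symbols $\zeta'\cup\pi'$ over the regular semilocal UFD $A'=A[\zeta]$ and push them down with $\Cores$, using $\partial_{\fp}\circ\Cores=\sum_{\fp'\mid\fp}\Cores_{k(\fp')/k(\fp)}\circ\partial_{\fp'}$.

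The step you yourself call the main obstacle is surjectivity of $N_G\colon\mu_n\to\mu_n^G$ with $G=\mathrm{Gal}(k(\fp)(\zeta_n)/k(\fp))$. You never prove it, and the reason you sketch is not the operative one. $G=\{\pm1\}\subset(\Z/4)^\times$ is cyclic, yet $N_G$ kills $\mu_4$ while $\mu_4^G=\mu_2$. Moreover, descent through $A[\zeta_n]$ with composite $n$ fails: if $\zeta_3\in k(\fp)$ and $3\mid[k(\fp)(\zeta_7):k(\fp)]$, the norm from $k(\fp)(\zeta_{21})$ is zero on $\mu_3$.

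The missing argument is to split into $\ell$-primary parts (as you anticipate), use $A[\zeta_{\ell^r}]$, and compute $s=\sum_{a\in G}a$.
\begin{itemize}
\item For $\ell$ odd: if $G$ contains some $a\not\equiv1\pmod{\ell}$, then $\mu_{\ell^r}^G=1$. Otherwise $G=1+\ell^t\Z/\ell^r\Z$ with $1\le t\le r$, so $s\equiv\ell^{r-t}\pmod{\ell^r}$ and $N_G(\mu_{\ell^r})=\mu_{\ell^t}=\mu_{\ell^r}^G$.
\item For $\ell=2$ and $r\ge2$: the hypothesis $\sqrt{-1}\in A$ forces $G=1+2^t\Z/2^r\Z$ with $2\le t\le r$. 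Then $s=2^{r-t}+2^{r-1}(2^{r-t}-1)$ is, modulo $2^r$, an odd multiple of $2^{r-t}$, so again $N_G(\mu_{2^r})=\mu_{2^t}=\mu_{2^r}^G$.
\end{itemize}
This computation is exactly where the hypothesis enters.

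Your ``cleaner route'' should be discarded, since two of its claims are false.
\begin{itemize}
\item $A/\fp$ need not be normal. For $A=\Q[x,y]_{(x,y)}$, $\fp=(x^2+3y^2)$ and $n=3$, one has $\zeta_3\in k(\fp)$ because $(x/y)^2=-3$, but $\mu_3(A/\fp)=1$.
\item Elements of $\mu_n(A/\fp)$ need not lift to $\mu_n(A)$. For $A=\Z_{(7)}$ and $n=3$, $\mu_3(\F_7)\neq1=\mu_3(\Z_{(7)})$.
\end{itemize}
The lemma holds in both examples, but the required preimages are corestrictions such as $\Cores_{\Q(\zeta_3)/\Q}(\zeta_3\cup(2-\zeta_3))$. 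They are not symbols $\tilde\zeta\cup\pi$ with $\tilde\zeta\in\mu_n(A)$.

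Once the norm computation is supplied, your first route is a complete and elementary proof. Combined with the paper's spectral-sequence identification, it also yields the injectivity of $H^2_\et(A,\Lambda_n(2))\to H^2_\et(F,\Lambda_n(2))$ that the paper imports from Hoobler.
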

\begin{proof}
In the coniveau spectral sequence ~\eqref{eqn:Unram-2-0} for $Z = \Spec(A)$,
  we see that $E^{i,j}_r = 0$ if $i > j$ or $i < 0$. 
If we run this spectral sequence whose abutment is
  $H^2_\et(A, \Lambda_n(2))$, we get an exact sequence
  \begin{equation}\label{eqn:Unram-2-1}
    0 \to E^{0,2}_\infty \to  H^{2}_\et(F, \Lambda_n(2)) \xrightarrow{d^{0,2}_1}
    {\underset{\htt(\fp) =1}\bigoplus} H^{1}_\et(k(\fp), \Lambda_n(1)).
    \end{equation} 
  We also get $E^{1,1}_2 \cong E^{1,1}_\infty$ and an exact sequence
  \begin{equation}\label{eqn:Unram-2-2}
H^1_\et(F, \Lambda_n(2)) \xrightarrow{d^{0,1}_1 = - \partial^{\et}_0}
 {\underset{\htt(\fp) = 1}\bigoplus} H^{0}_\et(k(\fp), \Lambda_n(1)) \to
E^{1,1}_\infty \to 0.
\end{equation}

 Since $E^{i,2-i}_1 = 0$ for $i \ge 2$, we get an exact sequence
  \begin{equation}\label{eqn:Unram-2-3}
    0 \to E^{1,1}_\infty \to H^2_\et(A, \Lambda_n(2)) \xrightarrow{\epsilon^{\et}}
    H^{2}_\et(F, \Lambda_n(2)) \xrightarrow{d^{0,2}_1}
  {\underset{\htt(\fp) =1}\bigoplus} H^{1}_\et(k(\fp), \Lambda_n(1)).
  \end{equation}

By ~\eqref{eqn:Unram-2-2}, proving the lemma is equivalent to proving that
  $E^{1,1}_\infty =0$. The latter is equivalent to showing that the pull-back map
  $\epsilon^{\et}$ in ~\eqref{eqn:Unram-2-3} is injective.
  But this follows from \cite[Lem.~2]{Hoobler}.
\end{proof}

As an application of \lemref{lem:Unram-2}, we shall now prove a mod-$n$ analogue of
\thmref{thm:Gersten-10} together with several of its consequences. To simplify the
exposition, we first introduce the following notation. It is slightly different
from the similar notation in \S~\ref{sec:VRC}.

\vskip.2cm

{\sl {\bf Notation:}} We shall say that the $(\star_n)$-condition holds for $\sX$
if one of the following conditions is satisfied.
\begin{enumerate}
  \item
$n$ is odd.
  \item
    $\sqrt{-1} \in R$.
  \item
    $R$ is henselian and $\sqrt{-1} \in \F$.
\item
  $\sX$ is a smooth $R$-scheme.
\item
  $R$ is equicharacteristic.
  \end{enumerate}

\begin{thm}\label{thm:Unram-3}
Under the $(\star_n)$-condition, the Gersten complexes
  \begin{equation}\label{eqn:Unram-3-2}
    0 \to {\sK^{\nr}_{2,\sX}}[n] \to  i_{\eta *} {K_2(k(\eta))}[n]
    \xrightarrow{\partial_0}
    {\underset{x \in \sX^{(1)}}\bigoplus} i_{x*} {k(x)^\times}[n] \to  0;
  \end{equation}
 
\begin{equation}\label{eqn:Unram-3-0}
   0 \to {\sK^{\nr}_{2,\sX}}/n \to i_{\eta *} {K_2(k(\eta))}/n \xrightarrow{\partial_0}
  {\underset{x \in \sX^{(1)}}\bigoplus} i_{x*} {k(x)^\times}/n \xrightarrow{\partial_1}
  {\underset{x \in \sX^{(2)}}\bigoplus} i_{x*} {\Z}/n \to 0
\end{equation}
are exact.
\end{thm}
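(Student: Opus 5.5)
The plan is to deduce both sequences from \thmref{thm:Gersten-10} by applying the snake lemma to multiplication by $n$. The one new input is surjectivity on the $[n]$-parts, which comes from \lemref{lem:Unram-2} via the norm residue isomorphism. In equal characteristic (condition (5)) everything follows from the exactness of ~\eqref{eqn:Quillen-2} (\lemref{lem:Panin*}) together with Panin's Gersten conjecture for $K_*/n$. So I assume $\Char(k)=0$ and work stalkwise at $x\in\sX$, with $A=\sO_{\sX,x}$ and $K=k(\eta)$.

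First split \thmref{thm:Gersten-10} at the stalk $A$ into short exact sequences:
\[
0\to \sK^{\nr}_{2,A}\to K_2(K)\to I\to 0,\qquad 0\to I\to \bigoplus_{\htt\fp=1}k(\fp)^\times \to J\to 0,
\]
\[
0\to J\to \bigoplus_{\htt\fp=2}\Z\to 0 .
\]
Here $J$ is the image of $\partial_1$; the last map is onto by \lemref{lem:Gersten-1} and injective by exactness at $\fp$ of height two. Since $\bigoplus\Z$ is torsion-free, $J[n]=0$, and tensoring the second sequence with $\Z/n$ gives exactness of
\[
0\to I[n]\to \bigoplus k(\fp)^\times[n]\to 0
\]
and
\[
0\to I/n\to \bigoplus k(\fp)^\times/n\to \bigoplus \Z/n\to 0 .
\]
The first sequence yields
\[
0\to\sK^{\nr}_2[n]\to K_2(K)[n]\to I[n]\to \sK^{\nr}_2/n\to K_2(K)/n\to I/n\to 0 .
\]
Splicing these sequences shows that ~\eqref{eqn:Unram-3-2} and ~\eqref{eqn:Unram-3-0} are exact if and only if $K_2(K)[n]\to I[n]$ is surjective. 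Equivalently, $\partial_0\colon K_2(K)[n]\to\bigoplus k(\fp)^\times[n]=\bigoplus\mu_n(k(\fp))$ must be surjective. Stalkwise exactness gives exactness of the sheaf complexes, because exactness at $\sK^{\nr}_2/n$ also holds by construction.

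To prove that surjectivity, identify $K_2(K)[n]$ with $H^1_\et(K,\Lambda_n(2))$ via Merkurjev--Suslin (the universal coefficient/Bockstein sequence from the norm residue isomorphism in weights $\le 2$). Under this identification the tame symbol on $n$-torsion should correspond, up to sign, to the residue $\partial^{\et}_0\colon H^1(K,\Lambda_n(2))\to \bigoplus H^0(k(\fp),\Lambda_n(1))$. This compatibility of boundaries is where I would be careful; it is standard for dvrs (Suslin, ``Torsion in $K_2$ of fields''), and $\partial_0$ is computed dvr by dvr at each height-one prime. The claim then reduces to \lemref{lem:Unram-2} whenever $n$ is odd or $\sqrt{-1}\in A$, which covers conditions (1) and (2).

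For condition (3), with $R$ henselian and $\sqrt{-1}\in\F$, Hensel's lemma gives $\sqrt{-1}\in R\subset A$, since $n$ is even and invertible, so $2\in\F^\times$. For condition (4), with $\sX$ smooth over $R$, \lemref{lem:Gersten-4} gives stalkwise exactness of the Gersten complex for $K_2$ and, at the stalk, $\sK^{\nr}_2=K_2(A)$. The mod-$n$ and $n$-torsion statements then follow from Gersten for $K_*/n$ on smooth schemes over a dvr (Gillet--Levine with finite coefficients), which forces $H^1$ of the local complex with $[n]$-terms to vanish, or more directly from surjectivity of $K_2(K)\{n\}$-residues.

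I expect the main obstacle to be the identification of the tame symbol on $K_2(K)[n]$ with the étale residue. The exact statement of \lemref{lem:Unram-2} uses the parity/$\sqrt{-1}$ hypothesis, presumably to handle the sign and Bockstein issues in mixed characteristic.
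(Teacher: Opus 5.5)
Your proposal is correct and follows essentially the same route as the paper. The multiplication-by-$n$ diagram chase on \thmref{thm:Gersten-10} reduces everything to surjectivity of $\partial_0$ on $n$-torsion. Under conditions (1)--(3) this comes from \lemref{lem:Unram-2} via Suslin's residue-compatible map. In the smooth and equicharacteristic cases it comes from Gersten for $K$-theory with $\Z/n$-coefficients, through $K_2(A)/n\subset K_2(A;\Z/n)\hookrightarrow K_2(k(\eta);\Z/n)$, which gives $K_2(A)/n\hookrightarrow K_2(k(\eta))/n$.

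One correction: Merkurjev--Suslin does not identify $K_2(K)[n]$ with $H^1_\et(K,\Lambda_n(2))$. The Bockstein sequence only gives a surjection $H^1_\et(K,\Lambda_n(2))\surj K_2(K)[n]$ with kernel $K_3^{\rm ind}(K)/n$. A map in this direction that commutes with the residues is all your argument (and the paper's) actually uses.
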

\begin{proof}
 We look at the commutative diagram
  \begin{equation}\label{eqn:Unram-3-1}
    \xymatrix@C1pc{
       & 0 \ar[d] & 0 \ar[d] & 0 \ar[d] & & \\
      0 \ar[r] & {\sK^{\nr}_{2,\sX}}[n] \ar[r] \ar[d] &  i_{\eta *} {K_2(k(\eta))}[n]
      \ar[r]^-{\partial_0} \ar[d] &
         {\underset{x \in \sX^{(1)}}\bigoplus} i_{x*} {k(x)^\times}[n]
        \ar[r] \ar[d] & 0 \ar[d] & \\
        0 \ar[r] & {\sK^{\nr}_{2,\sX}} \ar[r] \ar[d]_-{n} &
        i_{\eta *} {K_2(k(\eta))} \ar[r]^-{\partial_0} \ar[d]^-{n} \ar[r] & 
        {\underset{x \in \sX^{(1)}}\bigoplus} i_{x*} {k(x)^\times} \ar[r]^-{\partial_1}
        \ar[d]^-{n} &
  {\underset{x \in \sX^{(2)}}\bigoplus} i_{x*} {\Z} \ar[r] \ar[d]^-{n} & 0 \\
  0 \ar[r] & {\sK^{\nr}_{2,\sX}} \ar[r] \ar[d] &
  i_{\eta *} {K_2(k(\eta))} \ar[r]^-{\partial_0} \ar[d] \ar[r] & 
  {\underset{x \in \sX^{(1)}}\bigoplus} i_{x*} {k(x)^\times} \ar[r]^-{\partial_1}
  \ar[d] & {\underset{x \in \sX^{(2)}}\bigoplus} i_{x*} {\Z} \ar[r] \ar[d] & 0 \\
  0 \ar[r] & {\sK^{\nr}_{2,\sX}}/n \ar[r] \ar[d] &
  i_{\eta *} {K_2(k(\eta))}/n \ar[r]^-{\partial_0} \ar[r] \ar[d] & 
 {\underset{x \in \sX^{(1)}}\bigoplus} i_{x*} {k(x)^\times}/n \ar[r]^-{\partial_1} \ar[d]
 & {\underset{x \in \sX^{(2)}}\bigoplus} i_{x*} {\Z}/n \ar[r] \ar[d] & 0. \\
  & 0 & 0 & 0 & 0 &} 
\end{equation}

Since all columns are exact, the two middle rows are exact by
  \thmref{thm:Gersten-10}, and the maps between these rows are all multiplication by
  $n$, a diagram chase tells us that the following conditions are equivalent.
  \begin{enumerate}
  \item
    ~\eqref{eqn:Unram-3-2} and ~\eqref{eqn:Unram-3-0} are exact.
  \item
    The top row of ~\eqref{eqn:Unram-3-1} is exact.
  \item
    The bottom row of ~\eqref{eqn:Unram-3-1} is exact.
  \item
    The map $\partial_0$ in the top row is surjective.
    \end{enumerate}
We shall verify either (3) or (4), depending on which of the
   $(\star_n)$-condition holds.

  We let $x \in \sX$ and $A = \sO_{\sX, x}$.
  We assume first that either the first or the third $(\star_n)$-condition holds. By
  \cite[Lem.~3.14]{Suslin-KT}, there is a commutative diagram
  \begin{equation}\label{eqn:Unram-3-1-1}
    \xymatrix@C1pc{
H^1_\et(k(\eta), \Lambda_n(2)) \ar[r]^-{\partial^{\et}_0} \ar[d] & 
{\underset{\htt(\fp) = 1}\bigoplus} H^{0}_\et(k(\fp), \Lambda_n(1))
\ar[d]^-{\cong} \\
  K_2(k(\eta))[n] \ar[r]^-{\partial_0} & {\underset{\htt(\fp) = 1}\bigoplus}
  k(\fp)^\times[n],}
  \end{equation}
  where the right vertical arrow is the canonical isomorphism induced by the
  Kummer sequence
  (Suslin also shows that the left vertical arrow is surjective but we don't need it).
 Since the top horizontal arrow is surjective by \lemref{lem:Unram-2}, it follows
 that so is the bottom horizontal arrow. This completes the proof of the theorem when
 either the first or the third $(\star_n)$-condition holds. Since the
 third $(\star_n)$-condition implies the second, the theorem
 follows in the latter case too.

 We assume now that $\sX$ is a smooth $R$-scheme. We shall show in this case that
 the bottom row of ~\eqref{eqn:Unram-3-1} is exact, which will prove the theorem
 under the fourth $(\star_n)$-condition.
 Since the last term of ~\eqref{eqn:Gersten-9} is torsion-free, it suffices to show
 that the map ${K_2(A)}/n \to {K_2(k(\eta))}/n$ is injective. To show this, we
 look at the commutative diagram
\begin{equation}\label{eqn:Unram-3-1-2}
    \xymatrix@C1pc{
      {K_2(A)}/n \ar[r] \ar[d] & {K_2(k(\eta))}/n \ar[d] \\
      K_2(A, \Lambda_n) \ar[r] & K_2(k(\eta), \Lambda_n),}
\end{equation}
where $K(A, \Lambda_n)$ is the $K$-theory with finite coefficient spectrum and
the vertical arrows are the canonical inclusions given by the universal
coefficient theorem. Since the bottom horizontal arrow is injective by
\cite[Cor.~B]{Gillet-JA}, it follows that the top horizontal arrow is also injective.
This proves the theorem if the fourth $(\star_n)$-condition holds.

Finally, we assume that $R$ is an equicharacteristic ring. As in the previous case,
it suffices to show that the map ${K_2(A)}/n \to {K_2(k(\eta))}/n$ is injective.
To show this, we apply \cite[Thm.~2.2]{KP-Comp} to reduce to the
case when $A$ is essentially of finite type and smooth over a perfect field. 
In this case, we can use ~\eqref{eqn:Unram-3-1-2} and \cite[Cor.~B]{Gillet-JA}
(which holds for smooth algebras over fields too) once again to conclude the
desired injectivity. This completes the proof of the theorem.
\end{proof}

\begin{cor}\label{cor:Unram-4}
 Under the $(\star_n)$-condition, we have an exact sequence
  \begin{equation}\label{eqn:Unram-4-0}
  0 \to {H^1_\zar(\sX, {\sK}^{\nr}_{2,\sX})}/n \to H^1_\zar(\sX, {{\sK}^{\nr}_{2,\sX}}/n)
  \to \CH_1(\sX)[n] \to 0.
  \end{equation}
  \end{cor}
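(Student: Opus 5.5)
The plan is to take the Zariski cohomology long exact sequence of the short exact sequence of sheaves
\[
0 \to {\sK^{\nr}_{2,\sX}}[n] \to {\sK^{\nr}_{2,\sX}} \xrightarrow{n} {\sK^{\nr}_{2,\sX}} \to {\sK^{\nr}_{2,\sX}}/n \to 0,
\]
split into two short exact sequences via the image sheaf $n\sK^{\nr}_{2,\sX}$. This is the standard route to a universal-coefficient-type sequence
\[
0 \to H^1_\zar(\sX, \sK^{\nr}_{2,\sX})/n \to H^1_\zar(\sX, \sK^{\nr}_{2,\sX}/n) \to H^2_\zar(\sX, \sK^{\nr}_{2,\sX})[n] \to 0.
\]
Combined with \corref{cor:Bloch-2} and \eqref{eqn:Bloch-2-0}, which give $H^2_\zar(\sX, \sK^{\nr}_{2,\sX}) \cong \CH_1(\sX)$, this yields \eqref{eqn:Unram-4-0}. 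The universal-coefficient sequence is not automatic, however: it holds only once the torsion sheaf ${\sK^{\nr}_{2,\sX}}[n]$ has no higher cohomology in the relevant degrees. Controlling that torsion sheaf is the only real input.

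\textbf{Step 1: kill the cohomology of the torsion sheaf.} Under the $(\star_n)$-condition, \thmref{thm:Unram-3} says that \eqref{eqn:Unram-3-2} is exact. This gives a flasque resolution
\[
{\sK^{\nr}_{2,\sX}}[n] \to i_{\eta *}K_2(k(\eta))[n] \to \bigoplus_{x\in\sX^{(1)}} i_{x*}k(x)^\times[n]
\]
of length one, since constant sheaves on irreducible closed subsets are flasque. Consequently $H^i_\zar(\sX, {\sK^{\nr}_{2,\sX}}[n]) = 0$ for $i \ge 2$.

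\textbf{Step 2: run the long exact sequences.} Let $\sI = n\sK^{\nr}_{2,\sX}$.
\begin{itemize}
\item From $0 \to {\sK^{\nr}_{2,\sX}}[n] \to \sK^{\nr}_{2,\sX} \to \sI \to 0$ and Step 1, the map $H^i(\sK^{\nr}_{2,\sX}) \to H^i(\sI)$ is surjective for $i=1$ and bijective for $i\ge 2$.
\item From $0 \to \sI \to \sK^{\nr}_{2,\sX} \to \sK^{\nr}_{2,\sX}/n \to 0$ we get the exact sequence
\[
H^1(\sI) \to H^1(\sK^{\nr}_{2,\sX}) \to H^1(\sK^{\nr}_{2,\sX}/n) \to H^2(\sI) \to H^2(\sK^{\nr}_{2,\sX}).
\]
\end{itemize}
Composing the maps $H^i(\sK^{\nr}_{2,\sX}) \to H^i(\sI) \to H^i(\sK^{\nr}_{2,\sX})$ gives multiplication by $n$. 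Using the surjectivity in degree $1$ and the bijectivity in degree $2$, the sequence above becomes
\[
0 \to H^1(\sK^{\nr}_{2,\sX})/n \to H^1(\sK^{\nr}_{2,\sX}/n) \to H^2(\sK^{\nr}_{2,\sX})[n] \to 0.
\]

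\textbf{Step 3: identify the last term.} Substitute $H^2_\zar(\sX,\sK^{\nr}_{2,\sX}) \cong \CH_1(\sX)$ from \eqref{eqn:Bloch-2-0} to obtain \eqref{eqn:Unram-4-0}.

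The main obstacle is Step 1. It uses exactness of \eqref{eqn:Unram-3-2}, in particular surjectivity of $\partial_0$ on $n$-torsion, and this is exactly where the $(\star_n)$-condition enters. Once that is granted from \thmref{thm:Unram-3}, the rest is routine homological algebra, and the only point needing care is checking that the composite maps through $\sI$ are indeed multiplication by $n$.
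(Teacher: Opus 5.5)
Your proposal is correct and follows the paper's proof: it deduces $H^i_\zar(\sX, {\sK}^{\nr}_{2,\sX}[n]) = 0$ for $i \ge 2$ from \thmref{thm:Unram-3}, gets surjectivity for $i=1$ and bijectivity for $i \ge 2$ of $H^i_\zar(\sX, {\sK}^{\nr}_{2,\sX}) \to H^i_\zar(\sX, n{\sK}^{\nr}_{2,\sX})$, and concludes using \eqref{eqn:Bloch-2-0}. The only difference is that you write out the diagram chase the paper leaves implicit, including why $\bigoplus_{x} i_{x*}k(x)^\times[n]$ is flasque and why the composites through $n{\sK}^{\nr}_{2,\sX}$ are multiplication by $n$.
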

\begin{proof}
  It follows from \thmref{thm:Unram-3} that
  $H^i_\zar(\sX, {\sK}^{\nr}_{2,\sX}[n]) = 0$ for $i \ge 2$.
Letting $n{\sK}^{\nr}_{2,\sX}$ denote the image of the `multiplication by $n$' map on
  ${\sK}^{\nr}_{2,\sX}$, this implies that the map $H^i_\zar(\sX, {\sK}^{\nr}_{2,\sX}) \to
  H^i_\zar(\sX, n{\sK}^{\nr}_{2,\sX})$ is surjective for $i =1$ and bijective
  for $i \ge 2$. Using ~\eqref{eqn:Bloch-2-0}, this easily results in the exactness
  of ~\eqref{eqn:Unram-4-0}.
\end{proof}

\subsection{Bloch--Ogus resolution for $\sH^2_{\nr}(n,2)$}\label{sec:ECC}
We continue to work in the setting of \S~\ref{sec:Gersten-model}. 
We shall write $H^{i,j}_n(A) = H^i_\et(A, \Lambda_n(j))$ for any ring $A$.

We consider the diagram
\begin{equation}\label{eqn:Unram-6-0}
    \xymatrix@C.7pc{
 {\sK_{2,\sX}}/n \ar[r]^-{\epsilon} \ar[d]_-{\beta_0} &
      i_{\eta *} {K_2(k(\eta))}/n \ar[r]^-{\partial_0}
 \ar[d]^-{\beta_0} & {\underset{x \in \sX^{(1)}}\bigoplus} i_{x*} {k(x)^\times}/n
 \ar[r]^-{\partial_1} \ar[d]^-{\beta_1} &
    {\underset{x \in \sX^{(2)}}\bigoplus} i_{x*} {\Z}/n
 \ar[r] \ar[d]^-{\beta_2} &  0 \\
\sH^2(n,2) \ar[r]^-{\epsilon^{\et}} & i_{\eta *} H^{2,2}_n(k(\eta))
  \ar[r]^-{\partial^{\et}_0} & {\underset{x \in \sX^{(1)}}\bigoplus}
  i_{x *} H^{1,1}_n(k(x)) \ar[r]^-{\partial^{\et}_1}
  & {\underset{x \in \sX^{(2)}}\bigoplus} i_{x *} H^{0,0}_n(k(x)) \ar[r] & 0,}
\end{equation}
whose top row is the (mod-$n$) Gersten complex ~\eqref{eqn:Gersten-0}, the
  bottom row is the Bloch-Ogus complex ~\eqref{eqn:BO-1}, and the vertical
  arrows are the Norm-residue maps.

\begin{lem}\label{lem:Unram-6}
    The diagram ~\eqref{eqn:Unram-6-0} is commutative and its vertical arrows are
    bijective.
  \end{lem}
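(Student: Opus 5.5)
We prove the two assertions separately: bijectivity of the vertical arrows first, then commutativity square by square. The maps $\beta_0, \beta_1, \beta_2$ on the flasque terms are the Galois symbol (norm residue) maps for the fields $k(\eta)$, $k(x)$ with $x \in \sX^{(1)}$, and $k(x)$ with $x \in \sX^{(2)}$. They are ${K^M_2(F)}/n \to H^2_{\rm {\etl}}(F,\Lambda_n(2))$, the Kummer isomorphism ${F^\times}/n \cong H^1_{\rm {\etl}}(F,\Lambda_n(1))$, and the identity ${\Z}/n = H^0_{\rm {\etl}}(F,\Lambda_n(0))$. Since $n \in \F^\times$, $n$ is invertible on every residue field, so the Merkurjev--Suslin theorem gives bijectivity of $\beta_0$ on the generic term. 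Bijectivity of $\beta_1$ and $\beta_2$ is immediate. The sheaf-level map ${\sK_{2,\sX}}/n \to \sH^2(n,2)$ is checked on stalks, i.e. for the regular local rings $A = \sO_{\sX,x}$. There ${K_2(A)}/n \to H^2_{\rm {\etl}}(A,\Lambda_n(2))$ is the norm residue map for the regular local ring $A$, and by Hoobler's extension of the norm residue isomorphism to arbitrary regular local rings \cite{Hoobler} it is an isomorphism. We identify $K_2(A)$ with $K^M_2(A)$ (or $\wh{K}^M_2(A)$) via \lemref{lem:Kerz-0}, passing to improved Milnor $K$-theory if residue fields are small, so that it matches Hoobler's formulation.

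For commutativity, the left square commutes because the norm residue map is natural for the flat pull-back $\Spec(k(\eta)) \to \Spec(A)$. The right square involves $\partial_1 = \divf$ and the étale residue $H^1(k(x),\Lambda_n(1)) \to H^0(k(y),\Lambda_n(0))$. Both are computed at a codimension-one point $y$ of $\ov{\{x\}}$ by passing to the normalization, and the Kummer-theoretic residue of a unit class is its valuation mod $n$. So this square commutes by standard compatibility, up to the sign convention already fixed in \eqref{eqn:Unram-2-2}.

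The main obstacle is the middle square, which says that the tame symbol is compatible with the Bloch--Ogus differential $\partial^{\rm {\etl}}_0$. The latter is defined via Gabber purity and localization sequences, not by an explicit formula. I would reduce to the local case by localizing at $x \in \sX^{(1)}$, where $\sO_{\sX,x}$ is a dvr $B$ with fraction field $k(\eta)$. The claim then becomes that the Gysin boundary $H^2_{\rm {\etl}}(k(\eta),\Lambda_n(2)) \to H^1_{\rm {\etl}}(k(x),\Lambda_n(1))$ arising from the localization sequence for $\Spec(B)$ agrees, up to a universal sign, with the tame symbol under the Galois symbol. This is the classical compatibility of the residue map with the tame symbol, cf. \cite[\S~1]{CHK} and \cite{Kato-contem-86}. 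It can be checked on symbols $\{u,\pi\}$ and $\{u,v\}$ with $u,v \in B^\times$ and $\pi$ a uniformizer, which generate ${K_2(k(\eta))}/n$. The symbol $\{u,v\}$ comes from $H^2(B)$ and so has zero residue on both sides. For $\{u,\pi\}$, the boundary is the cup product of the class of $\bar u$ with the Gysin image of $(\pi)$, which is the fundamental class; this gives $\bar u$, matching the tame symbol. If the sign conventions in \cite{CHK} differ, I would absorb the sign into $\beta_1$, which does not affect bijectivity.
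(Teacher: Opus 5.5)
Your proposal is correct and follows essentially the same route as the paper. The bijectivity of all vertical arrows comes from Hoobler's norm residue theorem for regular local rings, with Merkurjev--Suslin as the field case. The left square commutes by naturality, and the right square is handled by passing to the normalization of $\ov{\{x\}}$ at $y$, where the paper's diagram makes explicit that the corestriction $H^0(k(y_i)) \to H^0(k(y))$ matches the degree factors in $\divf$. The only difference is that you verify the middle square by hand on the generators $\{u,v\}$, $\{u,\pi\}$ via linearity of the boundary and the fundamental class of the divisor, whereas the paper cites Gillet and Weibel for this classical compatibility; your sign caveat is harmless, since a sign does not affect kernels or bijectivity.
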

  \begin{proof}
The commutativity of the diagram
    is well-known (cf. \cite[Proof of Thm.~(1.1), P.~942]{Bloch-BAMS}).
    We give a proof sketch as op. cit. does not provide a proof.

The left square in  ~\eqref{eqn:Unram-6-0} clearly commutes.
  The middle square commutes by a combination of \cite[Thm.~3.11]{Gillet-Adv}
  and \cite[Chap.~V, Lem.~11.13]{Weibel-K-book}. To see that the right square
  commutes, note that for any $x \in  \sX^{(1)}$ and any codimension one point $y$ of
  $\ov{\{x\}}$, one has that the $x$-component of $\beta_1$ is the Kummer map, i.e.,
  the first boundary map arising from the Kummer sequence
  (cf. \cite[Chap.~V, Exm.~11.10]{Weibel-K-book})
  while the $y$-component of $\beta_2$ is the identity map.

  We now let $Z = \ov{\{x\}}, \ A = \sO_{Z,y}$ and let $A_n$ denote the normalization
  of $A$. Let $y_1, \ldots , y_r$ denote the closed points of $\Spec(A_n)$
  (note that $A$ is excellent because $R$ is).
   We then have a commutative diagram
\begin{equation}\label{eqn:Unram-6-1}
    \xymatrix@C2pc{
      A^\times \ar[r] \ar[d] &  k(x)^\times \ar[r]^-{\rm valuation} \ar[d] &
      \stackrel{r}{\underset{i =1}\oplus} K_0(k(y_i))
      \ar[r] \ar@{->>}[d] & 0 \\
      H^{1,1}_n(A) \ar[r] &  H^{1,1}_n(k(x)) \ar[r] &
      \stackrel{r}{\underset{i =1}\oplus} H^{0,0}_n(k(y_i)) \ar[r] & 0,}
\end{equation}
in which the left and the middle vertical arrows are the Kummer maps and the
right vertical arrow is the canonical quotient map.

The commutativity of the right square in ~\eqref{eqn:Unram-6-0} now follows because
its $(x,y)$-component is nothing but (the reduction mod-$n$ of)
the composition of the two squares in the diagram
\begin{equation}\label{eqn:Unram-6-2}
    \xymatrix@C1pc{
      k(x)^\times \ar[r] \ar[d] & \stackrel{r}{\underset{i =1}\oplus} K_0(k(y_i))
      \ar[r] \ar@{->>}[d] & K_0(k(y)) \ar@{->>}[d] \\
      H^{1,1}_n(k(x)) \ar[r] &
      \stackrel{r}{\underset{i =1}\oplus} H^{0,0}_n(k(y_i)) \ar[r] &
      H^{0,0}_n(k(y)),}
\end{equation}
whose horizontal arrows in the second square are the corestriction maps.
In particular, this square commutes. The vertical arrows of ~\eqref{eqn:Unram-6-0}
are bijective by \cite[Thm.~3]{Hoobler}.
  \end{proof}

\begin{lem}\label{lem:Unram-8}
  We have the following.
  \begin{enumerate}
  \item
    There exists a commutative diagram
  \begin{equation}\label{eqn:Unram-8-0}
    \xymatrix@C1pc{
      {\sK_{2,\sX}}/n \ar[r]^-{\epsilon} \ar[d]_-{\beta_0} & {{\sK}^{\nr}_{2, \sX}}/n
      \ar[d]^-{\beta'_0} \\
      \sH^2(n,2) \ar[r]^-{\epsilon^{{\rm {\etl}}}} & \sH^2_{\nr}(n,2),}
  \end{equation}
  whose left vertical arrow is bijective and the right vertical arrow is surjective.
  If the $(\star_n)$-condition holds, then the right vertical arrow is bijective.
\item
If the $(\star_n)$-condition holds, then the Bloch-Ogus sequence
  \[
  \hspace*{.6cm}
  0 \to \sH^2_{\nr}(n,2) \xrightarrow{\epsilon^{\rm {\etl}}} i_{\eta *} H^{2,2}_n(k(\eta))
  \xrightarrow{\partial^{\rm {\etl}}_0} {\underset{x \in \sX^{(1)}}\bigoplus}
  i_{x *} H^{1,1}_n(k(x)) \xrightarrow{\partial^{\rm {\etl}}_1}
  {\underset{x \in \sX^{(2)}}\bigoplus} i_{x *} H^{0,0}_n(k(x)) \to 0
  \]
  is a flasque resolution of $\sH^2_{\nr}(n,2)$.
  \end{enumerate}
\end{lem}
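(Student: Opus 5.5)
The plan is to deduce everything from the commutative ladder \eqref{eqn:Unram-6-0} of \lemref{lem:Unram-6}, whose vertical arrows $\beta_0,\beta_1,\beta_2$ are all isomorphisms.

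\textbf{Item (1).} First define $\beta'_0$ and check its properties.
\begin{enumerate}
\item By definition ${\sK}^{\nr}_{2,\sX}=\Ker(\partial_0)$ on $i_{\eta*}K_2(k(\eta))$. Its reduction ${\sK}^{\nr}_{2,\sX}/n$ maps to $i_{\eta*}K_2(k(\eta))/n$, and the image lies in $\Ker(\partial_0 \bmod n)$.
\item Composing with the middle vertical isomorphism $\beta_0$ of \eqref{eqn:Unram-6-0} and using commutativity of the middle square, the image lands in $\Ker(\partial^{\et}_0)=\sH^2_{\nr}(n,2)$. This defines $\beta'_0$.
\item The map $\epsilon$ in \eqref{eqn:Unram-8-0} is the reduction mod $n$ of $\sK_{2,\sX}\surj\ov{\sK}_{2,\sX}\inj{\sK}^{\nr}_{2,\sX}$. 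Commutativity of \eqref{eqn:Unram-8-0} then follows from the left square of \eqref{eqn:Unram-6-0}.
\item Bijectivity of the left arrow $\beta_0\colon{\sK_{2,\sX}}/n\to\sH^2(n,2)$ is the stalkwise norm residue isomorphism for regular local rings \cite[Thm.~3]{Hoobler}, as already used in \lemref{lem:Unram-6}.
\end{enumerate}

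For surjectivity of $\beta'_0$, take a local section $s$ of $\sH^2_{\nr}(n,2)$.
\begin{enumerate}
\item Its preimage in $K_2(k(\eta))/n$ under $\beta_0$ is killed by $\partial_0 \bmod n$, since $\beta_1$ is injective.
\item Lift it to $a\in K_2(k(\eta))$. Then $\partial_0(a)=nb$ for some $b\in\bigoplus k(x)^\times$.
\item We have $n\,\partial_1(b)=0$ in $\bigoplus\Z$, which is torsion free, so $\partial_1(b)=0$.
\item By exactness of \eqref{eqn:Gersten-9} (\thmref{thm:Gersten-10}), locally $b=\partial_0(c)$. Then $a-nc\in{\sK}^{\nr}_{2,\sX}$ and it maps to $s$.
\end{enumerate}
This argument needs no $(\star_n)$ hypothesis.

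For injectivity under $(\star_n)$, use \thmref{thm:Unram-3}: exactness of \eqref{eqn:Unram-3-0} at its first spot says ${\sK}^{\nr}_{2,\sX}/n\to i_{\eta*}K_2(k(\eta))/n$ is injective. Composing with the isomorphism $\beta_0$ gives injectivity of $\beta'_0$.

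\textbf{Item (2).} Assume $(\star_n)$.
\begin{enumerate}
\item Transport the exact sequence \eqref{eqn:Unram-3-0} along the vertical isomorphisms $\beta'_0,\beta_0,\beta_1,\beta_2$. Commutativity of every square in \eqref{eqn:Unram-6-0}, together with \eqref{eqn:Unram-8-0}, gives a degreewise isomorphism of complexes, so the Bloch--Ogus sequence is exact.
\item The terms $i_{x*}(\text{group})$ are skyscraper-type sheaves on the irreducible closures, hence flasque.
\end{enumerate}

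The main obstacle is keeping track of signs and conventions. Here $d_1^{0,1}=-\partial^{\et}_0$, and the commutativity of the tame/residue square is invoked at the level of sheaves rather than stalks. Both are handled by \lemref{lem:Unram-6}, so the remaining work is bookkeeping.
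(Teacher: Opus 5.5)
Your proposal is correct and follows the paper's route: you define $\beta'_0$ through the middle square of \eqref{eqn:Unram-6-0}, and you get injectivity under the $(\star_n)$-condition and item (2) from \thmref{thm:Unram-3} together with the isomorphisms of \lemref{lem:Unram-6}. Your explicit chase via \thmref{thm:Gersten-10}, which gives surjectivity of $\beta'_0$ without any hypothesis, is the step the paper leaves implicit (it is the diagram chase in \eqref{eqn:Unram-3-1}), so it is a useful clarification rather than a different argument.
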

\begin{proof}
  Item (1) follows from \thmref{thm:Unram-3} and \lemref{lem:Unram-6} if we let
  $\beta'_0$ be the map induced on the kernels of the horizontal arrows in
  the middle square of ~\eqref{eqn:Unram-6-0}. Item (2) also follows by a
  combination of \thmref{thm:Unram-3} and \lemref{lem:Unram-6}.
  \end{proof}

\subsection{Proof of \thmref{thm:Main-11}}\label{sec:ECC-prf}
We continue to work in the setting of \S~\ref{sec:Gersten-model}.
We shall prove few lemmas before we come to the proof of \thmref{thm:Main-11}.

\begin{lem}\label{lem:BO-exact}
  For every $j, q \ge 0$, the Bloch--Ogus sequence
  \[
  0 \to \sH^j(n,q) \xrightarrow{\epsilon^{{\rm {\etl}}}} i_{\eta *} H^{j,q}_n(k(\eta))
  \xrightarrow{\partial^{{\rm {\etl}}}_0} {\underset{x \in \sX^{(1)}}\bigoplus}
  i_{x *} H^{j-1,q-1}_n(k(x)) \hspace*{4cm} 
  \]
  \[
 \hspace*{4cm}  \xrightarrow{\partial^{{\rm {\etl}}}_1} {\underset{x \in \sX^{(2)}}\bigoplus} i_{x *} H^{j-2,q-2}_n(k(x))
\xrightarrow{\partial^{{\rm {\etl}}}_2} 
            {\underset{x \in \sX^{(3)}}\bigoplus} i_{x *} H^{j-3,q-3}_n(k(x)) \to 0
            \]
            is a flasque resolution of $\sH^j(n,q)$ when either the fourth or
            the fifth $(\star_n)$-condition holds, or when we restrict
            $\sH^j(n,q)$ to $X$.
\end{lem}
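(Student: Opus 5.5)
We reduce the lemma to the local question of Gersten (Bloch--Ogus) exactness for étale cohomology with $\Lambda_n(q)$-coefficients on regular local rings that are either smooth over a dvr or equicharacteristic. The flasqueness of the terms is automatic, because each term is a direct sum of skyscraper-type sheaves $i_{x*}M$ with $M$ constant. Exactness of a complex of Zariski sheaves can be checked on stalks. So it suffices to show the following: for every $x \in \sX$ (resp. $x \in X$), with $A = \sO_{\sX,x}$, the complex
\[
0 \to H^j_\et(A, \Lambda_n(q)) \to H^{j}_\et(k(\eta), \Lambda_n(q)) \to \bigoplus_{\htt(\fp)=1} H^{j-1}_\et(k(\fp), \Lambda_n(q-1)) \to \cdots
\]
is exact. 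Here the stalk of $\sH^j(n,q)$ at $x$ is $H^j_\et(A,\Lambda_n(q))$, since étale cohomology commutes with the filtered limit of Zariski neighbourhoods. The complex has at most four nonzero terms after the first, since $\dim A \le 3$.

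\textbf{Equicharacteristic case.} If $R$ is equicharacteristic (the fifth condition), or we restrict to $X$ (a regular, hence smooth after the usual perfectness reduction, scheme over the field $k$), then $A$ is a regular local ring containing a field with $n$ invertible. We reduce to the case of $A$ essentially smooth over a perfect field, using Popescu's theorem in the form of \cite[Thm.~2.2]{KP-Comp} as in the proof of \thmref{thm:Unram-3}. Exactness of the Gersten complex for étale cohomology then passes through filtered colimits. In the essentially smooth case this is the Bloch--Ogus theorem in the form of \cite{CHK}, which is Gabber's effacement theorem for étale cohomology with $\Lambda_n(q)$-coefficients.

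\textbf{Smooth-over-$R$ case.} If $\sX$ is smooth over $R$ (the fourth condition), then $A$ is a local ring of a smooth scheme over a dvr with $n$ invertible on it. Here we invoke Gersten exactness for étale cohomology over such rings, namely the results of Geisser \cite{Geisser-MZ} and Sakagaito \cite{Sakagaito} cited in the overview. These give the Bloch--Ogus resolution for $\Lambda_n(q)$ on schemes smooth over a dvr, proved via purity \cite{Fujiwara} and the Gillet--Levine style reduction \cite{Gillet-Levine} to the dvr case. The dvr case itself is handled by the localization sequence and absolute purity: for a henselian-free argument one uses that $H^j_\et(A) \to H^j_\et(K)$ is injective and the Gysin sequence $H^j(A)\to H^j(K)\to H^{j-1}(\kappa)(q-1)\to H^{j+1}(A)$, together with the restriction $H^{j+1}(A)\to H^{j+1}(K)$ being injective.

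\textbf{Main obstacle.} The hard point is making sure the cited Gersten exactness for $\Lambda_n(q)$ over a smooth $R$-scheme holds in mixed characteristic at every point, including points on the closed fiber, and for all $(j,q)$ rather than just $j=q$. I would first try to deduce it formally from Gabber's geometric presentation lemma over a dvr, as in Gillet--Levine, combined with absolute purity. If that is unavailable in full, I would quote Sakagaito's theorem directly.
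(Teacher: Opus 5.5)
Your proposal follows essentially the same route as the paper: when $\sX$ is smooth over $R$ the paper simply cites \cite[Thm.~1.2(5)]{Geisser-MZ}, and in the equicharacteristic case (and for $X$, which is a regular $k$-scheme) it reduces to the Bloch--Ogus theorem for smooth schemes over a field via Panin's method \cite[\S~5]{Panin}, which is what your Popescu reduction does. One point to tighten: Gersten exactness does not formally ``pass through filtered colimits'', because the transition maps in Popescu's theorem are not flat, and handling this is precisely the content of Panin's \S~5, so cite that reduction rather than asserting it. Also drop the claim that a regular $k$-scheme becomes smooth after a perfectness reduction; this is false in general, and Popescu over the prime field makes it unnecessary.
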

\begin{proof}
If the fourth $(\star_n)$-condition holds, $\sH^j(n,q)$ admits
the Bloch--Ogus resolution by \cite[Thm.~1.2(5)]{Geisser-MZ}.
If the fifth  $(\star_n)$-condition
holds, then also the Bloch--Ogus resolution of $\sH^j(n,q)$ is known, and
is easily deduced by reducing to the case of smooth schemes over a field case
(which was considered by Bloch--Ogus) by a method developed by Panin
\cite[\S~5]{Panin}. The same reasoning also applies to the restriction of
$\sH^j(n,q)$ to $X$ because the latter is a regular $k$-scheme.
\end{proof}

\begin{lem}\label{lem:Unram-0}
 We have the following.
  \begin{enumerate}
  \item
    The map $H^1_\zar(\sX, \sH^j(n,2)) \to H^1_\zar(\sX, \sH^j_{\nr}(n,2))$ is
    surjective for $j \ge 0$.
    \item
      $H^i_\zar(\sX, \sH^j(n,2)) = 0$ for $j =0,1$ and $i \ge 2$ if the
      $(\star_n)$-condition holds.
      \end{enumerate}
\end{lem}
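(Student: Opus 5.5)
The plan mirrors the proof of \lemref{lem:Bloch-0}: compare the Zariski sheaf $\sH^j(n,2)$ with its unramified part $\sH^j_{\nr}(n,2) = \Ker(\partial^{\et}_0)$ via the map $\epsilon^{\et}$, show that the kernel and cokernel of this comparison are supported in small dimension, and then apply \lemref{lem:Kerz-Strunk}. Write $\sF = \Ker(\epsilon^{\et} \colon \sH^j(n,2) \to \sH^j_{\nr}(n,2))$, $\sI$ for the image of $\epsilon^{\et}$, and $\sG = \coker(\sI \inj \sH^j_{\nr}(n,2))$.

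\emph{Dimension-two points.} Let $x \in \sX$ have codimension $\le 1$. Then $\sO_{\sX,x}$ is a field or a dvr, and the Bloch--Ogus (Gersten) sequence is exact there. This follows from Gabber's purity \cite{Fujiwara} and the localization sequence, together with injectivity of $H^j_\et(A,\Lambda_n(2)) \to H^j_\et(F,\Lambda_n(2))$ for a dvr $A$ with fraction field $F$. The same statement is contained in \lemref{lem:BO-exact} for points of $X$ or of the smooth locus. Hence $\sF_x = 0 = \sG_x$ whenever $\dim \ov{\{x\}} \ge 2$.

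\emph{Dimension-one points.} Let $x \in \sX^{(2)}$, so $A = \sO_{\sX,x}$ is a two-dimensional regular local ring. I would show $\sG_x = 0$ using the coniveau spectral sequence \eqref{eqn:Unram-2-0} for $\Spec(A)$. The group $E^{0,j}_2(c)$ equals $H^j_{\nr}$ of $A$, so it suffices that the only possibly nonzero differential $d_2 \colon E^{0,j}_2 \to E^{2,j-1}_2$ vanishes. For that it is enough that $E^{2,j-1}_2 = 0$, i.e.\ that $\partial^{\et}_1$ surjects onto $H^{j-3}_\et(k(x), \Lambda_n(0))$.

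To get this surjectivity, choose a regular parameter $t$ with $A/(t)$ a dvr having fraction field $k(\fp)$. The residue map $H^{j-2}_\et(k(\fp),\Lambda_n(1)) \to H^{j-3}_\et(k(x),\Lambda_n(0))$ is surjective: lift a class to the henselization and cup it with the Kummer class of a uniformizer. Granting this, $\sG$ is supported on closed points and $\sF$ on points of dimension $\le 1$. \lemref{lem:Kerz-Strunk} then gives $H^i_\zar(\sX,\sG) = 0$ for $i \ge 1$ and $H^i_\zar(\sX,\sF) = 0$ for $i \ge 2$.

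\emph{Part (1).} From the long exact sequences for $0 \to \sF \to \sH^j(n,2) \to \sI \to 0$ and $0 \to \sI \to \sH^j_{\nr}(n,2) \to \sG \to 0$, the map $H^1_\zar(\sX,\sH^j(n,2)) \to H^1_\zar(\sX,\sI)$ is surjective because $H^2(\sF) = 0$. The map $H^1_\zar(\sX,\sI) \to H^1_\zar(\sX,\sH^j_{\nr}(n,2))$ is surjective because $H^1(\sG) = 0$. Composing gives (1). The same sequences also give isomorphisms $H^i_\zar(\sX,\sH^j(n,2)) \cong H^i_\zar(\sX,\sH^j_{\nr}(n,2))$ for $i \ge 2$.

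\emph{Part (2).} By the isomorphisms just obtained, it suffices to show $H^i_\zar(\sX,\sH^j_{\nr}(n,2)) = 0$ for $i \ge 2$ and $j = 0,1$.

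For $j=0$, the Bloch--Ogus complex has length zero, so $\sH^0_{\nr}(n,2)$ is the constant sheaf $i_{\eta*}H^0_\et(k(\eta),\Lambda_n(2))$ on the irreducible space $\sX$. This sheaf is flasque, so its higher cohomology vanishes.

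For $j=1$, the complex is
\[
0 \to \sH^1_{\nr}(n,2) \to i_{\eta *} H^{1,2}_n(k(\eta)) \xrightarrow{\partial^{\et}_0} {\underset{x \in \sX^{(1)}}\bigoplus} i_{x*}H^{0,1}_n(k(x)) \to 0,
\]
since the next term involves $H^{-1}$ and is zero. Under the first three $(\star_n)$-conditions, surjectivity of $\partial^{\et}_0$ stalkwise is exactly \lemref{lem:Unram-2}; the third condition reduces to the second by henselianity. Under the fourth and fifth conditions, it follows from \lemref{lem:BO-exact}. This gives a flasque resolution of length one, so $H^i_\zar(\sX,\sH^1_{\nr}(n,2)) = 0$ for $i \ge 2$.

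The step I expect to be delicate is the vanishing of $\sG$ at dimension-one points. This needs the residue-surjectivity argument in the coniveau spectral sequence of a two-dimensional regular local ring that need not be smooth over $R$. I would handle it by the explicit choice of a regular parameter whose quotient is a dvr, combined with the cup-product-with-uniformizer trick.
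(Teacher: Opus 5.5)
Your overall structure matches the paper's. Your $\sF$, $\sI$, $\sG$ are the paper's $\sF^j_1,\sF^j_2,\sF^j_3$. Part (1), and the passage from $\sH^j(n,2)$ to $\sH^j_{\nr}(n,2)$ in degrees $\ge 2$, use \lemref{lem:Kerz-Strunk} exactly as the paper does. For $j=0,1$ you use the same short flasque resolution, with right exactness from \lemref{lem:Unram-2} or \lemref{lem:BO-exact}.

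The one place you deviate is the vanishing of $\sG_x$ for $x \in \sX^{(2)}$, which the paper takes from \cite[Prop.~6]{Sakagaito}. Your reduction there is sound. Since $\dim(A)=2$, one has $E^{0,j}_\infty = \Ker(d_2 \colon E^{0,j}_2 \to E^{2,j-1}_2)$, so it suffices that one $(\fp,x)$-component of $\partial^{\et}_1$ be surjective.

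Your justification of that surjectivity, however, does not work. Lift $a \in H^{j-3}_\et(k(x),\Lambda_n(0))$ to the henselization $B^h$ of $B = A/(t)$ and cup with the Kummer class of a uniformizer. This produces a class in $H^{j-2}_\et(\mathrm{Frac}(B^h),\Lambda_n(1))$. So it only shows that the residue map of the henselian dvr $B^h$ is surjective. Nothing in the argument descends this class to $k(\fp)$. For the non-henselian dvr $B$ itself, the lifting step (from $k(x)$ to $B$) is not available by the same trick.

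The repair uses only an input you already invoke, without citation, at points of codimension $\le 1$. Consider the localization sequence for the closed point $x$ of $\Spec(B)$, whose complement is $\Spec(k(\fp))$. Combined with purity $H^{j-1}_{x}(B,\Lambda_n(1)) \cong H^{j-3}_\et(k(x),\Lambda_n(0))$, it shows that the residue map $H^{j-2}_\et(k(\fp),\Lambda_n(1)) \to H^{j-3}_\et(k(x),\Lambda_n(0))$ is followed by $H^{j-3}_\et(k(x),\Lambda_n(0)) \to H^{j-1}_\et(B,\Lambda_n(1)) \to H^{j-1}_\et(k(\fp),\Lambda_n(1))$. Hence the residue map is surjective as soon as $H^{j-1}_\et(B,\Lambda_n(1)) \to H^{j-1}_\et(k(\fp),\Lambda_n(1))$ is injective. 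That is Gersten injectivity for the dvr $B$, the same fact the paper draws from \cite[Thm.~1.2(5)]{Geisser-MZ} at codimension-one points.

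You also need the $(\fp,x)$-component of $\partial^{\et}_1$ to agree, up to sign, with this boundary map. This follows from transitivity of the purity isomorphisms for the regular closed immersions $\Spec(k(x)) \inj \Spec(B) \inj \Spec(A)$. With this fix, your argument is a self-contained proof of the two-dimensional purity statement the paper imports from Sakagaito, and the rest of your proof goes through.
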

\begin{proof}
For $j \ge 0$ and $1 \le r \le 5$, we let $\sF^j_r$ be the Zariski sheaf on
  $\sX$
so that we have the short exact sequences 
  \begin{equation}\label{eqn:Unram-0-0}
  0 \to \sF^j_1 \to \sH^j(n,2) \xrightarrow{\epsilon^{\et}} \sF^j_2 \to 0; \
  0 \to  \sF^j_2 \to \sH^j_{\nr}(n,2) \to \sF^j_3 \to 0;
  \end{equation}
  \[
  0 \to \sH^j_{\nr}(n,2) \to  i_{\eta *} H^{j,2}_n(k(\eta)) \to \sF^j_4 \to 0; \
0 \to  \sF^j_4 \to  {\underset{x \in \sX^{(1)}}\bigoplus}
i_{x *} H^{j-1,1}_n(k(x)) \to \sF^j_5 \to 0.
\]

If $x \in \sX_{(2)}$, then $\sO_{\sX,x}$ is a dvr, and we can
apply \cite[Thm.~1.2(5)]{Geisser-MZ} (where we consider $\Spec(A)$
as a smooth $A$-scheme) to see that $(\sF^j_1)_x = 0$ for $j \ge 0$.
This implies by \lemref{lem:Kerz-Strunk} that $H^i_\zar(\sX, \sF^j_1) = 0$ for
$j \ge 0, i > 1$. On the other hand, we have
$H^i_\zar(\sX, \sF^j_3) = 0$ for $i > 0$ by \cite[Prop.~6]{Sakagaito} and
\lemref{lem:Kerz-Strunk}. This implies part (1) of the lemma.
If the fourth  or the fifth $(\star_n)$-condition, then
$H^i_\zar(\sX, \sH^j(n,2)) = 0$ for $j \ge 0$ and $i \ge 2$ by
\lemref{lem:BO-exact}.

Since the second  $(\star_n)$-condition is implied by the third, we are left
with showing part (2) when either the first or the second $(\star_n)$-conditions
holds. Since $ i_{\eta *} H^{j,2}_n(k(\eta))$ and
${\underset{x \in \sX^{(1)}}\bigoplus} i_{x *} H^{j-1,1}_n(k(x))$ are flasque, an
elementary argument reduces the proof to showing that
$\sF^j_5 = 0$ for $j = 0,1$. But $\sF^0_5$ is clearly zero (because
$ H^{-1,1}_n(k(x)) =0$ for every $x \in  \sX^{(1)}$), and
$\sF^1_5 = 0$ by \lemref{lem:Unram-2}. This completes the proof.
\end{proof}

\begin{remk}\label{remk:Unram-0-1}
  The reader may note that the $j =0$ case of part (2) of
  \lemref{lem:Unram-0} does not use the $(\star_n)$-condition.
\end{remk}

\begin{lem}\label{lem:Unram-9}
  Under the $(\star_n)$-condition, we have $H^i_{Y_j}(\sX, \sH^1(n,2)) =
  0 = H^i_Y(\sX, \sH^1(n,2))$ for $1 \le j \le m$ and $i \ge 3$.
\end{lem}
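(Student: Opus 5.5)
We use the localization long exact sequence for Zariski cohomology with supports. For a closed subset $W \subset \sX$ (either $W = Y_j$ or $W = Y$) with open complement $V = \sX \setminus W$, we have
\[
\cdots \to H^{i-1}_\zar(\sX, \sH^1(n,2)) \to H^{i-1}_\zar(V, \sH^1(n,2)) \to H^i_W(\sX, \sH^1(n,2)) \to H^i_\zar(\sX, \sH^1(n,2)) \to \cdots.
\]
Under the $(\star_n)$-condition, part (2) of \lemref{lem:Unram-0} gives $H^i_\zar(\sX, \sH^1(n,2)) = 0$ for $i \ge 2$. So for $i \ge 3$ we get $H^i_W(\sX, \sH^1(n,2)) \cong H^{i-1}_\zar(V, \sH^1(n,2))$, and it suffices to show $H^{i}_\zar(V, \sH^1(n,2)) = 0$ for $i \ge 2$.

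For $W = Y$, the complement is $V = X$, the generic fiber. This is a regular scheme of dimension two, so by \lemref{lem:Kerz-Strunk} (or simply Grothendieck vanishing on a noetherian space of dimension $2$) we already have $H^i_\zar(X, -) = 0$ for $i \ge 3$. It remains to get $H^2_\zar(X, \sH^1(n,2)) = 0$. We take this from \lemref{lem:BO-exact} applied to $X$: the Bloch--Ogus complex resolves $\sH^1(n,2)|_X$ flasquely. Its term in degree $2$ is $\bigoplus_{x \in X^{(2)}} H^{-1,0}_n(k(x)) = 0$, so all cohomology in degrees $\ge 2$ vanishes.

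For $W = Y_j$, the complement $V = \sX \setminus Y_j$ is an open subscheme of $\sX$. Here I would rerun the proof of part (2) of \lemref{lem:Unram-0} verbatim on $V$. That proof is local: it only uses the stalk statements $(\sF^j_1)_x = 0$ for $x$ of dimension $\le 1$, the vanishing $H^{>0}(\sF^j_3)$ from Sakagaito, and $\sF^1_5 = 0$ from \lemref{lem:Unram-2}, or Geisser/Panin in cases (4)/(5). All of these are statements about local rings of $\sX$, so they restrict to the open $V$. The flasque sheaves also stay flasque on $V$. Hence $H^i_\zar(V, \sH^1(n,2)) = 0$ for $i \ge 2$.

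The main point to check is the restriction argument for $V$. Concretely, the "elementary argument" in \lemref{lem:Unram-0} must be carried out on $V$:
\begin{itemize}
\item $H^i(V, \sF^1_1) = 0$ for $i \ge 2$, by \lemref{lem:Kerz-Strunk} applied to $V$, since $V$ is noetherian and the stalks vanish at points of dimension $\ge$ ... as before;
\item $H^i(V, \sF^1_3) = 0$ for $i \ge 1$;
\item $\sF^1_4|_V$ is isomorphic to a flasque sheaf, so $H^{i}(V, \sH^1_{\nr}) \cong H^{i-1}(V, \sF^1_4) = 0$ for $i \ge 2$.
\end{itemize}
None of these uses properness or global features of $\sX$, so the argument should go through.
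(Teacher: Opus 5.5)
Your proposal is correct and is essentially the paper's proof. It uses the localization sequences for $Y_j \subset \sX$ and $Y \subset \sX$, the vanishing of $H^{\ge 2}_\zar(\sX, \sH^1(n,2))$ from \lemref{lem:Unram-0}, the Bloch--Ogus resolution on $X$, and \lemref{lem:Unram-0} applied to $\sX \setminus Y_j$. The paper justifies the last step by noting that $\sX \setminus Y_j$ satisfies the same hypotheses as $\sX$ when $Y$ is reducible, and it treats $Y_j = Y$ separately. Your observation that the argument of \lemref{lem:Unram-0} is local works equally well.

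One small slip: the stalk vanishing of $\sF^1_1$ used there is at points of codimension $\le 1$ (dimension $\ge 2$), not dimension $\le 1$.
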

\begin{proof}
We let $\sX_j = \sX \setminus Y_j$ and look at the exact sequences
  \[
  H^{i-1}_\zar(\sX_j,  \sH^1(n,2)) \to H^i_{Y_j}(\sX, \sH^1(n,2)) \to  
  H^{i}_\zar(\sX,  \sH^1(n,2));
  \]
  \[
  H^{i-1}_\zar(X,  \sH^1(n,2)) \to H^i_{Y}(\sX, \sH^1(n,2)) \to  
  H^{i}_\zar(\sX,  \sH^1(n,2)).
  \]
  The Bloch--Ogus resolution for $\sH^1(n,2)$ on $X$ (cf. \lemref{lem:BO-exact})
  and \lemref{lem:Unram-0} together
 imply that the end terms of the second exact sequence vanish. This
 implies that $H^i_Y(\sX, \sH^1(n,2)) =0$.

If $Y$ has only one irreducible component, then $Y_j = Y$ and we are done.
 If $Y$ has multiple components, then $\sX_j$ satisfies the same hypothesis as
 $\sX$ does for every $1 \le j \le m$. In particular,
 $H^{i-1}_\zar(\sX_j,  \sH^1(n,2)) = 0$ by \lemref{lem:Unram-0}. The first exact
 sequence above now yields the desired result.
\end{proof}

\begin{lem}\label{lem:Unram-7}
Under the $(\star_n)$-condition, the canonical maps
\[
H^1_\zar(\sX, {{\sK}_{2, \sX}}/n) \to H^1_\zar(\sX, {{\sK}^{\nr}_{2, \sX}}/n); \ \ \
H^1_\zar(\sX, \sH^2(n,2)) \to H^1_\zar(\sX, \sH^2_{\nr}(n,2))
\]
are isomorphisms.
\end{lem}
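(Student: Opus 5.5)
By Lemma~\ref{lem:Unram-8}(1), the left vertical arrow $\beta_0$ of \eqref{eqn:Unram-8-0} is bijective, and under the $(\star_n)$-condition so is $\beta'_0$. The commutative square \eqref{eqn:Unram-8-0} therefore identifies the first map of the statement with the second. It suffices to prove that
\[
H^1_\zar(\sX, \sH^2(n,2)) \to H^1_\zar(\sX, \sH^2_{\nr}(n,2))
\]
is an isomorphism. Lemma~\ref{lem:Unram-0}(1) already gives surjectivity for $j=2$, so the content of the lemma is injectivity.

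To study the kernel, I will use the factorization $\sH^2(n,2) \surj \sF^2_2 \inj \sH^2_{\nr}(n,2)$ from \eqref{eqn:Unram-0-0}, with kernel $\sF^2_1$ and cokernel $\sF^2_3$. For the second arrow, the long exact sequence gives
\[
H^0_\zar(\sX, \sF^2_3) \to H^1_\zar(\sX, \sF^2_2) \to H^1_\zar(\sX, \sH^2_{\nr}(n,2)).
\]
For the first arrow, the kernel of $H^1_\zar(\sX,\sH^2(n,2)) \to H^1_\zar(\sX,\sF^2_2)$ is the image of $H^1_\zar(\sX,\sF^2_1)$.

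The first step is to show $\sF^2_3 = 0$, i.e.\ $\epsilon^{\et}\colon \sH^2(n,2) \to \sH^2_{\nr}(n,2)$ is surjective as sheaves. This is the purity statement that an unramified class in $H^2_\et(K,\Lambda_n(2))$ extends over each local ring $\sO_{\sX,x}$. Via the norm-residue isomorphisms of Lemma~\ref{lem:Unram-6}, it is equivalent to the surjectivity of $K_2(A)/n \to \sK^{\nr}_{2,\sX,x}/n$. That surjectivity follows from the exactness in Theorem~\ref{thm:Unram-3} combined with Lemma~\ref{lem:Gersten-5}/Theorem~\ref{thm:Gersten-10}: the map $\sK_{2,\sX} \to \sK^{\nr}_{2,\sX}$ is surjective at stalks of points of dimension $\ge 1$ by Corollary~\ref{cor:Gersten-3}. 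At closed points I expect to need the computation of Lemma~\ref{lem:Gersten-5} once more.

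A cleaner route avoids this stalkwise issue. I will instead compare the two coniveau-type resolutions directly. Theorem~\ref{thm:Unram-3} and Lemma~\ref{lem:Unram-8}(2) give flasque resolutions of $\sK^{\nr}_{2,\sX}/n$ and $\sH^2_{\nr}(n,2)$. The source sheaves map to the first term of these resolutions through $\epsilon$. Hence each $H^1$ of the source maps to the homology at the $\sX^{(1)}$-spot of the global Gersten (resp.\ Bloch--Ogus) complex, which is $H^1$ of the unramified sheaf.

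The second step is to kill the kernel coming from $\sF^2_1$, whose stalks vanish at all points of dimension $\ge 2$, so that $\sF^2_1$ is supported on closed points. By Lemma~\ref{lem:Kerz-Strunk}, $H^1_\zar(\sX,\sF^2_1)$ is a sum over closed points of $H^1_x$-type contributions. I would show that the map $H^1_\zar(\sX,\sF^2_1) \to H^1_\zar(\sX,\sH^2(n,2))$ is zero. To do this, I would compare with the Leray-to-coniveau morphism of Proposition~\ref{prop:LC-SS}, using $H^i_\zar(\sX,\sH^j(n,2)) = 0$ for $j\le 1$, $i\ge 2$ (Lemma~\ref{lem:Unram-0}(2)). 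In the Leray spectral sequence, $E_2^{1,2}(l)$ then injects into $H^3_\et(\sX,\Lambda_n(2))/F^2_L$, and the coniveau side identifies the image with $E^{1,2}_2(c) = H^1_\zar(\sX,\sH^2_{\nr}(n,2))$. The inclusion $F^\bullet_L \subseteq F^\bullet_C$ is what yields injectivity of $\lambda_\sX(1,2)$. In turn, this uses vanishing of the differentials landing in $E^{3,1}$; that vanishing follows from Lemma~\ref{lem:Unram-9} and the $(\star_n)$-vanishing of $H^3_\zar(\sX,\sH^1(n,2))$.

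I expect this spectral-sequence comparison to be the main obstacle: checking that no $d_2$ differential from $E^{1,2}$ kills classes on the Leray side but not on the coniveau side.
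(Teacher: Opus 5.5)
Your decisive argument is the paper's proof: you reduce to the étale map via Lemma~\ref{lem:Unram-8}(1), take surjectivity from Lemma~\ref{lem:Unram-0}(1), and get injectivity from Proposition~\ref{prop:LC-SS}. There, Lemma~\ref{lem:Unram-0}(2) kills $E^{2,1}_2(l)$, $E^{3,0}_2(l)$ and $E^{3,1}_2(l)$, so $H^1_\zar(\sX,\sH^2(n,2)) = F^1_L H^3_\et(\sX,\Lambda_n(2))$, while Lemma~\ref{lem:Unram-8}(2) gives $H^1_\zar(\sX,\sH^2_{\nr}(n,2)) = F^1_C H^3_\et(\sX,\Lambda_n(2))$, and both inject compatibly into $H^3_\et(\sX,\Lambda_n(2))$.

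Your sheaf-level detours through $\sF^2_1$ and $\sF^2_3$ are superfluous. The claim $\sF^2_3=0$ is also unjustified: Lemma~\ref{lem:Gersten-5} gives exactness only at the $\sX^{(1)}$ term, not surjectivity of $\sK_{2,\sX}\to\sK^{\nr}_{2,\sX}$ at closed points of $Y_\sing$. The obstacle you anticipate does not arise, because no differential enters or leaves the $(1,2)$ term on either side and $F^2_C H^3_\et(\sX,\Lambda_n(2))=0$ trivially.
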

\begin{proof}
By \lemref{lem:Unram-8}, it is enough to prove that the second map is
an isomorphism. To this end, we first compute $H^3_\et(\sX, \Lambda_n(2))$
in terms of its filtration resulting from the Leray spectral sequence
~\eqref{eqn:LSS-0}. 
Since $E^{3,1}_2(l) = 0$ by \lemref{lem:Unram-0}, we get
$\frac{F^1_L H^3_\et(\sX, \Lambda_n(2))}{F^2_L H^3_\et(\sX, \Lambda_n(2))} =
E^{1,2}_\infty(l) = E^{1,2}_2(l)$. 
Since Lemma~\ref{lem:Unram-0} also implies that
$\frac{F^i_L H^3_\et(\sX, \Lambda_n(2))}{F^{i+1}_L H^3_\et(\sX, \Lambda_n(2))} = 0$
for $i \ge 2$ so that $F^2_L H^3_\et(\sX, \Lambda_n(2)) = 0$, we
get $F^1_L H^3_\et(\sX, \Lambda_n(2)) = H^1_\zar(\sX, \sH^2(n,2))$.

On the other hand, if we apply the coniveau spectral sequence
~\eqref{eqn:Unram-2-0}
for computing $H^3_\et(\sX, \Lambda_n(2))$, then \lemref{lem:Unram-8}(2) says that
$F^1_C H^3_\et(\sX, \Lambda_n(2)) = H^1_\zar(\sX, \sH^2_{\nr}(n,2))$.
By \propref{prop:LC-SS} therefore, we get a commutative diagram
\begin{equation}\label{eqn:Unram-7-0}
  \xymatrix@C1.6pc{
    H^1_\zar(\sX, \sH^2(n,2)) \ar[r]^-{\alpha_1} \ar[d]_-{\epsilon^{\et}} &
    H^3_\et(\sX, \Lambda_n(2)) \ar[d]^-{\id} \\
    H^1_\zar(\sX, \sH^2_{\nr}(n,2)) \ar[r]^-{\alpha^{\nr}_1} &
    H^3_\et(\sX, \Lambda_n(2)),}
\end{equation}
whose horizontal arrows are injective. We now apply \lemref{lem:Unram-0} to
conclude the proof.
\end{proof}

The following result proves \thmref{thm:Main-11}.

\begin{thm}\label{thm:Unram-5}
  Under the $(\star_n)$-condition, the following hold.
  \begin{enumerate}
  \item
    The map $H^i_\zar(\sX, {{\sK}_{2,\sX}}/n) \to
    H^i_\zar(\sX, {{\sK}^{\nr}_{2,\sX}}/n)$
    is injective for $i =0$ and bijective for $i \ge 1$.
    \item
   The map $H^i_Y(\sX, {{\sK}_{2,\sX}}/n) \to H^i_Y(\sX, {{\sK}^{\nr}_{2,\sX}}/n)$
   is surjective for $i =1$ and bijective for $i \ge 2$.
 \item
$\coker\left(H^0_\zar(\sX, {{\sK}_{2,\sX}}/n) \inj
   H^0_\zar(\sX, {{\sK}^{\nr}_{2,\sX}}/n)\right)$ \hspace*{4cm}

   \[
   \hspace*{7cm} \xrightarrow{\cong}
   \Ker\left(H^1_Y(\sX, {{\sK}_{2,\sX}}/n) \surj
   H^1_Y(\sX, {{\sK}^{\nr}_{2,\sX}}/n)\right).
   \]
 \item
   There is a surjective map $H^1_\zar(\sX, {{\sK}_{2,\sX}}/n)
   \surj \CH_1(\sX)[n]$ and a bijective map $H^2_\zar(\sX, {{\sK}_{2,\sX}}/n)
   \xrightarrow{\cong} {\CH_1(\sX)}/n$.
 \item
   There is a commutative diagram
   \[
   \xymatrix@C1pc{
     H^2_Z(\sX, \sH^2(n,2)) \ar[d] &  H^2_Z(\sX, {{\sK}_{2,\sX}}/n)
     \ar[r]^-{\cong} \ar[d] \ar[l]_-{\cong} & {\CH_1(Z)}/n \ar[d] \\
 H^2_\zar(\sX, \sH^2(n,2)) &  H^2_\zar(\sX, {{\sK}_{2,\sX}}/n)
 \ar[r]^-{\cong} \ar[l]_-{\cong} & {\CH_1(\sX)}/n}
   \]
   for $Z \in \{Y_j|1 \le j \le m\} \bigcup  \{Y\}$,
   where the left and the middle vertical arrows are the forget support maps and
   the
  right vertical arrow is the push-forward map via the inclusion $Z \inj \sX$.
  \end{enumerate}
\end{thm}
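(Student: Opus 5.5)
The plan is to work with the short exact sequence of Zariski sheaves
\[
0 \to \sF \to {\sK_{2,\sX}}/n \xrightarrow{\epsilon} {\sK^{\nr}_{2,\sX}}/n \to \sG \to 0,
\]
where $\sF$ and $\sG$ are the kernel and cokernel of $\epsilon$. In the equicharacteristic case \lemref{lem:Panin*} gives $\sF = 0 = \sG$, so all items are immediate; assume otherwise. By \lemref{lem:Gersten-4} and the fact that $\sX_\sm$ and $X$ are smooth over $R$ resp.\ $k$, the stalks of $\sF$ and $\sG$ vanish at every point of $X$ and of $Y_\reg$. At a point $x$ of dimension one, the same holds by \thmref{thm:Unram-3} and the proof of \lemref{lem:Unram-3}'s smooth case: $\sO_{\sX,x}$ is a dvr, so ${K_2(A)}/n \to {K_2(k(\eta))}/n$ is injective. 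Hence $\sF$ and $\sG$ are supported on $Y_\sing$, which has dimension at most one, and $\sG$ is supported on closed points. By \lemref{lem:Kerz-Strunk}, $H^i(\sX,\sG)=0$ for $i\ge 1$ and $H^i(\sX,\sF)=0$ for $i\ge 2$. It follows that $H^i(\sX,{\sK_{2,\sX}}/n)\to H^i(\sX,{\sK^{\nr}_{2,\sX}}/n)$ is surjective for $i=1$ and bijective for $i\ge 2$.

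Injectivity for $i=1$ is supplied by \lemref{lem:Unram-7}. For $i=0$, I would show $\sF=0$ outright: by \lemref{lem:Unram-8}(1), $\beta_0$ is bijective and $\beta'_0$ is bijective under $(\star_n)$, so $\epsilon$ identifies with $\epsilon^{\et}\colon \sH^2(n,2)\to \sH^2_{\nr}(n,2)$. The kernel $\sF^2_1$ of the latter vanishes stalkwise, because the pull-back $H^2_\et(A,\Lambda_n(2))\to H^2_\et(F,\Lambda_n(2))$ is injective for regular local $A$ \cite[Lem.~2]{Hoobler}. This gives (1) for $i=0$, and also the statement that $\sF=0$.

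For (2) and (3), I would apply the same argument with supports in $Y$. Since $\sF = 0$, we have $0\to {\sK_{2,\sX}}/n\to{\sK^{\nr}_{2,\sX}}/n\to\sG\to 0$ with $\sG$ a skyscraper sheaf on closed points of $Y$. Then $H^i_Y(\sX,\sG)=H^i(\sX,\sG)$ vanishes for $i\ge1$ and equals $H^0(\sX,\sG)$ for $i=0$. The long exact sequence with supports gives (2) directly. Item (3) follows by comparing the two long exact sequences (global and with support), using that $H^0_Y$ of both ${\sK}$-sheaves vanishes. The latter holds because the sheaves inject into $i_{\eta*}$ of a group, so they have no sections supported on $Y$. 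Then the connecting map $H^0(\sX,\sG)\to H^1_Y(\sX,{\sK_{2,\sX}}/n)$ is injective with image the kernel in (3), and $H^0(\sX,\sG)$ is the cokernel in (3) by (1).

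Item (4) follows from (1) combined with \corref{cor:Unram-4} for $H^1$. For $H^2$, I would use the flasque resolution ~\eqref{eqn:Unram-3-0}, which gives $H^2(\sX,{\sK^{\nr}_{2,\sX}}/n)=\coker(\partial_1)={\CH_1(\sX)}/n$ by right exactness of $\otimes\Z/n$ applied to ~\eqref{eqn:Bloch-2-0}. For (5), the left horizontal isomorphisms come from \lemref{lem:Unram-8}(1), since the mod-$n$ norm residue isomorphism $\beta_0$ is compatible with supports. For the right isomorphisms, I would compute $H^2_Z$ using the flasque resolution: the supported complex consists of the terms at points of $Z$, and its $H^2$ is $\coker(\bigoplus_{x\in Z^{(0)}}k(x)^\times/n\to\bigoplus_{x\in Z^{(1)}}\Z/n)={\CH_1(Z)}/n$. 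Bijectivity on the ${\sK_{2,\sX}}/n$ side comes from (2). Commutativity holds because the forget-support map corresponds to the inclusion of subcomplexes, which is the push-forward on cycles. I expect the main obstacle to be the support bookkeeping in (5), in particular making sure that $H^1_Z$ of the sheaf $i_{\eta*}$-terms vanishes so that the supported cohomology is computed by the supported subcomplex.
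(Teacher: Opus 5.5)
Outside one step, your route is the paper's:
\begin{itemize}
\item injectivity of ${\sK_{2,\sX}}/n\to{\sK^{\nr}_{2,\sX}}/n$ via \lemref{lem:Unram-8}(1) together with injectivity of $\sH^2(n,2)\to i_{\eta *}H^{2,2}_n(k(\eta))$;
\item vanishing of the higher cohomology of the cokernel $\sG$;
\item \lemref{lem:Unram-7} for $i=1$;
\item the flasque resolution ~\eqref{eqn:Unram-3-0} for (4) and (5).
\end{itemize}
You derive (2) and (3) from the support sequence of $0\to{\sK_{2,\sX}}/n\to{\sK^{\nr}_{2,\sX}}/n\to\sG\to 0$, whereas the paper uses a five-lemma comparison of $\sX$ with $X$. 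That is a fine alternative. Because a sheaf concentrated on closed points is flasque, it also handles $Z=Y_j$ in (5), which citing (2) alone does not.

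\textbf{The gap.} Your justification that $\sG$ vanishes at the points of dimension one does not work. The points that matter are the generic points of the double curves $Y_i\cap Y_j$. Since $\dim\sX=3$, their local rings are two-dimensional regular local rings. They are not discrete valuation rings and not smooth over $R$, so neither \lemref{lem:Gersten-4} nor the smooth case of \thmref{thm:Unram-3} applies. Moreover, injectivity of ${K_2(A)}/n\to{K_2(k(\eta))}/n$ is a statement about the kernel $\sF$, not the cokernel $\sG$.

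What you need is that $K_2(A)\to\Ker(\partial_0)$ is surjective for a two-dimensional regular local ring $A$. This is \corref{cor:Gersten-3} with $d=j=2$, exactly as the paper uses it for the integral cokernel in \lemref{lem:Bloch-0}, and it passes to ${\Z}/n$ by right exactness. The paper itself instead identifies $\sG$ with $\sF^2_3$ via \lemref{lem:Unram-8}(1) and cites \cite[Prop.~6]{Sakagaito}.

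\textbf{Why this step matters.} If you only know that $\sG$ is supported on $Y_\sing$, \lemref{lem:Kerz-Strunk} gives $H^i(\sX,\sG)=0$ only for $i\ge 2$. That does not give injectivity in (1) at $i=2$. Nor does it give $H^1_Y(\sX,\sG)=0=H^2_Y(\sX,\sG)$, on which your arguments for (2) and (5) depend.

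\textbf{Two minor points.} For points of $X$ the correct citation is \lemref{lem:Panin*}, not \lemref{lem:Gersten-4}. The paper invokes \cite[Lem.~2]{Hoobler} only when $n$ is odd or $\sqrt{-1}\in\F$, and uses \lemref{lem:BO-exact} when $\sX$ is smooth over $R$. Your proof that $\sF=0$ should make the same case split.
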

\begin{proof}
  By \lemref{lem:Unram-8}, and a combination of \cite[Lem.~2]{Hoobler}
  and \lemref{lem:BO-exact}, the map
  ${{\sK}_{2,\sX}}/n \to  {{\sK}^{\nr}_{2,\sX}}/n$ is injective. Meanwhile, we showed
  in the proof of \lemref{lem:Unram-0} that all higher cohomology of its cokernel
 $\sF^2_3$ vanish. Jointly with \lemref{lem:Unram-7}, this implies item (1). 

To prove item (2), we let $\sF =  {{\sK}_{2,\sX}}/n, \
  \sF^{\nr} = {{\sK}^{\nr}_{2,\sX}}/n$ and
  look at the commutative diagram of long exact sequences
  of Zariski cohomology
  \begin{equation}\label{eqn:Unram-5-0}
    \xymatrix@C1pc{
      H^{i-1}(\sX, \sF)  \ar[r] \ar[d]_-{\alpha_{i-1}} &
      H^{i-1}(X, \sF) \ar[r] \ar[d]^-{\beta_{i-1}} & H^i_Y(\sX, \sF)
      \ar[d]^-{\gamma_i}
        \ar[r] &
        H^i(\sX, \sF) \ar[d]_-{\alpha_{i}} \ar[r] &  H^i(X, \sF)
        \ar[d]^-{\beta_{i}} \\
      H^{i-1}(\sX, \sF^{\nr})  \ar[r] & H^{i-1}(X, \sF^{\nr}) \ar[r]  &
      H^i_Y(\sX, \sF^{\nr}) \ar[r] &
      H^i(\sX, \sF^{\nr}) \ar[r] &  H^i(X, \sF^{\nr})}
    \end{equation}
  for $i \ge 1$.
  The map ${\sK}_{2,X} \to {\sK}^{\nr}_{2,X}$ is bijective since the
  Gersten resolution for ${\sK}_{2,X}$ holds on $X$ by \cite[Prop.~10]{Kerz-JAG}.
In particular, $\sF|_X \to  \sF^{\nr}|_X$ is an isomorphism. 
It follows that the maps $\beta_{i-1}$ and $\beta_i$ are isomorphisms.
The desired conclusion now follows from item (1) by a diagram chase.

The map $H^{0}(\sX, \sF) \to H^{0}(X, \sF)$ is injective either by
\cite[Lem.~2]{Hoobler} (when one of the first three of the $(\star_n)$-condition
holds) or by Lemmas~\ref{lem:Unram-8}(1) and ~\ref{lem:BO-exact}
(under fourth or fifth $(\star_n)$-condition). The same holds
for  $\sF^{\nr}$ by \thmref{thm:Unram-3}. Using this, Item (3) follows by a
diagram chase in  ~\eqref{eqn:Unram-5-0}. 
 Item (4) follows from item (1)  in combination with
\thmref{thm:Unram-3} and \corref{cor:Unram-4}.
 Lastly, item (5) follows directly from items (1) and (2) in combination with
  \thmref{thm:Unram-3} and \lemref{lem:Unram-8}.
\end{proof}

\section{Tate module of unramified cohomology}\label{sec:Unram-V}
Let $R$ be an excellent henselian discrete valuation ring  (hdvr)
with quotient field $k$ and residue field $\F$.
We assume that $\F$ is either finite or separably closed of
exponential characteristic $p > 0$.
Let $\sX$ be a connected, regular and proper $R$-scheme of Krull dimension three.
We assume that $\sX$ is faithfully flat over $R$ and its reduced closed fiber
$Y$ is an snc divisor on $\sX$. We let $X$ denote the generic fiber of
$\sX$ and assume that it is a smooth $k$-scheme (this is automatic if
$\Char(k) = 0$).
We let $\{Y_1, \ldots , Y_m\}$ be the set of irreducible components of $Y$.
We let $\ell \neq p$ be a fixed prime.
The goal of this section is to prove Theorems~\ref{thm:Main-0} and
~\ref{thm:Main-12}.

We begin by noting that, under the henselian hypothesis (which will be assumed
throughout the remainder of the paper), the $(\star_n)$-condition is satisfied
if and only if one of the following conditions is satisfied.
\begin{equation}\label{eqn:Star-condn}
  (1) \ n \ \mbox{is  odd}; \ \ (2) \ \sqrt{-1} \in \F; \ \ (3) \ \sX \in \Sm_R;
  \ \
  (4) \ R \ \mbox{is equicharacteristic}.
  \end{equation}

\subsection{The key diagram}\label{sec:Main-map}
We assume that the $(\star_\ell)$-condition holds. By virtue of
Lemmas~\ref{lem:Unram-0} and
~\ref{lem:Unram-9}, the Leray spectral sequence for the {\'e}tale cohomology
groups (with and without support) gives us for, every $n \ge 1$,
the cycle class maps
\begin{equation}\label{eqn:cyc-cl-map}
  H^i_\zar(\sX, \sH^2(\ell^n,2)) \xrightarrow{\alpha_i}
  H^{i+2}_\et(\sX, \Lambda_{\ell^n}(2)); \
 H^i_\zar(X, \sH^2(\ell^n,2))  \xrightarrow{\beta_i}
  H^{i+2}_\et(X, \Lambda_{\ell^n}(2));
\end{equation}
\[
\hspace*{.8cm} H^2_{Y_j}(\sX, \sH^2(\ell^n,2)) \xrightarrow{\gamma_{2,j}}
H^{4}_{\et, Y_j}(\sX, \Lambda_{\ell^n}(2)); \
H^2_Y(\sX, \sH^2(\ell^n,2)) \xrightarrow{\gamma_{2}}
H^{4}_{\et, Y}(\sX, \Lambda_{\ell^n}(2)).
\]
for $i = 1,2$ and $1 \le j \le m$.

We let $\sF_n$ denote the Zariski sheaf $\sH^2(\ell^n,2)$ and let
$\sG_n$ denote the {\'e}tale sheaf $\Lambda_{\ell^n}(2)$. We consider the diagram
\begin{equation}\label{eqn:Main-Diag-0}
  \xymatrix@C.8pc{
    H^1_\zar(\sX, \sF_n) \ar[r] \ar[d]_-{\alpha_1} & H^1_\zar(X, \sF_n) \ar[r]
    \ar[d]^-{\beta_1} & 
    H^2_Y(\sX, \sF_n) \ar[d]^-{\gamma_2} \ar[r] & H^2_\zar(\sX, \sF_n) \ar[r]
    \ar[d]^-{\alpha_2} & H^2_\zar(X, \sF_n) \ar[d]^-{\beta_2} \\
H^{3}_\et(\sX, \sG_n) \ar[r] & H^{3}_\et(X, \sG_n) \ar[r] &
H^{4}_{\et, Y}(\sX, \sG_n) \ar[r] &
H^{4}_\et(\sX, \sG_n) \ar[r] & H^{4}_\et(X, \sG_n).}
\end{equation}

\begin{lem}\label{lem:Main-Diag-1}
  The diagram ~\eqref{eqn:Main-Diag-0} is commutative and all groups in this
  diagram are finite.
\end{lem}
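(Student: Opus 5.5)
The plan is to prove commutativity and finiteness separately, starting with commutativity.

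\emph{Commutativity.} All vertical maps in \eqref{eqn:Main-Diag-0} are edge maps of Leray spectral sequences for $\epsilon\colon \sX_\et \to \sX_\zar$ (and its restriction to $X$). The supported versions come from the analogous spectral sequence $H^i_{Y}(\sX_\zar, R^j\epsilon_*\sG_n) \Rightarrow H^{i+j}_{\et,Y}(\sX,\sG_n)$. Lemmas~\ref{lem:Unram-0} and~\ref{lem:Unram-9} make these edge maps well defined in the relevant degrees. I would realize both rows at the level of complexes.

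Take an injective resolution $\sG_n \to \sI^\bullet$ on $\sX_\et$ and set $\sJ^\bullet = \epsilon_*\sI^\bullet$. Apply the canonical truncation $\tau_{\le 2}\sJ^\bullet \to \sJ^\bullet$. Because $\sH^j(\ell^n,2)$ has no higher cohomology for $j\le 1$ (Lemma~\ref{lem:Unram-0}(2)), together with the support version Lemma~\ref{lem:Unram-9}, the maps $H^i(\sF_n) \to H^{i+2}(\tau_{\le2}\sJ^\bullet) \to H^{i+2}(\sJ^\bullet)$ recover the $\alpha_i$, $\beta_i$ and $\gamma_2$. Here one uses $\sF_n[-2] \leftarrow \tau_{\le 2}\sJ^\bullet$ together with the vanishing of the lower pieces to invert the relevant arrow in the needed degrees.

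Both rows are then the long exact localization sequences
\[
R\Gamma_Y(\sX,-) \to R\Gamma(\sX,-) \to R\Gamma(X,-),
\]
applied to $\sF_n$ and to $\sJ^\bullet$ respectively. Hence they are functorial for morphisms in the derived category, which gives commutativity of all squares, including the connecting-map square. The only care needed is in the square involving the boundary map $H^1(X,\sF_n)\to H^2_Y(\sX,\sF_n)$. I would handle it by working with the triangle of truncated complexes, so that the maps come from a single morphism of distinguished triangles.

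\emph{Finiteness.} For the bottom row, $H^i_\et(X,\sG_n)$ is finite because $X$ is smooth over a local field or a strictly local field and $\ell\ne p$. This is Deligne's finiteness for $\ov{X}$ combined with the finiteness of Galois cohomology of $k$ (when $\F$ is finite) or the finite cohomological dimension and finiteness for $k$ strictly henselian (when $\F$ is separably closed). For $\sX$ proper over the henselian $R$, proper base change gives
\[
H^i_\et(\sX,\sG_n)\cong H^i_\et(Y,\sG_n),
\]
which is finite for $Y$ over a finite or separably closed field. The supported groups are then finite by the localization sequence.

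For the top row, I would show that $\alpha_1,\alpha_2,\beta_1,\beta_2$ and $\gamma_2$ are injective, using the Leray filtration. By Lemma~\ref{lem:Unram-0}(2), $E_2^{i,j}(l)=0$ for $j\le1$ and $i\ge2$, so $H^1(\sF_n)$ and $H^2(\sF_n)$ inject into $H^3_\et$ and $H^4_\et$ respectively. This is exactly the argument in the proof of Lemma~\ref{lem:Unram-7}: for $H^4$, $F^2_L H^4 = E^{2,2}_\infty = E^{2,2}_2$ since the differentials into and out of it land in or come from vanishing terms. The same holds on $X$ via Bloch--Ogus (Lemma~\ref{lem:BO-exact}), and with supports via Lemma~\ref{lem:Unram-9}. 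Finiteness of the top row then follows.

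Alternatively, one can get finiteness of $H^2(\sX,\sF_n)\cong \CH_1(\sX)/\ell^n$ from Theorem~\ref{thm:Unram-5}(4) and Remark~\ref{remk:Finiteness}.

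I expect the main obstacle to be making the edge maps with supports compatible with the connecting homomorphisms, that is, the commutativity of the middle square. The point that needs checking is that the degenerations are valid simultaneously for $X$, $\sX$ and with support in $Y$. This is exactly what Lemmas~\ref{lem:Unram-0} and~\ref{lem:Unram-9}, under the $(\star_\ell)$-condition, provide.
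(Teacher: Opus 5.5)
\textbf{Commutativity.} This part is sound, and it takes a different route from the paper. You realize the edge maps through the canonical truncation $\tau_{\le 2}\sJ^\bullet$ and then use functoriality of the triangle $R\Gamma_Y(\sX,-)\to R\Gamma(\sX,-)\to R\Gamma(X,-)$. The paper instead builds a filtered mapping cone of Cartan--Eilenberg resolutions and obtains morphisms of spectral sequences. Your version is lighter; its only extra input is the standard identification of the Leray spectral sequence with the spectral sequence of the canonical filtration.

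\textbf{The gap: finiteness of the top row.} You assert that $\alpha_2,\beta_2,\gamma_2$ are injective because $E^{2,2}_2(l)=E^{2,2}_\infty(l)$, ``exactly as in Lemma~\ref{lem:Unram-7}''. Lemmas~\ref{lem:Unram-0}(2) and~\ref{lem:Unram-9} only kill the rows $j\le 1$. They say nothing about the incoming differential $d_2\colon E^{0,3}_2=H^0_{\zar}(-,\sH^3(\ell^n,2))\to E^{2,2}_2$, or its supported analogue with source $H^0_Y(\sX,\sH^3(\ell^n,2))$. Neither source is a vanishing term. In Lemma~\ref{lem:Unram-7} the issue does not arise, because the only possible differential into $E^{1,2}$ comes from $E^{-1,3}=0$.

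So in degree $4$ the edge maps are only surjections onto $F^2_L H^4$, with kernel $d_2(E^{0,3}_2)$, and this kernel is not zero in general. By \eqref{eqn:EW-main-3-2}, $\Ker(\beta_2)$ is the image of $H^3_{\nr}(X,\Z/\ell^n)$ in $\CH_0(X)/\ell^n$. If $\beta_2$ were injective for every $n$, left exactness of $\varprojlim$ would make $\CH_0(X)^{(\ell)}\to H^4_{\et}(X,\Z_\ell(2))$ injective with no hypotheses. That is exactly the content of \cite{Esnault-Wittenberg}, which fails without their assumptions \cite{Parimala-Suresh}. For $\sX$, $\alpha_2$ is indeed injective, but that is the theorem of Saito--Sato (Lemma~\ref{lem:SS-main-0}), not a formal degeneration.

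Your fallback does not close the gap either. Remark~\ref{remk:Finiteness} concerns $\CH_1(\sX)[n]$, not $\CH_1(\sX)/n$.

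\textbf{Repair.} This is what the paper does, and it needs no injectivity of $\alpha_2,\beta_2,\gamma_2$:
\begin{enumerate}
\item $H^2_{\zar}(\sX,\sF_n)\cong \CH_1(\sX)/\ell^n$ by Lemma~\ref{lem:Unram-8} and Theorem~\ref{thm:Unram-5}(4), and this group is finite by \cite[Thm.~0.6]{Saito-Sato-Ann}.
\item $H^2_{\zar}(X,\sF_n)\cong \CH_0(X)/\ell^n$ is a quotient of $\CH_1(\sX)/\ell^n$ (take closures in $\sX$ of closed points of $X$), hence finite.
\item $H^2_Y(\sX,\sF_n)$ is finite because, in the exact top row, it sits between the finite groups $H^1_{\zar}(X,\sF_n)$ and $H^2_{\zar}(\sX,\sF_n)$. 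Your injectivity of $\beta_1$ gives the finiteness of the former.
\end{enumerate}
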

\begin{proof}
The commutativity of ~\eqref{eqn:Main-Diag-0} is an easy consequence of the fact
that each square in the diagram is induced by a morphism of spectral
sequences. It works as follows.
Let $\iota \colon Y \inj \sX$ and $j \colon X \inj \sX$ be the
inclusion maps. We let $\sG^\bullet_X = j^*(\sG^\bullet)$ for any complex of sheaves
  $\sG$ on $\sX_\et$ or $\sX_\zar$.
Let $\sG_n \to \sI^\bullet$ be an injective resolution on $\sX_{\et}$ and let
  $\phi \colon \sI^\bullet \to j_*(\sI^\bullet_X)$ denote the canonical map. 
Since $j^*$ and $j_*$ have left adjoints (namely, $j_{!}$ and $j^*$,
respectively), $\phi$ is a morphism of injective cochain complexes.

We let $\sJ^\bullet = \epsilon_*(\sI^\bullet) \cong {\bf R}\epsilon_*(\sI^\bullet)$
  (where $\epsilon \colon \sX_\et \to \sX_\zar$ is the morphism of sites) and let
  $\sJ^\bullet \to \sL^{\bullet, \bullet}$
  be a Cartan--Eilenberg resolution of $\sJ^\bullet$ on $\sX_\zar$.
 Letting $\epsilon^X$ denote the restriction of $\epsilon$ to $X$, we then get a
  commutative diagram
  \begin{equation}\label{eqn:Main-Diag-1-0}
\xymatrix@C1.6pc{
  {\bf R}\epsilon_*(\sI^\bullet) \ar[r]^-{\simeq}  \ar[dr]
  & \sJ^\bullet \ar[r]^-{\phi} \ar[d] &
  j_*(\sJ^\bullet_X) \ar[r]^-{\simeq} &
  j_* \epsilon^X_*(\sI^\bullet_X) \ar[d] \ar[r]^-{\simeq} &
  {\bf R}j_* {\bf R}\epsilon^X_*(\sI^\bullet_X) \ar[dl] \\
  & \sL^{\bullet, \bullet} \ar[rr]^-{\phi} &  & j_*(\sL^{\bullet, \bullet}_X), & }
\end{equation}
in which the left vertical arrow is the Cartan--Eilenberg
resolution map, the right vertical arrow is obtained by applying $j_*$ to this
map, and the horizontal arrows are the restriction maps.

Letting $\sL^{\bullet}$ denote the  totalization of $\sL^{\bullet, \bullet}$ and
$G = G^\star \sL^{\bullet}$ denote the filtration of $\sL^{\bullet}$ given by
$G^r \sL^{\bullet} = \sL^{\ge r}$ as in the proof of \propref{prop:LC-SS},
we get a commutative diagram
 \begin{equation}\label{eqn:Main-Diag-1-1}
\xymatrix@C1.6pc{
  {\bf R}\epsilon_*(\sI^\bullet) \ar[d] \ar[r]^-{\phi} &
  {\bf R}j_* {\bf R}\epsilon^X_*(\sI^\bullet_X) \ar[d] \\
  \sL^{\bullet} \ar[r]^-{\phi} & j_*(\sL^\bullet_X)}
 \end{equation}
 and a canonical map of filtered complexes
 $\phi \colon (\sL^{\bullet}, G) \to (j_*(\sL^\bullet_X), j_*(G))$ on $\sX_\zar$.

If we let $\sM^\bullet = C(\phi)[-1]$ denote the shifted mapping cone of
 $\phi$, then $\sM^\bullet$ is canonically equipped with a filtration $
 H = H^\star\sM^\bullet$ such that we have canonical morphisms of filtered complexes
 \begin{equation}\label{eqn:Main-Diag-1-2}
   (\sM^\bullet, H) \xrightarrow{\psi} (\sL^{\bullet}, G) \xrightarrow{\phi}
   j_*(\sL^\bullet_X, G_X) \simeq {\bf R}j_*(\sL^\bullet_X, G_X)
   \xrightarrow{\delta}  (\sM^\bullet, H)[1],
   \end{equation}
 which give rise to a distinguished triangle of graded quotients
 \begin{equation}\label{eqn:Main-Diag-1-3}
   \gr^q(\sM^\bullet) \xrightarrow{\psi} \gr^q(\sL^{\bullet}) \xrightarrow{\phi}
   {\bf R}j_*(\gr^q(\sL^\bullet_X)) \xrightarrow{\delta}  \gr^q(\sM^\bullet)[1]
   \end{equation}
 for every $q \ge 0$.

It follows from ~\eqref{eqn:Main-Diag-1-2} and ~\eqref{eqn:Main-Diag-1-3}
   (cf. proof of \propref{prop:LC-SS}) that there are morphisms of
 spectral sequences
 \begin{equation}\label{eqn:Main-Diag-1-4}
   \xymatrix@C.6pc{
     E^{i,j}_2(\sL^{\bullet}, G) \ar@2{->}[d] \ar[r] & E^{i,j}_2(\sL^{\bullet}_X, G_X)
     \ar@2{->}[d] \ar[r] & 
     E^{i+1,j}_2(\sM^{\bullet}, H) \ar@2{->}[d] \ar[r] &  E^{i+1,j}_2(\sL^{\bullet}, G)
     \ar@2{->}[d] \ar[r] & E^{i+1,j}_2(\sL^{\bullet}_X, G_X) \ar@2{->}[d] \\
H^{i+j}_\et(\sX, \sG_n) \ar[r] & H^{i+j}_\et(X, \sG_n) \ar[r] &
\H^{i+j+1}_{\et}(\sM^\bullet) \ar[r] & H^{i+j+1}_\et(\sX, \sG_n) \ar[r] &
H^{i+j+1}_\et(X, \sG_n).}
\end{equation}
As ${\bf R}\Gamma(\sM^\bullet) \to
 {\bf R}\Gamma(\sL^\bullet) \to {\bf R}\Gamma {\bf R}j_*(\sL^\bullet_X)$ is
 a distinguished triangle, we moreover see that $\H^{i+j+1}_{\et}(\sM^\bullet)$ is
 canonically isomorphic to $H^{i+j+1}_{\et,Y}(\sX, \sG_n)$.

Taking $(i,j) = (1,2)$ and using ~\eqref{eqn:Main-Diag-1-3}, we get that the
 top (resp. bottom) row of ~\eqref{eqn:Main-Diag-1-4} coincides with the
 top (resp. bottom) row of ~\eqref{eqn:Main-Diag-0}. Since the vertical arrows of
 the latter diagram are, by definition, the one induced from the spectral
 sequences of ~\eqref{eqn:Main-Diag-1-4}, we obtain the commutativity of
 ~\eqref{eqn:Main-Diag-0}, as desired.

We now prove the finiteness claim. We begin with the bottom row of
 ~\eqref{eqn:Main-Diag-0}. Since $H^i_\et(\sX, \sG_n) \cong H^i_\et(Y, \sG_n)$
 by the proper base change theorem, 
 \cite[Rem.~3.5(c)]{Jannsen-MA} says that all cohomology groups on the bottom
 row except possibly $H^{4}_{\et, Y}(\sX, \sG_n)$ are finite. It follows that
 the latter group is also finite.
 For the top row, we first note that $\alpha_1$ is
 injective as shown in the proof of \lemref{lem:Unram-7}
 (cf. ~\eqref{eqn:Unram-7-0}). The same proof also
 applies to show that $\beta_1$ is injective. This implies that the first
 two terms from left in the top row of ~\eqref{eqn:Main-Diag-0} are finite.

By \thmref{thm:Unram-5}, we have $H^2_\zar(\sX, \sF_n) \cong {\CH_1(\sX)}/n$
 and the latter group is finite by \cite[Thm.~0.6]{Saito-Sato-Ann}.
 Since ${\CH_1(\sX)}/n \surj {\CH_0(X)}/n \cong H^2_\zar(X, \sF_n)$, it follows
 that all cohomology groups on the top row except possibly
 $H^{2}_{Y}(\sX, \sF_n)$ are finite. It follows that
 the latter group is also finite. This completes the proof.
 \end{proof}

 For any scheme $Z$, we let $H^i_\et(Z, \Z_\ell(j)) =  {\varprojlim}_n
H^i_\et(Z, \Lambda_{\ell^n}(j))$ and $H^i_\et(Z, {\Q_\ell}/{\Z_\ell}(j)) =
{\varinjlim}_n H^i_\et(Z, \Lambda_{\ell^n}(j))$.

\begin{cor}\label{cor:Main-Diag-2}
  Under the $(\star_\ell)$-condition, there exists a commutative diagram of exact
  sequences
  \begin{equation}\label{eqn:Main-Diag-2-0}
  \xymatrix@C.8pc{
    {\varprojlim}_n H^1_\zar(\sX, \sF_n) \ar[r] \ar[d]_-{\alpha_1} &
    {\varprojlim}_n  H^1_\zar(X, \sF_n) \ar[r] \ar[d]^-{\beta_1} & 
    {\varprojlim}_n  H^2_Y(\sX, \sF_n) \ar[d]^-{\gamma_2} \ar[r] &
    {\varprojlim}_n  H^2_\zar(\sX, \sF_n) \ar[d]^-{\alpha_2} \\
H^{3}_{\rm {\etl}}(\sX, {\Z}_\ell(2)) \ar[r] & H^{3}_{\rm {\etl}}(X, {\Z}_\ell(2)) \ar[r] &
H^{4}_{{\rm {\etl}}, Y}(\sX, {\Z}_\ell(2)) \ar[r] & H^{4}_{\rm {\etl}}(\sX, {\Z}_\ell(2)).}
\end{equation}
\end{cor}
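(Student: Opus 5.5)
The corollary is the $\ell$-adic limit of the finite-level diagram~\eqref{eqn:Main-Diag-0}, with $n$ replaced by $\ell^n$. We take $\varprojlim_n$ of the first four columns and use the finiteness in \lemref{lem:Main-Diag-1} to keep the rows exact.

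\textbf{Step 1: compatible systems.} For each $n$, \lemref{lem:Main-Diag-1} gives a commutative diagram with exact rows (the Zariski and étale localization sequences). First we check that as $n$ varies these form an inverse system of diagrams along the transition maps $\Lambda_{\ell^{n+1}}(2) \to \Lambda_{\ell^n}(2)$. These maps induce maps of Zariski sheaves $\sF_{n+1} \to \sF_n$ and of étale sheaves $\sG_{n+1} \to \sG_n$. Since the vertical maps in~\eqref{eqn:Main-Diag-0} come from morphisms of Leray spectral sequences built functorially from the coefficient sheaf, as in the proof of \lemref{lem:Main-Diag-1}, they are compatible with the transition maps. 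Concretely, we choose injective resolutions and Cartan--Eilenberg resolutions compatibly in $n$, or simply invoke functoriality of the Leray spectral sequence in the coefficient sheaf. The localization sequences are natural in the coefficients as well. We thus get an inverse system of commutative diagrams with exact rows.

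\textbf{Step 2: passing to the limit.} All groups in~\eqref{eqn:Main-Diag-0} are finite by \lemref{lem:Main-Diag-1}. Inverse systems of finite groups satisfy the Mittag-Leffler condition, so $\varprojlim_n$ is exact on them. Applying $\varprojlim_n$ to the first four columns therefore gives a commutative diagram whose top row is exact, which is the top row of~\eqref{eqn:Main-Diag-2-0}.

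For the bottom row, by definition $H^i_\et(Z, \Z_\ell(2)) = \varprojlim_n H^i_\et(Z, \Lambda_{\ell^n}(2))$. We define $H^4_{\et,Y}(\sX, \Z_\ell(2))$ in the same way as the limit of the finite groups $H^4_{\et,Y}(\sX, \sG_n)$. Exactness of the limit of the bottom rows then again follows from Mittag-Leffler. If instead one wants to use continuous étale cohomology à la Jannsen, finiteness makes all $\varprojlim^1$ terms vanish, so the two definitions agree.

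\textbf{Main obstacle.} The only real subtlety is the compatibility in Step 1 of the vertical maps $\alpha_i, \beta_1, \gamma_2$ with change of coefficients. This includes the support map $\gamma_2$, which is built from the mapping cone $\sM^\bullet$. To handle it, I would make the constructions of \lemref{lem:Main-Diag-1} functorial: a map of sheaves $\sG_{n+1} \to \sG_n$ lifts, up to homotopy, to a map of injective resolutions. This induces a map of Cartan--Eilenberg resolutions and of the filtered cones, and hence of the spectral sequences~\eqref{eqn:Main-Diag-1-4}. Since homotopic maps induce the same maps on $E_2$-terms and abutments, the resulting squares commute on the nose, and Step 2 goes through.
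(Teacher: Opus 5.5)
Your proposal is correct and follows the paper's own proof. The paper also passes to the inverse limit in the finite-level diagram~\eqref{eqn:Main-Diag-0}, using that its rows are exact sequences of finite groups (\lemref{lem:Main-Diag-1}) and that the vertical arrows are compatible with the transition maps $\Z/\ell^{n+1} \to \Z/\ell^{n}$; your Step~1 simply spells out the compatibility check that the paper dismisses with ``as one easily checks.''
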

\begin{proof}
  Since the two rows of ~\eqref{eqn:Main-Diag-0} are exact sequences of finite
  groups and its vertical arrows are compatible with the canonical map
  ${\Z}/{\ell}^{n+1} \to {\Z}/{\ell}^{n}$ (as one easily checks),
  we can pass to the limit to get the desired conclusion.
\end{proof}

\subsection{Theorem of Esnault--Wittenberg}\label{sec:EW-thm}
Recall from \cite[\S~4]{Esnault-Wittenberg} that if $F$ is a field and $V$
is a connected smooth projective variety over $F$, then one says that
$H^2_\et(V, {\Q}_\ell(1))$ is algebraic if the canonical inclusion
$\Pic(V)^{(\ell)} {\otimes}_{\Z_\ell} {\Q}_\ell \inj H^2_\et(V, {\Q}_\ell(1))$ is an
isomorphism. Since the $\ell$-adic Tate module $T_\ell(\Br(V))$ is a torsion-free
${\Z}_\ell$-module, the Kummer sequence tells us that the algebraicity of
$H^2_\et(V, {\Q}_\ell(1))$ is equivalent to the condition that $T_\ell(\Br(V))$
vanishes. We let $\ov{V}$ denote the base change of $V$ to a separable closure
of $F$.

The following result is due to Esnault--Wittenberg
\cite[\S~3,4]{Esnault-Wittenberg}.

\begin{prop}\label{prop:EW-main}
  Assume that the $(\star_\ell)$-condition holds. Assume also that one of the
  following two conditions holds.
  \begin{enumerate}
  \item
    $\F$ is finite, $\Alb_X$ has potentially good reduction and the irreducible
    components of $Y$ satisfy the Tate conjecture.
  \item
    $\F$ is separably closed and $ H^2_{\rm {\etl}}(\ov{X}, {\Q}_\ell(1))$ is algebraic.
  \end{enumerate}

  Then the map $\gamma_2 \colon {\varprojlim}_n  H^2_Y(\sX, \sF_n) \to
  H^{4}_{{\rm {\etl}}, Y}(\sX, {\Z}_\ell(2))$ is surjective.
\end{prop}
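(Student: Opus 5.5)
We will identify both sides of $\gamma_2$ with groups built out of the irreducible components $Y_j$, and then quote the Tate conjecture (resp. algebraicity) input from \cite{Esnault-Wittenberg}. The first step is to compute the source. By \thmref{thm:Unram-5}(5) with $Z = Y$ and $Z = Y_j$, we have compatible isomorphisms $H^2_Y(\sX, \sF_n) \cong {\CH_1(Y)}/\ell^n$ and $H^2_{Y_j}(\sX, \sF_n) \cong {\CH_1(Y_j)}/\ell^n = \Pic(Y_j)/\ell^n$. The forget-support maps assemble into a surjection ${\underset{j}\bigoplus} H^2_{Y_j}(\sX, \sF_n) \surj H^2_Y(\sX, \sF_n)$, because $\bigoplus_j \CH_1(Y_j) \to \CH_1(Y)$ is surjective. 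These maps are compatible with the $\gamma_{2,j}$ and $\gamma_2$, by the same spectral sequence argument as in \lemref{lem:Main-Diag-1}. Passing to the limit, which is harmless since all groups are finite, it therefore suffices to show that the composite
\[
{\underset{j}\bigoplus} \Pic(Y_j)^{(\ell)} \to H^4_{\et, Y}(\sX, \Z_\ell(2))
\]
is surjective. This composite is $\bigoplus_j$ (first Chern class of $\Pic(Y_j)$) followed by the Gysin map $H^2_\et(Y_j, \Z_\ell(1)) \to H^4_{\et,Y_j}(\sX,\Z_\ell(2)) \to H^4_{\et, Y}(\sX, \Z_\ell(2))$, where the first arrow uses absolute purity for the regular divisor $Y_j \subset \sX$.

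The second step is to analyse the target $H^4_{\et,Y}(\sX, \Z_\ell(2))$ via the weight, or Mayer--Vietoris, spectral sequence for the snc divisor $Y$, exactly as in \cite[\S~3]{Esnault-Wittenberg}. By purity, $H^4_{\et,Y}$ has a filtration whose graded pieces are subquotients of $\bigoplus_j H^2_\et(Y_j, \Z_\ell(1))$, $\bigoplus_{i<j} H^0_\et(Y_i\cap Y_j, \Z_\ell(0))$ (shifted), and terms from triple points. The image of the Gysin map from $\bigoplus_j H^2(Y_j)$ is the piece we must show exhausts everything, up to finite index which disappears after completion and the torsion analysis. Esnault--Wittenberg show that, modulo torsion, the cokernel of $\bigoplus_j H^2_\et(Y_j, \Z_\ell(1)) \to H^4_{\et,Y}(\sX,\Z_\ell(2))$ is controlled by $H^3_\et(X,\Q_\ell(2))$ through the localization sequence. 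Its vanishing uses the hypotheses: in case (1), the potentially good reduction of $\Alb_X$ together with weight arguments over the finite field $\F$; in case (2), the algebraicity of $H^2_\et(\ov X,\Q_\ell(1))$, which kills the relevant $\Q_\ell$-part by monodromy.

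The third step is to show that $\Pic(Y_j)^{(\ell)}$ surjects onto the part of $H^2_\et(Y_j,\Z_\ell(1))$ that matters. Over finite $\F$, this uses the Tate conjecture for $Y_j$. It gives surjectivity of $\Pic(Y_j)\otimes\Q_\ell \to H^2_\et(Y_j,\Q_\ell(1))^{G_\F}$. Then the Hochschild--Serre sequence shows that $H^2_\et(Y_j, \Z_\ell(1)) \to H^2_\et(\ov Y_j,\Z_\ell(1))^{G_\F}$ has image in which $\Pic$ has finite index. The remaining finite torsion is handled integrally, as in \cite[Prop.~3.3]{Esnault-Wittenberg}, through the fact that $\Br(Y_j)\{\ell\}$ is finite, so its Tate module vanishes. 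In the separably closed case, algebraicity transfers from $\ov X$ to the components via specialization, again following \cite[\S~4]{Esnault-Wittenberg}. The $(\star_\ell)$-condition enters only to make the identifications of \thmref{thm:Unram-5} available.

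The main obstacle is the second step: getting integral, rather than rational, surjectivity and correctly tracking the double-intersection contributions. Over $\Q_\ell$ the argument is a weight computation. For $\Z_\ell$ we rely on the fact that the target is a finitely generated $\Z_\ell$-module whose torsion comes from $H^3_\et(X,\Z_\ell(2))_\tor$. That torsion is hit because the first square of \eqref{eqn:Main-Diag-2-0} has surjective $\beta_1$ on torsion, by Merkurjev--Suslin applied to $X$. Rather than reproving all of this, we would cite \cite[Prop.~3.3, Thm.~4.1]{Esnault-Wittenberg} for the surjectivity of $\bigoplus_j \Pic(Y_j)^{(\ell)} \to H^4_{\et,Y}(\sX,\Z_\ell(2))$ directly, and add the compatibility check of the first step.
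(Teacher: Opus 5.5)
Your skeleton is the paper's proof. Step~1 reduces, via the commutative square relating $\gamma_2$ to $\bigoplus_i\gamma_{2,i}$ and the forget-support map $\psi\colon\bigoplus_i H^4_{\et,Y_i}(\sX,\Z_\ell(2))\to H^4_{\et,Y}(\sX,\Z_\ell(2))$, to the surjectivity of each $\gamma_{2,i}$ and of $\psi$. Only commutativity is used, so you do not need $\bigoplus_j H^2_{Y_j}(\sX,\sF_n)\to H^2_Y(\sX,\sF_n)$ to be onto. Step~3 is exactly the paper's argument that $\gamma_{2,i}$ is an isomorphism: rational surjectivity comes from the Tate conjecture, and it becomes integral because $T_\ell(\Br(Y_i))$ is torsion-free; in the separably closed case one uses \cite[Lem.~4.4, 4.5]{Esnault-Wittenberg}. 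For $\psi$ you end up citing Esnault--Wittenberg, as the paper does, so the proof goes through as a citation argument.

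The integral reasoning you give for Step~2, however, is not correct and should be dropped.
\begin{itemize}
\item An image of finite index that contains the torsion of the target need not be the whole target (think of $\ell\Z_\ell\subset\Z_\ell$).
\item Your assertion that the torsion of $H^4_{\et,Y}(\sX,\Z_\ell(2))$ comes from $H^3_\et(X,\Z_\ell(2))_\tor$ is unsupported. Torsion can also come from $\Ker(H^4_\et(\sX,\Z_\ell(2))\to H^4_\et(X,\Z_\ell(2)))$.
\item The double-curve term is a subquotient of $\bigoplus_{i<j}H^5_{\et,Y_i\cap Y_j}(\sX,\Z_\ell(2))\cong\bigoplus_{i<j}H^1_\et(Y_i\cap Y_j,\Z_\ell)$, not an $H^0$. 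For finite $\F$ its non-torsion part comes from $H^1$ of $\F$ with coefficients in $H^0$ of the geometric double curves. This has weight zero, so no weight argument removes it; this is where the hypothesis on $\Alb_X$ must be brought in.
\end{itemize}

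The paper avoids all of this by duality. By purity and \cite[Prop.~2.5, 2.6]{Esnault-Wittenberg}, surjectivity of $\psi$ is equivalent to injectivity of the restriction map $H^j_\et(Y,\Q_\ell/\Z_\ell(1))\to\bigoplus_i H^j_\et(Y_i,\Q_\ell/\Z_\ell(1))$. Here $j=3$ when $\F$ is finite and $j=2$ when $\F$ is separably closed. This injectivity is \cite[Lem.~3.8, (3.10)]{Esnault-Wittenberg} in the finite case and the argument of \cite[\S~4.2]{Esnault-Wittenberg} in the separably closed case. Injectivity with $\Q_\ell/\Z_\ell$-coefficients is precisely the integral surjectivity you need, so these are the statements to cite.
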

\begin{proof}
We look at the commutative diagram (cf. ~\eqref{eqn:cyc-cl-map})
  \begin{equation}\label{eqn:EW-main-0}
    \xymatrix@C1.6pc{
      \stackrel{m}{\underset{i=1}\bigoplus} {\varprojlim}_n H^2_{Y_i}(\sX, \sF_n)
      \ar[r]^-{\bigoplus \gamma_{2,i}} \ar[d] &
      \stackrel{m}{\underset{i=1}\bigoplus} 
      H^{4}_{\et, Y_i}(\sX, {\Z}_\ell(2)) \ar[d]^-{\psi} \\
      {\varprojlim}_n H^2_{Y}(\sX, \sF_n) \ar[r]^-{\gamma_2} &
      H^{4}_{\et, Y}(\sX, {\Z}_\ell(2)),}
  \end{equation}
  where the vertical arrows are the canonical maps induced by the inclusions 
  $Y_i \inj Y \inj \sX$. Using this diagram, it suffices to show that
  each $\gamma_{2,i}$ as well as the map $\psi$ is surjective.

To show that $\gamma_{2,i}$ is surjective, we shall use the exact sequence
  \begin{equation}\label{eqn:EW-main-1}
0 \to \CH_1(Y_i)^{(\ell)} \xrightarrow{\cyc} H^{2}_{\et}(Y_i, {\Z}_\ell(1))
\to T_\ell(\Br(Y_i)) \to 0.
  \end{equation}

  If $\F$ is finite, the Tate conjecture for $Y_i$ implies that
  the cycle class map $\CH_1(Y_i)^{(\ell)} \otimes_{\Z_\ell} \Q_{\ell} \to
  H^2_\et(Y_i, \Q_{\ell}(1))$ is surjective, as shown in the proof of
  \cite[Lem.~3.8]{Esnault-Wittenberg} (cf. \cite[Prop.~4.3]{Tate-Motives}).
  This implies by ~\eqref{eqn:EW-main-1} that the first map in
  ~\eqref{eqn:EW-main-1} is an isomorphism. We deduce from \thmref{thm:Unram-5}
  and the purity theorem for {\'e}tale cohomology
  that the cycle class map $\gamma_{2,i}$ is an isomorphism.
If $\F$ is separably closed, our assumption
  and \cite[Lem.~4.4, 4.5]{Esnault-Wittenberg} together imply that the
  cycle class map in ~\eqref{eqn:EW-main-1} is again an isomorphism.
  Equivalently, $\gamma_{2,i}$ is an isomorphism.

We now show that $\psi$ is surjective. By the purity and the duality
  theorems (cf. \cite[Prop.~2.5, 2.6]{Esnault-Wittenberg}), this is equivalent to
  showing that the restriction map
  \begin{equation}\label{eqn:EW-main-2}
H^j_\et(Y, {\Q_\ell}/{\Z_\ell}(1)) \to  \ \stackrel{m}{\underset{i=1}\bigoplus}
H^j_\et(Y_i, {\Q_\ell}/{\Z_\ell}(1))
  \end{equation}
  is injective, where $j = 3$ if $\F$ is finite and $j = 2$ is $\F$ is
  separably closed.
  But this follows by Lemma~3.8 and (3.10) of op. cit. when $\F$ is finite, and
  from the argument of \S~4.2 of loc. cit. when $\F$ is separably closed.
\end{proof}

\subsection{Theorem of Saito--Sato}\label{sec:Main-0-prf-0}
In this subsection, we shall use the main results of \cite{Saito-Sato-Ann} to
prove some lemmas about the cycle class maps $\alpha_i$ for $i =1,2$ and
$\beta_1$. We shall use these to prove Theorems~\ref{thm:Main-0} and
~\ref{thm:Main-12}.

\begin{lem}\label{lem:SS-main-0}
  Assume that the $(\star_\ell)$-condition holds. Then the map
  $\alpha_1 \colon H^1_\zar(\sX, \sF_n) \to H^{3}_{\rm {\etl}}(\sX, {\Z}/{\ell^n}(2))$
  is surjective and the map
  $\alpha_2 \colon H^2_\zar(\sX, \sF_n) \to H^{4}_{\rm {\etl}}(\sX, {\Z}/{\ell^n}(2))$
  is injective.
\end{lem}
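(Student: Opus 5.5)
We will read off both claims from the Leray spectral sequence $E^{i,j}_2(l) = H^i_\zar(\sX, \sH^j(\ell^n,2)) \Rightarrow H^{i+j}_\et(\sX, \Lambda_{\ell^n}(2))$, using the vanishing results of \S~\ref{sec:K-fin}. We then feed in the Saito--Sato computations of $H^3$ and $H^4$ of $\sX$ in terms of the closed fiber.

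\emph{Injectivity of $\alpha_2$.} By \lemref{lem:Unram-0}(2), $E^{i,0}_2 = E^{i,1}_2 = 0$ for $i \ge 2$. Moreover, $H^i_\zar(\sX, \sH^j) = 0$ for $i > 3$ by \lemref{lem:Kerz-Strunk}. Hence the only differentials that could hit $E^{2,2}$ come from $E^{0,3}_2$ and $E^{-1,4}$, and the second of these is zero. So the question reduces to the single differential $d_2 \colon E^{0,3}_2 \to E^{2,2}_2$.

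To kill it, we would use the morphism of spectral sequences from \propref{prop:LC-SS} to the coniveau spectral sequence. By \thmref{thm:Unram-5}(1) and \lemref{lem:Unram-8}, $E^{2,2}_2(l) = H^2(\sX, \sH^2_{\nr})$, and this maps isomorphically to $E^{2,2}_2(c)$ by the Bloch--Ogus resolution of \lemref{lem:Unram-8}(2). In the coniveau spectral sequence, $E^{2,2}_2(c)$ sits on the line $i = j$. Since $E^{i,j}_1(c) = 0$ for $i > j$, no nonzero differential leaves it, and incoming differentials would come from $E^{0,3}$ via $d_2$. The cleaner route is to use directly that $F^2_C H^4 \supseteq F^2_L H^4$ and that $E^{2,2}_\infty(c) = E^{2,2}_2(c)$, since the terms with $i > j$ vanish. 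Here $E^{2,2}_2(c) = \CH_1(\sX)/\ell^n$ is the bottom graded piece of $F^2_C H^4_\et(\sX)$, so the composite $H^2(\sX, \sF_n) \to H^4_\et$ factors as $E^{2,2}_2(l) \cong E^{2,2}_2(c) = E^{2,2}_\infty(c) \hookrightarrow H^4$. The conclusion is that $\alpha_2$ is injective, exactly as in the proof of injectivity of $\alpha_1$ in \lemref{lem:Unram-7}.

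\emph{Surjectivity of $\alpha_1$.} From the proof of \lemref{lem:Unram-7}, $H^3_\et(\sX, \Lambda_{\ell^n}(2))$ has Leray filtration $F^1_L = H^1_\zar(\sX, \sF_n)$, with $H^3/F^1_L \subseteq E^{0,3}_2 = H^0(\sX, \sH^3(\ell^n,2))$. We must therefore show that the edge map $H^3_\et(\sX) \to H^0_\zar(\sX, \sH^3)$ is zero, i.e. every class in $H^3_\et(\sX, \Lambda_{\ell^n}(2))$ dies at the generic point. Here we invoke Saito--Sato: by proper base change $H^3_\et(\sX) \cong H^3_\et(Y)$, and \cite{Saito-Sato-Ann} shows that this group is generated by (i.e. equal to $F^1$ of) classes supported in codimension one. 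Concretely, the map $\bigoplus_i H^1_\et(Y_i\text{-type terms}) \to H^3$ coming from coniveau, or equivalently the vanishing of $H^3_\et(\sX, \Lambda(2)) \to H^3_\et(k(\eta), \Lambda(2))$, holds because $\sX$ is a regular proper model over a henselian dvr with finite or separably closed residue field. In that case $H^3_\et(\sX) \cong H^3_\et(Y)$, and its image in $H^3$ of the function field vanishes by their ``Kato complex'' computation (or by the dimension count $\mathrm{cd}_\ell(Y) \le 3$ together with the fact that restriction factors through $H^3$ of a henselian local scheme at $Y$).

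\emph{Main obstacle.} The crux is this last point: identifying precisely which result of \cite{Saito-Sato-Ann} gives $F^1_C H^3_\et(\sX) = H^3_\et(\sX)$. We expect to use their theorem that the cycle class $\CH_1(\sX)/\ell^n \to H^4_\et(\sX)$ is an isomorphism, combined with the unramified part being zero.
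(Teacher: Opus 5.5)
Your proposal has a genuine gap in each half. In both cases the missing ingredient is a non-formal theorem of Saito--Sato, which the spectral-sequence bookkeeping cannot supply. \emph{Injectivity of $\alpha_2$.} Your Leray analysis correctly shows that $\Ker(\alpha_2)$ is the image of $d_2\colon H^0_{\zar}(\sX,\sH^3(\ell^n,2))\to H^2_{\zar}(\sX,\sF_n)$. Moving to the coniveau spectral sequence does not dispose of this differential. The vanishing of $E^{i,j}_1(c)$ for $i>j$ only kills differentials \emph{leaving} $E^{2,2}$. The incoming $d_2\colon E^{0,3}_2(c)\to E^{2,2}_2(c)$ has as source the unramified part of $H^3_{\et}(k(\eta),\Lambda_{\ell^n}(2))$; there is no formal reason for it to vanish, and its image is exactly $\Ker(\alpha^{\nr}_2)$. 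So $E^{2,2}_\infty(c)=E^{2,2}_2(c)$ is the statement to be proved, not a consequence of the shape of the spectral sequence. The analogy with $\alpha_1$ in \lemref{lem:Unram-7} breaks down because there the only incoming term is $E^{-1,3}=0$. On the generic fibre the spectral sequence has the same shape (cf.\ \eqref{eqn:EW-main-3-1}), and the corresponding $d_2$ can be nonzero, e.g.\ in the Parimala--Suresh examples. The paper uses the comparison diagram only to reduce to injectivity of $\alpha^{\nr}_2$ on $H^2_{\zar}(\sX,\sH^2_{\nr}(\ell^n,2))\cong\CH_1(\sX)/\ell^n$. That map is the Saito--Sato cycle class map, injective by \cite[Thm.~0.6]{Saito-Sato-Ann}. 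You name this theorem only at the end, and attach it to the other half.

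\emph{Surjectivity of $\alpha_1$.} Your reduction to the vanishing of the edge map $H^3_{\et}(\sX,\Lambda_{\ell^n}(2))\to H^0_{\zar}(\sX,\sH^3(\ell^n,2))$ is correct, but you give no valid argument for it. Using the $\CH_1$ cycle class isomorphism ``combined with the unramified part being zero'' is circular. Injectivity of $\alpha_2$ only makes this edge map surjective, so surjectivity of $\alpha_1$ becomes equivalent to $H^3_{\nr}(\sX,\Z/\ell^n)=0$. That is \thmref{thm:Main-0-2}, which the paper deduces from this very lemma. The dimension heuristic also fails. $\mathrm{cd}_\ell(Y)$ is $5$ (resp.\ $4$) when $\F$ is finite (resp.\ separably closed). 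Moreover, killing a class in $H^3$ of a henselian local scheme at a point of $Y$ only shows it dies in $H^3$ of the corresponding henselian fraction field, not in $H^3_{\et}(k(\eta),\Lambda_{\ell^n}(2))$. The missing idea is Kato homology. The coniveau spectral sequence gives an exact sequence $0\to E^{1,2}_2(c)\to H^3_{\et}(\sX,\Lambda_{\ell^n}(2))\to KH_3(\sX,\Lambda_{\ell^n})$, with $E^{1,2}_2(c)\cong H^1_{\zar}(\sX,\sH^2_{\nr}(\ell^n,2))$ by \lemref{lem:Unram-8}(2). Saito--Sato's vanishing $KH_3(\sX,\Lambda_{\ell^n})=0$ (\cite[Thm.~2.13]{Saito-Sato-Ann}, with their Lemma~8.8 when $\F$ is finite) then makes $\alpha^{\nr}_1$ surjective, and \lemref{lem:Unram-7} transfers this to $\alpha_1$.
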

\begin{proof}
We can use \lemref{lem:Unram-7} and
the diagram~\ref{eqn:Unram-7-0} to reduce the problem of showing the surjectivity
of $\alpha_1$ to showing that 
$\alpha^{\nr}_1 \colon H^1_\zar(\sX, \sH^2_{\nr}(\ell^n,2)) \to
  H^{3}_\et(\sX, {\Z}/{\ell^n}(2))$ (cf. ~\eqref{eqn:Unram-7-0}) is surjective.

By the coniveau spectral sequence ~\eqref{eqn:Unram-2-0} whose abutment is
  $H^{3}_\et(\sX, \Lambda_{\ell^n}(2))$, we get an exact sequence
  \begin{equation}\label{eqn:SS-main-0-0}
    0 \to E^{1,2}_2(c) \to H^{3}_\et(\sX, \Lambda_{\ell^n}(2)) \to
    KH_3(\sX, \Lambda_{\ell^n}),
  \end{equation}
  where the last term is the Kato homology whose definition is given in
  \cite[p.~1597]{Saito-Sato-Ann}.
Using \lemref{lem:Unram-8}, ~\eqref{eqn:SS-main-0-0}
  coincides with the exact sequence
  \begin{equation}\label{eqn:SS-main-0-1*}
    0 \to  H^1_\zar(\sX, \sH^2_{\nr}(\ell^n,2)) \to
    H^{3}_\et(\sX, \Lambda_{\ell^n}(2)) \to
    KH_3(\sX, \Lambda_{\ell^n}).
  \end{equation}
If $\F$ is separably closed, then $KH_3(\sX, \Lambda_{\ell^n})$ vanishes
for every $n \ge 1$ by \cite[Thm.~2.13]{Saito-Sato-Ann}
(see its proof on p.~1630). We derive the same conclusion using
  Lemma~8.8 and Theorem~2.13 of op. cit. if $\F$ is finite.

To prove the injectivity of $\alpha_2$, we let $n \ge 1$ and use the spectral
  sequences ~\eqref{eqn:Unram-2-0} and ~\eqref{eqn:LSS-0} whose common abutment is
  $H^{4}_\et(\sX, \Lambda_{\ell^n}(2))$. It follows by \propref{prop:LC-SS} and
  \lemref{lem:Unram-8} that we have a commutative diagram
  \begin{equation}\label{eqn:SS-main-0-2}
 \xymatrix@C1pc{
   E^{2,2}_2(l) \ar[r]^-{\cong} \ar[d] &  H^2_\zar(\sX, \sF_n)
   \ar[d]^{\epsilon^{\et}} \ar[r]^-{\alpha_2} & 
        H^{4}_\et(\sX, \Lambda_{\ell^n}(2)) \ar[d]^-{\id} \\
        E^{2,2}_2(c) \ar[r]^-{\cong} &  H^2_\zar(\sX, \sH^2_{\nr}(\ell^n,2))
        \ar[r]^-{\alpha^{\nr}_2} & H^{4}_\et(\sX, \Lambda_{\ell^n}(2)).}
  \end{equation}

Since the middle vertical arrow is an isomorphism by
  \lemref{lem:Unram-8} and \thmref{thm:Unram-5}, it suffices to show that
  $\alpha^{\nr}_2$ is injective. But this follows
  from \cite[Thm.~0.6, Defn.~1.12]{Saito-Sato-Ann}. This completes the proof.
  \end{proof}

Recall from \S~\ref{sec:VRC} that $H^i_{\nr}(X, {\Z}/{\ell^n}) :=
H^0_\zar(X, \sH^i({\Z}/{\ell^n}(i-1))), \
H^i_{\nr}(X, {\Q_\ell}/{\Z_\ell}) = {\varinjlim}_n H^i_{\nr}(X, {\Z}/{\ell^n})$
and $H^i_{\nr}(X, {\Z}_\ell) = {\varprojlim}_n H^i_{\nr}(X, {\Z}/{\ell^n})$.
The groups $H^i_{\nr}(\sX, {\Q_\ell}/{\Z_\ell})$ and
$H^i_{\nr}(\sX, {\Z}_\ell)$ are defined similarly.
The group $H^i_{\nr}(X, {\Q_\ell}/{\Z_\ell})$ is called the $i$-th
  unramified cohomology of $X$.

\begin{lem}\label{lem:EW-main-3}
  Under the hypotheses of \thmref{thm:Main-0} or \thmref{thm:Main-12},
  there exists an exact sequence
  \begin{equation}\label{eqn:EW-main-3-0}
    0 \to {\varprojlim}_n H^1_\zar(X, \sF_n) \xrightarrow{\beta_1}
    H^{3}_{\rm {\etl}}(X, \Z_{\ell}(2)) \to H^3_{\nr}(X, \Z_\ell) \to 0.
  \end{equation}
\end{lem}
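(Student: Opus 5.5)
The sequence will come from the Leray spectral sequence ~\eqref{eqn:LSS-0} for $\epsilon\colon X_\et \to X_\zar$ with coefficients $\Lambda_{\ell^n}(2)$, taken at a fixed finite level and then passed to the inverse limit. Since $X$ is a smooth surface over $k$, \lemref{lem:BO-exact} gives a flasque Bloch--Ogus resolution of $\sH^j(\ell^n,2)$ of length at most two on $X$. Hence $H^i_\zar(X, \sH^j(\ell^n,2)) = 0$ for $i > \min(j,2)$, and $H^i_\zar(X,\sH^j)=0$ for $i>j$.

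For total degree $3$ the only nonzero terms are therefore $E^{0,3}_2$, $E^{1,2}_2$, $E^{2,1}_2$, $E^{3,0}_2$, and the last of these vanishes. The term $E^{2,1}_2 = H^2_\zar(X,\sH^1(\ell^n,2))$ also vanishes, because the Bloch--Ogus resolution of $\sH^1$ ends in $\bigoplus_{x\in X^{(1)}} H^0(k(x),\Lambda(1))$ and has no term in codimension two. By the same vanishing, the differentials into $E^{1,2}$ come from $E^{-1,3}=0$, and the differentials out of $E^{1,2}$ land in $E^{3,1}=0$. So $E^{1,2}_\infty = E^{1,2}_2 = H^1_\zar(X,\sF_n)$. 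On $E^{0,3}$, the differential $d_2 \colon E^{0,3}_2 \to E^{2,2}_2 = H^2_\zar(X,\sF_n)$ may be nonzero, and $d_3$ lands in $E^{3,1}=0$.

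This yields an exact sequence
\[
0 \to H^1_\zar(X,\sF_n) \xrightarrow{\beta_1} H^3_\et(X,\Lambda_{\ell^n}(2)) \to H^3_{\nr}(X,\Z/\ell^n) \xrightarrow{d_2} H^2_\zar(X,\sF_n) \xrightarrow{\beta_2} H^4_\et(X,\Lambda_{\ell^n}(2)),
\]
which is the usual five-term sequence, with injectivity of $\beta_1$ as already noted in the proof of \lemref{lem:Main-Diag-1}. The groups $H^1_\zar(X,\sF_n)$ and $H^2_\zar(X,\sF_n)\cong \CH_0(X)/\ell^n$ are finite, as shown in \lemref{lem:Main-Diag-1}, so all terms are finite and $\varprojlim_n$ is exact on these sequences. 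In the limit we get
\[
0\to \varprojlim_n H^1_\zar(X,\sF_n) \to H^3_\et(X,\Z_\ell(2)) \to H^3_{\nr}(X,\Z_\ell) \to \Ker\Bigl(\varprojlim_n H^2_\zar(X,\sF_n) \to H^4_\et(X,\Z_\ell(2))\Bigr).
\]

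The key step, and the main obstacle, is to show this last kernel is zero. Since $\varprojlim_n H^2_\zar(X,\sF_n) = \CH_0(X)^{(\ell)}$, this amounts to injectivity of the $\ell$-adic cycle class map $\CH_0(X)^{(\ell)} \to H^4_\et(X,\Z_\ell(2))$. This is precisely the theorem of Esnault--Wittenberg \cite{Esnault-Wittenberg}. It holds under condition (1) of the $(\star\star_\ell)$ hypothesis in the finite residue field case, and under the algebraicity hypothesis in the separably closed case.

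If one prefers to derive it from the results above, the plan is to chase diagram~\eqref{eqn:Main-Diag-2-0} extended by $\beta_2$. An element of the kernel lifts to $\varprojlim H^2_\zar(\sX,\sF_n)$ because $\CH_1(\sX)\surj\CH_0(X)$. Its image under $\alpha_2$ restricts to zero on $X$, so it comes from $H^4_{\et,Y}(\sX,\Z_\ell(2))$. By \propref{prop:EW-main} it then lifts to $\varprojlim H^2_Y(\sX,\sF_n)$. Injectivity of $\alpha_2$ (\lemref{lem:SS-main-0}, passed to the limit) then shows the lift comes from the support group, so its image on $X$ vanishes.

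Finally, condition (2) of $(\star\star_\ell)$ is reduced to condition (1) by base change to $R[\sqrt{-1}]$, which is unramified, followed by a norm argument; this works because the kernel is an $\ell$-group and the extension has degree at most $2$. The case $\ell = 2$ needs an extra check here.
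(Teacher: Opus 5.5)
Your core argument is the paper's. You take the five-term sequence from the Leray spectral sequence on $X$ via the Bloch--Ogus resolution. The paper identifies $\beta_2$ with the cycle class map $\CH_0(X)/\ell^n \to H^4_{\et}(X,\Lambda_{\ell^n}(2))$ using \propref{prop:LC-SS}; you take this identification for granted. You then use finiteness of all terms, pass to the limit, and get injectivity of $\CH_0(X)^{(\ell)}\to H^4_{\et}(X,\Z_\ell(2))$ from \cite[Thm.~3.1, 4.1]{Esnault-Wittenberg}.

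The gap is in your treatment of part (2) of the $(\star\star_\ell)$-condition. The norm argument only shows that an element of the kernel is killed by $2$, so it gives vanishing only for odd $\ell$. For odd $\ell$, however, the $(\star_\ell)$-condition holds automatically, so part (2) is already covered by part (1) once the components of $Y$ satisfy the Tate conjecture. The only case that actually needs an argument is $\ell=2$ with $\sqrt{-1}\notin\F$, and there your argument gives nothing, since $\CH_0(X)^{(2)}$ need not be torsion-free. The paper's own trick for $\ell=2$ in \thmref{thm:Main-0-0} cannot be recycled here: it exploits torsion-freeness of $H^3_{\nr}(X,\Z_2)$, and it uses this very lemma for $X$ as input. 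As written, you leave exactly this case as ``an extra check''.

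The missing observation is that the $(\star_\ell)$-condition plays no role in the theorem of Esnault--Wittenberg. In this paper it enters only through the model-level comparisons (\thmref{thm:Unram-5}, \propref{prop:EW-main}, \lemref{lem:SS-main-0}). That is also why your alternative chase in \eqref{eqn:Main-Diag-2-0} cannot reach $\ell=2$; it is not independent of Esnault--Wittenberg anyway, since \propref{prop:EW-main} rests on their results.

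\cite[Thm.~3.1]{Esnault-Wittenberg} requires only that $\Alb_X$ have potentially good reduction and that the components of $Y$ satisfy the Tate conjecture. So put the trace argument on the Tate conjecture rather than on the kernel. The Tate conjecture for the components of $Y_{\F'}$ implies it for the components of $Y$ by push-forward of cycles, as the paper notes in the proof of \thmref{thm:Main-0-0}. Esnault--Wittenberg then applies directly, for every $\ell\neq p$, under either part of $(\star\star_\ell)$. This is what the paper's one-line citation amounts to.
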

\begin{proof}
For any $n \ge 1$, the Bloch--Ogus resolution for $\sH^i({\ell^n},2)$ on $X$
(cf. \lemref{lem:BO-exact})
and the Leray spectral sequence ~\eqref{eqn:LSS-0} together yield an exact
sequence
  \begin{equation}\label{eqn:EW-main-3-1}
  0 \to H^1_\zar(X, \sF_n) \xrightarrow{\beta_1} H^{3}_\et(X, \Lambda_{\ell^n}(2))
  \to H^3_{\nr}(X, \Lambda_{\ell^n}) \to H^2_\zar(X, \sF_n) 
  \xrightarrow{\beta_2} H^{4}_\et(X, \Lambda_{\ell^n}(2)).
  \end{equation}

Using \propref{prop:LC-SS} and diagram~\eqref{eqn:SS-main-0-2} for $X$,
 we get that ${\beta_2}$ coincides with the cycle class map
 $\rho_X \colon {\CH_0(X)}/{\ell^n} \to H^{4}_\et(X, \Lambda_{\ell^n}(2))$
 given in \cite[Defn.~1.12]{Saito-Sato-Ann}.
We can thus rewrite ~\eqref{eqn:EW-main-3-1} as
 \begin{equation}\label{eqn:EW-main-3-2}
 0 \to H^1_\zar(X, \sF_n) \xrightarrow{\beta_1} H^{3}_\et(X, \Lambda_{\ell^n}(2))
  \to H^3_{\nr}(X, \Lambda_{\ell^n}) \to  {\CH_0(X)}/{\ell^n}
  \xrightarrow{\rho_X} H^{4}_\et(X, \Lambda_{\ell^n}(2)). 
 \end{equation}

 The groups
 $H^3_\et(X, \Lambda_{\ell^n}(2))$ and $H^4_\et(X, \Lambda_{\ell^n}(2))$ are finite
 by \lemref{lem:Main-Diag-1}. We also showed that $ {\CH_0(X)}/{\ell^n}$ is finite
(see the end of the proof of \lemref{lem:Main-Diag-1}).
It follows that all terms of ~\eqref{eqn:EW-main-3-2} are finite.
We can therefore pass to the limit to
  get an an exact sequence
  \begin{equation}\label{eqn:EW-main-3-3}
    0 \to {\varprojlim}_n H^1_\zar(X, \sF_n) \xrightarrow{\beta_1}
    H^{3}_\et(X, \Z_{\ell}(2)) \to H^3_{\nr}(X, \Z_{\ell}) \hspace*{4cm}
  \end{equation}
  \[
  \hspace*{8cm} \to \CH_0(X)^{(\ell)}
  \xrightarrow{\rho_X} H^{4}_\et(X, \Z_{\ell}(2)). 
 \]
The exactness of ~\eqref{eqn:EW-main-3-0} now follows by 
\cite[Thm.~3.1, 4.1]{Esnault-Wittenberg} which say that $\rho_X$ is injective
under our assumptions. This completes the proof.
   \end{proof}

For $n \ge 1$, we let $\CH^i(X, j; {\Z}/{\ell^n})$ denote the $j$-th homology of
Bloch's cycle complex $Z^i(X, \bullet) \otimes {\Z}/{\ell^n}$.
For any field $F$ and $Z \in \Sch_F$, we let $H^i(Z, {\Z}/{\ell^n}(j))$
denote Voevodsky's motivic cohomology of $Z$ (cf. Definition~\ref{defn:MC-defn}). 

\begin{lem}\label{lem:Chow-finite}
  We have the following.
  \begin{enumerate}
  \item
  There are canonical isomorphisms
  \[
  H^1_\zar(X, {\sK_{2,X}}/{\ell^n}) \xrightarrow{\cong}
  H^1_\zar(X, \sH^2(\ell^n,2)) \xrightarrow{\cong} \CH^2(X, 1; {\Z}/{\ell^n})
  \xrightarrow{\cong} H^3(X, {\Z}/{\ell^n}(2)).
  \]
  \item
    The cycle class map $H^3(X, {\Z}/{\ell^n}(2)) \to H^3_{\rm {\etl}}(X, {\Z}/{\ell^n}(2))$
    is injective. 
  \item
  There is a canonical isomorphism
  $H^i_{\nr}(X, \Z_\ell) \cong T_\ell(H^i_{\nr}(X, {\Q_\ell}/{\Z_\ell}))$ for every
  $i \ge 1$.
  \end{enumerate}
\end{lem}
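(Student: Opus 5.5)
For item (1), I would work over the smooth $k$-surface $X$, where everything is known. The first isomorphism $H^1_\zar(X, {\sK_{2,X}}/{\ell^n}) \cong H^1_\zar(X, \sH^2(\ell^n,2))$ is the restriction to $X$ of the norm-residue isomorphism $\beta_0$ of \lemref{lem:Unram-6} (Hoobler, or Merkurjev--Suslin on the regular $k$-scheme $X$). For the second isomorphism I would use the Bloch--Ogus resolution of $\sH^2(\ell^n,2)$ on $X$ (\lemref{lem:BO-exact}). It identifies $H^1_\zar(X,\sH^2(\ell^n,2))$ with the middle homology of
\[
K_2(k(X))/\ell^n \to \bigoplus_{x\in X^{(1)}} k(x)^\times/\ell^n \to \bigoplus_{x \in X^{(2)}} \Z/\ell^n .
\]
Here I use that the Gersten complex for ${\sK_{2,X}}/\ell^n$ is exact on $X$ by Gersten for $\sK_2$ on $X$ (\lemref{lem:Panin*}) together with the argument of \thmref{thm:Unram-3}, which applies on $X$ because $X$ is smooth over a field.

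Next, I would identify this homology with $\CH^2(X,1;\Z/\ell^n)$ via Landsburg's description, sending a pair of rational functions or a rational function on a curve to its graph. Over a field this is classical: $\CH^2(X,1;\Z/\ell^n) \cong H^1(X, \sK_2/\ell^n)$ by Bloch's localization and the Gersten conjecture for higher Chow groups with finite coefficients, using the coniveau spectral sequence for higher Chow groups, whose $E_1$-terms in weight $2$ are exactly the above complex. Finally, $\CH^2(X,1;\Z/\ell^n)\cong H^3(X,\Z/\ell^n(2))$ is Voevodsky's comparison theorem, in positive characteristic after inverting $p$, which is harmless since $\ell \neq p$.

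For item (2), I would use that under these identifications the cycle class map becomes the edge map $\beta_1 \colon H^1_\zar(X,\sH^2(\ell^n,2)) \to H^3_\et(X,\Lambda_{\ell^n}(2))$ from the Leray spectral sequence. This map is injective because $E^{3,1}_2 = H^3_\zar(X,\sH^1)$ vanishes: $X$ has dimension $2$, so Zariski cohomological dimension gives the vanishing. Alternatively, the Beilinson--Lichtenbaum (Bloch--Kato) theorem gives directly that motivic cohomology maps injectively to \'etale cohomology in degree $w+1 = 3$. The main obstacle is checking that the motivic cycle class map agrees with $\beta_1$ under (1). I would handle this by compatibility of both with the coniveau filtration (\propref{prop:LC-SS}), or simply cite Beilinson--Lichtenbaum, which gives injectivity in degree $\le w+1$ for any smooth $X$.

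For item (3), take the inverse limit of the short exact sequences
\[
0 \to H^{i-1}_{\nr}(X,\Q_\ell/\Z_\ell)/\ell^n \to H^i_{\nr}(X,\Z/\ell^n) \to H^i_{\nr}(X,\Q_\ell/\Z_\ell)[\ell^n] \to 0.
\]
These come from the Bloch--Ogus exactness, using left exactness of $H^0$ applied to $0\to \sH^{i-1}(\Q_\ell/\Z_\ell)/\ell^n \to \sH^i(\Z/\ell^n)\to \sH^i(\Q_\ell/\Z_\ell)[\ell^n]\to 0$. The limit of the right-hand terms is $T_\ell$. It then suffices to show that $\varprojlim_n$ of the left terms vanishes, and this holds if $H^{i-1}_{\nr}(X,\Q_\ell/\Z_\ell)$ has finite corank, since a divisible group modulo $\ell^n$ is zero and a finite group contributes nothing to the Tate module. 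The finite corank follows from the injection $H^{i-1}_{\nr}\subset H^{i-1}_\et(k(X),-)$, refined by the finiteness of $H^{i-1}_\et(X,\Lambda_\ell)$ over local or strictly local $k$ (\lemref{lem:Main-Diag-1}).
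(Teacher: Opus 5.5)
Items (1) and (2) are essentially fine. For the middle isomorphism in (1) you use the coniveau spectral sequence for Bloch's higher Chow groups. It degenerates at the relevant spot because $\CH^2(F,1;\Z/\ell^n)=0=\CH^0(F,1;\Z/\ell^n)$ for a field $F$. The paper argues differently: it compares Suslin's sequence $0\to H^1_\zar(X,\sK_{2,X})/\ell^n\to H^1_\zar(X,\sK_{2,X}/\ell^n)\to H^2_\zar(X,\sK_{2,X})[\ell^n]\to 0$ with the universal coefficient sequence for higher Chow groups, using Landsburg's integral theorem and Bloch's formula. Both routes work.

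In (2) your Leray argument is the paper's, but the vanishing you cite is not the one that gives injectivity. The vanishing $H^3_\zar(X,\sH^1)=0$ only kills the differential leaving $E_2^{1,2}$. An injection $E_2^{1,2}\inj H^3_{\et}(X,\Lambda_{\ell^n}(2))$ also needs $F^2_L H^3_{\et}=0$, i.e.\ $H^2_\zar(X,\sH^1(\ell^n,2))=0$. This follows from the Bloch--Ogus resolution of $\sH^1$ on $X$, whose codimension-two term is zero; it is what underlies the exact sequence for $\beta_1$ in the proof of \lemref{lem:EW-main-3}. Your Beilinson--Lichtenbaum alternative is fine as stated.

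The genuine gap is in (3). First, the Bockstein sequence for $0\to\Z/\ell^n(i-1)\to\Q_\ell/\Z_\ell(i-1)\xrightarrow{\ell^n}\Q_\ell/\Z_\ell(i-1)\to 0$ has left-hand sheaf $\sH^{i-1}(\Q_\ell/\Z_\ell(i-1))/\ell^n$, with twist $i-1$. That is not the sheaf whose sections are $H^{i-1}_{\nr}(X,\Q_\ell/\Z_\ell)$, which has twist $i-2$. Second, left exactness of $H^0$ neither gives surjectivity on the right nor identifies $H^0(X,\sF/\ell^n)$ with $H^0(X,\sF)/\ell^n$. Third, and decisively, finite corank does not make $\varprojlim_n M/\ell^n$ vanish. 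If $M\cong(\Q_\ell/\Z_\ell)^r\oplus F$ with $F$ finite, then $\varprojlim_n M/\ell^n\cong F$; a finite group is invisible in $T_\ell$ but not in the $\ell$-completion. So your argument would at best exhibit $H^i_{\nr}(X,\Z_\ell)$ as an extension of $T_\ell(H^i_{\nr}(X,\Q_\ell/\Z_\ell))$ by a finite group.

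The missing idea is to kill the left term at the level of sheaves, before taking global sections. By the norm residue isomorphism $\sK^M_{i-1,X}\otimes\Q_\ell/\Z_\ell\cong\sH^{i-1}(\Q_\ell/\Z_\ell(i-1))$, this sheaf is $\ell$-divisible. Hence multiplication by $\ell^n$ on it is surjective and $\sH^i(\Z/\ell^n(i-1))\xrightarrow{\cong}\sH^i(\Q_\ell/\Z_\ell(i-1))[\ell^n]$. Since $H^0$ preserves kernels, and filtered colimits on the Noetherian space $X$, this gives $H^i_{\nr}(X,\Z/\ell^n)\cong H^i_{\nr}(X,\Q_\ell/\Z_\ell)[\ell^n]$ compatibly in $n$. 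Passing to the limit proves (3) for every $i\ge 1$ with no finiteness input; this is the paper's argument.
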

\begin{proof}
The first and the third isomorphisms in item (1) are well known (cf.
  \lemref{lem:Unram-8}, \cite{Voevodsky-IMRN}). The integral version of the
  second isomorphism was shown by Landsburg \cite[Thm.~2.5]{Landsburg} and his
  proof also works with finite coefficients. Alternatively, one can use the
    commutative diagram
    \begin{equation}\label{eqn:Chow-finite-0}
      \xymatrix@C1pc{
        0 \ar[r] &  {H^1_\zar(X, {\sK_{2,X}})}/{\ell^n} \ar[r] \ar[d] &
        H^1_\zar(X, {\sK_{2,X}}/{\ell^n}) \ar[r] \ar[d] &
        {H^2_\zar(X, {\sK_{2,X}})}[\ell^n]  \ar[r] \ar[d] & 0 \\
        0 \ar[r] & {\CH^2(X, 1)}/{\ell^n} \ar[r] & \CH^2(X, 1; {\Z}/{\ell^n})
        \ar[r] &  \CH^2(X)[\ell^n] \ar[r] & 0.}
    \end{equation}

The bottom row is clearly exact while the exactness of the top row is
    due to Suslin \cite[(4.4)]{Suslin-KT}). It also follows from a version of
    \corref{cor:Unram-4} for $X$ because ${\sK_{2,X}} \cong {\sK^{\nr}_{2,X}}$.
    The left vertical arrow is bijective by \cite[Thm.~2.5]{Landsburg} while the
  same for the right vertical arrow is the well-known Bloch's formula. The
    desired isomorphism follows.

Item (2) follows from item (1) and ~\eqref{eqn:EW-main-3-1} which shows that
$\beta_1$ is injective without the hypotheses of \thmref{thm:Main-0} or
\thmref{thm:Main-12}, and it
    is well-known that $\beta_1$ is the {\'e}tale realization map
    (equivalently, the cycle class map) under the isomorphism of item (1).

To prove item (3), we look at the exact sequence of Zariski sheaves
\[
\sH^{i-1}({\Q_\ell}/{\Z_\ell}(i-1)) \xrightarrow{\ell^n}
\sH^{i-1}({\Q_\ell}/{\Z_\ell}(i-1))
\to \sH^i({\Z}/{\ell^n}(i-1)) \to {\sH^i({\Q_\ell}/{\Z_\ell}(i-1))}[\ell^n] \to 0
\]
on $X$.
Using the Norm-residue isomorphism $\sK_{i-1, X} \otimes {\Q_l}/{\Z_\ell}
\xrightarrow{\cong} \sH^{i-1}({\Q_\ell}/{\Z_\ell}(i-1))$, we see that the first map
in the above exact sequence is surjective. 
Equivalently, the map
$\sH^i({\Z}/{\ell^n}(i-1)) \to {\sH^i({\Q_\ell}/{\Z_\ell}(i-1))}[\ell^n]$ is
bijective. Taking the global sections, we obtain $H^i_{\nr}(X, {\Z}/{\ell^n})
\xrightarrow{\cong} {H^i_{\nr}(X, {\Q_\ell}/{\Z_\ell})}[\ell^n]$ for every $n \ge 1$.
Passing to the limit, we get the isomorphism of item (3).
\end{proof}

\subsection{The main result}\label{sec:Main-0-prf}
We can now prove the main result of \S~\ref{sec:Unram-V} which will prove
Theorems~\ref{thm:Main-0} and ~\ref{thm:Main-12}.
Let the notation and hypotheses be as given at the outset of \S~\ref{sec:Unram-V}.
If $\F$ is finite, we let $\F' = \F(\sqrt{-1})$. 

Recall the $(\star_\ell)$-condition from ~\eqref{eqn:Star-condn} and
recall from \S~\ref{sec:VRC} that the $(\star\star_\ell)$-condition
holds for $\sX$ if one of the following conditions is satisfied.
  \begin{enumerate}
  \item
  $\F$ is finite, the $(\star_\ell)$-condition holds,
  $\Alb_X$ has potentially good reduction and the irreducible
    components of $Y$ satisfy the Tate conjecture.
  \item
    $\F$ is finite, $\Alb_X$ has potentially good reduction and the irreducible
    components of $Y_{\F'}$ satisfy the Tate conjecture.
  \end{enumerate}

 \begin{thm}\label{thm:Main-0-0}
  Assume one of the following.
  \begin{enumerate}
  \item  
    The  $(\star\star_\ell)$-condition holds.
  \item
    $\F$ is separably closed and $H^2_{\rm {\etl}}(\ov{X}, \Q_\ell(1))$ is algebraic.
  \end{enumerate}
  Then $H^3_{\nr}(X, \Z_\ell) = 0$. Equivalently, the cycle class map
  $H^3(X, \Z_\ell(2)) \to H^3_{\rm {\etl}}(X, \Z_\ell(2))$ is an isomorphism.
\end{thm}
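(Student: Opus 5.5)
The plan is to show that $\beta_1$ in \eqref{eqn:EW-main-3-0} is surjective. By \lemref{lem:EW-main-3}, $H^3_{\nr}(X,\Z_\ell)$ is the cokernel of
\[
\beta_1 \colon {\varprojlim}_n H^1_\zar(X,\sF_n) \to H^3_{\rm {\etl}}(X,\Z_\ell(2)),
\]
so surjectivity gives the vanishing. By \lemref{lem:Chow-finite}(1), ${\varprojlim}_n H^1_\zar(X,\sF_n)\cong {\varprojlim}_n H^3(X,{\Z}/{\ell^n}(2))$, and under this identification $\beta_1$ is the cycle class map. So the equivalent formulation in the statement follows from the same exact sequence.

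\emph{Case where the $(\star_\ell)$-condition holds.} This covers case (1) of the $(\star\star_\ell)$-condition. It also covers the separably closed case, since then $\sqrt{-1}\in\F$. I will work with the commutative diagram of exact sequences \eqref{eqn:Main-Diag-2-0} from \corref{cor:Main-Diag-2}. Its vertical maps have the following properties.
\begin{enumerate}
\item $\alpha_1$ is surjective. By \lemref{lem:SS-main-0} it is surjective at each finite level. Since all groups involved are finite (\lemref{lem:Main-Diag-1}), the kernels form a system of finite groups, so Mittag-Leffler holds and surjectivity survives in the inverse limit.
\item $\alpha_2$ is injective, because \lemref{lem:SS-main-0} gives injectivity at each level and inverse limits are left exact.
\item $\gamma_2$ is surjective by \propref{prop:EW-main}.
\end{enumerate}

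A four-lemma chase then gives surjectivity of $\beta_1$:
\begin{itemize}
\item Take $b\in H^3_{\rm {\etl}}(X,\Z_\ell(2))$ and let $\partial b$ be its image in $H^4_{{\rm {\etl}},Y}(\sX,\Z_\ell(2))$.
\item Lift $\partial b$ via $\gamma_2$ to some $c\in{\varprojlim}_n H^2_Y(\sX,\sF_n)$.
\item The image of $c$ in ${\varprojlim}_n H^2_\zar(\sX,\sF_n)$ maps under $\alpha_2$ to the image of $\partial b$ in $H^4_{\rm {\etl}}(\sX,\Z_\ell(2))$, which is zero. Since $\alpha_2$ is injective, that image of $c$ vanishes.
\item Hence $c$ comes from some $x\in{\varprojlim}_n H^1_\zar(X,\sF_n)$.
\item Then $b-\beta_1(x)$ dies in $H^4_{{\rm {\etl}},Y}$, so it comes from $H^3_{\rm {\etl}}(\sX,\Z_\ell(2))$.
\item By surjectivity of $\alpha_1$, that element lies in the image of the limit of $H^1_\zar(\sX,\sF_n)$, hence in the image of $\beta_1$ by commutativity.
\end{itemize}

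\emph{Case (2) of the $(\star\star_\ell)$-condition without $(\star_\ell)$.} Here $\ell=2$ and $\sqrt{-1}\notin\F$. Let $R'$ be the finite étale (unramified) extension of $R$ with residue field $\F'$, with fraction field $k'$, and set $\sX'=\sX_{R'}$. Then $\sX'$ is still regular and proper with snc reduced closed fiber $Y_{\F'}$. The potentially good reduction of $\Alb_{X_{k'}}=(\Alb_X)_{k'}$ is preserved. Since $\sqrt{-1}\in\F'$, the previous case applies to $\sX'$ and gives $H^3_{\nr}(X_{k'},\Z_2)=0$.

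To descend, I will use restriction and corestriction for the degree-two finite étale cover $\pi\colon X_{k'}\to X$. Corestriction on the sheaves $\sH^3$ comes from the trace $\pi_*\pi^*\to\id$ on étale cohomology and is compatible with Zariski localization, so it induces a map on $H^3_{\nr}$. The composite $\mathrm{cores}\circ\mathrm{res}$ is multiplication by $2$. Since $H^3_{\nr}(X,\Z_2)$ is a Tate module (\lemref{lem:Chow-finite}(3)), it is torsion-free, so multiplication by $2$ is injective on it. Hence restriction is injective and $H^3_{\nr}(X,\Z_2)=0$.

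I expect the main obstacle to be this descent step. The sheaf-level construction of corestriction on $\sH^3$ and the identity $\mathrm{cores}\circ\mathrm{res}=2$ have to be justified, though both should be standard via the étale trace. The main case is essentially a formal chase once the finiteness needed for the limits is in place.
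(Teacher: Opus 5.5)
Your main case is the paper's argument: the four-lemma chase in \eqref{eqn:Main-Diag-2-0}, using that $\alpha_1$ is onto and $\alpha_2$ is injective (\lemref{lem:SS-main-0}) and that $\gamma_2$ is onto (\propref{prop:EW-main}), followed by \lemref{lem:EW-main-3}. Your Mittag-Leffler and left-exactness remarks make explicit what the paper leaves implicit.

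Your case split, however, leaves one situation uncovered. This is hypothesis (2) of the $(\star\star_\ell)$-condition when the $(\star_\ell)$-condition \emph{does} hold and $\sqrt{-1}\notin\F$, for instance when $\ell$ is odd. You send every case satisfying $(\star_\ell)$ to your first case. There the appeal to \propref{prop:EW-main}(1) needs the Tate conjecture for the irreducible components of $Y$, but hypothesis (2) only gives it for the components of $Y_{\F'}$. The paper bridges this by observing that the Tate conjecture for the components of $Y_{\F'}$ implies it for those of $Y$, via push-forward of cycles along the degree-two cover (cf.\ Totaro). Alternatively, your own descent closes the gap with no change. After base change to $R'$, the $(\star_\ell)$-condition holds for $\sX'$ for every $\ell$, since $\sqrt{-1}\in\F'$. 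Multiplication by $2$ is injective on the torsion-free group $H^3_{\nr}(X,\Z_\ell)$ for every $\ell$. So there is no reason to restrict that step to $\ell=2$.

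The descent itself is a valid variant of the paper's. The paper stays on the level of \'etale cohomology. It uses three facts: the cycle class map for $X'$ is onto, it commutes with the push-forward $f_*$ (D{\'e}glise), and $f_*f^*=2$ on $H^3_\et(X,\Z_2(2))$. Together these show that $H^3_{\nr}(X,\Z_2)$ is a quotient of the $2$-torsion group $\coker(f_*)$. You instead work directly on unramified cohomology with restriction and corestriction. This avoids motivic push-forwards and their compatibility with the \'etale realization. The cost is that you must define corestriction on $H^0_\zar(-,\sH^3)$ and check that it preserves unramified classes. The cleanest way is to use $H^3_{\nr}(X,{\Z}/{2^n})\subset H^3_\et(k(X),{\Z}/{2^n}(2))$ together with the compatibility of residues with corestriction, rather than arguing with sheafification directly. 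Both routes finish with the same torsion-freeness input from \lemref{lem:Chow-finite}(3).
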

\begin{proof}
  Suppose first that part (1) of the $(\star\star_\ell)$-condition holds.
  We look at the
  diagram~\eqref{eqn:Main-Diag-2-0}. The map $\alpha_1$ in this diagram is
  surjective and $\alpha_2$ is injective by \lemref{lem:SS-main-0} while
  $\gamma_2$ is surjective by \propref{prop:EW-main}. A diagram chase shows
  that $\beta_1$ is surjective. We now apply \lemref{lem:EW-main-3} to
  conclude the proof.

Suppose next that part (2) of the $(\star\star_\ell)$-condition holds.
Using the push-forward of cycles, it is then easy to see that the Tate
conjecture holds
for all irreducible components of $Y$ too (cf. \cite[\S~2, p.~580]{Totaro}).
If the $(\star_\ell)$-condition holds, this allows us to refer to condition (1) to
finish the proof.
We assume now that the $(\star_\ell)$-condition does not hold. In particular,
$\ell = 2$ and $p$ is an odd prime.

Let $R \to R'$ be the unique finite {\'e}tale extension of rings corresponding
to the field extension ${\F'}/{\F}$. We let $k'$ be the quotient field of the
excellent hdvr $R'$.
We let $\sX' = \sX \times_{\Spec(R)} \Spec(R')$ and
let $X'$ be the generic fiber of $\sX'$.
Then the $(\star_\ell)$-condition holds for $\sX'$.
Furthermore, it is easily seen that $\Alb_{X'} \cong (\Alb_X)_{k'}$ has potentially
good reduction. We conclude from the previous case that
$H^3_{\nr}(X', \Z_\ell) = 0$.

We now let $f \colon X' \to X$ denote the projection map and look at the
diagram
\begin{equation}\label{eqn:Main-0-0-1}
  \xymatrix@C1pc{
    H^3(X', \Z_2(2)) \ar[r] \ar[d]_-{f_*} & H^3_\et(X', \Z_2(2)) \ar[r]
    \ar[d]^-{f_*} & H^3_{\nr}(X', \Z_2) \ar[r] \ar[d] & 0 \\
  H^3(X, \Z_2(2)) \ar[r] & H^3_\et(X, \Z_2(2)) \ar[r] \ar[d] &
  H^3_{\nr}(X, \Z_2) \ar[r]  & 0 \\
  & \coker(f_*) \ar[d] & \\
  & 0 & }
\end{equation}
in which the horizontal arrows in the left square are the {\'e}tale realization
maps (which we call the cycle class maps)
from Voevodsky's motivic cohomology to {\'e}tale motivic cohomology.
The rows are exact by \lemref{lem:EW-main-3}.

It is well-known that the above square is commutative
(cf. \cite[Thm.~4.3.2]{Deglise-Tata}).
Since $f_* \circ f^* \colon H^3_\et(X, \Z_2(2)) \to H^3_\et(X, \Z_2(2))$ is
multiplication by two, it follows that
$\coker(f_*)$ is a 2-torsion group. Since $H^3_{\nr}(X', \Z_2) = 0$,
we get that $\coker(f_*) \surj H^3_{\nr}(X, \Z_2)$. In particular,
$H^3_{\nr}(X, \Z_2)$ is a 2-torsion group. But this forces $H^3_{\nr}(X, \Z_2)$
to be trivial since \lemref{lem:Chow-finite} implies that this group is
torsion-free.

Finally, suppose that the hypothesis (2) of the theorem is satisfied.
  If $\ell$ is odd, then the $(\star_\ell)$-condition holds. If $\ell = 2$, then
  $\ell \in \F^\times$ by our assumption. In particular, $\F(\sqrt{-1})$ is a
  separable extension of $\F$. But this implies that $\sqrt{-1} \in \F$ as
  $\F$ is separably closed. As $R$ is henselian, it follows that the
  $(\star_\ell)$-condition holds in this case too. Now, we proceed exactly as
  we argued under the assumption that part (1) of the
  $(\star\star_\ell)$-condition holds.
That the vanishing of $H^3_{\nr}(X, \Z_\ell)$ is equivalent to the bijectivity
of the cycle class map, follows from Lemmas~\ref{lem:EW-main-3} and
~\ref{lem:Chow-finite}. This completes the proof.
\end{proof}

We end this section with the following corollary of \lemref{lem:SS-main-0}
which proves \thmref{thm:Main-0-0} for $\sX$ under much weaker conditions.

\begin{thm}\label{thm:Main-0-2}
  Under the $(\star_\ell)$-condition, we have $H^3_{\nr}(\sX, {\Z}/{\ell^n}) = 0$.
\end{thm}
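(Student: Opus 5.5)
We plan to imitate the first half of the proof of \lemref{lem:EW-main-3}, now on the model $\sX$ rather than on $X$. Here $H^3_{\nr}(\sX, {\Z}/{\ell^n}) = H^0_\zar(\sX, \sH^3(\ell^n,2))$. Under the $(\star_\ell)$-condition, \lemref{lem:Unram-0}(2) gives $H^i_\zar(\sX, \sH^j(\ell^n,2)) = 0$ for $j = 0,1$ and $i \ge 2$. We will use this vanishing to control the low-degree terms of the Leray spectral sequence \eqref{eqn:LSS-0} for $Z = \sX$ and $q = 2$, which abuts to $H^{3}_\et(\sX, \Lambda_{\ell^n}(2))$.

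The first step is to extract from the Leray spectral sequence an exact sequence
\[
H^1_\zar(\sX, \sF_n) \xrightarrow{\alpha_1} H^3_\et(\sX, \Lambda_{\ell^n}(2)) \to H^0_\zar(\sX, \sH^3(\ell^n,2)) \to H^2_\zar(\sX, \sF_n) \xrightarrow{\alpha_2} H^4_\et(\sX, \Lambda_{\ell^n}(2)).
\]
For this we check which terms can contribute. The terms $E^{2,1}_2$ and $E^{3,0}_2$ vanish, and so does $E^{4,0}_2$. Hence the only possible contributions to $H^3$ are $E^{0,3}$ and $E^{1,2}$, with $E^{1,2}_\infty = E^{1,2}_2$ because $E^{3,1}_2 = 0$. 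The only differential leaving $E^{0,3}$ that can be nonzero is $d_2 \colon E^{0,3}_2 \to E^{2,2}_2$, since $d_3$ lands in $E^{3,1}_2 = 0$ and $d_4$ in $E^{4,0}_2 = 0$. Likewise $H^4$ has $F^2_L H^4 = E^{2,2}_\infty = \coker(d_2)$. The edge maps are then exactly $\alpha_1$ and $\alpha_2$ of \eqref{eqn:cyc-cl-map}. This is the same bookkeeping as in the proof of \lemref{lem:Unram-7}.

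The second step is to plug in \lemref{lem:SS-main-0}: under the $(\star_\ell)$-condition, $\alpha_1$ is surjective and $\alpha_2$ is injective. Exactness at the two middle terms then gives the vanishing. Surjectivity of $\alpha_1$ says that the map $H^3_\et(\sX,\Lambda_{\ell^n}(2)) \to H^0_\zar(\sX, \sH^3)$ is zero. Injectivity of $\alpha_2$ says that $d_2$ is zero. Together these force $H^0_\zar(\sX, \sH^3(\ell^n,2)) = 0$.

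The step I expect to need the most care is identifying $\alpha_2$ from \lemref{lem:SS-main-0} with the edge map $E^{2,2}_2 \to H^4$, so that its injectivity really does mean $d_2 = 0$. By construction in \eqref{eqn:cyc-cl-map}, $\alpha_2$ is the composite $E^{2,2}_2 \surj E^{2,2}_\infty = F^2_L H^4 \inj H^4$, so its injectivity is equivalent to $d_2^{0,3}$ vanishing. With that identification in place, the rest is formal.
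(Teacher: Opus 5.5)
Your proposal is correct and is essentially the paper's own proof. The paper also uses \lemref{lem:Unram-0} to extract from the Leray spectral sequence~\eqref{eqn:LSS-0} the five-term exact sequence running from $H^1_\zar(\sX,\sF_n)$ to $H^4_\et(\sX,\Lambda_{\ell^n}(2))$ through $H^3_{\nr}(\sX,{\Z}/{\ell^n})$, and then concludes from the surjectivity of $\alpha_1$ and the injectivity of $\alpha_2$ given by \lemref{lem:SS-main-0}. The only difference is cosmetic: the paper rewrites the end terms as $H^1_\zar(\sX,{\sK_{2,\sX}}/{\ell^n})$ and ${\CH_1(\sX)}/{\ell^n}$ via \lemref{lem:Unram-8} and \thmref{thm:Unram-5}(4), which the vanishing does not need.
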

\begin{proof}
Using Lemmas~\ref{lem:Unram-8} and ~\ref{lem:Unram-0}, \thmref{thm:Unram-5}(4)
  and the Leray spectral sequence ~\eqref{eqn:LSS-0}, we get an exact sequence
  \[
  0 \to H^1_\zar(\sX, {{\sK}_{2, \sX}}/{\ell^n}) \xrightarrow{\alpha_1}
  H^3_\et(\sX, \sG_n) \to H^3_{\nr}(\sX, {\Z}/{\ell^n}) \to
   {\CH_1(\sX)}/{\ell^n} \xrightarrow{\alpha_2} H^4_\et(\sX, \sG_n).
\]
Lemma~\ref{lem:SS-main-0} implies that $\alpha_1$ is surjective, and we
showed in its proof that $\alpha_2$ is an isomorphism. The desired vanishing
follows.
\end{proof}

\section{Suslin homology of schemes}\label{sec:Suslin-review}
In this section, we recall motivic cohomology with compact support and the
properties that we will use. We then recall Suslin homology and establish the
main result of the section, relating Suslin homology to motivic cohomology with
compact support with finite coefficients.

\subsection{Review of motivic cohomology}\label{sec:MC-review}
We fix a field $k$ of exponential characteristic $p \ge 1$ and fix an integer
$n$ such that $(p,n) = 1$. 
We let $\Lambda$ be a commutative ring which we take to be $\Z$ if $p = 1$ or
any $\Z[\tfrac{1}{p}]$-algebra if $p \ge 2$. 
We let $\Nsch_{k}$ denote the category of Noetherian $k$-schemes of finite Krull
dimension. Let $R \in \{\Lambda, \Lambda_n\}$, where recall that $\Lambda_n =
{\Z}/n$.

By \cite[Thm.~11.4.5]{CD-Springer}, every $X \in \Nsch_{k}$ has associated to it
a monoidal triangulated category of integral mixed motives $\dm(X, R)$ which
coincides with the original construction of Voevodsky \cite{Voe00} when $X$ is the
spectrum of a perfect field and of Suslin \cite{Suslin-AKT} when $X$ is the
spectrum of an arbitrary field. The assignment
$X \mapsto \dm(X, R)$ on $\Nsch_{k}$
satisfies several axioms of Grothendieck's six functor formalism
(cf. \cite[Thm.~1, p.~xviii]{CD-Springer}).  
The constant  Nisnevich sheaf with transfer associated to the ring $R$ is the
identity object for the monoidal structure of $\dm(X, R)$.
We shall denote this object by $R_X$. If $X = \Spec(A)$, we shall use
$R_A$ as another notation for $R_X$.

For $X \in \Sch_k$ with the structure map $f \colon X \to \Spec(k)$, we let
$M_k(X) = f_\sharp R_X \cong f_{!} f^{!} R_k$ and
$M^c_k(X) = f_* f^! R_k$ as objects of $\dm(k, R)$.
If $X$ is complete, then $f_{!} = f_*$ and hence $M^c_k(X) = M_k(X)$.
Recall that  $R_k(1) =
{R_{\rm tr}(\P^1_k)}/{R_{\rm tr}(\Spec(k))}$ as a presheaf with transfer
via the identification of $\Spec(k)$ with the residue field of
$\infty \in \P^1_k$.
In particular, $R_k(1) \cong M_k({\P^1_k}/{\Spec(k)}) \in  \dm(k, R)$.
For $j \ge 0$, we let $R_k(j)$ denote the $j$-fold self-product of $R_k(1)$
under the monoidal structure of $\dm(k, R)$.
We shall drop the field $k$ from the notation of motives of objects in $\Sch_k$
except when we need to vary the base field.

\begin{defn}\label{defn:MC-defn}
  For $i,j \ge 0$, we let
  \[
  H_i(X, R(j)) = \Hom_{\dm(k, R)}( R_{k}(j)[i], M(X));
  \]
\[
H^i(X, R(j)) = \Hom_{\dm(k, R)}(M(X), R_{k}(j)[i]);
\]
\[
\hspace*{2cm} {\cong}^{1} \ \Hom_{\dm(k, R)}(R_k, f_* R_{X}(j)[i]);
\]
\[
  H^i_c(X, R(j)) = \Hom_{\dm(k, R)}(M^c(X),  R_{k}(j)[i])
  \]
  \[
  \hspace*{2cm} {\cong}^2 \ \Hom_{\dm(k, R)}(R_k,  f_! R_{X}(j)[i]),
  \]
\end{defn}
where the isomorphisms ${\cong}^1$ and ${\cong}^2$
are deduced using the adjointness relations among the six functors
(cf. \cite[Lem.~5.5]{GKR}).

By \cite[Thm.~5.1]{CD-Doc} and \cite[Thm.~2.14]{Krishna-Pelaez},
there are functorial isomorphisms
\begin{equation}\label{eqn:MV-sing-0} 
\begin{array}{lll}
  H^p(X, \Lambda(q))   &
  \xrightarrow{\,\,\cong\,\,} &
  \Hom_{\sS\sH(X)}(\Lambda_X,  \Sigma^{p,q} (H\Lambda_X)) \\
  & \xrightarrow{\,\,\cong\,\,} &
  \Hom_{\sS\sH_{\rm cdh}(k)}(\Sigma^\infty_T X_+, \Sigma^{p,q}H\Lambda),
  \end{array}
\end{equation}
where $\sS\sH(X)= \sS\sH(X, \Lambda)$ is the monoidal stable homotopy
category of smooth schemes over $X$ (with $\Lambda$-coefficients and unit
object $\Lambda_X$) and $\sS\sH_{\rm cdh}(k)$ is the stable homotopy category
of $\Sch_k$ with respect to the cdh topology. $H\Lambda$ is the motivic
Eilenberg-MacLane spectrum and $H\Lambda_X = {\bf L}f^*(H\Lambda)$, where
$f \colon X \to \Spec(k)$ is the structure map.
We refer to, e.g., \cite[\S~2,3]{Krishna-Pelaez}
for the definitions of the suspension operators $\Sigma^\infty_T$ and
$\Sigma^{p,q}$.

The following properties of motivic cohomology will be used repeatedly in this
paper.
If $\iota \colon Y \inj X$ is  a closed immersion in $\Sch_k$ and
$j \colon U \inj X$ is the inclusion of the complement of $Y$, then one
has a natural long exact sequence (cf. \cite[Lem.~5.4]{GKR})
\begin{equation}\label{eqn:Mot-loc}
  \cdots  \xrightarrow{j_{*}} H^i_c(X, R(j)) \xrightarrow{\iota^*}
  H^i_c(Y, R(j))  \xrightarrow{\partial} H^{i+1}_c(U, R(j))
    \xrightarrow{j_{*}} H^{i+1}_c(X, R(j)) \xrightarrow{\iota^*} \cdots .
    \end{equation}

\begin{lem}\label{lem:Vanishing-coh}
    Let $X \in \Sch_k$ have dimension $d$. Then we have the following.
    \begin{enumerate}
    \item
      $H^{2a-b}(X, R(a)) = 0 = H^{2a-b}_c(X, R(a))$ if $a > d+b$.
    \item
      If $X \inj X'$ is a closed immersion with complement $U$, the canonical
      map \\
      $H^{2a-b+1}_c(U, R(a)) \to H^{2a-b+1}_c(X', R(a))$ is an isomorphism if
      $a > d+b$.
    \item
      The canonical maps ${H^{2a-b-1}(X, R(a))}/n \to H^{2a-b-1}(X, \Lambda_n(a))$
      and \\
      ${H^{2a-b-1}_c(X, R(a))}/n \to H^{2a-b-1}_c(X, \Lambda_n(a))$
      are isomorphisms if $a > d+b$.
      \end{enumerate}
  \end{lem}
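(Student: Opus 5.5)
Item (1) is a vanishing statement in a range where the weight is large relative to dimension, and I would prove it by reducing to the case of fields. For $H^{2a-b}(X,R(a))$, I would use the cdh (or Nisnevich) descent spectral sequence for motivic cohomology. Via \eqref{eqn:MV-sing-0}, this identifies $H^*(X,R(a))$ with cdh hypercohomology of the motivic complex $R(a)$. Its stalks are the motivic cohomology of henselian local rings, which for smooth points is bounded by $H^q(\sO^h, R(a))=0$ for $q>a$. Rather than working with singular stalks, I would argue by induction on $d$ using cdh descent: choose a resolution (or, when $p>1$, an $\ell$dh/alteration argument allowed since $\Lambda$ inverts $p$) and use the blow-up (abstract blow-up) long exact sequences. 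This reduces the claim to smooth $X$ of dimension $\le d$ and to lower-dimensional schemes, where the bound $a>d+b$ only becomes weaker.

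For smooth $X$, I would use the coniveau/Gersten (Bloch--Levine) spectral sequence
\[
E_1^{s,t}=\bigoplus_{x\in X^{(s)}}H^{s+t-2s}(k(x),R(a-s))\Rightarrow H^{s+t}(X,R(a)).
\]
For a field $F$, $H^q(F,R(w))=0$ if $q>w$. The term contributing to degree $2a-b$ from codimension $s$ is $H^{2a-b-2s}(k(x),R(a-s))$, which vanishes when $2a-b-2s>a-s$, i.e.\ $s<a-b$. Since $s\le d<a-b$, everything vanishes.

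For compact support, I would choose a compactification $X\subset\bar X$ with boundary $Z$ of dimension $<d$, and use \eqref{eqn:Mot-loc}:
\[
H^{2a-b-1}_c(Z)\to H^{2a-b}_c(X)\to H^{2a-b}(\bar X).
\]
The right term vanishes by the non-compact case, since $\bar X$ is proper, so $H_c=H$, and $\dim\bar X=d$. The left term is $H^{2a-(b+1)}_c(Z,R(a))$ with $a>(d-1)+(b+1)$, so it vanishes by induction on dimension. Hence the middle term vanishes.

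Item (2) follows from \eqref{eqn:Mot-loc} for $X\inj X'$:
\[
H^{2a-b}_c(X)\to H^{2a-b+1}_c(U)\to H^{2a-b+1}_c(X')\to H^{2a-b+1}_c(X).
\]
The outer terms vanish by (1): the first directly, the last as $H^{2a-(b-1)}_c$ with $a>d+b>d+b-1$.

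Item (3) follows from the universal coefficient sequence
\[
0\to H^{i}(X,R(a))/n\to H^{i}(X,\Lambda_n(a))\to H^{i+1}(X,R(a))[n]\to 0
\]
with $i=2a-b-1$. The obstruction $H^{2a-b}(X,R(a))[n]$ vanishes by (1), and the same argument applies with compact support.

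The main obstacle is the smooth reduction in positive characteristic, where resolution of singularities is unavailable. I would handle it with Kelly's $\ell$dh-descent (legitimate since $p$ is inverted in $\Lambda$), or alternatively with the Cisinski--Déglise six-functor formalism: write $f_!R_X$ and stratify $X$ by regular locally closed pieces, for which absolute purity gives the same Gersten bound.
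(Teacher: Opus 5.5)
Items (2) and (3), and the compactly supported half of (1), are handled exactly as in the paper. The paper needs no induction there: the boundary $Z$ is proper, so $H^{2a-b-1}_c(Z,R(a))=H^{2a-b-1}(Z,R(a))$. In characteristic $0$, your resolution plus abstract blow-up induction is a valid, more hands-on alternative to the paper's citation. There the smooth case can go via coniveau, or simply via $z^a(X,b)=0$, since $\dim(X\times\Delta^b)=d+b<a$.

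The gap is the vanishing of $H^{2a-b}(X,R(a))$ for singular $X$ when $\Char(k)=p>0$. Your compact-support argument depends on this case too, since $\ov{X}$ is singular in general. None of your proposed mechanisms works as stated:
\begin{enumerate}
\item An alteration $X'\to X$ is not an abstract blow-up square, so it gives no long exact sequence to run your induction. Nor is there a trace along a non-flat alteration to a singular $X$ that would split $H^*(X)$ off $H^*(X')$. So Kelly's $\ell$dh-descent is not a drop-in replacement for resolution inside that induction.
\item The stratification argument via $f_!R_X$ does control $H^*_c$, hence proper $X$. It does not control $H^*(X,R(a))$ for singular non-proper $X$. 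The localization triangles for ordinary cohomology involve either $i^!R_X$ on a singular scheme, where there is no purity, or the cohomology of $X$ with coefficients in $j_!R_U$, which is not the cohomology of the stratum $U$.
\item You never reduce to a perfect base field. Kelly's theory requires one, and the paper applies this lemma over imperfect fields such as $\F_q((t))$.
\end{enumerate}

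The fix is the paper's argument.
\begin{enumerate}
\item Replace $k$ by its perfect closure $k^{\hs}$; this does not change motivic cohomology with $p$ inverted \cite[Prop.~8.1]{CD-Doc}.
\item Over a perfect field, \eqref{eqn:MV-sing-0} together with Kelly's comparison computes $H^i(X,R(a))$ as the cdh hypercohomology of the motivic complex $R(a)$. The paper quotes the outcome from \cite[Thms.~2.14, 5.1]{Krishna-Pelaez}.
\item No reduction to smooth schemes and no stalk computation is then needed; the cdh points are henselian valuation rings, not the henselian local rings of $X$, anyway. The complex $R(a)$ is concentrated in cohomological degrees $\le a$, and the cdh cohomological dimension of $X$ is at most $d$. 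Hence $H^i(X,R(a))=0$ for $i>a+d$, which for $i=2a-b$ is exactly the condition $a>d+b$.
\end{enumerate}
Your opening sentence was heading this way; abandoning it for the stalkwise and inductive reduction is what breaks the argument in positive characteristic.
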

  \begin{proof}
    We let $k^\hs$ be a perfect closure of $k$ and let $Z^\hs$ denote the base
    change of a $k$-scheme $Z$ to $k^\hs$. By \cite[Prop.~8.1]{CD-Doc} (see also
    \cite[Cor.~2.19]{Elmanto-Khan}), we can replace $X$ by
    $X^\hs$ to prove the vanishing of $H^{2a-b}(X, R(a))$. This allows us to assume
    that $k$ is perfect in which case the claim follows by
    \cite[Thms.~2.14, 5.1]{Krishna-Pelaez}. To prove the vanishing of
    $H^{2a-b}_c(X, R(a))$,
 we choose a compactification $\ov{X}$ of $X$ of dimension $d$
 (cf. \cite{Nagata}).
    We let $Y = \ov{X} \setminus X$ with the reduced
    closed subscheme structure.

    In the localization exact sequence (cf. ~\eqref{eqn:Mot-loc})
    \[
    H^{2a-b-1}(Y, R(a)) \to H^{2a-b}_c(X, R(a)) \to H^{2a-b}(\ov{X}, R(a)) \to
    H^{2a-b}(Y, R(a)),
    \]
    all cohomology groups, except possibly $H^{2a-b}_c(X, R(a))$, vanish by what
    we just proved. It follows that $H^{2a-b}_c(X, R(a))$ also vanishes.

    Item (2) of the lemma follows from item (1) and the localization
    sequence ~\eqref{eqn:Mot-loc} while item (3) follows from  item (1) and the
    exact sequence
    \[
    0 \to {H^{2a-b-1}(X, R(a))}/n \to H^{2a-b-1}(X, \Lambda_n(a)) \to
    H^{2a-b}(X, R(a))[n] \to 0
    \]
    (and a similar exact sequence for cohomology with compact support)
    given by the universal coefficient theorem.
\end{proof}

Suppose now that $R = \Lambda_n$. For $X \in \Sch_k$, we let $\dm_\et(X, R)$ denote the
  monoidal triangulated category of {\'e}tale sheaves of $R$-modules with transfer on
  $\Sm_X$. Let $\dr_\et(X, R)$ denote the unbounded derived category of sheaves of
  $R$-modules on $X_\et$. Let $\epsilon_X \colon X_\et \to X_\zar$ be the
  morphism of sites. By  \cite[\S~1,2]{CD-Comp}
  and \cite[\S~9]{CD-Doc}, the assignment $X \mapsto \dm_\et(X, R)$ is a presheaf
  of monoidal  triangulated categories on $\Sch_k$ which is endowed with the
  six functor formalism. Furthermore, there are monoidal functors
\begin{equation}\label{eqn:MC-01}
  \dm(X, R) \xrightarrow{\epsilon^*_X} \dm_\et(X, R) \xrightarrow{\tau_X} \dr_\et(X, R),
\end{equation}
where $\epsilon^*_X$ is the {\'e}tale sheafification functor and $\tau_X$ 
is obtained by forgetting the transfer structure in $\dm_\et(X, R)$. The
latter is an equivalence of monoidal triangulated categories
(cf. \cite[Thm.~4.5.2]{CD-Comp}).
These functors commute with the six functors when restricted to the subcategories of
constructible objects (cf. \cite[Rem.~9.6]{CD-Doc}). We let
$M_\et(X) = \epsilon^*_k(M(X))$ for $X \in \Sch_k$. One similarly defines
$M_{\et,c}(X)$.
  
For $f \colon X \to \Spec(k)$ in $\Sch_k$, we let
\begin{equation}\label{eqn:RE-1}
H^i_{\et, \sM}(X, R(j)) =  \Hom_{\dm_\et(k, R)}(R_k, f_*R_X(j)[i]);
  \end{equation}
\[
  H^i_{\et, \sM, c}(X, R(j)) = \Hom_{\dm_\et(k, R)}(R_k, f_!R_X(j)[i]).
\]

For $j \in \Z$, we let $R'(j)$ be the sheaf on $X_\et$ defined so that
$R'(0)= R$, $R'(j) = \mu_m^{\otimes j}$ when $j >0$,
$R'(j) = \un{\Hom}(R'(-j), R)$ when $j<0$, where the latter is the
internal hom in the category of sheaves of abelian groups on $X_\et$ 
(cf. \cite[\S~V.1, p.~163]{Milne-etale}). 
Then the functor $\tau_X$ induces an isomorphism
$H^i_{\et, \sM}(X, R(j)) \xrightarrow{\cong} H^i_{\et}(X, R'(j))$
for every $X \in \Sch_k$ (cf. \cite[Lem.~6.2]{GKR}).
We shall use the common notation  $H^i_{\et}(X, R(j))$ for these isomorphic groups.
One similarly has an isomorphism
$H^i_{\et, \sM, c}(X, R(j)) \xrightarrow{\cong}  H^i_{\et,c}(X, R'(j))$
for every $X \in \Sch_k$, and we shall write these isomorphic groups as
$H^i_{\et,c}(X, R(j))$.

Let $X \in \Sch_k$ and let $\iota \colon Y \inj X$ be a closed
immersion. Let $j \colon U \inj X$ be the inclusion of the complement of $Y$.
In $\dm(k, \Lambda_n)$, we have a commutative diagram of distinguished triangles
(e.g, see the proof of of  \cite[Lem.~5.5]{GKR})
\begin{equation}\label{eqn:MC-02}
  \xymatrix@C1.7pc{
    M^c(Y) \ar[r] \ar[d] & M^c(X) \ar[r] \ar[d] & M^c(U) \ar[d] \\
    (\epsilon_k)_*(M^c_\et(Y)) \ar[r] &  (\epsilon_k)_*(M^c_\et(X)) \ar[r] &
    (\epsilon_k)_*(M^c_\et(U)),}
\end{equation}
where the vertical arrows are the unit of adjunction maps.

This gives rise to a commutative diagram of the long exact cohomology sequences 
\begin{equation}\label{eqn:MC-03}
  \xymatrix@C.5pc{
    \cdots  \ar[r] & H^{i}_c(U, \Lambda_n(j)) \ar[r]^-{j_{*}} \ar[d] &
    H^i_c(X, \Lambda_n(j)) \ar[r]^-{\iota^*} \ar[d] & H^i_c(Y, \Lambda_n(j))
    \ar[r]^-{\partial_\zar} \ar[d] &  H^{i+1}_c(U, \Lambda_n(j))
    \ar[r]^-{j_{*}} \ar[d] & \cdots \\
    \cdots \ar[r] & H^{i}_{\et,c}(U, \Lambda_n(j)) \ar[r]^-{j_{*}} &
    H^i_{\et,c}(X, \Lambda_n(j)) \ar[r]^-{\iota^*} & H^i_{\et,c}(Y, \Lambda_n(j))
    \ar[r]^-{\partial_\et}  &  H^{i+1}_{\et,c}(U, \Lambda_n(j))
    \ar[r]^-{j_{*}}  & \cdots,}
\end{equation}
where the vertical arrows are the {\'e}tale realization (cycle class) maps.

In the above diagram, we
can replace the motivic (resp. {\'e}tale) cohomology with compact support for
$X$ and $Y$ by their usual motivic (resp. {\'e}tale) cohomology if $X$ is proper
over $k$. Furthermore, this diagram is compatible with the canonical
maps $H^i_c(-, \Lambda_n(j))  \to H^i_c(-, \Lambda_m(j))$
and $H^i_{\et,c}(-, \Lambda_n(j))  \to H^i_{\et,c}(-, \Lambda_m(j))$
induced by the inclusion $\Lambda_n \inj \Lambda_m$ given by $a \mapsto a (m/n)$,
whenever $n|m$. It is also compatible with the canonical
maps $H^i_c(-, \Lambda_m(j))  \to H^i_c(-, \Lambda_n(j))$
and $H^i_{\et,c}(-, \Lambda_m(j))  \to H^i_{\et,c}(-, \Lambda_n(j))$
induced by the quotient map $\Lambda_m \surj \Lambda_n$ whenever $n|m$.
We let $H^i_c(-, \Z_\ell(j)) = {\varprojlim}_n H^i_c(-, {\Z}/{\ell^n}(j))$ and
$H^i_c(-, {\Q_\ell}/{\Z_\ell}(j)) = {\varinjlim}_{n} H^i_c(-, {\Z}/{\ell^n}(j))$.
We shall use similar notations for the corresponding {\'e}tale cohomology groups.

We shall need the following known result a few times.
  
\begin{lem}\label{lem:Etale-fin}
  Assume $k$ is either a finite or a henselian discrete valuation field (hdvf)
  with finite or separably closed residue field. Then $H^{i}_{{\rm {\etl}},c}(X, \Lambda_n(j))$
  is finite for all $i$ and $j$.
\end{lem}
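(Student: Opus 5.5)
We reduce to étale cohomology without compact support and then use the known finiteness theorems for finite fields and for henselian discrete valuation fields. Here $X \in \Sch_k$ and $n$ is prime to $p$.

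\textbf{Step 1: reduce to proper $X$.} By Nagata we choose a compactification $X \subset \ov{X}$ with $\ov{X}$ proper over $k$, and set $Z = \ov{X} \setminus X$ with its reduced structure. The localization sequence for étale cohomology with compact support (the bottom row of \eqref{eqn:MC-03}) reads
\[
H^{i-1}_{\et}(Z, \Lambda_n(j)) \to H^i_{\et,c}(X, \Lambda_n(j)) \to H^i_{\et}(\ov{X}, \Lambda_n(j)).
\]
It therefore suffices to show that $H^i_\et(V, \Lambda_n(j))$ is finite for every proper $k$-scheme $V$ and all $i, j$. Since $\Lambda_n(j)$ is a locally constant constructible sheaf and $j$ only twists by a finite Galois module, $j$ plays no role.

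\textbf{Step 2: use the Hochschild--Serre spectral sequence.} Let $\ov{k}$ be a separable closure of $k$. We use
\[
E_2^{a,b} = H^a(G_k, H^b_\et(\ov{V}, \Lambda_n(j))) \Rightarrow H^{a+b}_\et(V, \Lambda_n(j)).
\]
For proper $\ov{V}$, the finiteness theorem for proper morphisms (SGA~4, XIV) shows that the groups $H^b_\et(\ov{V}, \Lambda_n(j))$ are finite. They vanish for $b > 2\dim V$, so only finitely many are nonzero. It remains to see that Galois cohomology of $G_k$ with finite coefficients of order prime to $p$ is finite and vanishes in large degrees; then the $E_2$ page has finitely many nonzero finite terms, and the abutment is finite.

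\textbf{Step 3: Galois cohomology of $k$.} If $k$ is finite, $G_k \cong \wh{\Z}$. Its cohomology with finite coefficients is finite and vanishes in degrees $\ge 2$.

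If $k$ is an hdvf with separably closed residue field, the prime-to-$p$ part of $G_k$ is the tame inertia group $\prod_{\ell \neq p} \Z_\ell(1)$. Wild inertia is pro-$p$, so it is cohomologically trivial on prime-to-$p$ modules, and again we get finiteness and vanishing in degrees $\ge 2$.

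If $k$ is an hdvf with finite residue field, we use the extension $1 \to I \to G_k \to \wh{\Z} \to 1$ together with the tame computation for $I$ and the Hochschild--Serre spectral sequence for this extension. This gives finiteness with cohomological dimension $\le 2$ prime to $p$. Alternatively, we quote Serre's \emph{Cohomologie galoisienne} (II.5), which applies to a henselian rather than complete field because $G_k$ equals the Galois group of the completion $\wh{k}$.

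\textbf{Expected obstacle.} The main point needing care is the henselian but non-complete case, including positive characteristic $k$. The identification $G_k \cong G_{\wh{k}}$ holds for excellent henselian dvrs, and in general we rely on the tame/wild structure of the inertia group described above. Beyond this, we must make sure that $\Lambda_n(j)$ for negative $j$ is still a finite locally constant sheaf, which is clear. We also note that the finiteness results cited in the proof of \lemref{lem:Main-Diag-1}, namely \cite[Rem.~3.5(c)]{Jannsen-MA}, could be invoked directly in place of Steps 2 and 3.
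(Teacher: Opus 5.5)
Your proof is correct and follows the paper's own route. The paper likewise compactifies via Nagata, uses the localization sequence \eqref{eqn:MC-03} to reduce to proper $X$, and then simply cites \cite[Rem.~3.5]{Jannsen-MA} for the proper case; your Hochschild--Serre argument in Steps~2--3 spells out that cited fact.

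One point to keep explicit: your tame/wild computation in Step~3 needs $n$ prime to the residue characteristic, not merely to $\Char(k)$. This restriction is genuinely necessary when the residue field is separably closed, since for instance $H^1_{\et}(\Q_p^{\rm ur}, \mu_p) \cong (\Q_p^{\rm ur})^\times/p$ is infinite. When the residue field is finite and $\Char(k)=0$, your alternative appeal to Serre via $G_k \cong G_{\wh{k}}$ covers all $n$.
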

\begin{proof}
We choose a compactification $X \inj \ov{X}$ (cf. \cite{Nagata}) and let
$Z = \ov{X} \setminus X$. Using ~\eqref{eqn:MC-03}, we can then assume that $X$ is
proper over $k$ in which case the result is well-known
(e.g., see \cite[Rem.~3.5(2)]{Jannsen-MA}).
\end{proof}

\subsection{The Suslin homology}\label{sec:SHom}
  Let the notation be as in the previous subsection. We let $X$ be
  a connected, regular $k$-scheme of dimension $d$.
  Recall that the Suslin-Voevodsky singular homology $H^S_0(X)$ of $X$
  (also called the
Suslin homology in the literature) is deﬁned as the $0$-th homology of a certain
explicit complex of algebraic cycles, introduced by Suslin--Voevodsky
\cite{Suslin-Voevodsky}.
Instead of recalling this whole complex, we recall the following equivalent
definition of $H^S_0(X)$. This equivalence was shown by Schmidt
\cite[Thm.~5.1]{Schmidt-ANT} (see also \cite[Defn.~1.1]{Yamazaki-Nagoya}).

For $C \in \sC(X)$, we let $\ov{C}$ denote the
unique compactification of $C$ and let $\nu \colon \ov{C}_n \to \ov{C}$ denote
the normalization map. We let
$\Delta(C) = \ov{C}_n \setminus C_n = \ov{C}_n \setminus \nu^{-1}(C)$ and
$U(C) = \Ker(\sO^\times_{\ov{C}_n, \Delta(C)} \surj \sO^\times_{\Delta(C)})$. 
Then $H^S_0(X)$ is the quotient of $Z_0(X)$ by the subgroup generated by
$\{\divf(f)| C \in \sC(X), f \in U(C)\}$.

The Suslin homology $H^S_0(-)$ has the following properties which will be useful.

\begin{prop}\label{prop:Suslin-hom}
  The assignment $X \mapsto H^S_0(X)$ satisfies the following.
  \begin {enumerate}
 \item
    It is covariantly functorial in $X$.
  \item
    If $f \colon X' \to X$ is a finite dominant
    map of non-singular varieties over $k$, then
    the pull-back map $f^* \colon Z_0(X) \to Z_0(X')$ induces
    $f^* \colon H^S_0(X) \to H^S_0(X')$ such that $f_* \circ f^*$ is
    multiplication by $\deg(f)$.
  \item
    If $j \colon U \inj X$ is an open immersion, then the map
    $H^S_0(U) \to H^S_0(X)$
    is surjective.
  \item
    If $\Char(k) = 0$, then there is a canonical isomorphism
    $H^S_0(X) \xrightarrow{\cong} H^{2d}_{c}(X, \Z(d))$.
  \item
    If $X$ is proper over $k$, then the identity map of $Z_0(X)$ induces an
    isomorphism $H^S_0(X) \xrightarrow{\cong} \CH_0({X})$.
  \item
    If $X \in \Sm_k$ and ${k'}/{k}$ is a purely inseparable extension, then there
    exists a pullback map ${H^S_0(X)}/n \to{H^S_0(X_{k'})}/n$ which is an
    isomorphism.  
    \item
    If $X \in \Sm_k$, there is a natural isomorphism
    ${H^S_0(X)}/n \xrightarrow{\cong} H_0(X, \Lambda_n(0))$.
\end{enumerate}
  \end{prop}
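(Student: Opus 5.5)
The plan is to derive most items from Schmidt's explicit presentation of $H^S_0(X)$ recalled above: $Z_0(X)$ modulo divisors of functions $f \in U(C)$ on normalized compactified curves. The remaining items (4), (6), (7) will come from the comparison with motivic homology and cohomology with compact support in $\dm(k,R)$. I treat the items roughly in order, grouping those with the same source.

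\textbf{Functoriality and surjectivity, items (1), (3), (5).} For item (1), given $g \colon X' \to X$ and a curve $C \in \sC(X')$, either $g(C)$ is a point, in which case $g_*\divf(f) = 0$ for degree reasons, or $\ov{C}_n \to \ov{g(C)}_n$ is finite. In the latter case the norm of a function in $U(C)$ is congruent to $1$ modulo the boundary $\Delta(g(C))$, because the preimage of $\Delta(g(C))$ lies in $\Delta(C)$. Hence $g_*\divf(f) = \divf(N f)$ with $Nf \in U(g(C))$. For item (3), the issue is surjectivity $Z_0(U) \to H^S_0(X)$. Given a closed point $x \in X \setminus U$, choose an integral curve $C$ through $x$ meeting $U$. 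Then choose $f$ on $\ov{C}_n$ that is a unit and $\equiv 1$ along $\Delta(C)$, has a simple zero at a point over $x$, and is $\equiv 1$ at the other points over $X \setminus U$. This is possible by the Chinese remainder theorem on the semilocal ring. Then $x$ is equivalent to a cycle supported in $U$. Item (5) is immediate, since properness gives $\Delta(C) = \emptyset$ and $U(C) = k(C)^\times$.

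\textbf{Pull-back, item (2).} For item (2), define $f^*$ on closed points using the multiplicities of the scheme-theoretic fibers, which are well defined since $f$ is finite and flat between regular schemes. Compatibility with relations is checked as follows. For $C \subset X$, take the components $C'_i$ of $f^{-1}(C)$ with their multiplicities. The pulled-back function lies in $U(C'_i)$, since the boundary of $C'_i$ maps to the boundary of $C$, and $f^*\divf(h) = \sum m_i \divf(h|_{C'_i})$. The projection formula $f_*f^* = \deg(f)$ already holds on cycles.

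\textbf{Comparison with motives, items (4), (6), (7).} Item (7) will follow from the identification of $H_0(X, \Lambda_n(0))$ with $\Hom_{\dm}(\Lambda_n, M(X))$. By Suslin--Voevodsky, this is the zeroth homology of $C_*(X)\otimes \Lambda_n$ of the Suslin complex, whose $H_0$ is $H^S_0(X)/n$ by right exactness. For positive characteristic I would cite the $\Z[1/p]$ version (Kelly) or argue via the perfect closure. Item (6) follows from (7), because $\dm(-,\Lambda_n)$ is invariant under purely inseparable extensions when $p \nmid n$ (Cisinski--D\'eglise, already used in the proof of \lemref{lem:Vanishing-coh}). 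Item (4) is the duality $M(X) \cong M^c(X)^*(d)[2d]$ for smooth $X$ in characteristic zero, which gives $H^S_0(X) = H_0(X,\Z(0)) \cong H^{2d}_c(X,\Z(d))$. The main obstacle is tracking this identification with integral coefficients, which needs resolution of singularities; this is why the statement is restricted to characteristic zero. Everything else is bookkeeping with Schmidt's presentation.
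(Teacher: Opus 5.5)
Your items (1), (2), (4) and (5) are fine. For (1)--(3) the paper only cites Schmidt, and for (4) it uses \cite[Prop.~14.18]{MVW} together with Friedlander--Voevodsky duality, as you do. Two steps, however, do not close as written.

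\textbf{Item (3).} Take $f\in U(C)$ with a simple zero at a point $y_0\in\ov{C}_n$ over $x$, and $f\equiv 1$ at the other points of $\Delta(C)\cup\nu^{-1}(C\setminus U)$. This only gives $\nu_*\divf(f)=[k(y_0):k(x)]\,[x]+(\text{cycle on }U)$. So you move the multiple $[k(y_0):k(x)]\,[x]$, not $[x]$ itself. For an arbitrary curve through $x$, this degree can exceed $1$ at every point over $x$. For example, $C=\{s^2+t^2=0\}\subset\A^2_{\mathbb{R}}$ has normalization $\A^1_{\mathbb{C}}$, and the origin has a single preimage, with residue field $\mathbb{C}$. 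This is exactly where the regularity of $X$ must enter. Choose $C$ regular at $x$: cut it out near $x$ by $d-1$ members of a regular system of parameters of $\sO_{X,x}$. Choose these inductively by prime avoidance in the factorial rings $\sO_{X,x}/(t_1,\dots,t_i)$, so that a nonzero local equation of $X\setminus U$ survives and hence $C\not\subset X\setminus U$. Then $\nu$ is an isomorphism over $x$ and $k(y_0)=k(x)$.

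\textbf{Items (6) and (7).} For imperfect $k$ of characteristic $p$ your argument is circular. You deduce (6) from (7), and for (7) you offer either Kelly or the perfect closure. Kelly's comparison is over perfect fields; the paper invokes it only after passing to $k^{\hs}$. Reducing (7) to $k^{\hs}$ requires precisely (6) on the Suslin-homology side, since \cite[Prop.~8.1]{CD-Doc} only handles the motivic side.

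The paper goes in the opposite order: it proves (6) first at the level of cycles, as in \cite[Cor.~3.17]{GKR}, and then obtains (7) from the perfect case. You already have what is needed for this. Let $k'/k$ be finite purely inseparable of degree $p^m$, and $f\colon X_{k'}\to X$ the projection.
\begin{itemize}
\item $f$ is finite flat between regular schemes, so your item (2) gives $f_*f^*=p^m$.
\item Each closed point $x$ has a unique preimage $y$, with $[k(y):k(x)]\,\ell(S_x)=\dim_{k(x)}(k(x)\otimes_k k')=p^m$. Hence $f^*f_*=p^m$ already on cycles.
\end{itemize}
Both composites are invertible mod $n$. A colimit over finite subextensions then handles $k'=k^{\hs}$.

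Alternatively, you could get (7) directly over nonperfect fields from Suslin's theory \cite{Suslin-AKT}, but citing Kelly does not suffice. The cycle-level route has a further advantage: the isomorphism in (6) is then the explicit pullback $[x]\mapsto\ell(S_x)[y]$. That is the map used later in the proof of \propref{prop:Suslin-hom-0}, where the commutativity of a face of the cube relies on it.
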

 \begin{proof}
For items (1), (2) and (3), see \cite[\S~2, p.~191, Prop.~5.6]{Schmidt-ANT}.
  Item (4) follows from \cite[Prop.~14.18]{MVW} and the isomorphism between the
  motivic homology and motivic cohomology with compact support, given at the
  end of \cite{Friedlander-Voevodsky}. Item (5) is clear from the definitions
  of $H^S_0(X)$ and $\CH_0(X)$. The proof of item (6) is identical to
  (in fact, slightly easier than) that of \cite[Cor.~3.17]{GKR}, we only have to
  replace $K^M_i(A)$ by $K^M_{i-1}(A)$ everywhere in the proof for all local
  rings $A$. To prove item (7), we can assume $k$ to be perfect by item (6) and
  \cite[Prop.~8.1]{CD-Doc}. The latter case follows from \cite[Prop.~14.18]{MVW}.
 \end{proof}

Recall from \cite[Lem.~5.9, 5.10]{GKR} that for a closed point $x \in X$, there
 exists a natural Gysin homomorphism
 $\iota^x_* \colon H^i(k(x), \Lambda_n(j)) \to H^{2d+i}(X, \Lambda_n(d+j))$. 
 We also have the Chern class map $\Psi_{k(x)} \colon K_0(k(x)) \xrightarrow{\cong}
 H^0(k(x), \Z(0))$ which is an isomorphism.
For an Artinian local ring $A$, we let $\ell(\Spec(A))$ denote the length of $A$.

\begin{prop}\label{prop:Suslin-hom-0}
If $X \in \Sm_k$, there are canonical isomorphisms 
\[
\Psi_X \colon {H^S_0(X)}/n \xrightarrow{\cong} {H^{2d}_c(X, \Lambda(d))}/n
\xrightarrow{\cong} H^{2d}_c(X, \Lambda_n(d))
    \]
    such that the diagram
    \begin{equation}\label{eqn:Tame-MCCS-0}
      \xymatrix@C1pc{
        {K_0(k(x))}/n \ar[r]^-{\Psi_{k(x)}} \ar[d]_-{\iota^x_*} &
        H^0(k(x), \Lambda_n(0)) \ar[d]^-{\iota^x_*} \\
        {H^S_0(X)}/n \ar[r]^-{\Psi_X} & H^{2d}_c(X, \Lambda_n(d))}
    \end{equation}
    is commutative for every $x \in X_{(0)}$, where the left vertical arrow is the
    canonical map.
  \end{prop}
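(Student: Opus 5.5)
The plan is to reduce to a perfect base field and then use the isomorphism of Proposition~\ref{prop:Suslin-hom}(7). By Proposition~\ref{prop:Suslin-hom}(6) and \cite[Prop.~8.1]{CD-Doc}, both ${H^S_0(X)}/n$ and $H^{2d}_c(X, \Lambda_n(d))$ are insensitive to base change to a perfect closure $k^\hs$. I first check that these base change maps are compatible with the Gysin maps $\iota^x_*$ and with $\Psi_{k(x)}$. Here $x$ becomes a single point $x^\hs$ with $k(x^\hs)$ purely inseparable over $k(x)$, and the multiplicity factor is a power of $p$, hence a unit mod $n$. This compatibility is where I would be careful. Granting it, we may assume $k$ is perfect.

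For perfect $k$ and $X \in \Sm_k$ of dimension $d$, I define $\Psi_X$ as the composite
\[
{H^S_0(X)}/n \xrightarrow{\cong} H_0(X, \Lambda_n(0)) \xrightarrow{\cong} H^{2d}_c(X, \Lambda_n(d)).
\]
The first map comes from Proposition~\ref{prop:Suslin-hom}(7). The second is the Friedlander--Voevodsky duality $M(X)^* \cong M^c(X)(-d)[-2d]$ in $\dm(k,\Lambda_n)$, which is valid for smooth $X$ with $p$ inverted in the coefficients. To obtain the integral factorization through ${H^{2d}_c(X, \Lambda(d))}/n$, I apply Lemma~\ref{lem:Vanishing-coh}(3) with $a = d+1$ and $b = 0$. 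Shifting the indices appropriately, this gives $H^{2d+1}_c(X,\Lambda(d))=0$ when the relevant inequality holds. Equivalently, I use the universal coefficient sequence
\[
0 \to {H^{2d}_c(X, \Lambda(d))}/n \to H^{2d}_c(X, \Lambda_n(d)) \to H^{2d+1}_c(X, \Lambda(d))[n] \to 0,
\]
together with the vanishing $H^{2d+1}_c(X,\Lambda(d)) = 0$. This vanishing follows from Lemma~\ref{lem:Vanishing-coh}(1) applied to a compactification and its boundary, exactly as in that lemma's proof: it is cohomology in degree $>2d$ with weight $d$ on schemes of dimension $\le d$.

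For the commutativity of \eqref{eqn:Tame-MCCS-0}, the cycle $[x]\in Z_0(X)$ maps in $H_0(X,\Lambda_n(0))$ to the pushforward of the fundamental class of $\Spec k(x)$. Under duality, homological pushforward along the closed immersion $\iota^x$ corresponds to the Gysin map $\iota^x_*$ of \cite[Lem.~5.9]{GKR}, and $\Psi_{k(x)}$ sends $[k(x)]$ to $1$. I therefore need that the Gysin map constructed in op.~cit.\ agrees with the dual of the proper pushforward. I expect this identification to be the main obstacle. I would settle it by unwinding both maps through the purity isomorphism $\iota^!\Lambda_X \cong \Lambda_{x}(-d)[-2d]$ in the six-functor formalism, where both are induced by the counit $\iota_*\iota^! \to \mathrm{id}$.
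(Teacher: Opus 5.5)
Your proposal is correct and follows essentially the same route as the paper. For perfect $k$, both you and the paper use Proposition~\ref{prop:Suslin-hom}(7) together with the duality $H_0(X,\Lambda_n(0))\cong H^{2d}_c(X,\Lambda_n(d))$; the paper cites Kelly's theorem for this duality in positive characteristic. For imperfect $k$, both define $\Psi_X$ through the perfect closure, and the compatibilities you flag are exactly the commuting faces of the paper's cube diagram: the pull-back on Suslin homology, \cite[Cor.~5.13]{GKR} for the Gysin maps, and the multiplicity $\ell(S_x)$ on $K_0$ and $H^0$.

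One small slip: the correct parameters in Lemma~\ref{lem:Vanishing-coh} are $a=d$, $b=-1$, not $a=d+1$, $b=0$. With these, items (1) and (3) give both $H^{2d+1}_c(X,\Lambda(d))=0$ and ${H^{2d}_c(X,\Lambda(d))}/n\cong H^{2d}_c(X,\Lambda_n(d))$ directly, so there is no need to redo the compactification argument.
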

\begin{proof}
By \lemref{lem:Vanishing-coh}(3), we only have to show that
  $\Psi_X \colon  {H^S_0(X)}/n \to H^{2d}_c(X, \Lambda_n(d))$ is an isomorphism.
In view of \propref{prop:Suslin-hom}, we can assume that $p \ge 2$.
We first assume that $k$ is perfect. In this case, $x \in \Sm_k$ for all
$x \in X_{(0)}$, and the claim follows from item (7) of \propref{prop:Suslin-hom} and
\cite[Thm.~5.3.18]{Kelly}.

Suppose now that $k$ is not perfect and let $k^{\hs}$ be a chosen perfect closure of
$k$. Let $X^\hs = X_{k^{\hs}}$ and let $f \colon X^\hs \to X$ be the projection map.
For $x \in X_{(0)}$ and the unique point $y \in X^\hs$ lying over $x$, we let
$f^*_x \colon H^0(k(x), \Lambda_n(0)) \to H^0(k(y), \Lambda_n(0))$ be given by
$f^*_x(a) = \ell(S_x) \tau^*_x(a)$, where $S_x = \Spec(k(x)) \times_X X^\hs$ and
$\tau_x \colon \Spec(k(y)) \to \Spec(k(x))$ is the projection map.
We define $f^*_x \colon {K_0(k(x))}/n \to {K_0(k(y))}/n$ in the similar manner.
We let $\wt{f}^* = {\underset{x \in X_{(0)}}\bigoplus} f^*_x$. We let
$\alpha = {\underset{x \in X_{(0)}}\bigoplus} \Psi_{k(x)}$ and define
$\alpha^\hs$ in the similar manner.

We now consider the diagram
\begin{equation}\label{eqn:Tame-MCCS-3}
  \xymatrix@C1pc{
    {\underset{x \in X_{(0)}}\bigoplus} {K_0(k(x))}/n \ar[rr]^-{\alpha}
    \ar@{->>}[dr]  \ar[dd]_-{\wt{f}^*} & &
        {\underset{x \in X_{(0)}}\bigoplus} H^0(k(x), \Lambda_n(0))
        \ar[dd]^>>>>>>>>{\wt{f}^*}
    \ar[dr] & \\
& {H^S_0(X)}/n \ar[dd]_>>>>>>>>{f^*} \ar@{.>}[rr] & & H^{2d}_c(X, \Lambda_n(d))
\ar[dd]^-{f^*} \\
{\underset{x \in X^\hs_{(0)}}\bigoplus} {K_0(k(x))}/n \ar[rr]^->>>>>>>>>{\alpha^\hs}
\ar@{->>}[dr]  & &
{\underset{x \in X^\hs_{(0)}}\bigoplus} H^0(k(x), \Lambda_n(0)) \ar[dr] & \\
& {H^S_0(X^\hs)}/n \ar[rr]^-{\Psi_{X^\hs}} & &  H^{2d}_c(X^\hs, \Lambda_n(d)).}
\end{equation}

We showed above that $\Psi_{X^\hs}$ is an isomorphism. The right vertical arrow of
the front face is an isomorphism by \cite[Lem.~5.9]{GKR} and
\cite[Cor.~2.1.7]{Elmanto-Khan}. Its left vertical arrow is an
isomorphism by \propref{prop:Suslin-hom}(6). It follows that there is a unique
isomorphism $\Psi_X \colon {H^S_0(X)}/n \xrightarrow{\cong} H^{2d}_c(X, \Lambda_n(d))$
such that the front face of ~\eqref{eqn:Tame-MCCS-3} commutes. To complete the proof
of the proposition, it remains to show that its top face commutes.
Since the right vertical arrow of the front face is an isomorphism, it suffices to
show that all other faces of ~\eqref{eqn:Tame-MCCS-3} commute.

Now, the left face of ~\eqref{eqn:Tame-MCCS-3} commutes by the construction of the
pull-back map $f^*$ between the Suslin homology groups (cf. \cite[\S~3.3]{GKR}).
The right face commutes by \cite[Cor.~5.13]{GKR}.
The back face commutes directly by the definition of all its arrows. The bottom face
commutes by using ~\eqref{eqn:Tame-MCCS-0} for $X^{\hs}$. We have already shown
that the front face commutes. This completes the proof.
\end{proof}

\section{Left kernel of Brauer--Manin pairing}\label{sec:BM-left}
The goal of this section is to construct the Brauer--Manin pairing for open surfaces
and prove Theorems~\ref{thm:Main-4} and ~\ref{thm:Main-5}.
We begin with the definition of the class group.

\subsection{The class group of open varieties}\label{sec:CG-defn}
We now fix a prime number $p$ and let $R$ be an hdvr with finite residue field
$\F$ of characteristic $p$.
We let $k$ denote the quotient field of $R$. We let
$X$ be a connected, smooth $k$-scheme of dimension $d$.
For $i \ge 1$, we let $H^i(X) :=  H^i_\et(X, {\Q}/{\Z}(i-1))$.
For a local ring $A$, we let $\wh{A}$ denote the completion of $A$ with respect to
its maximal ideal.
We begin by recalling the definition of the class group of $X$ from
\cite[\S~9]{KRS}.

For any $C \in \sC(X)$, we let $k(C)$ be the function field of $C$ and let
$k(C)_x$ denote the quotient field of $\wh{\sO_{\ov{C}_n,x}}$ for
$x \in (\ov{C}_n)_{(0)}$.
We let $k(C)^\times_\infty = {\underset{x \in \Delta(C)}\bigoplus} k(C)^\times_x$.
We then have the diagonal inclusion $\delta_C \colon k(C)^\times \inj
k(C)^\times_\infty$.
We let $\partial_C \colon k(C)^\times \to Z_0(X)$ be the boundary map.
The class group $C_0(X)$ of $X$ is defined to be the cokernel of the map
\begin{equation}\label{eqn:Class-grp}
{\underset{C \in \sC(X)}\sum} (\partial_C,\delta_C) \colon
{\underset{C \in \sC(X)}\bigoplus}  k(C)^\times \to Z_0(X) \bigoplus
\left({\underset{C \in \sC(X)}\bigoplus} k(C)^\times_\infty \right).
\end{equation}
It is easy to check that $X \mapsto C_0(X)$ is a covariant functor in $X \in \Sm_k$
(cf. \cite[Prop.~3.14]{GKR}).

We next define the tame Brauer group of $X$. 
If $F$ is an hdvf and $i \ge 1$, let
$\Fil_{\bullet} H^i_\et(F)$ denote Kato's filtration of
$H^i_\et(F)$ (cf. \cite[Defn.~2.1]{Kato-Swan-cond}).

\begin{defn}\label{defn:Suslin-Br}
If $\ov{X}$ is a non-singular projective curve over $k$ and
$D = \{x_1, \ldots , x_m\} \subset \ov{X}$ is a reduced divisor with
$X = \ov{X} \setminus D$, we let
$\Br^{\tm}(X)$ be the subgroup of $\Br(X)$ consisting of elements whose image under
the map $\Br(X) \to \Br(k(X)_{x_i}) \cong H^2_{\rm {\etl}}(k(X)_{x_i})$
lies in $\Fil_{1}H^2_{\rm {\etl}}(k(X)_{x_i})$ for every $i$.
If $\dim(X) \ge 2$, we let $\Br^{\tm}(X)$ denote the subgroup of $\Br(X)$ consisting
of elements $\chi \in \Br(X)$ such that for every $C \in \sC(X)$, the Brauer
class $\nu^*(\chi) \in \Br(C_n)$ lies in $\Br^{\tm}(C_n)$, where
$\nu \colon C_n \to C \inj X$ is the map induced by the normalization of $C$.
We shall refer to $\Br^{\tm}(X)$ as the `tame Brauer group' of $X$.
\end{defn}

For $C \in \sC(X)$, we let $U_1K_1(k(C)_{\infty})  =
{\underset{x \in \Delta(C)}\bigoplus} \Ker\left((\wh{\sO_{\ov{C}_n,x}})^\times \to
  k(x)^\times\right)$.

  \begin{prop}\label{prop:Tame-part}
    There is a canonical exact sequence
   \begin{equation}\label{eqn:Tame-part-0}
{\underset{C \in \sC(X)}\bigoplus} U_1 K_1(k(C)_{\infty})
        \xrightarrow{\vartheta_X} C_0(X) \xrightarrow{\kappa_X} H^S_0(X) \to 0.
      \end{equation} 
 The canonical maps
 $\kappa_X \colon {C_0(X)}/n \to {H^S_0(X)}/n$ and
 $\kappa'_X \colon \Br^{\tm}(X)[n] \to \Br(X)[n]$
are isomorphisms if $n \in k^\times$.
\end{prop}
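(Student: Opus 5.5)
To build \eqref{eqn:Tame-part-0}, I would compare the two presentations directly. Schmidt's description of $H^S_0(X)$ is the quotient of $Z_0(X)$ by the divisors $\divf(f)$ with $C \in \sC(X)$ and $f \in U(C)$. The class group $C_0(X)$ is the cokernel of \eqref{eqn:Class-grp}.

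\textbf{Defining $\kappa_X$.} Send a $0$-cycle to its class, and send each $k(C)^\times_\infty$ to zero. This is well defined: an element $f \in k(C)^\times$ maps to $(\partial_C(f), \delta_C(f))$, and its image is the class of $\divf(f)$. That class vanishes in $H^S_0(X)$ only if $f \in U(C)$. So I would instead define $\kappa_X$ as the quotient of $C_0(X)$ by the image of $\vartheta_X$.

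\textbf{Defining $\vartheta_X$.} Let it be the composite ${\underset{C}\bigoplus} U_1K_1(k(C)_\infty) \inj {\underset{C}\bigoplus} k(C)^\times_\infty \to C_0(X)$.

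\textbf{The key identification.} I then need to show $\coker(\vartheta_X) \cong H^S_0(X)$. Killing $\bigoplus U_1K_1$ in the middle summand gives $C_0(X)/\mathrm{Im}(\vartheta_X)$ as the cokernel of
\[
{\underset{C}\bigoplus} k(C)^\times \to Z_0(X) \oplus {\underset{C}\bigoplus} {k(C)^\times_\infty}/{U_1K_1(k(C)_\infty)}.
\]
By weak approximation (density of $k(C)^\times$ in the finite product of completions), the map $k(C)^\times \to {k(C)^\times_\infty}/{U_1}$ is surjective. Its kernel is exactly $U(C)$: a function lying in $U_1$ at every point of $\Delta(C)$ is a unit at $\Delta(C)$ and $\equiv 1$ there. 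A standard snake-lemma argument then identifies the cokernel with $Z_0(X)/\langle \divf(f) : f \in U(C)\rangle = H^S_0(X)$. This yields exactness of \eqref{eqn:Tame-part-0} and surjectivity of $\kappa_X$.

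\textbf{$\kappa_X$ mod $n$ (the main obstacle).} For $\kappa_X \colon C_0(X)/n \cong H^S_0(X)/n$, it suffices to show that the image of $\vartheta_X$ is $n$-divisible. Each $U_1K_1$ of a henselian-complete dvf whose residue field has characteristic $p$ is a pro-$p$ group. Hence it is $n$-divisible when $(n,p)=1$, by Hensel's lemma and extraction of $n$-th roots of $1$-units. The case $n$ divisible by $p$ with $\Char(k)=0$ needs care: $1$-units are then not $p$-divisible. The hypothesis $n \in k^\times$ only excludes $p \mid n$ in positive characteristic. I expect this to be the main obstacle. I would try to show instead that the $U_1$-part maps into a divisible subgroup of $C_0(X)$ via the reciprocity/norm argument of \cite{KRS}, and otherwise restrict to the prime-to-$p$ statement.

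\textbf{$\kappa'_X$ on $n$-torsion.} This follows by the analogous local statement on the Brauer side. For a henselian dvf $F$ with perfect residue field and $n$ prime to the residue characteristic, $H^2_\et(F)[n] \subset \Fil_1 H^2_\et(F)$, since Kato's filtration $\Fil_0$ already contains all tamely ramified classes. Hence every $n$-torsion Brauer class is tame on each curve, and $\Br^{\tm}(X)[n] = \Br(X)[n]$.
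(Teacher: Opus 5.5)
Your construction of the exact sequence \eqref{eqn:Tame-part-0} is fine, and is what the paper imports from \cite[Lem.~3.21]{GKR}. Weak approximation makes $k(C)^\times \to k(C)^\times_\infty/U_1K_1(k(C)_\infty)$ surjective, its kernel is $U(C)$, and the snake-lemma comparison identifies $\coker(\vartheta_X)$ with Schmidt's presentation of $H^S_0(X)$.

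\textbf{The gap is in the mod-$n$ statement.} It comes from misidentifying the residue field of $k(C)_x$.
\begin{itemize}
\item For $x \in \Delta(C)$, the point $x$ is a closed point of the $k$-curve $\overline{C}_n$. So $\widehat{\mathcal{O}_{\overline{C}_n,x}}$ is a complete dvr with residue field $k(x)$, a finite extension of $k$ itself, and $k(C)_x \cong k(x)((t))$. This residue field has characteristic $\Char(k)$, not $p$.
\item Hence $U_1K_1(k(C)_x) = 1+t\,k(x)[[t]]$, and the hypothesis $n \in k^\times$ says exactly that $n$ is invertible in $k(x)$.
\item For $u \in 1+t\,k(x)[[t]]$, the polynomial $T^n-u$ has the simple root $1$ modulo $t$. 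Hensel's lemma then gives an $n$-th root of $u$ inside $1+t\,k(x)[[t]]$.
\item This works uniformly in both characteristics. When $\Char(k)=0$ the group is even uniquely divisible via $\log$ (cf.\ Remark~\ref{remk:BM-pair-def-2}).
\end{itemize}
Your ``pro-$p$'' description is correct only when $\Char(k)=p$. The failure of $p$-divisibility you invoke for $\Char(k)=0$ is a property of the $1$-units of the local field $k(x)$. It says nothing about the $t$-adic $1$-units that make up $U_1K_1$. So the case $\Char(k)=0$, $p \mid n$ is not an obstacle. Your proposal nevertheless leaves it unproved, falling back on an unspecified KRS-type argument or on a weaker prime-to-$p$ statement. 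As written, it does not establish ${C_0(X)}/n \cong {H^S_0(X)}/n$ in the stated generality.

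\textbf{The same confusion affects your argument for $\kappa'_X$.} The residue field $k(x)$ of $k(C)_x$ is perfect only when $\Char(k)=0$. When $\Char(k)=p$ it is a local field of characteristic $p$, hence imperfect, so a local statement restricted to perfect residue fields does not apply. What you need is Kato's theory for imperfect residue fields, which is why \cite{Kato-Swan-cond} is cited. It says that classes in $H^2_{\et}(k(C)_x)$ killed by an integer invertible in the residue field lie in $\Fil_0 H^2_{\et}(k(C)_x) \subseteq \Fil_1 H^2_{\et}(k(C)_x)$. With that input, and with ``residue characteristic'' read as $\Char(k)$, your argument goes through. It is then equivalent to the paper's, which lifts $\chi \in \Br(X)[n]$ to $H^2_{\et}(X, \Z/n(1))$ via the Kummer sequence and applies \cite[Cor.~2.5]{Kato-Swan-cond} to the pull-backs to $k(C)_x$.
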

\begin{proof}
  The proof of the exact sequence is completely identical to that of
  \cite[Lem.~3.21]{GKR}. It follows from \cite[Lem.~3.23]{GKR} that the first term
  of ~\eqref{eqn:Tame-part-0} is $n$-divisible, which proves the first isomorphism
  of the second part of the proposition.

It is easy to see that there is an exact sequence
   \begin{equation}\label{eqn:Tame-part-1}
     0 \to \Br^{\tm}(X) \xrightarrow{\kappa'_X} \Br(X) \to
     {\underset{C \in \sC(X)}\prod}
       \frac{\bigoplus_{x \in \Delta(C)} H^2_\et(k(C)_{x})}
            {\bigoplus_{x \in \Delta(C)} \Fil_{1} H^2_\et(k(C)_{x})}.
   \end{equation}
Suppose now that $\chi \in \Br(X)[n]$, and look at the commutative diagram
   \begin{equation}\label{eqn:Tame-part-2}
     \xymatrix@C1pc{
       & H^2_\et(X, {\Z}/n(1)) \ar[d] \ar[dr]^-{\psi} & \\
       \Br^{\tm}(X) \ar[r] & \Br(X) \ar[r] & {\underset{C \in \sC(X)}\prod}
       \frac{\bigoplus_{x \in \Delta(C)} H^2_\et(k(C)_{x})}
            {\bigoplus_{x \in \Delta(C)} \Fil_{1} H^2_\et(k(C)_{x})},}
   \end{equation}
   where the vertical arrow is the Kummer map and $\psi$
   is the product of pull-back maps between {\'e}tale cohomology.

 We can find $\chi' \in H^2_\et(X, {\Z}/n(1))$ which maps to $\chi$.
   Meanwhile, for any $C \in \sC(X)$ and $x \in \Delta(C)$,
   \cite[Cor.~2.5]{Kato-Swan-cond} says that the pull-back 
   $\nu^* \colon H^2_\et(X, {\Z}/n(1)) \to \frac{ H^2_\et(k(C)_{x})}
   {\Fil_{1} H^2_\et(k(C)_{x})}$ is the zero map.
   A diagram chase then implies that $\chi$ dies in this quotient. We conclude that
   $\chi \in  \Br^{\tm}(X)$.
\end{proof}

In the sequel, we shall denote the composite isomorphism
${C_0(X)}/n \xrightarrow{\kappa_X} {H^S_0(X)}/n \xrightarrow{\Psi_X}
H^4_c(X, \Lambda_n(2))$ by $\vartheta_X$.

\subsection{Brauer--Manin pairing for open varieties}\label{sec:BM-defn}
To define the pairing between the class group and Brauer group of $X$,
we let $x \in X_{(0)}$ with the inclusion $\iota_x \colon \Spec(k(x)) \inj X$. We let
$\rho_{X,x} \colon \Br(X) \xrightarrow{\iota^*_x} \Br(k(x))
\xrightarrow{\Cores} \Br(k) \xrightarrow{\inv_{k}} {\Q}/{\Z}$ denote the
composite map, where $\inv_k$ is the classical invariant map which is a bijection,
and $\Cores \colon H^i_\et(k(x)) \to H^i_\et(k)$ is the corestriction map.
Given $\chi \in \Br(X)$, we let $\<x, \chi\> = \rho_{X,x}(\chi)$. Extending it
linearly over $Z_0(X)$, we get a pairing
$\<-, -\> \colon Z_0(X) \times \Br(X) \to {\Q}/{\Z}$.

\begin{prop}\label{prop:BM-pair-def}
  The pairing $\<-, -\> \colon  Z_0(X) \times \Br(X) \to {\Q}/{\Z}$ induces
  a commutative diagram of pairings between abelian groups
  \begin{equation}\label{eqn:BM-pair-def-0}
    \xymatrix@C1.6pc{
C_0(X) \times \Br(X) \ar@<-6ex>[d]_{\kappa_X} \ar[r]^-{\<-, -\>} &
      {\Q}/{\Z} \ar[d]^-{\id} \\
      H^S_0(X) \times \Br^\tm(X)  \ar@<-6ex>[u]_-{\kappa'_X} \ar[r]^-{\<-, -\>} &
         {\Q}/{\Z},}
    \end{equation}
\end{prop}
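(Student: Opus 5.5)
The plan is to prove two things: that the pairing on $Z_0(X)\times\Br(X)$ descends to $C_0(X)\times\Br(X)$, and that it descends to $H^S_0(X)\times\Br^{\tm}(X)$ compatibly with $\kappa_X$ and $\kappa'_X$. Commutativity of \eqref{eqn:BM-pair-def-0} then reduces to the identity $\<\kappa_X(\alpha),\chi\> = \<\alpha,\kappa'_X(\chi)\>$ for $\alpha \in C_0(X)$ and $\chi \in \Br^{\tm}(X)$. This identity is automatic once both pairings are induced by the same pairing on $Z_0(X)$, because $Z_0(X)\to C_0(X)$ and $Z_0(X)\to H^S_0(X)$ are both surjective (the latter by \eqref{eqn:Tame-part-0}).

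For the first pairing, I would extend $\<-,\chi\>$ to $Z_0(X)\oplus \bigoplus_{C} k(C)^\times_\infty$. The extra pairing sends $(f_x)_{x\in\Delta(C)}$ to $\sum_{x\in\Delta(C)} \inv_k\Cores_{k(x)/k}\partial_x\{\nu^*\chi, f_x\}$. Here $\nu^*\chi \in \Br(C_n)$ is restricted to $\Br(k(C)_x)$, the cup product $\{\nu^*\chi, f_x\}$ lies in $H^3(k(C)_x)$, and the residue $\partial_x$ lands in $\Br(k(x))$, where $k(x)$ is a local field. Equivalently, I would use the local Hilbert symbol pairing of the two-dimensional local field $k(C)_x$. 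The sign must be chosen so that, on the image of $(\partial_C,\delta_C)(f)$ for $f\in k(C)^\times$, the total is
\[\sum_{y\in C_n}\inv\Cores\bigl(\mathrm{ord}_y(f)\,\chi(y)\bigr)\;\pm\;\sum_{x\in\Delta(C)}\inv\Cores\,\partial_x\{\nu^*\chi,f\}.\]
By the projection formula for the tame symbol at points where $\nu^*\chi$ is unramified, this equals the sum over all closed points of $\ov{C}_n$ of $\inv\Cores\,\partial_y\{\nu^*\chi,f\}$. That sum vanishes by the reciprocity law (Kato's residue theorem) for $H^3(k(C))$ of a curve over a local field, i.e.\ $\sum_y \Cores\circ\partial_y = 0$ on $H^3(k(\ov C_n),\Q/\Z(2))$. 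So the pairing factors through the cokernel $C_0(X)$ of \eqref{eqn:Class-grp}.

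For the second pairing, by Schmidt's presentation of $H^S_0(X)$ it suffices to show $\<\divf(f),\chi\>=0$ for $f\in U(C)$ and $\chi\in\Br^{\tm}(X)$. Running the same computation, this amounts to checking that $\partial_x\{\nu^*\chi, f\}=0$ whenever $f\in U_1K_1(k(C)_x)$ and $\nu^*\chi\in \Fil_1 H^2(k(C)_x)$. This is the tame statement: $\Fil_1$ classes pair trivially with principal units under the symbol pairing, by Kato's description of $\Fil_\bullet$ \cite{Kato-Swan-cond} (the Swan conductor bound: $\Fil_m$ annihilates $U_{m+1}K$). It also shows that the pairing $C_0(X)\times\Br^{\tm}(X)$ kills the image of $\vartheta_X$ in \eqref{eqn:Tame-part-0}, which is exactly the statement of commutativity.

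I expect the main obstacle to be the reciprocity step. It needs the sum-of-residues formula for $H^3(k(C),\Q/\Z(2))$ of a curve over a local field, including the $p$-part when $\Char(k)=p$, where one must use Kato's residue map for $p$-primary cohomology. It also needs careful sign and normalization bookkeeping between the tame symbol $\partial_y\{\chi,f\}$ and $\mathrm{ord}_y(f)\chi(y)$. I would cite Kato/Saito's reciprocity for arithmetic curves and, if needed, reduce $p$-parts to \cite{KRS}, where the curve case is treated.
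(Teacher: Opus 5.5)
Your route differs from the paper's. The paper constructs nothing by hand.
\begin{itemize}
\item For the top pairing it cites \cite[Prop.~9.4]{KRS}.
\item For the bottom pairing it uses $\<\nu_*z,\chi\>=\<z,\nu^*\chi\>$ for $\nu\colon C_n\to X$. This follows from $\nu_*[x]=[k(x):k(y)][y]$ and the fact that restriction $\Br(k(y))\to\Br(k(x))$ is multiplication by the degree on invariants. Together with $\nu^*\Br^{\tm}(X)\subseteq\Br^{\tm}(C_n)$, this reduces the claim to $d=1$.
\item For $d=1$ it cites \cite[Prop.~9.4]{KRS} again, identifying $\Br(\ov{C}_n|\Delta(C))$ with $\Br^{\tm}(C_n)$ and $\CH_0(\ov{C}_n|\Delta(C))$ with $H^S_0(C_n)$.
\end{itemize}
You open that black box instead. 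You use local symbols on the idelic summands, get well-definedness from the reciprocity law $\sum_y\Cores\circ\partial_y=0$ on $H^3(k(\ov{C}_n),\Q/\Z(2))$, and get the bottom pairing from orthogonality of tame classes and principal units. Your passage to $\nu^*\chi$ on $C_n$ is the same reduction to curves the paper makes. What you gain is an explicit mechanism and an explicit treatment of the idelic part of $C_0(X)$, which the paper leaves implicit. The price is that you need the reciprocity law for curves over local fields including $p$-parts in characteristic $p$, via Kato's top-degree residue on $H^3$ of the two-dimensional local fields $k(C)_x$. You also need careful sign conventions.

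Two things must be fixed.

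First, $Z_0(X)\to C_0(X)$ is not surjective. In fact $C_0(X)/\mathrm{im}\,Z_0(X)\cong\bigoplus_C k(C)^\times_\infty/\delta_C(k(C)^\times)$, which is already nonzero for $X=\A^1_k$, where it is $k((1/t))^\times/k(t)^\times$. So agreement on $Z_0(X)$ does not by itself give commutativity. What gives it is:
\begin{itemize}
\item $C_0(X)=\mathrm{im}\,Z_0(X)+\mathrm{im}\,\vartheta_X$, which follows from \eqref{eqn:Tame-part-0} and $Z_0(X)\surj H^S_0(X)$;
\item the vanishing of the top pairing of $\Br^{\tm}(X)$ on $\mathrm{im}\,\vartheta_X$.
\end{itemize}
You do note this vanishing at the end, so make that the argument and drop the surjectivity claim.

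Second, your justification of the tame step is off by one. With Kato's Swan-conductor indexing, where $\Fil_m$ is orthogonal to $U_{m+1}$, the group $\Fil_1$ is only orthogonal to $U_2$. In characteristic $p$, $p$-primary classes of Swan conductor exactly $1$ need not be orthogonal to $U_1$. You need the indexing in which $\Fil_1$ is orthogonal to $U_1$, for instance Kato's $\Fil_0$, the tame part. This is what the name $\Br^{\tm}$ and the identification with $\Br(\ov{C}_n|\Delta(C))$ for the reduced divisor $\Delta(C)$ presuppose. With that convention the step is immediate, and prime-to-$p$ classes are unaffected either way.
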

\begin{proof}
The top pairing is shown in \cite[Prop.~9.4]{KRS}. To show that the bottom pairing
is defined, we let $C \in \sC(X)$ and look at the diagram
  \begin{equation}\label{eqn:BM-pair-def-1}
    \xymatrix@C1.7pc{
      Z_0(C_n) \times \Br^\tm(C_n) \ar@<-6ex>[d]_{\nu_*} \ar[r]^-{\<-, -\>} &
      {\Q}/{\Z} \ar[d]^-{\id} \\
      \sZ_0(X) \times \Br^\tm(X)  \ar@<-6ex>[u]_-{\nu^*} \ar[r]^-{\<-, -\>} &
         {\Q}/{\Z},}
    \end{equation}
  where $\nu \colon C_n \to X$ denotes the canonical map.

If $x \in C_n$ is a closed point with $y = \nu(x)$, 
  we know that $\nu_*([x]) = [k(x): k(y)] [y] \in Z_0(X)$. We also know
  that the map $\Br(k(y)) \to \Br(k(x))$ is multiplication by $[k(x): k(y)]$
  when we identify each of these groups with ${\Q}/{\Z}$. It easily follows from
  these facts and the construction of the pairing $\<-, -\>$ that the above
  diagram is commutative. It suffices therefore to prove our claim when $d =1$.
But this follows from \cite[Prop.~9.4]{KRS} once we observe that
the group $\Br(\ov{C}_n|D)$ used in loc. cit. coincides with $\Br^\tm(C_n)$ if
we let $D = \Delta(C)$ be the reduced divisor. Similarly,
$\CH_0(\ov{C}_n|\Delta(C))$ coincides with $H^S_0(C_n)$. This completes the proof.
\end{proof}

\begin{remk}\label{remk:BM-pair-def-2}
  When $\Char(k) = 0$, the last term of ~\eqref{eqn:Tame-part-1} vanishes
  (cf. \cite[Cor.~2.5]{Kato-Swan-cond}) and the first term of
  ~\eqref{eqn:Tame-part-0} is divisible. It follows that
  the top pairing in ~\eqref{eqn:BM-pair-def-0} descends to a pairing
  $H^S_0(X) \times \Br(X) \to {\Q}/{\Z}$ if $\Char(k) = 0$. We thus recover the
  pairing constructed by Yamazaki \cite{Yamazaki-Nagoya}.
However, using the exact sequence ~\eqref{eqn:Tame-part-0}, it is not hard to see
  that the map $C_0(X) \to \Br(X)^\star$ does not in general factor through
  $H^S_0(X)$ if $\Char(k) >0$. In particular, Yamazaki's pairing
  does not extend to positive characteristic in general without altering the
  underlying groups.
\end{remk}

  By the Brauer--Manin pairing in this paper, we shall mean the top
  pairing of ~\eqref{eqn:BM-pair-def-0}. When $X$ is projective, we have 
  $C_0(X) \cong H^S_0(X) \cong \CH_0(X)$ and $\Br^\tm(X) \cong \Br(X)$, as one easily
  checks. In this case,  both pairings of  ~\eqref{eqn:BM-pair-def-0} therefore
  coincide with the classical Brauer--Manin pairing.

The following is immediate from the definition of the Brauer--Manin pairing.

\begin{lem}\label{lem:BM-pair-functor}
  Given a closed immersion $\iota \colon Y \inj X$ in $\Sm_k$, there is
  a commutative diagram
   \begin{equation}\label{eqn:BM-pair-functor-0}
    \xymatrix@C1.7pc{
C_0(Y) \times \Br(Y) \ar@<-6ex>[d]_{\iota_*} \ar[r]^-{\<-, -\>} &
      {\Q}/{\Z} \ar[d]^-{\id} \\
      C_0(X) \times \Br(X)  \ar@<-6ex>[u]_-{\iota^*} \ar[r]^-{\<-, -\>} &
         {\Q}/{\Z},}
    \end{equation}
\end{lem}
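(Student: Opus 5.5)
The plan is to check the identity $\<\iota_*(a), \chi\> = \<a, \iota^*(\chi)\>$ for $a \in C_0(Y)$ and $\chi \in \Br(X)$ directly on generators of $C_0(Y)$. This suffices because both pairings are induced from pairings on the free groups in ~\eqref{eqn:Class-grp} (cf. \cite[Prop.~9.4]{KRS} and \propref{prop:BM-pair-def}).

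First, I will make $\iota_*$ on $C_0(-)$ explicit (cf. \cite[Prop.~3.14]{GKR}). Since $\iota$ is a closed immersion, every $C \in \sC(Y)$ is also an element of $\sC(X)$. Its compactification $\ov{C}$, normalization $\ov{C}_n$, boundary $\Delta(C)$ and completed fields $k(C)_x$ are intrinsic to $C$, so they coincide whether $C$ is viewed in $Y$ or in $X$. Hence $\iota_*$ is induced by two maps: the push-forward $Z_0(Y) \to Z_0(X)$, which is the identity on closed points, and the identity on each summand $k(C)^\times_\infty$. These maps are compatible with $(\partial_C, \delta_C)$, so they descend to $C_0$.

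Next comes the check on closed points. For $y \in Y_{(0)}$ we have $\iota_x = \iota \circ \iota_y$ as maps $\Spec(k(y)) \to X$. Therefore
\[
\<\iota_*[y], \chi\> = \inv_k \Cores(\iota_y^*\iota^*\chi) = \<[y], \iota^*\chi\>.
\]

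The remaining check concerns the components in $k(C)^\times_\infty$. In \cite[Prop.~9.4]{KRS}, the value of the pairing on $f \in k(C)^\times_x$, $x \in \Delta(C)$, is a local expression: it is the invariant of the cup product of $f$ with the pull-back of $\chi$ along $\Spec(k(C)_x) \to C_n \to X$. Because $C \subset Y$, this map factors through $Y$, so the pull-back of $\chi$ equals the pull-back of $\iota^*\chi$, and the two values agree. This is the only step requiring care, namely recalling precisely the form of the local term in \cite{KRS}. Once that is done, the point is simply that every construction in the pairing depends on $\chi$ only through its pull-backs to points and to punctured curves, and all of these factor through $Y$. Bilinearity then gives commutativity of ~\eqref{eqn:BM-pair-functor-0}.
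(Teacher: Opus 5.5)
Your proposal is correct and takes the paper's approach: the paper only remarks that the lemma is immediate from the definition of the Brauer--Manin pairing, and your argument unpacks exactly that, checking $\langle \iota_* a, \chi\rangle = \langle a, \iota^*\chi\rangle$ on the generators of $C_0(Y)$ coming from $Z_0(Y)$ and from $k(C)^\times_\infty$, $C \in \sC(Y)$, where $\iota_*$ acts as the identity on both. The one point you flag, the shape of the local term in \cite{KRS}, causes no trouble: that term depends on $\chi$ only through its pull-back to $\Spec(k(C)_x)$, and this pull-back factors through $C \subset Y$.
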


\vskip.2cm

From now onward, we fix a prime $\ell \neq p$. For the Saito--Tate duality pairing
in the following result, we refer to \thmref{thm:SD-1}.

\begin{lem}\label{lem:BM-pair-def-3}
 We have the following.
  \begin{enumerate}
  \item
    The map $C_0(X)^{(\ell)} \to H^S_0(X)^{(\ell)}$ is an isomorphism and the
    Brauer--Manin pairing for $X$ induces a pairing
    \[
    H^S_0(X)^{(\ell)} \times \Br(X)\{\ell\} \to {\Q_\ell}/{\Z_\ell}.
    \]
    \item
    There is a commutative diagram
    \begin{equation}\label{eqn:BM-pair-def-3-0}
    \xymatrix@C1.7pc{
H^S_0(X)^{(\ell)} \times \Br(X)\{\ell\}  \ar@<-6ex>[d]_-{\cong} \ar[r]^-{\<-, -\>} &
{\Q_\ell}/{\Z_\ell} \ar[dd]^-{\id} \\
 H^{2d}_{c}(X, \Z_\ell(d)) \ar@<-6ex>[d]_-{\epsilon^*_X} \hspace*{2cm} & \\
 H^{2d}_{{\rm {\etl}},c}(X, {\Z_\ell}(d)) \times H^2_{\rm {\etl}}(X, {\Q_\ell}/{\Z_\ell}(1))
 \ar@<-6ex>[uu] \ar[r]^-{\<-, -\>} & {\Q_\ell}/{\Z_\ell},}
    \end{equation}
    where the bottom row is the Saito--Tate duality pairing.
    \end{enumerate}
  \end{lem}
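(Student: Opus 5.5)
Part (1) comes from the tame/wild decomposition. By \propref{prop:Tame-part}, for every $n=\ell^m$ the map $\kappa_X$ induces an isomorphism ${C_0(X)}/{\ell^m} \xrightarrow{\cong} {H^S_0(X)}/{\ell^m}$, because the image of $\vartheta_X$ is $\ell$-divisible. These isomorphisms are compatible as $m$ varies, so passing to the inverse limit gives $C_0(X)^{(\ell)} \cong H^S_0(X)^{(\ell)}$.

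For the pairing, I restrict the top pairing of \eqref{eqn:BM-pair-def-0} to $C_0(X) \times \Br(X)\{\ell\}$, where it takes values in ${\Q_\ell}/{\Z_\ell}$. Fix $\chi \in \Br(X)[\ell^m]$. Then $\<-,\chi\>$ is killed by $\ell^m$, so it factors through ${C_0(X)}/{\ell^m}$, and hence through $C_0(X)^{(\ell)} \to {C_0(X)}/{\ell^m}$. By \propref{prop:Tame-part} we have $\chi \in \Br^{\tm}(X)$, so the bottom row of \eqref{eqn:BM-pair-def-0} shows that $\<-,\chi\>$ also factors through ${H^S_0(X)}/{\ell^m}$. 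The same conclusion follows from the isomorphism above. Compatibility as $m$ grows is immediate, and this defines the pairing $H^S_0(X)^{(\ell)} \times \Br(X)\{\ell\} \to {\Q_\ell}/{\Z_\ell}$.

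For part (2), the top left vertical isomorphism is the limit over $m$ of the isomorphisms $\Psi_X$ of \propref{prop:Suslin-hom-0} with $n=\ell^m$; here $H^{2d}_c(X,\Z_\ell(d))$ means $\varprojlim H^{2d}_c(X,\Lambda_{\ell^m}(d))$, as defined earlier. The map $\epsilon^*_X$ is the limit of the étale realization maps from \eqref{eqn:MC-03}. The right-hand arrow $H^2_\et(X,{\Q_\ell}/{\Z_\ell}(1)) \to \Br(X)\{\ell\}$ is the colimit of the Kummer surjections. Commutativity can then be checked at finite level: it suffices to show that for $a \in {H^S_0(X)}/{\ell^m}$ and $\chi' \in H^2_\et(X,\Lambda_{\ell^m}(1))$ with image $\chi$, we have $\<a,\chi\> = \<\epsilon^*_X\Psi_X(a), \chi'\>_{\rm ST}$. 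Both sides are additive in $a$, and ${H^S_0(X)}/{\ell^m}$ is generated by the classes $\iota^x_*[k(x)]$ of closed points. So we may take $a=[x]$.

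By diagram \eqref{eqn:Tame-MCCS-0}, $\Psi_X([x]) = \iota^x_*(1)$. The Gysin map commutes with étale realization (cf. \cite[Cor.~5.13]{GKR}), so $\epsilon^*_X\Psi_X([x])$ is the étale Gysin image of $1 \in H^0_\et(k(x),\Lambda_{\ell^m})$. The Saito--Tate pairing (\thmref{thm:SD-1}) is compatible with proper pushforward and pullback along the finite map $\iota_x$, that is, $\<\iota^x_*(1), \chi'\> = \<1, \iota_x^*\chi'\>_{k(x)}$. Over the field $k(x)$ this pairing is cup product followed by the trace $H^2_\et(k(x),\Lambda_{\ell^m}(1)) \to H^2_\et(k,\Lambda_{\ell^m}(1)) \to {\Z}/{\ell^m}$, which is corestriction followed by $\inv_k$ under the Kummer identification. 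That is exactly $\rho_{X,x}(\chi)$, which proves the finite-level identity.

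The main obstacle is this last compatibility: identifying the trace map built into Saito--Tate duality with $\inv_k \circ \Cores$ on Brauer groups, and checking sign conventions in the Gysin and Kummer maps. I would handle it by reducing to $X=\Spec(k)$ through functoriality of the duality, where it is the normalization of local Tate duality. Finally, I would pass to the limit over $m$; this is harmless because all the finite-level groups are finite by \lemref{lem:Etale-fin}.
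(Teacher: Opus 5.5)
Your proposal is correct and follows essentially the same route as the paper. Part (1) comes from \propref{prop:Tame-part}, and the commutativity in part (2) is reduced via \eqref{eqn:Tame-MCCS-0} and the compatibility of Gysin maps with \'etale realization and duality to $\Spec(k(x))$ for a closed point $x$, where the identification of the trace with $\inv_k \circ \Cores$ is the routine check the paper omits. For the reduction step the paper cites \lemref{lem:BM-pair-functor} and \cite[Lem.~6.3]{GKR}, whereas you use the definition of $\rho_{X,x}$ directly and cite \cite[Cor.~5.13]{GKR}, which the paper uses only for compatibility with purely inseparable pull-back.
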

  \begin{proof}
The first item is clear from \propref{prop:Tame-part}. 
In ~\eqref{eqn:BM-pair-def-3-0}, the marked isomorphism is a consequence of
\propref{prop:Suslin-hom-0}, $\epsilon^*_X$ is the {\'e}tale realization map
and the vertical arrow going up is the one induced by the Kummer sequence.
To show that  ~\eqref{eqn:BM-pair-def-3-0} is commutative, one can use
~\eqref{eqn:Tame-MCCS-0}, \lemref{lem:BM-pair-functor} and
\cite[Lem.~6.3]{GKR} to reduce it to the case when $X = \Spec(k')$ for a finite
field extension ${k'}/k$. In the latter case, the claim is a routine check and we
omit the details.
\end{proof}

\subsection{Proof of Theorems~\ref{thm:Main-4} and ~\ref{thm:Main-5}}
  \label{sec:Main-4-prf}
  Let $R$ be an excellent hdvr with quotient field $k$
  and residue field $\F$. We assume that $\F$ is either finite or separably closed
  of exponential characteristic $p > 0$.
Let $\sX$ be a connected, regular and proper $R$-scheme of Krull dimension three.
We assume that $\sX$ is faithfully flat over $R$ and its reduced closed fiber $Y$
is an snc divisor on $\sX$. We let $X$ denote the generic fiber of
$\sX$ and assume that it is a smooth $k$-scheme.  Let $j \colon U \inj X$ be the
inclusion of a nonempty open subscheme and $D = X \setminus U$ with the reduced
closed subscheme structure. 
Note that our assumptions imply that $X$ is a projective $k$-scheme and each
irreducible component of $Y$ is a projective $\F$-scheme.

We need a few lemmas before we prove the main results.

\begin{lem}\label{lem:BM-fin}
  Assume that $\F$ is finite. Then the Brauer--Manin pairing for $U$ induces a
  pairing between projective and injective systems of finite groups
  \[
  \{{C_0(U)}/{\ell^n}\}_{n\ge 1} \times \{\Br(U)[\ell^n]\}_{n \ge 1} \to
  \{{\Z}/{\ell^n}\}_{n \ge 1}.
    \]
\end{lem}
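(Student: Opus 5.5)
We must show three things: the Brauer--Manin pairing for $U$ descends to a pairing ${C_0(U)}/{\ell^n} \times \Br(U)[\ell^n] \to {\Z}/{\ell^n}$ for each $n$; these pairings are compatible as $n$ varies; and all groups involved are finite.

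\emph{Descent.} Start with the top pairing $C_0(U) \times \Br(U) \to {\Q}/{\Z}$ of \propref{prop:BM-pair-def}; it is bilinear. If $\chi \in \Br(U)[\ell^n]$, then $\<\ell^n a, \chi\> = \<a, \ell^n\chi\> = 0$, so $\<-,\chi\>$ factors through ${C_0(U)}/{\ell^n}$. Its values lie in $({\Q}/{\Z})[\ell^n]$, which we identify with ${\Z}/{\ell^n}$ via $1/\ell^n \mapsto 1$.

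\emph{Compatibility.} Let $\pi\colon {C_0(U)}/{\ell^{n+1}} \surj {C_0(U)}/{\ell^n}$ be the quotient map and $\iota\colon \Br(U)[\ell^n] \inj \Br(U)[\ell^{n+1}]$ the inclusion. The projective system $\{{\Z}/{\ell^n}\}$ has transition maps the quotients and the injective system has transition maps multiplication by $\ell$; under the identification above, the inclusion $({\Q}/{\Z})[\ell^n] \subset ({\Q}/{\Z})[\ell^{n+1}]$ corresponds to ${\Z}/{\ell^n} \xrightarrow{\ell} {\Z}/{\ell^{n+1}}$. The required identities are
\[
\<a, \iota\chi\>_{n+1} = \<\pi a, \chi\>_n \ \text{in} \ {\Q}/{\Z},
\]
together with the analogous identity using multiplication by $\ell$ on $\Br$. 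Both follow because every pairing is the restriction of the single pairing into ${\Q}/{\Z}$. This step is formal bookkeeping.

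\emph{Finiteness.} This is the only substantive step.

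For ${C_0(U)}/{\ell^n}$, use the isomorphism $\vartheta_U\colon {C_0(U)}/{\ell^n} \cong H^4_c(U, \Lambda_{\ell^n}(2))$ from \propref{prop:Tame-part} and \propref{prop:Suslin-hom-0}. Then apply the localization sequence \eqref{eqn:Mot-loc} for $U \subset X \supset D$:
\[
H^3(D, \Lambda_{\ell^n}(2)) \to H^4_c(U, \Lambda_{\ell^n}(2)) \to H^4(X, \Lambda_{\ell^n}(2)).
\]
\begin{itemize}
\item The right term is $H^2_\zar(X,\sF_n) \cong {\CH_0(X)}/{\ell^n}$, which is finite by the end of the proof of \lemref{lem:Main-Diag-1}.
\item For the left term, $D$ has dimension at most one. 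Taking $a = 2$, $b = 0$ in \lemref{lem:Vanishing-coh}(3) gives $H^3(D,\Lambda_{\ell^n}(2)) \cong {H^3(D, \Lambda(2))}/{\ell^n}$, since $a > d + b$.
\end{itemize}
I expect the left term to be the main obstacle. One can bound it via the normalization and the finitely many points of $D$: the motivic cohomology of a curve in weight $2$ and degree $3$ is controlled by $\CH_0$-type groups of its normalization, which mod $\ell^n$ are finite by class field theory for curves over local fields.

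If that becomes delicate, I will instead argue directly with the generators and relations in \eqref{eqn:Class-grp}. The class group $C_0(U)$ surjects onto $H^S_0(U)$ with $\ell$-divisible kernel, and \propref{prop:Suslin-hom}(3) gives a surjection $H^S_0(U) \surj \CH_0(X)$. The kernel of this surjection is generated by local unit groups $k(C)^\times_x$ at points over $D$, and these have finite quotients mod $\ell^n$ since $k(C)_x$ is a local field.

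For $\Br(U)[\ell^n]$, the Kummer sequence gives a surjection $H^2_\et(U, \Lambda_{\ell^n}(1)) \surj \Br(U)[\ell^n]$. The source is finite by \lemref{lem:Etale-fin}, using Poincaré duality to pass between ordinary and compactly supported cohomology on the smooth $U$; alternatively, one can apply the localization sequence in étale cohomology relative to $D$ and use finiteness for $X$ and $D$.
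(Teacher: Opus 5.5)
Most of your reduction coincides with the paper's proof. The descent and the compatibility in $n$ are formal. $\Br(U)[\ell^n]$ is handled exactly as in the paper: the Kummer surjection from $H^2_{\et}(U,{\Z}/{\ell^n}(1))$, which is finite by Saito--Tate duality and \lemref{lem:Etale-fin}. ${C_0(U)}/{\ell^n}$ is reduced, via $\vartheta_U$ and \eqref{eqn:Mot-loc}, to the finiteness of $H^3(D,\Lambda_{\ell^n}(2))$ and $H^4(X,\Lambda_{\ell^n}(2)) \cong {\CH_0(X)}/{\ell^n}$.

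\textbf{The gap.} You never prove that $H^3(D,\Lambda_{\ell^n}(2))$ is finite, although you correctly single it out as the crux. The missing input is the comparison with \'etale cohomology that the paper uses here, and repeatedly elsewhere. By \cite[Cor.~7.3]{GKR}, the \'etale realization map $H^i(D,\Lambda_{\ell^n}(j)) \to H^i_{\et}(D,\Lambda_{\ell^n}(j))$ is injective whenever $i \le j+1$, which holds for $(i,j)=(3,2)$. The target is finite by \lemref{lem:Etale-fin}, since $D$ is projective over $k$. This settles the term in two lines, with no analysis of the singularities of $D$.

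\textbf{Your first sketch} (normalization of $D$ plus class field theory for curves) might be completable, but as written it is only an assertion. Carrying it out would require:
\begin{itemize}
\item the abstract blow-up long exact sequence for the singular curve $D$;
\item the finiteness of ${K_2(k(s))}/{\ell^n}$ at the finitely many singular points $s$;
\item the identification of $H^3$ of the normalization with ${SK_1}/{\ell^n}$;
\item Saito's class field theory;
\item in positive characteristic, a passage to the perfect closure, because the normalization is only regular, not smooth.
\end{itemize}

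\textbf{Your fallback does not work.}
\begin{itemize}
\item The curves $C$ range over the infinite set $\sC(U)$. Even granting that the groups $k(C)^\times_x$ generate the relevant kernel, finiteness of each ${k(C)^\times_x}/{\ell^n}$ gives no bound on a quotient of an infinite direct sum. Bounding it needs a global reciprocity argument, which is exactly what the \'etale comparison supplies.
\item By \eqref{eqn:Mot-loc}, the kernel of $H^4_c(U,\Lambda_{\ell^n}(2)) \to H^4(X,\Lambda_{\ell^n}(2))$ is exactly the image of $H^3(D,\Lambda_{\ell^n}(2))$. So bounding that kernel is the same problem you were trying to avoid.
\end{itemize}
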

\begin{proof}
  We only need to show that ${C_0(U)}/{\ell^n}$ and $\Br(U)[\ell^n]$ are finite.
  Using the surjection $H^2_\et(U, {\Z}/{\ell^n}(1)) \surj \Br(U)[\ell^n]$,
  the finiteness of $\Br(U)[\ell^n]$ follows directly from
  \lemref{lem:Etale-fin} and \cite[Thm.~6.8]{GKR}.

To prove the same for ${C_0(U)}/{\ell^n}$, we can use
Propositions~\ref{prop:Suslin-hom-0} and ~\ref{prop:Tame-part} to reduce to
showing that $H^4_c(U, \Lambda_{\ell^n}(2))$ is finite. It suffices now to show that 
the first and the last groups in the exact sequence (cf. ~\eqref{eqn:Mot-loc})
  \[
  H^3(D, \Lambda_{\ell^n}(2)) \to H^4_c(U, \Lambda_{\ell^n}(2)) \to
  H^4(X, \Lambda_{\ell^n}(2))
  \]
  are finite. The last group is finite because it coincides with
  ${\CH_0(X)}/{\ell^n}$ whose finiteness was shown in the last part of the proof of
  \lemref{lem:Main-Diag-1},
  and the first group is finite by
  \cite[Cor.~7.3]{GKR} and \lemref{lem:Etale-fin}.
 \end{proof}

We endow $C_0(U)^{(\ell)} \cong H^S_0(U)^{(\ell)}$ with the profinite topology and
$\Br(U)\{\ell\}$ with the discrete topology using \lemref{lem:BM-fin}.
We then obtain a continuous pairing of topological abelian groups
\begin{equation}\label{eqn:BM-fin-0}
  C_0(U)^{(\ell)} \times \Br(U)\{\ell\} \to {{\Q}_{\ell}}/{\Z_\ell}.
\end{equation}

Another finiteness result we need is the following. This was proven in
\cite[Prop.~4.2]{Gazaki-Rathore} when $k$ is a $p$-adic field.

\begin{lem}\label{lem:Torsion-Sus-hom}
  $H^S_0(U)[n]$ is a finite group if $n \in k^\times$.
\end{lem}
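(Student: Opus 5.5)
We need to show $H^S_0(U)[n]$ is finite for $n \in k^\times$, where $U$ is an open subscheme of the smooth projective surface $X$ over $k$ (residue field finite or separably closed). The plan is to reduce this to finiteness of a cohomology group with compact support by means of the universal coefficient sequence, and to realize $H^S_0(U)$ integrally as motivic cohomology with compact support.

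First reduce to the case where $n$ is a prime power $\ell^m$: torsion decomposes into primary parts, and $H^S_0(U)[n] \cong \bigoplus_{\ell | n} H^S_0(U)[\ell^{v_\ell(n)}]$. So fix $\ell \ne p$ with $\ell \in k^\times$.

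Next, assume first that $k$ is perfect (for example $\Char(k)=0$). Then $H^S_0(U) \cong H^{4}_c(U, \Lambda(2))$ after inverting $p$; in characteristic zero this is \propref{prop:Suslin-hom}(4), and over a perfect field it follows from \cite[Prop.~14.18]{MVW} and duality. Since $\ell$ is prime to $p$, inverting $p$ does not change $\ell$-power torsion. The universal coefficient sequence
\[
0 \to {H^3_c(U, \Lambda(2))}/{\ell^m} \to H^3_c(U, \Lambda_{\ell^m}(2)) \to H^4_c(U, \Lambda(2))[\ell^m] \to 0
\]
then shows it suffices to prove that $H^3_c(U, \Lambda_{\ell^m}(2))$ is finite. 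For this we use the localization sequence \eqref{eqn:Mot-loc}:
\[
H^2(D, \Lambda_{\ell^m}(2)) \to H^3_c(U, \Lambda_{\ell^m}(2)) \to H^3(X, \Lambda_{\ell^m}(2)).
\]
The right-hand term injects into $H^3_\et(X, \Lambda_{\ell^m}(2))$ by \lemref{lem:Chow-finite}(2), and the latter is finite by \lemref{lem:Etale-fin}. The left-hand term lives on a curve $D$. Passing to its reduced normalization (cdh descent, with a finite set of closed points handled by localization), its motivic cohomology in weight $2$ and degree $2$ is controlled by weight $2$ motivic cohomology of points and smooth curves. On a smooth curve, Beilinson--Lichtenbaum and Rost--Voevodsky identify $H^2(C, \Lambda_{\ell^m}(2))$ with $H^2_\et$ because $2 \le$ the weight. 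For points, $H^i(k', \Lambda_{\ell^m}(2)) \cong H^i_\et$ for $i \le 2$. All of these étale groups are finite by \lemref{lem:Etale-fin}. Alternatively, one can cite \cite[Cor.~7.3]{GKR} exactly as in the proof of \lemref{lem:BM-fin}, which already yields finiteness of $H^3(D, \Lambda_{\ell^m}(2))$, and the same comparison applies in degree $2$.

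For imperfect $k$ (positive characteristic), we pass to the perfect closure $k^\hs$. By \propref{prop:Suslin-hom}(6), or rather its integral-torsion analogue, the pullback $H^S_0(U) \to H^S_0(U_{k^\hs})$ composed with the pushforward is multiplication by a power of $p$. Hence the kernel is $p$-primary torsion, and $H^S_0(U)\{\ell\}$ injects into $H^S_0(U_{k^\hs})\{\ell\}$. The argument above can then be run over $k^\hs$; since $\ell$-adic étale cohomology is insensitive to purely inseparable extensions, the finiteness inputs survive. The main obstacle is the finiteness of $H^2(D, \Lambda_{\ell^m}(2))$ for a possibly singular curve $D$, together with this imperfect-field reduction. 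For the former I would lean on \cite[Cor.~7.3]{GKR} as the authors do.
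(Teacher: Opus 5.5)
Your proof is correct and follows the paper's route: universal coefficients reduce the claim to finiteness of $H^3_c(U,{\Z}/{\ell^m}(2))$, this group is compared with étale cohomology via \cite[Cor.~7.3]{GKR} and \lemref{lem:Etale-fin}, and the imperfect case is handled by passing to $k^{\hs}$. That last step works because the kernel of $H^S_0(U)\to H^S_0(U^{\hs})$ is $p$-primary; this comes from \propref{prop:Suslin-hom}(2) applied at finite purely inseparable levels, not from (6). The only real difference is your detour through the localization sequence for $D\subset X$, which is unnecessary: \cite[Cor.~7.3]{GKR} already gives injectivity of $H^3_c(U,{\Z}/{\ell^m}(2))\to H^3_{\et,c}(U,{\Z}/{\ell^m}(2))$ directly, and that is how the paper argues.
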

\begin{proof}
  If $\Char(k) = 0$, the lemma is equivalent by \propref{prop:Suslin-hom}(4) to
  showing that $H^4_c(U, \Z(2))[n]$ is finite. Using the surjection
  $H^3_c(U, {\Z}/n(2)) \surj H^4_c(U, \Z(2))[n]$, it suffices to show that
  $H^3_c(U, {\Z}/n(2))$ is finite. Using \cite[Cor.~7.3]{GKR}, this reduces to
  showing that $H^3_{c. \et}(U, {\Z}/n(2))$ is finite. But this follows from
  \lemref{lem:Etale-fin}.
  
Suppose now that $\Char(k) = p > 0$. 
We let $U^\hs$ be the base change of $U$ to $k^\hs$. \propref{prop:Suslin-hom}(2)
then says that the kernel of the pull-back map $H^S_0(U) \to H^S_0(U^\hs)$ is a
  $p$-primary torsion group. In particular,
  $H^S_0(U)[n] \inj H^S_0(U^\hs)[n]$.
On the other hand, we have a canonical isomorphism
$H^S_0(U^\hs)[\tfrac{1}{p}] \xrightarrow{\cong} H_0(U^\hs, \Z[\tfrac{1}{p}](0))$ by
\cite[Prop.~14.18]{MVW}, and
a canonical duality isomorphism $H_0(U^\hs, \Z[\tfrac{1}{p}](0))
\xrightarrow{\cong} H^4_c(U^\hs, \Z[\tfrac{1}{p}](2))$ by \cite[Thm.~5.3.18]{Kelly}.

 We thus get
  \[
  H^S_0(U^\hs)[n] \xrightarrow{\cong}  (H^S_0(U^\hs)[\tfrac{1}{p}])[n]
  \xrightarrow{\cong} 
 H^4_c(U^\hs, \Z[\tfrac{1}{p}](2))[n].
 \]
 It suffices therefore to show that $H^4_c(U^\hs, \Z[\tfrac{1}{p}](2))[n]$ is
 finite. But this
 follows from exactly the same argument as in the characteristic zero case above.
\end{proof}

For an abelian group $A$, we let $A_{\ell\divf} = \Ker(A \to A^{(\ell)}) =
{\underset{n \ge 1} \bigcap} \ell^nA$. It is easy to check that
$A_{\ell\divf}$ is the maximal $\ell$-divisible subgroup of $A$ if and only if it is
$\ell$-divisible. Moreover, $A_{\ell\divf}$ is $\ell$-divisible if $A[\ell^n]$ is
finite for every $n \ge 1$.

\begin{lem}\label{lem:Divisible-class-grp}
  $C_0(U)_{\ell\divf}$ and $H^S_0(U)_{\ell\divf}$ are the maximal $\ell$-divisible
  subgroups of $C_0(U)$ and $H^S_0(U)$, respectively.
\end{lem}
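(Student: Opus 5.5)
By the remark immediately before the lemma, $A_{\ell\divf}$ is the maximal $\ell$-divisible subgroup of $A$ as soon as $A[\ell^n]$ is finite for every $n \ge 1$. For $A = H^S_0(U)$ this finiteness is exactly \lemref{lem:Torsion-Sus-hom} applied with $n$ replaced by $\ell^n$, which is legitimate since $\ell \neq p$ gives $\ell^n \in k^\times$. So the second assertion needs nothing further.

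For $A = C_0(U)$, I will first prove $\ell$-divisibility of $C_0(U)_{\ell\divf}$ and then deduce maximality. I will use the exact sequence \eqref{eqn:Tame-part-0}:
\[
0 \to W \to C_0(U) \xrightarrow{\kappa_U} H^S_0(U) \to 0,
\]
where $W$ is the image of $\vartheta_U$. \propref{prop:Tame-part} (via \cite[Lem.~3.23]{GKR}) shows that $W$ is $\ell$-divisible. Hence $W \subseteq \ell^n C_0(U)$ for every $n$, so $W \subseteq C_0(U)_{\ell\divf}$.

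Next I claim $\kappa_U$ maps $C_0(U)_{\ell\divf}$ onto $H^S_0(U)_{\ell\divf}$. The inclusion $\kappa_U(C_0(U)_{\ell\divf}) \subseteq H^S_0(U)_{\ell\divf}$ is clear. For the converse, let $b \in H^S_0(U)_{\ell\divf}$ and pick a lift $a \in C_0(U)$. For each $n$, write $b = \ell^n b'$, lift $b'$ to $a'$, and note that $a - \ell^n a' \in W \subseteq \ell^n C_0(U)$. Hence $a \in \ell^n C_0(U)$ for every $n$, that is, $a \in C_0(U)_{\ell\divf}$. This gives the short exact sequence
\[
0 \to W \to C_0(U)_{\ell\divf} \to H^S_0(U)_{\ell\divf} \to 0,
\]
in which both outer groups are $\ell$-divisible, by the first part and by the choice of $W$. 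An extension of $\ell$-divisible groups is $\ell$-divisible: given $c$, divide its image, lift, and correct by an $\ell$-multiple from $W$. So $C_0(U)_{\ell\divf}$ is $\ell$-divisible.

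Finally, for any abelian group $A$, every $\ell$-divisible subgroup of $A$ lies in $\bigcap_n \ell^n A = A_{\ell\divf}$. So an $\ell$-divisible $A_{\ell\divf}$ is automatically the maximal such subgroup, as the remark says. The only genuinely nontrivial inputs are the finiteness of $H^S_0(U)[\ell^n]$ and the divisibility of the tame part $W$, both already established. I do not attempt to show $C_0(U)[\ell^n]$ finite directly, since the $\ell$-divisibility of $W$ makes that unnecessary.
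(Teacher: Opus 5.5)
Your argument is correct and follows the paper's proof. The $H^S_0(U)$ case comes from the finiteness of $H^S_0(U)[\ell^n]$ (\lemref{lem:Torsion-Sus-hom}). For $C_0(U)$ you use the $\ell$-divisibility of $\Ker(\kappa_U)$ from \propref{prop:Tame-part} to show that $C_0(U)_{\ell\divf} \to H^S_0(U)_{\ell\divf}$ is surjective with $\ell$-divisible kernel; the paper gets the same conclusion by a diagram chase comparing $\ell$-completions, and your element-wise check simply unpacks that chase.
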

\begin{proof}
  The result for $H^S_0(U)_{\ell\divf}$ follows directly from
  \lemref{lem:Torsion-Sus-hom}
  and the above fact. To prove the same for $C_0(U)$, we only have to show that
  $C_0(U)_{\ell\divf}$ is $\ell$-divisible.
To that end, we use the commutative diagram
  \begin{equation}\label{eqn:Divisible-class-grp-0}
    \xymatrix@C1pc{
      0 \ar[r] & C_0(U)_{\ell\divf} \ar[r] \ar[d] & C_0(U) \ar[r]
      \ar@{->>}[d]^-{\kappa_U} &
      C_0(U)^{(\ell)} \ar[d] \\
      0 \ar[r] & H^S_0(U)_{\ell\divf} \ar[r]  & H^S_0(U) \ar[r] &  H^S_0(U)^{(\ell)}.}
  \end{equation}

The right vertical arrow is an isomorphism and the kernel of $\kappa_U$
  is $\ell$-divisible by \propref{prop:Tame-part}.
   This implies that the left vertical arrow is surjective
  whose kernel is $\ell$-divisible. We conclude that $C_0(U)_{\ell\divf}$ is
  $\ell$-divisible.
\end{proof}

We now complete the proof of Theorem~\ref{thm:Main-4}.
The first part is already shown above. We restate the more non-trivial
second part to make the underlying assumptions explicit.

\begin{thm}\label{thm:BM-main-pf}
 Assume that the $(\star\star_\ell)$-condition holds. Then the cycle class map
  \[
  \cyc_U \colon H^S_0(U)^{(\ell)} \to H^4_{{\rm {\etl}},c}(U, \Z_\ell(2))
  \]
  is injective. Equivalently, the Brauer--Manin pairing for $U$ induces an
  injective map
  $C_0(U)^{(\ell)} \to \Hom(\Br(U)\{\ell\}, {\Q}/{\Z})$.
\end{thm}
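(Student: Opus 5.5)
The plan is to compare the motivic and \'etale localization sequences for the pair $(X, D)$ with $U = X \setminus D$, pass to $\ell$-adic limits, and do a diagram chase. The closed case (injectivity of $\cyc_X$ on $\CH_0(X)^{(\ell)}$) is supplied by Esnault--Wittenberg, and the surjectivity needed on $D$ comes from \thmref{thm:Main-0-0}.

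By \propref{prop:Suslin-hom-0} and \propref{prop:Tame-part}, we may identify $H^S_0(U)/\ell^n \cong C_0(U)/\ell^n$ with $H^4_c(U, \Lambda_{\ell^n}(2))$. Diagram \eqref{eqn:MC-03} applied to $D \inj X \supset U$ (with $X, D$ proper) gives a commutative ladder
\[
\xymatrix@C.7pc{
H^3(X,\Lambda_{\ell^n}(2)) \ar[r] \ar[d] & H^3(D,\Lambda_{\ell^n}(2)) \ar[r] \ar[d] & H^4_c(U,\Lambda_{\ell^n}(2)) \ar[r] \ar[d] & H^4(X,\Lambda_{\ell^n}(2)) \ar[d] \\
H^3_\et(X,\Lambda_{\ell^n}(2)) \ar[r] & H^3_\et(D,\Lambda_{\ell^n}(2)) \ar[r] & H^4_{\et,c}(U,\Lambda_{\ell^n}(2)) \ar[r] & H^4_\et(X,\Lambda_{\ell^n}(2)).}
\]
All terms are finite: the bottom row by \lemref{lem:Etale-fin}, and the top row by \cite[Cor.~7.3]{GKR} together with the finiteness of $\CH_0(X)/\ell^n$ noted in the proof of \lemref{lem:BM-fin}. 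So $\varprojlim_n$ preserves exactness, and we get the same ladder with $\Z_\ell(2)$-coefficients.

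The rightmost vertical map is $\rho_X \colon \CH_0(X)^{(\ell)} \to H^4_\et(X,\Z_\ell(2))$, which is injective by \cite[Thm.~3.1]{Esnault-Wittenberg} under the $(\star\star_\ell)$ hypotheses. If $(\star_\ell)$ fails, that is $\ell = 2$ and $\sqrt{-1}\notin\F$, I would either quote Esnault--Wittenberg directly, since their theorem needs no $(\star_\ell)$, or pass to $\F'$ and use a trace argument as in the proof of \thmref{thm:Main-0-0}. The four-lemma then reduces injectivity of $\cyc_U$ to two things: the map $H^3(D,\Z_\ell(2))\to H^3_\et(D,\Z_\ell(2))$ should be surjective modulo the image of $H^3_\et(X,\Z_\ell(2))$, and $H^3(X,\Z_\ell(2)) \to H^3_\et(X,\Z_\ell(2))$ should be surjective. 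The latter is precisely \thmref{thm:Main-0-0}; that is where the vanishing $H^3_{\nr}(X,\Z_\ell)=0$ enters.

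For $D$, a curve over $k$ (possibly singular and reducible), I need $H^3(D,\Z_\ell(2)) \to H^3_\et(D,\Z_\ell(2))$ to be surjective, and in fact an isomorphism. I would prove this by reducing to the normalization: pass to a perfect closure if necessary, use cdh descent or the localization sequence for $D_\sing \subset D$ (points contribute only in degree $\le 1$ in weight 2 for $H^3$-type terms, since for fields $H^i(F,\Lambda(2))\to H^i_\et$ is an isomorphism for $i\le 2$ and $H^3$ of a field vanishes motivically, which needs care), and then, for a regular curve $C$, use $H^3(C,\Lambda_{\ell^n}(2)) \cong H^1_\zar(C, \sH^2)$ together with the Bloch--Ogus/Beilinson--Lichtenbaum comparison, which gives an isomorphism in degree $\le w+1 = 3$. \cite[Cor.~7.3]{GKR} likely already gives exactly this comparison for curves, and I would cite it.

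The main obstacle I expect is this curve step: making the comparison map an isomorphism (or at least surjective) for singular $D$ in the relevant degree and compatible with the limits. A secondary obstacle is the $\ell=2$ case without $(\star_\ell)$, which I would handle by base change to $k'$ and the norm argument of \thmref{thm:Main-0-0}, using the torsion-freeness of $\Ker(\cyc_U)$ inside the $\ell$-completion modulo finite torsion. The stated equivalence with the Brauer--Manin formulation then follows from \lemref{lem:BM-pair-def-3}, Saito--Tate duality (\thmref{thm:SD-1}) and \lemref{lem:BM-fin}.
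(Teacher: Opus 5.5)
Your overall architecture matches the paper's: the $\Z/\ell^n$ ladder for $D \subset X \supset U$, finiteness of all terms, passage to $\varprojlim$, \thmref{thm:Main-0-0} for the first column and \cite[Thm.~3.1]{Esnault-Wittenberg} for the last. However, the step concerning $D$ is wrong in two ways.

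First, you have misidentified what the four lemma needs. Let $f_1,\dots,f_4$ be the vertical maps in the $\Z_\ell(2)$-ladder $H^3(X)\to H^3(D)\to H^4_c(U)\to H^4(X)$. Suppose $f_3(a)=0$. Injectivity of $f_4$ lets you lift $a$ to some $b\in H^3(D,\Z_\ell(2))$. Then $f_2(b)$ lies in the image of $H^3_{\et}(X,\Z_\ell(2))$, and surjectivity of $f_1$ makes it equal to $f_2$ of the image of some $c\in H^3(X,\Z_\ell(2))$. So $b$ minus the image of $c$ lies in $\Ker(f_2)$. What you need is therefore $\Ker(f_2)\subseteq \operatorname{Im}\bigl(H^3(X,\Z_\ell(2))\to H^3(D,\Z_\ell(2))\bigr)$, which follows for instance from injectivity of $f_2$. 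Surjectivity of $f_2$, modulo anything, plays no role.

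Second, the property you set out to prove for $D$ is false in general: you claimed $H^3(D,\Z_\ell(2))\to H^3_{\et}(D,\Z_\ell(2))$ is surjective, indeed an isomorphism. Beilinson--Lichtenbaum, in the form \cite[Cor.~7.3]{GKR}, gives an isomorphism only in degrees $\le w$ and only injectivity in degree $w+1=3$. For $D$ a smooth projective curve, Saito--Tate duality (\thmref{thm:SD-1}) identifies the cokernel of this map with the $\ell$-part of the cokernel of the reciprocity map of class field theory for $D$. By \thmref{thm:CFT-reg} that cokernel is $\Z_\ell^{r_\ell}$, which is nonzero for the elliptic curve of \S~\ref{sec:Exm-1}, where $r_\ell=1$. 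This cokernel is exactly what produces the extra right-kernel classes in \thmref{thm:Main-10*}; it is irrelevant to the left kernel. So your anticipated main obstacle cannot be overcome as stated.

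It also need not be overcome. The paper simply uses the injectivity half of \cite[Cor.~7.3]{GKR} at each finite level $n$, which applies to the possibly singular curve $D$ with no normalization or cdh argument. It then passes to the limit using left exactness of $\varprojlim$, and the four lemma closes.

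Likewise, your $\ell=2$ trace argument is unnecessary, and on its own it would only show the kernel is killed by $2$. \thmref{thm:Main-0-0} already covers both cases of the $(\star\star_\ell)$-condition. Moreover, the Tate conjecture for the components of $Y_{\F'}$ implies it for the components of $Y$, so \cite[Thm.~3.1]{Esnault-Wittenberg} applies directly.
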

  \begin{proof}
We first show the equivalence of the two statements. Suppose that $\cyc_U$ is
    injective.
   It follows from \lemref{lem:BM-pair-def-3} that one has a commutative diagram
  \begin{equation}\label{eqn:Main-0-1-0}
    \xymatrix@C1pc{
      H^S_0(U)^{(\ell)} \ar[d]_-{\cong} \ar[rr] \ar[dr]^-{\cyc_U} & &
      \Hom(\Br(U)\{\ell\}, {\Q}/{\Z}) \ar@{^{(}->}[d] \\
      H^4_{c}(U, {\Z}_\ell(2)) \ar[r]^-{\epsilon^*_U}  &  H^4_{\et,c}(U, {\Z}_\ell(2))
      \ar[r]^-{\cong} & \Hom(H^2_\et(U, {\Q_\ell}/{\Z_\ell}(1)), {\Q}/{\Z}),}
    \end{equation}
  where the top row is induced by the Brauer--Manin pairing for $U$.

The right horizontal arrow on the lower level is bijective by the Saito--Tate
  duality \cite{Saito-Duality} (cf. \thmref{thm:SD-1}).
  A diagram chase shows that the top horizontal arrow is injective.
  We now apply \propref{prop:Tame-part} and
  \lemref{lem:Divisible-class-grp} to conclude that
  the kernel of the map $C_0(U) \to \Br(U)\{\ell\}^\star$ coincides with
  $C_0(U)_{\ell\divf}$. The converse is clear from \lemref{lem:BM-pair-def-3}
  and the diagram ~\eqref{eqn:Main-0-1-0}.

 To prove the theorem, it remains to show that $\cyc_U$
  (equivalently, $\epsilon^*_U$) is injective. To that end, we proceed as follows.

By ~\eqref{eqn:MC-03}, we have a commutative diagram
  \begin{equation}\label{eqn:Main-0-1-1}
    \xymatrix@C1pc{
      H^3(X, {\Z}/{\ell^n}(2)) \ar[r] \ar[d] & H^3(D, {\Z}/{\ell^n}(2)) \ar[r]
      \ar[d] & H^4_c(U, {\Z}/{\ell^n}(2)) \ar[r] \ar[d] &
      H^4(X, {\Z}/{\ell^n}(2)) \ar[d] \\
      H^3_\et(X, {\Z}/{\ell^n}(2)) \ar[r] & H^3_\et(D, {\Z}/{\ell^n}(2)) \ar[r] &
      H^4_{\et,c}(U, {\Z}/{\ell^n}(2)) \ar[r] & H^4_\et(X, {\Z}/{\ell^n}(2))}
    \end{equation}
  of exact sequences for every $n \ge 1$
  whose vertical arrows are the cycle class maps.

All groups on the lower level of ~\eqref{eqn:Main-0-1-1}
are finite by \lemref{lem:Etale-fin}.
The first and the second vertical arrows from left are injective by
\cite[Cor.~7.3]{GKR}. It follows that the first two groups on the
upper level are finite. We showed in the end of the proof of 
\lemref{lem:Main-Diag-1} that $H^4(X, {\Z}/{\ell^n}(2)) \cong {\CH_0(X)}/{\ell^n}$
is finite. It follows that $H^4_c(U, {\Z}/{\ell^n}(2))$ is finite.
  We have thus shown that all groups in ~\eqref{eqn:Main-0-1-1} are finite.
  In particular, its rows remain exact after taking inverse limit as $n \to \infty$.

We thus get a commutative diagram of exact sequences
  \begin{equation}\label{eqn:Main-0-1-2}
    \xymatrix@C1pc{
      H^3(X, {\Z}_{\ell}(2)) \ar[r] \ar[d]_-{\epsilon^*_X} & H^3(D, {\Z}_{\ell}(2))
      \ar[r] \ar[d]^-{\epsilon^*_D} &
      H^4_c(U, {\Z}_{\ell}(2)) \ar[r] \ar[d]^-{\epsilon^*_U} &
      H^4(X, {\Z}_{\ell}(2)) \ar[d]^-{\epsilon^*_X} \\
      H^3_\et(X, {\Z}_{\ell}(2)) \ar[r] & H^3_\et(D, {\Z}_{\ell}(2)) \ar[r] &
      H^4_{\et,c}(U, {\Z}_{\ell}(2)) \ar[r] & H^4_\et(X, {\Z}_{\ell}(2)).}
  \end{equation}

The first vertical arrow in this diagram is bijective by \thmref{thm:Main-0-0}, the
  second vertical arrow is injective by \cite[Cor.~7.3]{GKR} and the fourth vertical
  arrow is injective by \cite[Thm.~3.1]{Esnault-Wittenberg}. A diagram chase now
  shows that ${\epsilon^*_U}$ is injective. This completes the proof.
\end{proof}

  \begin{cor}\label{cor:BM-main-pf-0}
    Assume that the $(\star\star_\ell)$-condition holds. Then the Brauer--Manin
    pairing for $U$ induces an injective map
  $C_0(U) \to \Hom(\Br(U)\{\ell\}, {\Q}/{\Z})$ modulo the maximal $\ell$-divisible
  subgroup of $C_0(U)$.
  \end{cor}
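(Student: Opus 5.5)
The corollary will follow from \thmref{thm:BM-main-pf} once we pass from the $\ell$-adic completion back to $C_0(U)$ itself. I would consider the composite
\[
C_0(U) \to C_0(U)^{(\ell)} \to \Hom(\Br(U)\{\ell\}, {\Q}/{\Z}).
\]
First I check that this composite agrees with the map induced by the Brauer--Manin pairing. This holds because every element of $\Br(U)\{\ell\}$ is killed by some $\ell^n$. Hence the pairing of $C_0(U)$ against $\Br(U)[\ell^n]$ factors through ${C_0(U)}/{\ell^n}$, which is finite by \lemref{lem:BM-fin}. Passing to the limit gives the continuous pairing~\eqref{eqn:BM-fin-0}, and the composite above is the map it induces.

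Next, I identify the kernel of the composite. By \thmref{thm:BM-main-pf}, under the $(\star\star_\ell)$-condition the second arrow $C_0(U)^{(\ell)} \to \Hom(\Br(U)\{\ell\}, {\Q}/{\Z})$ is injective. Therefore the kernel of the composite equals the kernel of the completion map $C_0(U) \to C_0(U)^{(\ell)}$. By definition, that kernel is $C_0(U)_{\ell\divf} = \bigcap_n \ell^n C_0(U)$.

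Finally, I invoke \lemref{lem:Divisible-class-grp}, which says that $C_0(U)_{\ell\divf}$ is exactly the maximal $\ell$-divisible subgroup of $C_0(U)$. Its proof uses that $\Ker(\kappa_U)$ is $\ell$-divisible and that $H^S_0(U)[\ell^n]$ is finite. Consequently the induced map
\[
C_0(U)/C_0(U)_{\ell\divf} \to \Hom(\Br(U)\{\ell\}, {\Q}/{\Z})
\]
is injective, which is the assertion.

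The only point needing care is the compatibility in the first step, namely that the pairing on $C_0(U)$ really factors through the profinite completion. This is exactly what \lemref{lem:BM-fin} provides, so I expect no real obstacle. The argument essentially repeats the final paragraph of the equivalence proven at the start of the proof of \thmref{thm:BM-main-pf}.
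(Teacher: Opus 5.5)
Your proposal is correct and follows essentially the same route as the paper: the paper's proof factors $C_0(U) \to \Hom(\Br(U)\{\ell\}, {\Q}/{\Z})$ through $C_0(U)^{(\ell)}$, uses \thmref{thm:BM-main-pf} for injectivity of the second map, and uses \lemref{lem:Divisible-class-grp} to identify the kernel of the completion map with the maximal $\ell$-divisible subgroup. Your extra check that the pairing factors through the completion via \lemref{lem:BM-fin} just spells out what the paper takes as evident.
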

  \begin{proof}
    This is immediate from \thmref{thm:BM-main-pf} because 
    $C_0(U) \to \Hom(\Br(U)\{\ell\}, {\Q}/{\Z})$ factors through
    $C_0(U) \to C_0(U)^{(\ell)} \to \Hom(\Br(U)\{\ell\}, {\Q}/{\Z})$,
    and the kernel of the first map is the maximal $\ell$-divisible
  subgroup of $C_0(U)$ by \lemref{lem:Divisible-class-grp}.
\end{proof}

\vskip.2cm

The following result proves Theorem~\ref{thm:Main-5}.

\begin{thm}\label{thm:BM-main-p5-f}
  Assume that $\F$ is separably closed and $H^2_{\rm {\etl}}(\ov{X}, {\Q}_\ell(1))$ is
  algebraic. Then the cycle class map
  \[
  \cyc_U \colon H^S_0(U)^{(\ell)} \to H^4_{{\rm {\etl}},c}(U, \Z_\ell(2))
  \]
  is injective.
\end{thm}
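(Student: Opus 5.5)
The plan is to repeat the proof of \thmref{thm:BM-main-pf} verbatim, replacing the finite residue field inputs by their separably closed counterparts. By \propref{prop:Suslin-hom-0}, \propref{prop:Tame-part} and \lemref{lem:Vanishing-coh}(3), the map $\cyc_U$ is identified with the $\ell$-adic étale realization
\[
\epsilon^*_U \colon H^4_c(U, \Z_\ell(2)) \to H^4_{\et,c}(U, \Z_\ell(2)),
\]
so it suffices to prove that $\epsilon^*_U$ is injective.

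First, I will write down the diagram~\eqref{eqn:Main-0-1-1} of localization sequences for $D \inj X \hookleftarrow U$ with $\Z/\ell^n$-coefficients, coming from~\eqref{eqn:MC-03}. I then need to check that every group in it is finite, so that $\varprojlim_n$ preserves exactness.
\begin{itemize}
\item[(a)] The bottom row is finite by \lemref{lem:Etale-fin}, which explicitly allows a separably closed residue field.
\item[(b)] The first two top groups are finite because the realization maps out of them are injective by \cite[Cor.~7.3]{GKR}. That result is insensitive to the nature of $\F$.
\item[(c)] $H^4(X,\Z/\ell^n(2)) \cong \CH_0(X)/\ell^n$ is finite by the argument at the end of \lemref{lem:Main-Diag-1}. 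That argument uses \thmref{thm:Unram-5} and \cite[Thm.~0.6]{Saito-Sato-Ann}, both of which cover separably closed $\F$.
\end{itemize}
Since three of the four groups are finite, $H^4_c(U,\Z/\ell^n(2))$ is finite as well. Passing to the limit then gives the $\Z_\ell$-diagram~\eqref{eqn:Main-0-1-2} with exact rows.

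Next comes the four-lemma chase on~\eqref{eqn:Main-0-1-2}. I need three inputs.
\begin{itemize}
\item[(i)] $\epsilon^*_X$ on $H^3$ is surjective, indeed bijective. This is \thmref{thm:Main-0-0} under hypothesis (2): there the $(\star_\ell)$-condition holds automatically, because $\sqrt{-1}\in\F$ when $\ell=2$.
\item[(ii)] $\epsilon^*_D$ is injective by \cite[Cor.~7.3]{GKR}.
\item[(iii)] $\epsilon^*_X$ on $H^4$, namely $\CH_0(X)^{(\ell)} \to H^4_\et(X,\Z_\ell(2))$, is injective by \cite[Thm.~4.1]{Esnault-Wittenberg} under the algebraicity of $H^2_\et(\ov X,\Q_\ell(1))$. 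This step was already used in \lemref{lem:EW-main-3} under the hypotheses of \thmref{thm:Main-12}.
\end{itemize}
With these in hand, the standard chase gives injectivity of $\epsilon^*_U$: surjectivity on the first column, injectivity on the second and fourth.

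The main point to verify is input (i), i.e.\ that \thmref{thm:Main-0-0}(2) applies as stated. That theorem ultimately rests on \propref{prop:EW-main}(2) and on \lemref{lem:SS-main-0} for separably closed $\F$, namely the vanishing of Kato homology via \cite[Thm.~2.13]{Saito-Sato-Ann}. Since these were established earlier, the remaining obstacle is only bookkeeping: making sure that $D$ possibly being singular does not affect (ii), and that finiteness holds in (b). I would handle both by citing \cite[Cor.~7.3]{GKR} for arbitrary $D\in\Sch_k$ of dimension at most one.
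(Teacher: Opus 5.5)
Your proposal is correct and is essentially the paper's own proof. The paper reruns the argument of Theorem~\ref{thm:BM-main-pf} on the diagram~\eqref{eqn:Main-0-1-2}, using Theorem~\ref{thm:Main-0-0}(2) for bijectivity on $H^3(X)$ and replacing \cite[Thm.~3.1]{Esnault-Wittenberg} by \cite[Thm.~4.1]{Esnault-Wittenberg} for injectivity on $H^4(X)$; your appeal to Proposition~\ref{prop:Tame-part} in the identification step is unnecessary, since the statement concerns $H^S_0(U)$ directly and Proposition~\ref{prop:Suslin-hom-0} suffices.
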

\begin{proof}
  The proof is completely identical to that of \thmref{thm:BM-main-pf} except that
  one uses \cite[Thm.~4.1]{Esnault-Wittenberg} in place of
  \cite[Thm.~3.1]{Esnault-Wittenberg}.
\end{proof}

\section{Saito--Tate duality for regular schemes}\label{sec:ST-duality}
In order to prove \thmref{thm:Main-10}, we need some results concerning the
functoriality of the {\'e}tale realization map from the motivic cohomology of
schemes over a local field to their {\'e}tale cohomology. We shall need these
results for schemes which may not necessarily be smooth or projective over the
ground field.
We shall derive these results by proving an extension of the Saito--Tate duality
\cite{Saito-Duality} for {\'e}tale cohomology from smooth to regular schemes over a
local field and showing its compatibility with the localization sequence for
{\'e}tale cohomology. We begin by setting up the notation for this section.

We let $k$ be an hdvf with finite residue field.
We let $p > 0$ denote the exponential characteristic of $k$.
We let $\Reg_k$ denote the category of equidimensional and 
regular quasi-projective $k$-schemes.
We fix an integer $n$ which is invertible in $k$ and let $\Lambda = {\Z}/n$
(this notation for ${\Z}/n$ will be used only in this section and we shall revert
back to our earlier notation $\Lambda_n$ from the next section onward).
For a Noetherian $k$-scheme $X$, we let
$D_\et(X, \Lambda) \subset \dr_\et(X, \Lambda)$ denote the bounded derived category
of constructible sheaves of $\Lambda$-modules on $X_\et$.  For
$A, B \in D_\et(X, \Lambda)$, we shall let $\un{\Hom}_X(A,B) \in D_\et(X, \Lambda)$
denote the derived internal hom object from $A$ to $B$. We shall write
$\Hom_{D_\et(X, \Lambda)}(A,B)$ as $[A,B]_X$.

Unless mentioned otherwise, all functors between 
derived categories in this section will be assumed to be derived. The tensor product
of two objects in $D_\et(X, \Lambda)$ will be the derived tensor product. 
We shall let $\Lambda_X$ denote the constant sheaf $\Lambda$ on $X_\et$.
We shall denote the object $\Lambda_X(i)[j] = \mu^{\otimes i}_{n,X}[j] \in
D_\et(X, \Lambda)$ by $\Lambda_X(i,j)$. If $X$ has pure dimension $d$, we shall let
$L_X = \Lambda_X(d,2d)$. If $\iota \colon Y \inj X$ is an immersion and
$K \in   D_\et(X, \Lambda)$, we shall often denote $\iota^*(K)$ by $K_Y$.
We shall denote $K \otimes \Lambda_X(i,j)$ by $K(i,j)$. We let $K(i) = K(i,0)$
and $K[j] = K(0,j)$.

Recall that the presheaf of monoidal triangulated categories
$X \mapsto D_\et(X, \Lambda)$ on $\Sch_k$ is endowed with Grothendieck's six
functors formalism (cf. \cite{CD-Comp}). We shall use the following
elementary result throughout this section without explicitly referring to it.

\begin{lem}\label{lem:Twist}
  Let $f \colon X' \to X$ be a morphism of $k$-schemes.
  Let $K \in  D_{\rm {\etl}}(X, \Lambda)$
  and $K' \in  D_{\rm {\etl}}(X', \Lambda)$. Then there are canonical isomorphisms
  $f^{!}(K(i,j)) \cong (f^!K)(i,j)$ and $f_!(K'(i,j)) \cong (f_!K')(i,j)$.
\end{lem}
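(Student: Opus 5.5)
The statement is a formal consequence of two facts: Tate twisting is tensoring with an invertible object, and the exceptional functors $f^!$ and $f_!$ are compatible with tensor products by pulled-back objects. We first reduce to a single base object. Since $\Lambda_X(i,j) = \mu^{\otimes i}_{n,X}[j]$ and $\mu^{\otimes i}_{n,X} \cong f^*$-pullback of $\mu^{\otimes i}_{n}$ from $\Spec(k)$, we have canonically $\Lambda_{X'}(i,j) \cong f^*\Lambda_X(i,j)$. Moreover $\Lambda_X(i,j)$ is $\otimes$-invertible in $D_\et(X,\Lambda)$, with inverse $\Lambda_X(-i,-j)$. Here negative twists are defined via $\un{\Hom}$ as in the paper's convention for $R'(j)$; the sheaf $\mu_n^{\otimes i}$ is locally constant and locally free of rank one over $\Z/n$. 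It is also compatible with pullback. We also note that the shift part is trivial, since $f^!$ and $f_!$ are triangulated functors and hence commute with $[j]$. So it suffices to treat $j=0$ and the invertible locally constant sheaf $T = \mu^{\otimes i}_n$.

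For $f_!$ we use the projection formula from the six functor formalism \cite{CD-Comp}: $f_!(K' \otimes f^*T) \cong f_!K' \otimes T$. Applied to $T=\Lambda_X(i,j)$, this gives $f_!(K'(i,j)) \cong (f_!K')(i,j)$. For $f^!$ we use the dual formula $f^!\un{\Hom}_X(T,K) \cong \un{\Hom}_{X'}(f^*T, f^!K)$, which is likewise part of the formalism. Since $T$ is invertible, $\un{\Hom}_X(T,K) \cong K \otimes T^{-1}$. Substituting $T^{-1}=\Lambda_X(-i,-j)$, i.e.\ replacing $T$ by its inverse, yields $f^!(K(i,j)) \cong (f^!K)(i,j)$.

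Alternatively, one can argue that $f^!$ commutes with tensoring by any object that is locally constant of finite type and flat. This can be checked étale locally on $X$, where $T \cong \Lambda$, provided the isomorphisms glue. Gluing is automatic here because the canonical map $f^!K \otimes f^*T \to f^!(K\otimes T)$ is globally defined, as the adjoint of $f_!(f^!K\otimes f^*T)\cong f_!f^!K\otimes T \to K\otimes T$. Checking that this map is an isomorphism is local on $X$, since $f^!$ commutes with étale base change. Locally it becomes the identity when $T$ is trivial.

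There is no real obstacle. The only point needing care is canonicity: the isomorphisms should be the natural maps above rather than merely local identifications, so that they are compatible with later constructions such as localization sequences.
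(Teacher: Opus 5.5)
Your proof is correct and follows the same route as the paper. The $f_!$ case is the projection formula $f_!(K'\otimes f^*L)\cong f_!K'\otimes L$. The $f^!$ case rewrites $K(i,j)$ as $\un{\Hom}_X(\Lambda_X(-i,-j),K)$ and applies $f^!\un{\Hom}_X(L,K)\cong \un{\Hom}_{X'}(f^*L,f^!K)$; your extra étale-local argument is valid but not needed.
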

\begin{proof}
This is a formal consequence of the six functor formalism (cf.
  \cite[Defn.~A.1.10(5)]{CD-Comp}). The second isomorphism is a special
  case of the more general isomorphism $f_!K' \otimes L \xrightarrow{\cong}
  f_!(K' \otimes f^*L)$ for any $L \in D_\et(X, \Lambda)$.
 For the
  first isomorphism, one notes that
  \[
  \begin{array}{lllll}
  f^!(K(i,j)) & = & f^!(\Lambda_X(i,j) \otimes K) & \xrightarrow{\cong} &
  f^!\left(\un{\Hom}_X(\Lambda_X(-i,-j), K)\right) \\
  & \xrightarrow{\cong} & \un{\Hom}_{X'}\left(\Lambda_{X'}(-i,-j), f^!K\right)
  & \xrightarrow{\cong}& \Lambda_{X'}(i,j) \otimes f^!K =  (f^!K)(i,j).
  \end{array}
  \]
  This gives the desired isomorphism.
 \end{proof}

\subsection{Trace map for regular schemes and its functoriality}\label{sec:Trace*}
Recall from \cite[Thm.~A]{Khan-Sigma} (this was originally proven in
\cite[Exp.~XVIII]{SGA4} for smooth morphisms) that for a morphism
$f \colon X' \to X$ in $\Reg_k$ of relative dimension $s$, there is a
natural trace map $\tr_f \colon f_!\Lambda_{X'}(s,2s) \to \Lambda_X$
whose adjoint $\tr^\star_f \colon \Lambda_{X'}(s,2s) \to f^!\Lambda_X$
is an isomorphism. Furthermore, $\tr^\star_f$ coincides with Gabber's
purity isomorphism (cf. \cite{Fujiwara}) if $f$ is a closed immersion.
For $K \in D_\et(X, \Lambda)$, we let $\theta_X(K)$
   denote the composite map
  \begin{equation}\label{eqn:SD-1-2-0}
  f_{*} \un{\Hom}_X(K, L_X) \to
  \un{\Hom}_k(f_{!}K, f_{!} L_{X}) 
  \xrightarrow{\circ \tr_f} \un{\Hom}_k(f_{!}K, \Lambda_{k}),
\end{equation}
  $f \colon X \to \Spec(k)$ is the structure map and the first map is obtained by
  applying the $f_{!}$ functor.

We now fix a closed immersion $\iota \colon Y \inj X$ of pure codimension $e$
in $\Reg_k$ and let $j \colon U \inj X$ be the inclusion of the complement
of $Y$. We let $d = \dim(X)$ and $d' = \dim(Y) = d-e$.
For $i,j \in \Z$, we shall write $i' = 2d+2-i$ and $j' = d+1-j$.
We let $\pi \colon X \to \Spec(k)$ denote the structure map. We let
$f = \pi \circ j$ and $g = \pi \circ \iota$. 

The trace map has the naturality property in that the diagrams
\begin{equation}\label{eqn:SD-1-2}
     \xymatrix@C1pc{
       g_{!}\Lambda_{Y}(d',2d')  \ar[r]^-{\cong} \ar[d]_-{\tr_g} &
       \pi_{!} \iota_{!}\left(\Lambda_{Y}(d',2d')\right) \ar[r]^-{\cong}   &
       \pi_{!}\left((\iota_{!}\Lambda_{Y}(-e, -2e))(d,2d)\right)
       \ar[d]^-{\pi_{!}(\tr_{\iota})} \\
       \Lambda_{k} & & \pi_{!}\Lambda_{X}(d,2d) \ar[ll]_-{\tr_\pi};}
\end{equation}

\begin{equation}\label{eqn:SD-1-2-1}
     \xymatrix@C1.6pc{
       \Lambda_{Y}(d',2d') \ar[r]^-{\tr^\star_\iota} \ar[d]_-{\tr^\star_g}
       & \iota^!\Lambda_X(d,2d)
       \ar[d]^-{\iota^!(\tr^\star_\pi)} \\
       g^!\Lambda_k & \iota^!\pi^!\Lambda_k \ar[l]_-{\cong}}
     \end{equation}
     are commutative. 

\subsection{Poincar{\'e} duality for regular schemes}\label{sec:PD-reg}
We fix a pair of integers $i, j \in \Z$ and let $K = \Lambda_X(i,j)
\in D_\et(X, \Lambda)$. The exact triangle
$j_!K_U \xrightarrow{j_*} K \xrightarrow{\iota^*} \iota_!K_Y \xrightarrow{+1}$
yields a diagram
\begin{equation}\label{eqn:SD-1-3}
  \xymatrix@C.5pc{
\pi_* \un{\Hom}_X(j_!K_U[1], L_X) \ar[d]_-{\theta_X(j_!K_U[1])} \ar[r] &
\pi_* \un{\Hom}_X(\iota_!K_Y, L_X) \ar[r] \ar[d]_-{\theta_X(\iota_!K_Y)} &
\pi_* \un{\Hom}_X(K, L_X) \ar[r] \ar[d]^-{\theta_X(K)} &
\pi_* \un{\Hom}_X(j_!K_U, L_X) \ar[d]^-{\theta_X(j_!K_U)} \\
\un{\Hom}_k(\pi_!j_!K_U[1], \Lambda_{k}) \ar[r] &
\un{\Hom}_k(\pi_!\iota_!K_Y, \Lambda_{k}) \ar[r] &
\un{\Hom}_k(\pi_!K, \Lambda_{k}) \ar[r] &
\un{\Hom}_k(\pi_!j_!K_U, \Lambda_{k})}
\end{equation}
in which any three consecutive columns form a commutative diagram of
exact triangles in $D_\et(k, \Lambda)$.
We let $\partial \colon \iota_!K_Y\to j_!K[1]_U$ denote the
boundary map of the above exact triangle.

\begin{lem}\label{lem:SD-2}
  There is a commutative diagram
\begin{equation}\label{eqn:SD-2-0}
  \xymatrix@C1pc{  
    g_* \un{\Hom}_Y(K_Y, L_Y) \ar[r]^-{\tr^\star_\iota}  \ar[d]_-{\theta_Y(K_Y)} &
       \pi_* \un{\Hom}_X(\iota_!K_Y, L_X) \ar[d]^-{\theta_X(\iota_!K_Y)} \\
\un{\Hom}_k(g_!K_Y, \Lambda_{k}) &
\un{\Hom}_k(\pi_!\iota_!K_Y, \Lambda_{k}),  \ar[l]}
\end{equation}
whose all arrows are isomorphisms.
\end{lem}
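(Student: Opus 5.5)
We first specify the maps in \eqref{eqn:SD-2-0}. The top arrow is the composite
\[
g_* \un{\Hom}_Y(K_Y, L_Y) \cong \pi_*\iota_*\un{\Hom}_Y(K_Y, L_Y(-e,-2e)\otimes\Lambda_Y(e,2e))
\xrightarrow{\tr^\star_\iota} \pi_*\iota_*\un{\Hom}_Y(K_Y, \iota^!L_X) \cong \pi_* \un{\Hom}_X(\iota_!K_Y, L_X).
\]
Here $L_Y = \Lambda_Y(d',2d') \cong \Lambda_Y(d,2d)(-e,-2e)$. We use the isomorphism $\tr^\star_\iota\colon \Lambda_Y(-e,-2e)\otimes\Lambda_Y(d,2d)\to \iota^!\Lambda_X(d,2d)$, which is the twisted form of Gabber's purity isomorphism, together with \lemref{lem:Twist}. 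The last step is the $(\iota_!,\iota^!)$ adjunction $\iota_*\un{\Hom}_Y(A,\iota^!B)\cong\un{\Hom}_X(\iota_!A,B)$. The bottom arrow is induced by $\pi_!\iota_!\cong g_!$.

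Both horizontal arrows are therefore isomorphisms, because $\tr^\star_\iota$ is one by \cite[Thm.~A]{Khan-Sigma}. It remains to check commutativity. Once that is done, the vertical arrows become isomorphic to each other, and it suffices to show one of them is an isomorphism.

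For commutativity, we unwind $\theta$ via \eqref{eqn:SD-1-2-0}. Take a local section $\phi\colon K_Y\to L_Y$. Going down first gives $\tr_g\circ g_!(\phi)$. Going across first gives $\tr_\pi\circ\pi_!(\tr_\iota\circ\iota_!\phi')$, where $\phi'$ is the adjoint. Here we use that the counit $\iota_!\iota^!\to\id$ composed with $\iota_!(\tr^\star_\iota)$ recovers $\tr_\iota$, since $\tr^\star_\iota$ is by definition the adjoint of $\tr_\iota$. The two composites then agree precisely by the commutativity of \eqref{eqn:SD-1-2}, which expresses $\tr_g$ as $\tr_\pi\circ\pi_!(\tr_\iota)$ under the identification $g_!\cong\pi_!\iota_!$. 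This is a formal but fiddly check of compatibilities between the projection formula, the twist isomorphisms and the adjunctions. We expect it to be the bulk of the bookkeeping, though not a conceptual obstacle.

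The main obstacle is showing that $\theta_Y(K_Y)$ (equivalently $\theta_X(\iota_!K_Y)$) is an isomorphism for a regular, possibly non-smooth, quasi-projective $Y$ over the local field $k$. We would prove that $\theta_Z(M)$ is an isomorphism for every $Z\in\Reg_k$ and every constructible $M$. The map $\theta_Z(M)$ is the adjoint of $\tr^\star$ in the following sense: it factors as
\[
\pi_*\un{\Hom}(M,L_Z)\xrightarrow{\tr^\star_\pi}\pi_*\un{\Hom}(M,\pi^!\Lambda_k)\cong\un{\Hom}_k(\pi_!M,\Lambda_k),
\]
where the second arrow is Verdier duality, the standard isomorphism coming from $(\pi_!,\pi^!)$ adjunction. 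The first arrow is an isomorphism because $\tr^\star_\pi\colon L_Z\to\pi^!\Lambda_k$ is an isomorphism for $\pi$ a morphism in $\Reg_k$, by \cite[Thm.~A]{Khan-Sigma}. Note that $\Spec(k)$ lies in $\Reg_k$ with relative dimension $d$. The only point needing care is identifying \eqref{eqn:SD-1-2-0} with this factorization, which again uses $\tr^\star_\pi$ as the adjoint of $\tr_\pi$. Applying this with $Z=Y$ and $M=K_Y$ shows $\theta_Y(K_Y)$ is an isomorphism, and by commutativity so is $\theta_X(\iota_!K_Y)$.
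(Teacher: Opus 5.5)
Your proof is correct. Your description of the horizontal arrows and your commutativity check (counit $\circ\,\iota_!(\tr^\star_\iota)=\tr_\iota$, followed by \eqref{eqn:SD-1-2}) are exactly what the paper does in its diagram \eqref{eqn:SD-2-1}.

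You differ from the paper in how you show the vertical arrows are isomorphisms.

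\emph{The paper's route.} It embeds the regular scheme $X$ into a smooth quasi-projective $X'$ via a closed immersion $\phi$. It then applies the commutative square it has just proved to $\phi$. This identifies $\theta_X(K)$ with $\theta_{X'}(\phi_!K)$ through isomorphisms coming from Gabber's purity for $\phi$. Finally it invokes classical Poincar\'e duality for smooth $k$-schemes (SGA4 XVIII 3.2.5).

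\emph{Your route.} You apply the invertibility of $\tr^\star_\pi$ from \cite[Thm.~A]{Khan-Sigma} directly to the non-smooth structure morphism $\pi\colon Z\to\Spec(k)$. You then factor $\theta_Z(M)$ as $\tr^\star_\pi\circ(-)$ followed by the internal $(\pi_!,\pi^!)$-adjunction isomorphism.

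\emph{What each buys.} Your argument is shorter. It shows $\theta_Z(M)$ is an isomorphism for every $Z\in\Reg_k$ and every $M$, without using the commutativity at all, so both vertical arrows are handled independently. It also matches how the paper later uses $(\tr^\star_\pi)^{-1}$ directly in \lemref{lem:SD-5}. The paper's argument needs invertibility of the trace adjoint only for closed immersions (Gabber) and for smooth morphisms (SGA4), plus the compatibility \eqref{eqn:SD-1-2} of traces under composition. In effect, it re-derives the special case of Khan's theorem that you cite from these two classical inputs.

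Two minor points. It is $\pi$, not $\Spec(k)$, that has relative dimension $d=\dim Z$. Your phrase ``local section $\phi$'' of an internal Hom in the derived category should be read as a Yoneda, or natural-transformation, argument. That is how the paper phrases the same check.
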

\begin{proof}
The bottom horizontal arrow is induced by the identity $g_! = \pi_! \iota_!$ and
  hence is an isomorphism. The top horizontal arrow is the composition of
  isomorphisms
  \[
  \begin{array}{lll}
g_* \un{\Hom}_Y(K_Y, L_{Y}) & \cong & \pi_* \iota_*  \un{\Hom}_Y(K_Y, L_Y) \\
& \xrightarrow{\tr^\star_\iota} &
\pi_* \iota_*  \un{\Hom}_Y(K_Y, (\iota^{!}\Lambda_{X}(e,2e))(d',2d'))
    \\
    & \cong & \pi_* \iota_* \un{\Hom}_Y(K_Y, \iota^{!}L_X) \\    
    & \cong & \pi_* \un{\Hom}_X(\iota_!K_Y, L_X).
    \end{array}
  \]

We now look at the diagram
  \begin{equation}\label{eqn:SD-2-1}
  \xymatrix@C1.6pc{  
g_* \un{\Hom}_Y(K_Y, L_Y) \ar[r]^-{\tr^\star_\iota \circ (-)} \ar[ddd]_-{g_!(-)} &
g_*  \un{\Hom}_Y\left(K_Y, (\iota^{!}\Lambda_{X}(e,2e))(d',2d')\right)
\ar[d]^-{\cong} \\
& \pi_* \iota_*  \un{\Hom}_Y(K_Y, \iota^{!}L_X) \ar[d]^-{\cong} \\
    & \pi_* \un{\Hom}_X(\iota_!K_Y, L_X) \ar[d]^-{\pi_!(-)} \\
\un{\Hom}_k(g_!K_Y, g_! L_Y) \ar[d]_-{\tr_g \circ (-)}
          \ar[r]^-{\pi_!(\tr_\iota)\circ (-)} &
\un{\Hom}_k(g_!K_Y, \pi_! L_X) \ar[dl]^-{\tr_\pi \circ (-)} \\
\un{\Hom}_k(g_!K_Y, \Lambda_{k}). &}
  \end{equation}

  The top square commutes because each of
  $\left(\pi_!(\tr_\iota) \circ (-)\right) \circ \left(g_!(-)\right)$ and
  $\left(\pi_!(-)\right) \circ \left(\tr^\star_\iota\circ (-)\right)$
  is the one induced by applying $\pi_!$ to the composite map
  $\iota_!K_Y \to \iota_!L_Y \to L_X$, where the last map is induced by the counit of
  adjunction $\iota_!\iota^! \to \id$ in $D_\et(X, \Lambda)$. The bottom triangle
  commutes by ~\eqref{eqn:SD-1-2}. It follows that ~\eqref{eqn:SD-2-0} is
  commutative.

To prove that the vertical arrows of  ~\eqref{eqn:SD-2-0} are isomorphisms, it
  suffices to show that  $\theta_X(K)$ is an isomorphism. To that end,
  we use the quasi-projectivity of $X$ to get a closed embedding
  $\phi \colon X \inj X'$, where $X'$ is a smooth and quasi-projective $k$-scheme.
  Letting $K' = \Lambda_{X'}(j)$, the previous paragraph then says that
  the diagram
  \begin{equation}\label{eqn:SD-2-2}
  \xymatrix@C1pc{  
    \pi_* \un{\Hom}_X(K, L_X) \ar[r]^-{\tr^\star_\phi}  \ar[d]_-{\theta_X(K)} &
       \pi'_* \un{\Hom}_{X'}(\phi_!K, L_{X'}) \ar[d]^-{\theta_{X'}(\phi_!K)} \\
\un{\Hom}_k(\pi_!K, \Lambda_{k}) &
\un{\Hom}_k(\pi'_!\phi_!K, \Lambda_{k}),  \ar[l]}
\end{equation}
is commutative, where $\pi' \colon X' \to \Spec(k)$ is the structure map. 
We are now done because $\phi_!K \in D_\et(X', \Lambda)$ whence
$\theta_{X'}(\phi_!K)$ is an isomorphism by the Poincar{\'e} duality theorem for
smooth $k$-schemes (cf. \cite[Exp.~XVIII, Thm.~3.2.5]{SGA4}).
This completes the proof.
\end{proof}

\begin{lem}\label{lem:SD-3}
  There are commutative diagrams
\begin{equation}\label{eqn:SD-3-0}
  \xymatrix@C.4pc{  
f_* \un{\Hom}_U(K_U, L_U) \ar[r] \ar[d]_-{\theta_U(K_U)} &
\pi_* \un{\Hom}_X(j_!K_U, L_X) \ar[d]^-{\theta_X(j_!K_U)} & 
f_* \un{\Hom}_U(K_U[1], L_U) \ar[r] \ar[d]_-{\theta_U(K_U[1])} &
\pi_* \un{\Hom}_X(j_!K_U[1], L_X) \ar[d]^-{\theta_X(j_!K_U[1])} \\
\un{\Hom}_k(f_!K_U, \Lambda_{k}) & \un{\Hom}_k(\pi_!j_!K_U, \Lambda_{k})  \ar[l] & 
\un{\Hom}_k(f_!K_U[1], \Lambda_{k}) & \un{\Hom}_k(\pi_!j_!K_U[1], \Lambda_{k})
\ar[l],}
\end{equation}
whose all arrows are isomorphisms.
\end{lem}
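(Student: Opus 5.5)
We follow the template of the proof of \lemref{lem:SD-2}, with the closed immersion $\iota$ replaced by the open immersion $j$; it suffices to treat the left diagram. The right one is obtained from it by replacing $K_U$ with $K_U[1]$, since all functors involved are triangulated and commute with shifts.

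\emph{The horizontal arrows.} The bottom horizontal arrow is induced by the identity $f_! = \pi_! j_!$, so it is an isomorphism. The top horizontal arrow is the composite
\[
f_*\un{\Hom}_U(K_U, L_U) \cong \pi_* j_* \un{\Hom}_U(K_U, j^! L_X) \cong \pi_*\un{\Hom}_X(j_!K_U, L_X).
\]
The first isomorphism uses $f_* = \pi_* j_*$ and $L_U = j^*L_X = j^!L_X$, because $j$ is an open immersion. The second is the adjunction $j_! \dashv j^!$ in its internal (sheafified) form $j_*\un{\Hom}_U(A, j^!B) \cong \un{\Hom}_X(j_!A,B)$.

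\emph{Commutativity.} We compare the two composites to $\un{\Hom}_k(f_!K_U, \Lambda_k)$.
\begin{itemize}
\item Going down then left: apply $f_!$, then compose with $\tr_f$.
\item Going right then down: apply $\pi_!$ to the adjoint map $j_!K_U \to j_!L_U \to L_X$, where the second arrow is the counit $j_!j^! \to \id$; then compose with $\tr_\pi$.
\end{itemize}
These agree by the open-immersion analogue of \eqref{eqn:SD-1-2}. Under $f_! = \pi_! j_!$ and the identification $j^!\Lambda_X = \Lambda_U$, the map $\tr_f$ equals $\tr_\pi \circ \pi_!(\mathrm{counit})$. This compatibility of the trace map with étale (in particular open) restriction is part of the construction in \cite[Thm.~A]{Khan-Sigma} and \cite[Exp.~XVIII]{SGA4}; concretely, $\tr_j$ is the counit $j_!j^* = j_!j^! \to \id$. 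With this in hand, the argument is the same diagram chase as in \eqref{eqn:SD-2-1}, with $\iota$ replaced by $j$ and $e = 0$.

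\emph{Vertical arrows.} It remains to show the vertical arrows are isomorphisms. Since both horizontal arrows are isomorphisms and the square commutes, it is enough to show that $\theta_X(j_!K_U)$ is an isomorphism. As in the proof of \lemref{lem:SD-2}, choose a closed embedding $\phi \colon X \inj X'$ into a smooth quasi-projective $k$-scheme. Diagram \eqref{eqn:SD-2-2}, with $K$ replaced by $j_!K_U$ (its proof only used constructibility of the object), identifies $\theta_X(j_!K_U)$ with $\theta_{X'}(\phi_!j_!K_U)$. The object $\phi_!j_!K_U$ is constructible on $X'$. So $\theta_{X'}(\phi_!j_!K_U)$ is an isomorphism by Poincaré duality for smooth schemes \cite[Exp.~XVIII, Thm.~3.2.5]{SGA4}, which holds for arbitrary constructible coefficients.

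Alternatively, $\theta_U(K_U)$ is directly an isomorphism by the same embedding argument applied to $U$, which is regular and quasi-projective. The main point requiring care is the trace compatibility $\tr_f = \tr_\pi \circ \pi_!(\mathrm{counit})$. If it is not available directly from the cited references, I would derive it from the uniqueness and normalization properties of the trace in \cite{Khan-Sigma}: transitivity under composition, and the fact that the trace of an étale map is the counit.
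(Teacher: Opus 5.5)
Your proposal is correct and follows the paper's proof. The paper also handles the left diagram by rerunning the argument of \lemref{lem:SD-2} with $\iota$ replaced by $j$ and using $j^! \cong j^*$, and gets the right diagram by replacing $K$ with $K[1]$. The paper leaves implicit the open-immersion analogue of \eqref{eqn:SD-1-2}, namely $\tr_f = \tr_\pi \circ \pi_!(\mathrm{counit})$, obtained from transitivity of the trace together with the fact that the trace of an open immersion is the counit; you were right to spell this out.
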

\begin{proof}
The asserted claim for the diagram on the right follows from the same for the
  one on the left by replacing $K$ by $K[1]$ (cf. \lemref{lem:Twist}) everywhere
  in the above discussion. The claim for the left diagram is identical to that of
  \lemref{lem:SD-2} once we replace $\iota_*$ by $j_*$ and use that $j^! \cong j^*$.
\end{proof}

Combining ~\eqref{eqn:SD-1-3} with Lemmas~\ref{lem:SD-2} and ~\ref{lem:SD-3}, we
get the following Poincar{\'e} duality for regular schemes.
\begin{prop}\label{prop:SD-4}
 There is a commutative diagram 
  \begin{equation}\label{eqn:SD-4-0}
    \xymatrix@C1pc{
f_* \un{\Hom}_U(K[1]_U, L_U) \ar[r] \ar[d]_-{\theta_U(K[1]_U)} \ar[r] &
g_* \un{\Hom}_Y(K_Y, L_Y) \ar[r] \ar[d]_-{\theta_Y(K_Y)} &
\pi_* \un{\Hom}_X(K, L_X) \ar[r] \ar[d]^-{\theta_X(K)} &
f_* \un{\Hom}_U(K_U, L_U) \ar[d]^-{\theta_U(K_U)} \\
\un{\Hom}_k(f_!K[1]_U, \Lambda_{k}) \ar[r] &
\un{\Hom}_k(g_!K_Y, \Lambda_{k}) \ar[r] &
\un{\Hom}_k(\pi_!K, \Lambda_{k}) \ar[r] &
\un{\Hom}_k(f_!K_U, \Lambda_{k})}
\end{equation}
in which any three consecutive columns form a commutative diagram of
exact triangles in $D_{\rm {\etl}}(k, \Lambda)$.
\end{prop}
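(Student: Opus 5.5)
The plan is to take the diagram ~\eqref{eqn:SD-1-3} as the basic input and replace three of its four columns by the corresponding columns of ~\eqref{eqn:SD-4-0}, using the isomorphisms of Lemmas~\ref{lem:SD-2} and ~\ref{lem:SD-3}. Recall that ~\eqref{eqn:SD-1-3} is obtained by applying the two contravariant triangulated functors $\pi_*\un{\Hom}_X(-, L_X)$ and $\un{\Hom}_k(\pi_!(-), \Lambda_k)$ to the localization triangle $j_!K_U \xrightarrow{j_*} K \xrightarrow{\iota^*} \iota_!K_Y \xrightarrow{\partial} j_!K_U[1]$. It is compared via the natural transformation $\theta_X$.

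\textbf{Step 1.} I will first check that $\theta_X(-)$ is a natural transformation between triangulated functors compatible with the shift. It is built from the functor $\pi_!$ on internal homs, followed by composition with $\tr_\pi$, and both operations commute with cones and shifts. Hence ~\eqref{eqn:SD-1-3} is a morphism of exact triangles, which gives the exactness and commutativity of every set of three consecutive columns there.

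\textbf{Step 2.} Next I transport along the isomorphisms.
\begin{itemize}
\item The second column of ~\eqref{eqn:SD-1-3} is replaced by $\theta_Y(K_Y)$, using the commutative square ~\eqref{eqn:SD-2-0}. Its horizontal arrows are the isomorphism $\tr^\star_\iota$ on top and the identification $g_! = \pi_!\iota_!$ on the bottom.
\item The first and fourth columns are replaced by $\theta_U(K[1]_U)$ and $\theta_U(K_U)$, using the two squares of ~\eqref{eqn:SD-3-0}.
\end{itemize}
Since these are isomorphisms of the rows compatible with the vertical maps, the transported rows are still exact triangles. I define the horizontal maps of ~\eqref{eqn:SD-4-0} as the composites of the maps in ~\eqref{eqn:SD-1-3} with these isomorphisms (or their inverses). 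Commutativity of each square of ~\eqref{eqn:SD-4-0} then follows formally by pasting a square of ~\eqref{eqn:SD-1-3} with a square from Lemma~\ref{lem:SD-2} or Lemma~\ref{lem:SD-3}.

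\textbf{Step 3.} The remaining point is consistency of the shift. The first column, built for $K[1]_U$, must be compatible with the fourth column shifted by one, via Lemma~\ref{lem:Twist}. This is required so that the outer triangle (fourth column to first) is again the rotation of the triangle. The right-hand diagram in ~\eqref{eqn:SD-3-0} is literally the left one applied to $K[1]$, and $\theta_U$ commutes with $[1]$, so this holds.

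\textbf{Main obstacle.} I expect the main difficulty to be signs and identifications in Step 3, and in Step 2 for the boundary map $\partial$. One must check that $\tr^\star_\iota$ and the adjunction isomorphisms $\pi_*\iota_* \un{\Hom}_Y(K_Y, \iota^! L_X) \cong \pi_*\un{\Hom}_X(\iota_!K_Y, L_X)$ intertwine the connecting morphisms. I would handle this by keeping the maps in ~\eqref{eqn:SD-1-3} as the primary data and only transporting them, so that no independent description of the connecting maps in ~\eqref{eqn:SD-4-0} is needed.
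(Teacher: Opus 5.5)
Your proposal is correct and is essentially the paper's argument. The paper obtains \eqref{eqn:SD-4-0} exactly by combining \eqref{eqn:SD-1-3} with the isomorphisms of Lemmas~\ref{lem:SD-2} and~\ref{lem:SD-3}; your transport argument, including the remark that the right square of \eqref{eqn:SD-3-0} is the left one applied to $K[1]$, just spells out that step in more detail.
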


We shall use the following local duality theorem of Tate (cf. \cite{Tate}).

\begin{thm}\label{thm:Tate-Duality-0}
  For any $K \in D_{\rm {\etl}}(k, \Lambda)$, we have the following.
  \begin{enumerate}
  \item
    $H^i_{\rm {\etl}}(k, K)$ is finite for all $i \in \Z$. If $H^i(K) = 0$ for all
    $i > m$, then $\H^i_{\rm {\etl}}(k, K) =0$ for all $i > m+2$.
  \item
    The trace map for $\Spec(k)$ induces an isomorphism $H^2_{\rm {\etl}}(k, \Lambda_k(1))
    \xrightarrow{\cong} \Lambda$.
  \item
    The composition of the Ext groups
    \[
    H^i_{\rm {\etl}}(k, K) \times H^{2-i}_{\rm {\etl}}(k, \un{\Hom}_k(K, \Lambda_k(1)))
    \to H^2_{\rm {\etl}}(k, \Lambda(1)) \cong \Lambda
    \]
    is a perfect pairing of finite groups which is natural in $K$.
  \end{enumerate}
\end{thm}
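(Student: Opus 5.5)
The statement is Tate's local duality for the henselian discretely valued field $k$ with finite residue field, extended from single sheaves to arbitrary $K \in D_{\rm {\etl}}(k, \Lambda)$. I would reduce to the classical case of finite Galois modules by dévissage on the cohomology sheaves of $K$. Since $k$ is a field, $D_{\rm {\etl}}(k,\Lambda)$ is the bounded derived category of finite discrete $G_k$-modules killed by $n$, with $G_k$ the absolute Galois group. For a henselian (not necessarily complete) dvf, $G_k$ is isomorphic to the Galois group of the completion $\wh{k}$. Étale cohomology of finite modules is likewise unchanged on passing to $\wh{k}$, and $\wh{k}$ is a genuine local field. So the classical statements for a single finite module $M$ of order prime to $p$ apply:
\begin{enumerate}
\item $H^i(k,M)$ is finite and vanishes for $i>2$;
\item $\inv\colon H^2(k,\mu_n)\cong \Lambda$;
\item the cup product $H^i(k,M)\times H^{2-i}(k,\Hom(M,\mu_n))\to \Lambda$ is perfect.
\end{enumerate}
These are standard (Serre, Milne's \emph{Arithmetic Duality Theorems}, Ch.~I). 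They give item (2) directly, with the trace map for $\Spec(k)$ identified with $\inv_k$. In positive characteristic, $n$ prime to $p$ ensures the same results hold.

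For items (1) and (3), I would induct on the length of $K$, meaning the number of nonzero cohomology sheaves. When $K$ has length one it is $M[-a]$, and the claims are the classical ones above, after shifting. The vanishing in (1) then reads: if $a\le m$, then $H^i(k,M[-a])=0$ for $i>a+2$. This uses $\mathrm{cd}(G_k)=2$ for $n$-torsion, which follows from the vanishing part of the classical result.

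The one delicate point in the base case is that $\un{\Hom}_k(M,\Lambda_k(1))$ is taken in the derived sense over $\Lambda=\Z/n$. Here I would use that $\Z/n$ is self-injective: $\mathrm{Ext}^q_{\Z/n}(M,\Z/n)=0$ for $q>0$. Hence the derived internal Hom is the underived sheaf $\Hom(M,\mu_n)$, which is the Cartier dual in Tate's theorem.

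For the inductive step I would use the truncation triangle
\[ \tau_{\le r}K \to K \to \tau_{>r}K \xrightarrow{+1}. \]
Applying $H^\ast(k,-)$ gives a long exact sequence, and finiteness and vanishing in (1) follow from the two ends. Applying $\un{\Hom}_k(-,\Lambda_k(1))$ gives a dual triangle, and hence a second long exact sequence. The Ext-composition pairing is natural, so it gives a map from the first long exact sequence to the Pontryagin dual of the second. Since Pontryagin duality $(-)^\star$ is exact on finite groups, the five lemma yields perfectness for $K$ from perfectness for the truncations. Naturality in $K$ holds because the pairing is composition in $D_{\rm {\etl}}(k,\Lambda)$.

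The main obstacle is the sign and commutativity check needed to apply the five lemma. I have to verify that the connecting homomorphisms of the two long exact sequences are compatible with the pairing, that is, adjoint up to a global sign. I would prove this once, by writing the pairing as
\[ \Hom(\Lambda,K[i])\times\Hom(K,\Lambda(1)[2-i])\to \Hom(\Lambda,\Lambda(1)[2]). \]
Compatibility then follows from the fact that composition commutes with the boundary maps of a distinguished triangle, up to the standard sign $(-1)^i$. A sign does not affect bijectivity, so the five lemma still applies.
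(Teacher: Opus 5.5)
Your argument is correct. It is a genuinely different route only in the sense that the paper gives no argument at all: it states the result as Tate's local duality theorem, cites \cite{Tate}, and tacitly treats the version for complexes over a henselian, not necessarily complete, $k$ as standard. Your proposal supplies exactly the reduction that this citation leaves implicit. There are three ingredients.
\begin{enumerate}
\item The identification $G_k \cong G_{\wh{k}}$. This holds for any henselian discretely valued field, by Krasner's lemma, even when $\wh{k}/k$ is inseparable.
\item Self-injectivity of $\Z/n$. This kills the higher local Ext sheaves, so $\un{\Hom}_k(M,\Lambda_k(1))$ is the Cartier dual $\Hom(M,\mu_n)$. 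Consequently the composition pairing agrees, up to sign, with Tate's cup-product pairing in the base case.
\item D\'evissage along truncation triangles, finished by the five lemma.
\end{enumerate}
What the citation buys is brevity. What your route buys is a check that the derived, henselian formulation really does follow from the classical single-module theorem over a complete local field.

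Two small remarks.
\begin{itemize}
\item In mixed characteristic, $n$ may be divisible by the residue characteristic. This is harmless, since Tate's theorem over a $p$-adic field has no restriction on the order of $M$, and $\mathrm{cd}_p(G_k)=2$ still holds. It is consistent with your phrase ``prime to $p$'', where $p$ is the exponential characteristic.
\item You can avoid the sign bookkeeping at the connecting maps by packaging the pairing as a morphism
\[
{\bf R}\Gamma(k,K) \to {\bf R}\Hom_\Lambda\bigl({\bf R}\Gamma(k,\un{\Hom}_k(K,\Lambda_k(1))),\Lambda\bigr)[-2].
\]
This is natural in $K$ and is a morphism of triangulated functors, so it sends truncation triangles to morphisms of triangles, and the five lemma applies directly.
\end{itemize}
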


\subsection{The duality theorem}\label{sec:Duality-main}
We keep the notation of \S~\ref{sec:Trace*}. We begin with the
following.

\begin{lem}\label{lem:SD-5}
  There is a canonical isomorphism $\Tr_X \colon H^{2d+2}_{{\rm {\etl}},c}(X, \Lambda(d+1))
  \xrightarrow{\cong} H^2_{{\rm {\etl}}}(k, \Lambda(1)) \cong \Lambda$ and there is a perfect
  pairing of finite groups
  \begin{equation}\label{eqn:SD-5-0}
    H^i_{{\rm {\etl}},c}(X, \Lambda(j)) \times H^{i'}_{\rm {\etl}}(X, \Lambda(j')) \to
    H^2_{{\rm {\etl}}}(k, \Lambda(1)).
  \end{equation}
\end{lem}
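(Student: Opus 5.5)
The plan is to derive both claims from the Poincaré duality isomorphism $\theta_X(K)$ of \lemref{lem:SD-2} (established in its proof by embedding $X$ into a smooth quasi-projective $X'$), combined with Tate local duality (\thmref{thm:Tate-Duality-0}). Set $K = \Lambda_X(j, 0)$, so that $\un{\Hom}_X(K, L_X) \cong \Lambda_X(d-j, 2d)$. Here $d = \dim(X)$, and $\pi \colon X \to \Spec(k)$ is the structure map.

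First apply $R\Gamma(k,-)$ to the isomorphism $\theta_X(K) \colon \pi_*\un{\Hom}_X(K,L_X) \xrightarrow{\cong} \un{\Hom}_k(\pi_!K, \Lambda_k)$. The left side computes
\[
H^m_\et(X, \Lambda(d-j)[2d]) = H^{m+2d}_\et(X, \Lambda(d-j)).
\]
The right side is $H^m_\et(k, \un{\Hom}_k(\pi_!K, \Lambda_k))$. Since $\pi_!K \in D_\et(k, \Lambda)$ is constructible (finiteness of $\pi_!$), we may apply \thmref{thm:Tate-Duality-0}(3) with $\pi_!K$ twisted by $(1)$. This identifies the right side with the Pontryagin dual of
\[
H^{2-m}_\et\bigl(k, \pi_!\Lambda_X(j-1)\bigr) \cong H^{2-m}_{\et,c}(X, \Lambda(j-1))
\]
(tracking the twist by $(1)$ via \lemref{lem:Twist}). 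Reindexing by $i = 2-m$ and replacing $j$ by $j-1$ produces exactly the indices $i' = 2d+2-i$ and $j' = d+1-j$. The resulting pairing
\[
H^i_{\et,c}(X,\Lambda(j)) \times H^{i'}_\et(X, \Lambda(j')) \to H^{2d+2}_{\et,c}(X, \Lambda(d+1))
\]
is the cup product. Composing it with $\Tr_X$ gives \eqref{eqn:SD-5-0}, and its perfectness is equivalent to the adjoint map being an isomorphism. The identification of the adjoint with the map obtained above is the naturality built into $\theta_X$ through $\tr_\pi$.

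Finiteness follows from the finiteness clause in \thmref{thm:Tate-Duality-0}(1) applied to the constructible complexes $\pi_!\Lambda_X(j)$ and $\pi_*\Lambda_X(j')$; it also follows from \lemref{lem:Etale-fin}.

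For the trace isomorphism, define
\[
\Tr_X \colon H^{2d+2}_{\et,c}(X, \Lambda(d+1)) = H^2_\et(k, \pi_!\Lambda_X(d,2d)(1)) \xrightarrow{\tr_\pi} H^2_\et(k, \Lambda(1)).
\]
To see that it is an isomorphism, take $i=2d+2$ and $j=d+1$ in the duality. Then $H^{2d+2}_{\et,c}(X,\Lambda(d+1))$ is dual to $H^0_\et(X,\Lambda) = \Lambda^{\pi_0(X)}$, and when $X$ is connected, $\Tr_X$ corresponds to the dual of the unit. If $X$ is not connected, the statement should be read componentwise, or the target interpreted as the sum over components; I would first reduce to connected $X$. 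Finally, $H^2_\et(k,\Lambda(1)) \cong \Lambda$ holds by \thmref{thm:Tate-Duality-0}(2).

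The main obstacle I expect is the bookkeeping step checking that the abstract adjoint produced by $\theta_X$ and Tate duality really is the cup-product pairing followed by $\Tr_X$. This requires compatibility of $\theta_X$ with the projection formula $\pi_!K\otimes(-) \cong \pi_!(K\otimes\pi^*(-))$ and with the evaluation map. I would verify it by the same argument used for the top square of \eqref{eqn:SD-2-1}, reducing to the smooth ambient $X'$, where this is SGA4 XVIII.
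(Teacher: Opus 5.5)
Your proposal is correct and is essentially the paper's argument. Both apply Tate duality (\thmref{thm:Tate-Duality-0}) to $\pi_!\Lambda_X(j)$ and identify $\un{\Hom}_k(\pi_!K,\Lambda_k)$ with $\pi_*\un{\Hom}_X(K,L_X)$ (the paper via the $(\pi_!,\pi^!)$-adjunction and the isomorphism $\tr^\star_\pi$, which is exactly what your $\theta_X(K)$ packages), and both obtain $\Tr_X$ by specializing to $H^0_\et(X,\Lambda)$, the paper leaving your connectedness caveat implicit. Since the paper simply defines the pairing through these identifications, the cup-product compatibility you flag as the main obstacle is not needed. There is one small slip: since $\un{\Hom}_k(\pi_!K,\Lambda_k)\cong\un{\Hom}_k((\pi_!K)(1),\Lambda_k(1))$, the dual group is $H^{2-m}_{\et,c}(X,\Lambda(j+1))$ rather than $H^{2-m}_{\et,c}(X,\Lambda(j-1))$. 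With this correction, taking $K=\Lambda_X(j-1)$ yields exactly the indices $i'$ and $j'$.
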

\begin{proof}
We let $K = \pi_!\Lambda_X(j)$ and apply \thmref{thm:Tate-Duality-0} to get
  a perfect pairing of finite groups
  \[
    [\Lambda_k, \pi_!\Lambda_X(j,i)]_k \times
    [\Lambda_k,  \un{\Hom}_k(\pi_!\Lambda_X(j,i), \Lambda_k(1,2))]_k \to
  [\Lambda_k, \pi_!\Lambda_X(d+1,2d+2)]_k
  \xrightarrow{\Tr_X} \Lambda.
  \]

By definition, we have
  $H^i_\et(k, \pi_!\Lambda_X(j)) \cong H^i_{\et,c}(X, \Lambda_X(j))$.
  Meanwhile, we have
  \begin{equation}\label{eqn:SD-5-1}
    \begin{array}{lll}
      [\Lambda_k,  \un{\Hom}_k(\pi_!\Lambda_X(j,i), \Lambda_k(1,2))]_k & \cong &
      [\Lambda_k, \pi_* \un{\Hom}_X(\Lambda_X(j,i), \pi^!\Lambda_k(1,2))]_k \\
      & \cong &
      [\Lambda_k, \pi_* \un{\Hom}_X(\Lambda_X(j,i), \Lambda_X(d+1,2d+2))]_k \\
      & \cong & [\Lambda_X, \un{\Hom}_X(\Lambda_X(j,i), \Lambda_X(d+1,2d+2))]_X \\
      & \cong & [\Lambda_X, \Lambda_X(d+1-j,2d+2-i)]_X \\
       & \cong &  H^{i'}_\et(X, \Lambda(j')),
  \end{array}
  \end{equation}
  where the first isomorphism is by the adjointness of $(\pi_!, \pi^!)$ and the
  second isomorphism is induced by $(\tr^\star_\pi)^{-1}$.
This yields the perfect pairing of finite abelian groups as displayed
  in ~\eqref{eqn:SD-5-0}.

Letting $K = \pi_!\Lambda_X(d+1)$ and $i = 2d+2$ in this pairing, we get
 \[
  H^{2d+2}_{\et,c}(X, \Lambda(d+1)) \xrightarrow{\cong}
  \Hom(H^0_\et(X, \Lambda(0)), H^2_\et(k, \Lambda(1))) \xrightarrow{\cong}
  H^2_\et(k, \Lambda(1)) \xrightarrow{\Tr_k} \Lambda.
  \]
 This completes the proof. 
\end{proof}

We now state and prove the main result of  \S~\ref{sec:ST-duality}.
This extends the Saito--Tate duality for smooth $k$-schemes to regular $k$-schemes,
and moreover, proves its compatibility with the localization sequences.
We let $H^i_{\et,c}(S, \Lambda(j))^\star =
\Hom(H^i_{\et,c}(S, \Lambda(j)), H^2_\et(k, \Lambda_k(1)))$ for any
$S \in \{X,Y,U\}$. Note that this
notation does not contradict our original definition of
$A^\star$ for an abelian group $A$.

\begin{thm}\label{thm:SD-1}
 For every $i, j \ge 0$, we have the following.
 \begin{enumerate}
 \item
   There exists a canonical isomorphism
   \[
   \Tr_X \colon H^{2d+2}_{{\rm {\etl}},c}(X, \Lambda(d+1)) \xrightarrow{\cong}
   H^2_{\rm {\etl}}(k, \Lambda(1)).
   \]
 \item
  There exists a perfect pairing of finite groups
  \[
  H^i_{{\rm {\etl}},c}(X, \Lambda(j)) \times H^{i'}_{{\rm {\etl}}}(X, \Lambda(j'))
  \to H^2_{\rm {\etl}}(k, \Lambda(1)).
  \]
\item
  The exact triangles $\iota_* \Lambda_Y(j-e, i-2e) \xrightarrow{\iota_*}
  \Lambda_X(j) \xrightarrow{j^*} j_* \Lambda_U(j) \xrightarrow{\partial'}$ and
  $j_!\Lambda_U(j') \xrightarrow{j_*} \Lambda_X(j') \xrightarrow{\iota^*}
  \iota_* \Lambda_Y(j')  \xrightarrow{\partial}$ induce
a commutative diagram of exact sequences
  \begin{equation}\label{eqn:SD-1-0}
  \xymatrix@C.6pc{
  H^i_{\rm {\etl}}(X, \Lambda(j)) \ar[r]^-{j^*} \ar[d] & H^i_{\rm {\etl}}(U, \Lambda(j))
  \ar[r]^-{\partial'} \ar[d] & H^{i+1-2e}_{{\rm {\etl}}}(Y, \Lambda(j-e)) \ar[r]^-{\iota_*}
  \ar[d] & H^{i+1}_{\rm {\etl}}(X, \Lambda(j)) \ar[d] \\
  H^{i'}_{{\rm {\etl}}, c}(X, \Lambda(j'))^\star \ar[r]^-{(j_*)^\star} &
  H^{i'}_{{\rm {\etl}},c}(U, \Lambda(j'))^\star \ar[r]^-{\partial^\star} &
  H^{i'-1}_{{\rm {\etl}},c}(Y, \Lambda(j'))^\star \ar[r]^-{(\iota^*)^\star} &
      H^{i'-1}_{{\rm {\etl}},c}(X, \Lambda(j'))^\star,}
  \end{equation}
  whose vertical arrows are isomorphisms.
 \end{enumerate}
 \end{thm}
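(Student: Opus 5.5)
Items (1) and (2) are exactly \lemref{lem:SD-5}, so the real content is item (3). The plan is to take cohomology $[\Lambda_k, -[i]]_k$ of the commutative ladder of exact triangles in \propref{prop:SD-4}. We apply it with $K = \Lambda_X(j')$, since the compactly supported groups on the bottom row involve $\Lambda(j')$, and then dualize via Tate's theorem.

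First, the top row of \eqref{eqn:SD-4-0}. Applying $[\Lambda_k, -]_k$ to $\pi_*\un{\Hom}_X(\Lambda_X(j'), L_X)$ with the appropriate shift gives $H^{i}_\et(X, \Lambda(j))$, using $d - j' = j-1$. One has to be careful with the shift: $L_X(1,2)$ versus $L_X$. I would therefore run the argument with $L_X$ replaced by $L_X(1,2) = \pi^!\Lambda_k(1,2)$, exactly as in \eqref{eqn:SD-5-1}. The term $g_*\un{\Hom}_Y(K_Y, L_Y(1,2))$ then yields $H^{i+1-2e}_\et(Y,\Lambda(j-e))$, because $L_Y = \Lambda_Y(d-e, 2d-2e)$. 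The term for $U$ yields $H^i_\et(U,\Lambda(j))$. The identification of the top-row maps with $j^*$, $\partial'$, $\iota_*$ comes from \lemref{lem:SD-2}: there $\tr^\star_\iota$ is Gabber's purity isomorphism, so the connecting map $g_*\un{\Hom}_Y \to \pi_*\un{\Hom}_X(\iota_!K_Y,L_X)$ in \eqref{eqn:SD-1-3} becomes the Gysin map. Similarly, \lemref{lem:SD-3} identifies the $U$-terms.

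Second, the bottom row. Taking $H^{\ast}_\et(k,-)$ of $\un{\Hom}_k(\pi_!K, \Lambda_k(1,2))$ and using the perfect pairing of \thmref{thm:Tate-Duality-0}(3), which is natural in $K$, gives $H^{i'}_{\et,c}(X,\Lambda(j'))^\star$. The same holds for $U$ and $Y$ with $f_!$, $g_!$. The bottom-row maps are the duals of $j_*$, $\iota^*$ and $\partial$. This uses the fact that the bottom row of \eqref{eqn:SD-1-3} is obtained by applying $\un{\Hom}_k(-,\Lambda_k)$ to the triangle $\pi_!j_!K_U \to \pi_!K \to \pi_!\iota_!K_Y$, which computes $H^\ast_{\et,c}$ via $f_! = \pi_! j_!$ and $g_! = \pi_!\iota_!$. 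Naturality of Tate's pairing in $K$ makes the dualized squares commute, and the dual of an exact sequence of finite groups is exact.

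Third, the vertical maps. They are the $\theta$'s, which are isomorphisms by Lemmas~\ref{lem:SD-2} and \ref{lem:SD-3}. Composed with Tate duality, they give the isomorphisms in \eqref{eqn:SD-1-0}, and commutativity follows from \propref{prop:SD-4}. Finiteness needed for exactness of the dual sequences comes from \lemref{lem:Etale-fin} (or Tate (1)).

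The main obstacle is bookkeeping. The twists and shifts must match exactly: $j-e$ and $i+1-2e$ on $Y$, and $i'-1$ rather than $i'$. The boundary $\partial'$ of the purity triangle $\iota_*\iota^!\to \id \to j_*j^*$ must agree, up to sign, with the dual of $\partial$ under the $\theta$'s. I would check this by tracking how the rotated triangle $\pi_*\un{\Hom}_X(-,L_X)$ applied to $j_!K_U\to K\to \iota_!K_Y$ becomes, via $\iota^! \cong$ purity and $j^!=j^*$, the triangle $\iota_*\iota^!L_X(-j') \to L_X(-j') \to j_*j^*L_X(-j')$. Signs in the leftmost square can be absorbed, since only commutativity up to the canonical sign convention of rotated triangles matters for exactness and bijectivity.
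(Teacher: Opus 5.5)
Your proposal is correct and follows the paper's own proof. Items (1) and (2) come from \lemref{lem:SD-5}, and item (3) comes from applying $[\Lambda_k,-]_k$ to the ladder of \propref{prop:SD-4} with $L_X$ replaced by $L_X(1,2)$, identifying the top row through the adjunction and purity isomorphisms and the bottom row through the naturality in $K$ of Tate's pairing in \thmref{thm:Tate-Duality-0}. The paper phrases the twist as applying \propref{prop:SD-4} to $K(-1,-2)$ with $K=\Lambda_X(j',i')$ and checks the three squares one at a time rather than dualizing the whole ladder at once; your twist and degree bookkeeping ($j-e$, $i+1-2e$, $i'-1$) is right.
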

\begin{proof}
We only have to show item (3) as the other items follow directly from
  \lemref{lem:SD-5}. For proving item (3), we let
  \[
  K = \Lambda_X(j',i'), \ K' =  \Lambda_X(j, i), \ M = \Lambda_X(j', i'-1),
  \]
  \[
  M' = \Lambda_X(j-e, i+1-2e), \ E = \Lambda_X(d+1, 2d+2), \
  E' = \Lambda_X(d'+1, 2d'+2).
  \]
 
To prove the commutativity of the left square in ~\eqref{eqn:SD-1-0},
  we first note that the diagram of pairings
  \begin{equation}\label{eqn:SD-1-1}
    \xymatrix@C1pc{
      [\Lambda_k, \pi_!K]_k \times
      [\Lambda_k,  \un{\Hom}_k(\pi_!K, \Lambda_k(1,2))]_k
      \ar[r]^-{\alpha_U} \ar@<7ex>[d]^-{(j_*)^\star} &
      H^2_\et(k, \Lambda(1)) \ar[d]^-{\id} \\
   [\Lambda_k, f_! K_U]_k \times
   [\Lambda_k,  \un{\Hom}_k(f_!K_U, \Lambda_k(1,2))]_k
   \ar[r]^-{\alpha_X}  \ar@<16ex>[u]^-{j_*} &
   H^2_\et(k, \Lambda(1))}
  \end{equation}
  is clearly commutative, where  for $S \in \{X,Y,U\}$, we let $\alpha_S$ denote
  the composition of morphisms (in $D_\et(k, \Lambda)$) on its left hand side
  followed by $\Tr_S$.

It remains therefore to show that the diagram
  \begin{equation}\label{eqn:SD-1-5}
    \xymatrix@C1.8pc{
      [\Lambda_X, K']_X \ar[d]_-{j^*} \ar[r]^-{\cong} &
      [\Lambda_k, \pi_*\un{\Hom}_X(K, E)]_k
      \ar[r]^-{(-) \circ \theta_X(K)} \ar[d]^-{j^*} & 
[\Lambda_k,  \un{\Hom}_k(\pi_!K, \Lambda_k(1,2))]_k 
      \ar[d]^-{(j_*)^\star} \\
         [\Lambda_X, j_*K'_U]_X \ar[r]^-{\cong} &
         [\Lambda_k, f_*\un{\Hom}_U(K_U, E_U)]_k
         \ar[r]^-{(-) \circ \theta_U(K_U)} &
         [\Lambda_k,  \un{\Hom}_k(f_!K_U, \Lambda_k(1,2))]_k}
    \end{equation}
  is commutative.
 But the left square in this diagram is clearly commutative, since its horizontal
  isomorphisms arise directly from the adjunction. The commutativity of the right
  square follows by applying \propref{prop:SD-4} to $K(-1,-2)$, in particular,
 using the commutativity of the right square in ~\eqref{eqn:SD-4-0}.

To prove the commutativity of the middle square in ~\eqref{eqn:SD-1-0},
  we note that the diagram of pairings
  \begin{equation}\label{eqn:SD-1-7}
    \xymatrix@C1pc{
      [\Lambda_k, f_!K_U]_k \times
      [\Lambda_k,  \un{\Hom}_k(f_!K_U, \Lambda_k(1,2))]_k
      \ar[r]^-{\alpha_Y} \ar@<7ex>[d]^-{\partial^\star} &
      H^2_\et(k, \Lambda(1)) \ar[d]^-{\id} \\
   [\Lambda_k, g_! M_Y]_k \times
   [\Lambda_k,  \un{\Hom}_k(g_!M_Y, \Lambda_k(1,2))]_k
   \ar[r]^-{\alpha_U}  \ar@<16ex>[u]^-{\partial} &
   H^2_\et(k, \Lambda(1))}
  \end{equation}
  is clearly commutative.

 It remains therefore to show that the diagram
 \begin{equation}\label{eqn:SD-1-4}
    \xymatrix@C1.8pc{
      [\Lambda_X, j_*K'_U]_X \ar[r]^-{\cong} \ar[d]_-{\partial'} &
      [\Lambda_k, f_*\un{\Hom}_U(K_U, E_U)]_k
\ar[r]^-{(-) \circ \theta_U(K_U)} \ar[d]^-{\partial'} &
[\Lambda_k,  \un{\Hom}_k(f_!K_U, \Lambda_k(1,2))]_k \ar[d]^-{\partial^\star} \\
[\Lambda_X, \iota_*M'_Y]_X \ar[r]^-{\cong} &
[\Lambda_k, g_*\un{\Hom}_Y(M_Y, E'_Y)]_k
\ar[r]^-{(-) \circ \theta_Y(M_Y)} &
   [\Lambda_k, \un{\Hom}_k(g_!M_Y, \Lambda_k(1,2))]_k}
 \end{equation}
 is commutative. But this diagram commutes by the same reasoning that
 establishes the commutativity of ~\eqref{eqn:SD-1-5} except that we now use the
 commutativity of the left square in ~\eqref{eqn:SD-4-0}.

Finally, to show the commutativity of the right square in ~\eqref{eqn:SD-1-0},
  we note that the diagram of pairings
  \begin{equation}\label{eqn:SD-1-6}
    \xymatrix@C1pc{
[\Lambda_k, g_! M_Y]_k \times
   [\Lambda_k,  \un{\Hom}_k(g_!M_Y, \Lambda_k(1,2))]_k
   \ar[r]^-{\alpha_Y} \ar@<7ex>[d]^-{(\iota^*)^\star} &
   H^2_\et(k, \Lambda(1)) \ar[d]^-{\id} \\
  [\Lambda_k, \pi_!M]_k \times
      [\Lambda_k,  \un{\Hom}_k(\pi_!M, \Lambda_k(1,2))]_k
      \ar[r]^-{\alpha_X} \ar@<16ex>[u]^-{\iota^*} &
      H^2_\et(k, \Lambda(1))}
    \end{equation}
      is clearly commutative.
      We now use  \propref{prop:SD-4} (more precisely, the commutativity of the
      middle square in ~\eqref{eqn:SD-4-0}) and repeat the previous arguments to
      conclude the proof of item (3), and hence of the theorem.
\end{proof}

\section{Right kernel of Brauer--Manin pairing}\label{sec:R-ker}
The goal of the next two sections is to study the right kernel of the
Brauer--Manin pairing for open varieties over local fields
and complete the proofs of the remaining theorems stated in \S~\ref{sec:Intro}.
In this section, we prove Theorem~\ref{thm:Main-10}.
We begin with an application of \thmref{thm:SD-1} which will be used in the
proof of Theorem~\ref{thm:Main-10}.

\subsection{General reciprocity map}\label{sec:Rec-map}
Let $k$ be an hdvf with finite residue field.
Let $p \ge 1$ denote the exponential characteristic of $k$.
Let $X \in \Reg_k$ be a connected scheme of dimension $d$ and let $i,j \in \Z$ be
two integers such that $i \le j+1$. We write $i' = 2d+2-i$ and $j' = d+1-j$.
For any abelian group $M$ and a prime $\ell$, we let $M^{(\ell')} :=
{\varprojlim}_{\ell \nmid n} M/n$ denote the prime-to-$\ell$ completion of $M$.
We let $\Lambda = \Z[\tfrac{1}{p}], {{\Q}/{\Z}}' = ({\Q}/{\Z})\{p'\}$ and
$\Lambda_n = {\Z}/n$ if $n$ is prime to $p$.

By the universal coefficient theorem, one has an inclusion
${H^i_c(X, \Lambda(j))}/n \inj H^i_c(X, \Lambda_n(j))$, where $H^i_c(X, \Lambda(j))$
is the motivic cohomology with compact support (cf. Definition~\ref{defn:MC-defn}).
On the other hand, the condition $i \le j+1$ implies by \cite[Cor.~7.3]{GKR}
that the {\'e}tale realization map $\epsilon^{i,j}_X \colon
H^i_c(X, \Lambda_n(j)) \to H^i_{\et,c}(X, \Lambda_n(j))$ is injective.
It follows that ${H^i_c(X, \Lambda(j))}/n$ is a finite group.

We endow each finite group with the discrete topology and endow
$H^i_c(X, \Lambda(j))$ with the subgroup topology generated by the neighborhood
system of the identity given by $\{nH^i_c(X, \Lambda(j))\}_{n \ge 1}$.
We consider the canonical maps
\begin{equation}\label{eqn:Main-map}
  \xymatrix@C1pc{
    H^i_c(X, \Lambda(j)) \ar[r]^-{\theta_X} \ar[d]_-{\rho_X} &
    {\varprojlim}_{n \in k^\times} {H^i_c(X, \Lambda(j))}/n \ar@{^{(}->}[r] &
  {\varprojlim}_{n \in k^\times}  H^i_c(X, \Lambda_n(j)) \ar@{^{(}->}[d] \\
  {\varprojlim}_{n \in k^\times}  H^{i'}_{\et}(X, \Lambda_n(j'))^\star & &
  {\varprojlim}_{n \in k^\times}  H^i_{\et,c}(X, \Lambda_n(j)) \ar[ll]_-{\cong},}
\end{equation}
where the bottom horizontal arrow is the duality isomorphism given by
\thmref{thm:SD-1}. 

Since $H^{i-1}_c(X, \Lambda_n(j)) \surj  H^i_c(X, \Lambda(j))[n]$ and the former
group is finite by the same reason as above, it follows that $\Ker(\theta_X)$
is the maximal divisible subgroup of $H^i_c(X, \Lambda(j))$. Note that
$H^i_c(X, \Lambda(j))\{p\} = 0$. Since all cohomology groups with finite
coefficients appearing in the above diagram
are finite, all inverse limits are canonically endowed with the profinite
topology and all maps are continuous homomorphisms.

We let
\[
\rho'_X \colon
\left({\varprojlim}_{n \in k^\times}  H^{i'}_{\et}(X, \Lambda_n(j'))^\star\right)^\vee
\to H^{i}_c(X, \Lambda(j))^\vee
\]
denote the map induced by $\rho_X$ upon taking
the continuous duals of its source and target.
Since $\left({\varprojlim}_{n \in k^\times}
H^{i'}_{\et}(X, \Lambda_n(j'))^\star\right)^\vee
\cong {\varinjlim}_{n \in k^\times}  \left(H^{i'}_{\et}(X, \Lambda_n(j'))^\star\right)^\vee
\cong {\varinjlim}_{n \in k^\times} H^{i'}_{\et}(X, \Lambda_n(j'))$,
we get that the continuous dual of $\rho_X$ is
the map
\begin{equation}\label{eqn:Main-map*}
\rho'_X \colon {\varinjlim}_{n \in k^\times} H^{i'}_{\et}(X, \Lambda_n(j'))
\xrightarrow{\cong}   H^{i'}_{\et}(X, {{\Q}/{\Z}}'(j'))  \to
H^{i}_c(X, \Lambda(j))^\vee.
\end{equation}

We shall usually refer to $\rho_X$ as the reciprocity map and $\rho'_X$ as the
dual reciprocity map for $H^{i}_c(X, \Lambda(j))$. 

\subsection{Class field theory for regular curves}\label{sec:Et-realn}
We now let $d =1$ in \S~\ref{sec:Rec-map} so that $X$ is
a regular (but not necessarily smooth) and connected curve over $k$.
We let $\ov{X}$ be a connected regular compactification of $X$ with the inclusion
$j \colon X \inj \ov{X}$. We let $S = \ov{X} \setminus X$ and let
$\iota \colon S \inj \ov{X}$ be the inclusion. Let $\pi \colon \ov{X} \to \Spec(k)$
be the structure map and let $f = \pi \circ j, \ g = \pi \circ \iota$.
Letting $C_1(X) :=  H^3_c(X, \Lambda(2))$, we get from ~\eqref{eqn:Main-map}
a continuous group homomorphism
 \begin{equation}\label{eqn:Main-map-0}
   \rho_X \colon C_1(X) \to \pi^{\ab}_1(X)^{(p')}.
   \end{equation}
One can think of $\rho_X$ as the reciprocity map for the  prime-to-$p$ class field
theory of $X$. We let $\rho'_X \colon H^1_\et(X, {{\Q}/{\Z}}') \to C_1(X)^\vee$ be
the continuous dual map of ~\eqref{eqn:Main-map*}.
Our main result concerning $\rho_X$ is the following extension of
the class field theory for smooth curves due to Saito \cite{Saito-JNT} and
Hiranouchi \cite{Hiranouchi-2}. This also extends the main result of
\cite{Gupta-Rathore}.

\begin{thm}\label{thm:CFT-reg}
  The kernel of $\rho_X$ is the maximal divisible subgroup of $C_1(X)$. Its
  topological cokernel is canonically isomorphic to the
  topological cokernel of $\rho_{\ov{X}}$, and both are isomorphic to
  ${\underset{\ell \neq p}\prod} \Z_\ell^{r_\ell}$, where $0 \le r_\ell \le r$ for
  some integer $r$ not depending on $\ell$.
\end{thm}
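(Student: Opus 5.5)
We compare the reciprocity maps for $X$ and $\ov{X}$ through the localization sequence. Take \eqref{eqn:Mot-loc} for $S \inj \ov{X}$ with $\Lambda(2)$-coefficients, and its \'etale analogue \eqref{eqn:MC-03}. In low degrees this gives a commutative diagram of exact sequences
\[
H^2(\ov{X}, \Lambda(2)) \to H^2(S, \Lambda(2)) \to C_1(X) \to C_1(\ov{X}) \to H^3(S, \Lambda(2)) = 0.
\]
The last term vanishes by \lemref{lem:Vanishing-coh}(1), since $\dim S = 0$ and $2 > 0 + 1$. Each term maps to the corresponding compact-support \'etale term with $\Lambda_n$-coefficients. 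By \thmref{thm:SD-1}(3), the \'etale sequence is Pontryagin dual to the sequence
\[
H^1_\et(\ov{X}) \to H^1_\et(X) \to \bigoplus_{s\in S} H^0_\et(k(s), \Lambda_n(0)) \to \cdots,
\]
so the reciprocity maps $\rho_X$, $\rho_{\ov{X}}$ and the local maps $\rho_s$ fit into a single morphism of exact sequences.

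\textbf{Step 1: the proper case.} For $\ov{X}$, which is regular and projective but possibly not smooth, we have $C_1(\ov{X}) = H^3(\ov{X}, \Lambda(2))$. Via Gersten/Bloch formula arguments this is $SK_1(\ov{X})$, and it injects mod $n$ into \'etale cohomology by \cite[Cor.~7.3]{GKR}. I would reduce to the known smooth case of Saito \cite{Saito-JNT} by passing to a finite purely inseparable extension $k'/k$ over which $\ov{X}_{k'}$ becomes smooth after normalization. The comparison uses push-forward and pull-back, whose composite is multiplication by a $p$-power degree; this is invertible in $\Lambda$. This yields that $\Ker \rho_{\ov{X}}$ is the maximal divisible subgroup and that the topological cokernel is $\prod_{\ell \neq p} \Z_\ell^{r_\ell}$, with $r_\ell$ bounded by the rank of the reduction graph / torus part, uniformly in $\ell$.

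\textbf{Step 2: passing from $\ov{X}$ to $X$.} For the points $s \in S$, the local component is $H^2(k(s), \Lambda(2)) = K_2(k(s))[1/p]$, and $\rho_s$ is the local (Moore/Merkurjev--Suslin) reciprocity
\[
K_2(k(s)) \to \mu(k(s)) \cong \pi_1^{\ab}(\Spec k(s))\text{-dual data}.
\]
This map is surjective with divisible kernel by Moore's theorem and Merkurjev--Suslin, and it is compatible with the Tate duality of \thmref{thm:Tate-Duality-0}. Now chase the diagram:
\begin{itemize}
\item surjectivity of $\rho_s$ gives that the topological cokernels of $\rho_X$ and $\rho_{\ov{X}}$ agree;
\item divisibility of $\Ker \rho_s$ and of $\Ker \rho_{\ov{X}}$, together with the identification of $\Ker \theta_X$ with the maximal divisible subgroup from \S\ref{sec:Rec-map}, gives that $\Ker \rho_X$ is divisible.
\end{itemize}
Here one works on the $\ell$-completed level and uses finiteness of all the mod $n$ groups, so that inverse limits stay exact.

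\textbf{Main obstacle.} The main difficulty is the kernel statement. An extension of divisible groups is divisible, but the four-term diagram chase requires controlling the image of $H^2(\ov{X}, \Lambda(2))$, i.e.\ showing that the relevant map on completions is surjective up to divisible pieces. I would first try to prove that $\Ker(\rho_X) = \Ker(\theta_X)$ directly: show that $\theta_X$ followed by the \'etale realization is injective on completions, via \cite[Cor.~7.3]{GKR} applied to each $n$ and a Mittag-Leffler argument. A second, secondary difficulty is verifying the non-smooth case in Step 1.
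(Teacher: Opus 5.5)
Your proposal is correct and is essentially the paper's proof: you reduce from $X$ to $\ov{X}$ via the localization sequence for $S \inj \ov{X}$, and you handle the projective case via the purely inseparable $p$-alteration $f\colon X'\to\ov{X}$ with $f_*f^*=p^m$, which makes $\coker(\epsilon^{3,2}_{\ov{X}})$ a closed $\Z^{(p')}$-submodule of Saito's $(\Z^{(p')})^{r'}$. (The paper works mod $n$, where $H^2(S,\Lambda_n(2))\to H^2_{\et}(S,\Lambda_n(2))$ is an isomorphism by \cite[Cor.~7.3]{GKR}, so Moore's theorem is not needed.) For the kernel, drop the four-term chase: your fallback is exactly the paper's argument in \S\ref{sec:Rec-map}, since $\rho_X$ is $\theta_X$ followed by maps that are injective mod $n$ (universal coefficients and \cite[Cor.~7.3]{GKR}), and inverse limits are left exact, so $\Ker\rho_X=\Ker\theta_X$ with no Mittag-Leffler argument required.
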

\begin{proof}
The first part is already shown in  \S~\ref{sec:Rec-map}. 
To prove the second part of the lemma, it is equivalent to show that
the cokernel of the composite continuous homomorphism
between profinite abelian groups
\[
H^3_{c}(X, \Lambda(2))^{(p')} \to {\varprojlim}_{n \in k^\times} H^3_{c}(X, \Lambda_n(2))
\xrightarrow{\epsilon^{3,2}_X} {\varprojlim}_{n \in k^\times}
H^3_{\et,c}(X, \Lambda_n(2))
\xrightarrow{\cong} H^1_\et(X, {\Q}/{\Z}')^\vee
\]
has the desired form.
Since the first arrow is an isomorphism by \lemref{lem:Vanishing-coh}
and the last arrow is an isomorphism by \thmref{thm:SD-1}, it suffices to
prove this claim for the {\'e}tale realization map $\epsilon^{3,2}_X \colon
{\varprojlim}_{n \in k^\times} H^3_{c}(X, \Lambda_n(2)) \to {\varprojlim}_{n \in k^\times}
H^3_{\et,c}(X, \Lambda_n(2))$. Ditto for $\ov{X}$.

To that end, we consider the diagram
  \begin{equation}\label{eqn:CFT-reg-0}
    \xymatrix@C1.7pc{
      H^2(S, \Lambda_n(2)) \ar[r]^-{\partial} \ar[d] &  H^3_{c}({X}, \Lambda_n(2))
      \ar[r]^-{j_*} \ar[d]^-{\epsilon^{3,2}_{X,n}} &
      H^3(\ov{X}, \Lambda_n(2)) \ar[r] \ar[d]^-{\epsilon^{3,2}_{\ov{X},n}} & 0 \\
      H^2_\et(S, \Lambda_n(2)) \ar[r]^-{\partial_\et} & H^3_{\et,c}({X}, \Lambda_n(2))
      \ar[r]^-{j_*} & H^3_\et(\ov{X}, \Lambda_n(2)) \ar[r] & 0,}
    \end{equation}
   whose vertical arrows are the  {\'e}tale realization maps.  

This is a commutative diagram (of finite groups) because the {\'e}tale realization
maps are compatible with the localization sequences. The maps $j_*$ in the two
rows are surjective because the next terms in the top and the bottom rows would
otherwise be $H^3(S, \Lambda_n(2))$ and $H^3_\et(S, \Lambda_n(2))$, respectively.
But these groups are zero by \thmref{thm:Tate-Duality-0} and \cite[Cor.~7.3]{GKR}.

As the left vertical arrow in ~\eqref{eqn:CFT-reg-0} is an isomorphism by
  \cite[Cor.~7.3]{GKR}, we get that the map
  $j_* \colon \coker(\epsilon^{3,2}_{X,n}) \to  \coker(\epsilon^{3,2}_{\ov{X},n})$ is an
  isomorphism. Since the canonical map
  $\coker(\epsilon^{3,2}_{X}) \to {\varprojlim}_{n \in k^\times}
  \coker(\epsilon^{3,2}_{X,n})$ is an isomorphism, we conclude that
 $j_* \colon \coker(\epsilon^{3,2}_{X}) \to \coker(\epsilon^{3,2}_{\ov{X}})$
  is an isomorphism of profinite groups.
It suffices therefore to prove the claim when $X$ is projective
  over $k$.
  
We assume for the rest of the proof that $X$ is projective. We can then find a
  commutative diagram (a trivial case of Temkin's $p$-alteration)
\begin{equation}\label{eqn:CFT-reg-1}
  \xymatrix@C1.7pc{
    X' \ar[r]^-{f} \ar[d]_-{\pi'} & X \ar[d]^-{\pi} \\
    \Spec(k') \ar[r] & \Spec(k),}
\end{equation}
where ${k'}/k$ is a finite purely inseparable extension (in particular, the trivial
extension if $\Char(k) = 0$), $X'$ is a connected smooth projective curve over
$k'$ and $f$ is a finite flat map of projective $k$-schemes of degree $p^m$
for some $m \ge 0$.

We let $H^i(X, \Z^{(p')}(j)) :=  {\varprojlim}_{n \in k^\times} H^i(X, \Lambda_n(j))$
and let $H^i_\et(X, \Z^{(p')}(j))$ be defined similarly.
We now look at the diagram
\begin{equation}\label{eqn:CFT-reg-2}
  \xymatrix@C1.7pc{
    H^3({X}, \Z^{(p')}(2)) \ar[r]^-{f^*} \ar[d]_-{\epsilon^{3,2}_X} &
     H^3({X'}, \Z^{(p')}(2)) \ar[r]^-{f_*} \ar[d]^-{\epsilon^{3,2}_{X'}} &
     H^3({X}, \Z^{(p')}(2)) \ar[d]^-{\epsilon^{3,2}_X} \\
   H^3_{\et}({X}, \Z^{(p')}(2)) \ar[r]^-{f^*}  &
     H^3_\et({X'}, \Z^{(p')}(2)) \ar[r]^-{f_*} &
     H^3_\et({X}, \Z^{(p')}(2)),}
  \end{equation}
where the horizontal arrows in the right square are the transfer maps
(cf. \cite[Prop.~6.3]{CD-Doc}).
All maps in this diagram are continuous $\Z^{(p')}$-linear maps
between profinite $\Z^{(p')}$-modules.
In particular, the cokernels of the vertical arrows coincide with the
respective topological cokernels and they are profinite $\Z^{(p')}$-modules.

The left square in ~\eqref{eqn:CFT-reg-2} commutes because the {\'e}tale realization
map is functorial and the right square commutes because the {\'e}tale realization
map preserves the transfer maps (cf. \cite[Prop.~9.4, Rem.~9.6]{CD-Doc}).
By \cite[Prop.~6.3]{CD-Doc}, the composite horizontal map on the
upper level of ~\eqref{eqn:CFT-reg-2} is multiplication by $p^m$.
The same is also true for the composite horizontal map on the lower level
by \cite[Prop.~6.1.8(4)]{CD-Comp}.

It follows that there are continuous homomorphisms of
profinite $\Z^{(p')}$-modules
\[
\coker(\epsilon^{3,2}_X) \xrightarrow{f^*} \coker(\epsilon^{3,2}_{X'})
 \xrightarrow{f_*} \coker(\epsilon^{3,2}_X)
 \]
 whose composition is multiplication by $p^m$.
 It follows that $\coker(\epsilon^{3,2}_X)$ is a closed $\Z^{(p')}$-submodule of
 $\coker(\epsilon^{3,2}_{X'})$. 
 Meanwhile, it follows from \cite[Prop.~6.11]{GKR}, \cite[Thm.~2.6]{Saito-JNT}
 and \cite[Lem.~5.4]{JS-Doc} (the latter result allows one to pass from $k$
 to its completion) that $\coker(\epsilon^{3,2}_{X'}) \cong (\Z^{(p')})^{r'}$ for some
 $r' \ge 0$. One deduces from \cite[Thm.~4.3.3]{Profin-book}
 that as a profinite abelian group,
 $\coker(\epsilon^{3,2}_X)$ has the desired form. This proves the claim, and
 completes the proof of the theorem.
\end{proof}

\subsection{The key lemma}\label{sec:Key-diag}
For the rest of \S~\ref{sec:R-ker}, we shall work under the following setting.
We let $R$ be an excellent hdvr with quotient field
$k$ and finite residue field $\F$ of characteristic $p$.
Let $\sX$ be a connected, regular and proper $R$-scheme of Krull dimension three.
We assume that $\sX$ is faithfully flat over $R$ and its reduced closed fiber $Y$
is an snc divisor on $\sX$. We let $X$ denote the generic fiber of
$\sX$ and assume that it is a smooth $k$-scheme. 

Let $j \colon U \inj X$ be the inclusion of a nonempty open subscheme and
$D = X \setminus U$. We let $j \colon U \inj X$ and $\iota \colon D \inj X$ be the
inclusions. We let $D^o \subset D$ be a dense open subscheme
which is regular of pure dimension one. We let $S = D \setminus D^o$ and 
$X^o = X \setminus S$. We let $u \colon U \inj X^o, \ v \colon D^o \inj X^o$ and
$w \colon X^o \inj X$ be the inclusions.

Recall from \S~\ref{sec:BM-left} that there is a pairing
$C_0(U) \times \Br(U) \to {\Q}/{\Z}$ which yields a continuous pairing of
topological abelian groups $C_0(U)^{(p')} \times \Br(U)\{p'\} \to {\Q}/{\Z}$,
where $C_0(U)^{(p')}$ has the profinite topology and $\Br(U)\{p'\}$ has the
discrete topology (cf. \lemref{lem:BM-fin}). We let $\Phi_U \colon \Br(U) \to
C_0(U)^\star$ denote the induced map. The goal of this section is a description
of the prime-to-$p$ part of $\Ker(\Phi_U)$. We shall do this by analyzing
$\Ker(\Phi_U)\{\ell\}$ separately for each prime $\ell \neq p$.
From now on therefore, we fix a prime $\ell \neq p$.
We let $\Lambda = \Z[\tfrac{1}{p}]$ and $\Lambda_n = {\Z}/n$.
We begin with some elementary lemmas.

\begin{lem}\label{lem:Elem-0}
  Let $M$ be an abelian group such that $M/{\ell^n}$ is finite for each $n \ge 1$
  and let $M^{(\ell)}$ be endowed with the profinite topology.
  Then there is a natural isomorphism of abelian groups
  $\gamma_M \colon M^\star\{\ell\} \xrightarrow{\cong} (M^{(\ell)})^\vee$.
\end{lem}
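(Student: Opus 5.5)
The plan is to build $\gamma_M$ one finite level at a time and then pass to the limit. Recall that $M^\star = \Hom(M, {\Q}/{\Z})$, so $M^\star\{\ell\}$ consists of the characters of $\ell$-power order. A homomorphism $\chi \colon M \to {\Q}/{\Z}$ is killed by $\ell^n$ exactly when it factors through $M/\ell^n$ and takes values in $\ell^{-n}\Z/\Z \cong {\Z}/{\ell^n}$. This gives natural identifications $M^\star[\ell^n] \cong \Hom(M/\ell^n, {\Q}/{\Z})$, compatible with the inclusions $M^\star[\ell^n] \subset M^\star[\ell^{n+1}]$ and with the surjections $M/\ell^{n+1} \surj M/\ell^n$. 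Taking the union over $n$, I get
\[
M^\star\{\ell\} = {\varinjlim}_n M^\star[\ell^n] \cong {\varinjlim}_n \Hom(M/\ell^n, {\Q}/{\Z}).
\]

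Next I identify the right-hand side with $(M^{(\ell)})^\vee$. Since $M^{(\ell)} = {\varprojlim}_n M/\ell^n$ is a limit of finite discrete groups, it is profinite, and each projection $M^{(\ell)} \to M/\ell^n$ is continuous and surjective. The surjectivity holds because the transition maps are surjective, so the Mittag-Leffler condition is automatic. Pulling back along these projections gives a map
\[
{\varinjlim}_n \Hom(M/\ell^n, {\Q}/{\Z}) \to (M^{(\ell)})^\vee,
\]
and I define $\gamma_M$ to be the composite of the two identifications. Concretely, $\gamma_M(\chi)$ is the unique continuous extension of $\chi$ to $M^{(\ell)}$.

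To show $\gamma_M$ is injective, I use that each projection $M^{(\ell)} \to M/\ell^n$ is surjective. A character of $M/\ell^n$ that vanishes after pullback to $M^{(\ell)}$ is therefore zero.

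To show $\gamma_M$ is surjective, I take a continuous $\psi \colon M^{(\ell)} \to {\Q}/{\Z}$. Its kernel is open, since ${\Q}/{\Z}$ is discrete. In the profinite group $M^{(\ell)}$ the kernels $\ker(M^{(\ell)} \to M/\ell^n)$ form a basis of neighbourhoods of $0$. So $\ker\psi$ contains one of these kernels, and $\psi$ factors through $M/\ell^n$ for some $n$. This shows $\psi$ lies in the image.

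Naturality in $M$ is immediate from the construction. The main point needing care is the step that uses the finiteness of $M/\ell^n$. Finiteness is what makes each level discrete and finite, so that $M^{(\ell)}$ is genuinely profinite and the kernels of the projections form a neighbourhood basis of $0$. I would spell that step out explicitly; everything else is formal.
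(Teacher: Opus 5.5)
Your proposal is correct and follows the paper's proof. The paper proves the lemma as the chain $M^\star\{\ell\} \cong \varinjlim_n \Hom(\Lambda_{\ell^n}, M^\star) \cong \varinjlim_n \Hom(M/\ell^n, \Q/\Z) \cong \varinjlim_n (M/\ell^n)^\vee \cong (M^{(\ell)})^\vee$, and you spell out the last identification: injectivity comes from surjectivity of the projections, and surjectivity from the open kernel of a continuous character containing some $\ker(M^{(\ell)} \to M/\ell^n)$. One minor remark: these kernels form a neighbourhood basis of $0$ in any inverse limit of discrete groups, so finiteness of $M/\ell^n$ is really only needed to make $M^{(\ell)}$ profinite, as the statement requires.
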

\begin{proof}
We have
  \[
M^\star\{\ell\}  \xrightarrow{\cong}  {\varinjlim}_n \Hom(\Lambda_{\ell^n}, M^\star)
     \xrightarrow{\cong}  {\varinjlim}_n \Hom(M/{\ell^n}, {\Q}/{\Z}) 
     \xrightarrow{\cong}  {\varinjlim}_n (M/{\ell^n})^\vee  \xrightarrow{\cong} 
    (M^{(\ell)})^\vee,
  \]
  where the third isomorphism follows by the finiteness of each $M/{\ell^n}$.
\end{proof}

\begin{cor}\label{cor:Elem-1}
  The isomorphism of topological groups
  $\vartheta_U \colon {C_0(U)}^{(\ell)} \xrightarrow{\cong}
  {\varprojlim}_n H^4_c(U, \Lambda_{\ell^n}(2))$ induces an isomorphism
    $\vartheta'_U \colon H^4_c(U, \Lambda(2))^\star\{\ell\} \xrightarrow{\cong}
    C_0(U)^\star\{\ell\}$ upon taking the continuous duals.
\end{cor}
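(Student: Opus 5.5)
The statement should follow formally from \lemref{lem:Elem-0}, applied twice, once to $M = C_0(U)$ and once to $M = H^4_c(U, \Lambda(2))$. The first thing to check is the finiteness hypothesis of that lemma for both groups. For $C_0(U)$ this is \lemref{lem:BM-fin}. For $H^4_c(U, \Lambda(2))$ we use the inclusion ${H^4_c(U, \Lambda(2))}/{\ell^n} \inj H^4_c(U, \Lambda_{\ell^n}(2))$ from the universal coefficient theorem. The right side is finite: its proof is part of \lemref{lem:BM-fin}, and it also follows from \cite[Cor.~7.3]{GKR} together with \lemref{lem:Etale-fin}. In fact \lemref{lem:Vanishing-coh}(3) with $a=2$, $b=-1$, $d=2$ gives ${H^4_c(U, \Lambda(2))}/{\ell^n} \cong H^4_c(U, \Lambda_{\ell^n}(2))$. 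The condition $a > d+b$ reads $2>1$, which holds. (When $\Lambda = \Z[\tfrac1p]$ one reduces mod $\ell^n$ exactly as in the lemma.)

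Next, \lemref{lem:Elem-0} gives natural isomorphisms
\[
\gamma_{C_0(U)} \colon C_0(U)^\star\{\ell\} \xrightarrow{\cong} (C_0(U)^{(\ell)})^\vee
\]
and
\[
\gamma_{H} \colon H^4_c(U, \Lambda(2))^\star\{\ell\} \xrightarrow{\cong} \left(H^4_c(U,\Lambda(2))^{(\ell)}\right)^\vee.
\]
By the isomorphism above, passing to the limit identifies $H^4_c(U,\Lambda(2))^{(\ell)}$ with ${\varprojlim}_n H^4_c(U, \Lambda_{\ell^n}(2))$ as profinite groups.

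The map $\vartheta_U$ is the inverse limit of the composite isomorphisms ${C_0(U)}/{\ell^n} \xrightarrow{\kappa_U} {H^S_0(U)}/{\ell^n} \xrightarrow{\Psi_U} H^4_c(U, \Lambda_{\ell^n}(2))$ of \propref{prop:Tame-part} and \propref{prop:Suslin-hom-0}. These are compatible in $n$ because they are induced by maps at integral level, namely the map $Z_0(U) \to H^4_c(U,\Lambda(2))$ via \eqref{eqn:Tame-MCCS-0}. Hence $\vartheta_U$ is an isomorphism of profinite groups, so its continuous dual is an isomorphism. We then define $\vartheta'_U := \gamma_{C_0(U)}^{-1} \circ \vartheta_U^\vee \circ \gamma_H$.

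The only point needing care, and the expected obstacle, is naturality: $\vartheta'_U$ should be the map induced by the integral map $C_0(U) \to H^4_c(U,\Lambda(2))$, and this must be compatible with the limit identifications. Since $\gamma_M$ is built levelwise from $\Hom(M/\ell^n, \Q/\Z)$, naturality of \lemref{lem:Elem-0} in $M$ reduces this to the levelwise compatibility already noted. We must also check that the profinite topology on $H^4_c(U,\Lambda(2))^{(\ell)}$ agrees with the inverse-limit topology, which holds because each level is finite.
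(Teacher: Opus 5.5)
Your proposal is correct and is essentially the paper's proof: identify $C_0(U)^{(\ell)}$ with $H^4_c(U,\Lambda(2))^{(\ell)}\cong\varprojlim_n H^4_c(U,\Lambda_{\ell^n}(2))$ via Propositions~\ref{prop:Tame-part} and~\ref{prop:Suslin-hom-0}, apply Lemma~\ref{lem:Elem-0} to both groups, and set $\vartheta'_U=\gamma_{C_0(U)}^{-1}\circ\vartheta_U^\vee\circ\gamma_H$; your finiteness checks just make explicit what the paper leaves implicit. One small slip: your alternative finiteness argument via \cite[Cor.~7.3]{GKR} does not apply to $H^4_c(U,\Lambda_{\ell^n}(2))$ itself, because that result needs $i\le j+1$ and $(i,j)=(4,2)$ fails this. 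Finiteness therefore genuinely comes from the localization sequence together with the finiteness of ${\CH_0(X)}/{\ell^n}$, as in the proof of Lemma~\ref{lem:BM-fin}.
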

\begin{proof}
By Propositions~\ref{prop:Suslin-hom-0} and ~\ref{prop:Tame-part}, there are
  canonical isomorphisms of profinite groups
  \begin{equation}\label{eqn:Elem-1-0}
    {C_0(U)}^{(\ell)} \xrightarrow{\cong} {H^S_0(U)}^{(\ell)} \xrightarrow{\cong}
    H^4_c(U, \Lambda(2))^{(\ell)} \xrightarrow{\cong}
    {\varprojlim}_n H^4_c(U, \Lambda_{\ell^n}(2)).
    \end{equation}
 Meanwhile, \lemref{lem:Elem-0} says that there are isomorphisms
    \begin{equation}\label{eqn:Elem-1-1}
      H^4_c(U, \Lambda(2))^\star\{\ell\}  \xrightarrow{\cong} 
      (H^4_c(U, \Lambda(2))^{(\ell)})^\vee \xrightarrow{(\vartheta_U)^\vee} 
      ({C_0(U)}^{(\ell)})^\vee \xleftarrow{\cong} C_0(U)^\star\{\ell\}.
      \end{equation}
Letting $M =  H^4_c(U, \Lambda(2)), N = C_0(U)$, the map
    $(\gamma_N)^{-1} \circ (\vartheta_U)^\vee \circ \gamma_M$ is the desired
    isomorphism $\vartheta'_U$.
    \end{proof}

Let $\psi_Z \colon H^2_\et(Z, {\Q_\ell}/{\Z_\ell}(1)) \to \Br(Z)\{\ell\}$ denote the
natural surjective map induced by the Kummer sequence for $Z \in \{X, U\}$.
      
\begin{lem}\label{lem:Elem-2}
  There exists a commutative diagram
  \begin{equation}\label{eqn:Elem-2-1}
    \xymatrix@C1pc{
      H^2_{\rm {\etl}}(X, {\Q_\ell}/{\Z_\ell}(1)) \ar[dd]_-{\rho'_X} \ar@{->>}[dr]^-{\psi_X}
      \ar[rr] &  &
H^2_{\rm {\etl}}(U, {\Q_\ell}/{\Z_\ell}(1)) \ar[dd]^->>>>>{\rho'_U} \ar@{->>}[dr]^-{\psi_U} & \\
& \Br(X)\{\ell\} \ar[rr] \ar[dd]^->>>>>{\Phi_X} & & \Br(U)\{\ell\}
\ar[dd]^-{\Phi_U} \\
H^4(X, \Lambda(2))^\star\{\ell\} \ar[rr] \ar[dr]_-{\vartheta'_X}^-{\cong} & &
H^4_c(U, \Lambda(2))^\star\{\ell\} \ar[dr]^-{\vartheta'_U}_-{\cong} & \\
& \CH_0(X)^\star\{\ell\} \ar[rr] & & C_0(U)^\star\{\ell\},}
  \end{equation}
  whose all horizontal arrows are the pull-back maps induced by $j \colon U \inj X$.
  \end{lem}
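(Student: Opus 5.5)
The diagram is a cube whose faces I will define and check one at a time. The vertical maps are fixed as follows. The maps $\rho'_X$ and $\rho'_U$ are the dual reciprocity maps of \eqref{eqn:Main-map*}, with $(i,j) = (4,2)$, $d = 2$, $i' = 2$, $j' = 1$, restricted to $\ell$-primary parts. Here I use $H^2_\et(-, {\Q_\ell}/{\Z_\ell}(1)) = H^2_\et(-, {{\Q}/{\Z}}'(1))\{\ell\}$ and, by \lemref{lem:Elem-0}, identify $H^4_c(-, \Lambda(2))^\vee\{\ell\}$ with $H^4_c(-,\Lambda(2))^\star\{\ell\}$. The maps $\Phi_X$ and $\Phi_U$ are induced by the Brauer--Manin pairings of \S~\ref{sec:BM-defn}. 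The maps $\vartheta'_X$ and $\vartheta'_U$ come from \corref{cor:Elem-1}; for $X$ it applies with $U = X$, where $C_0(X) = \CH_0(X)$. The horizontal arrows are the duals of the push-forward maps $j_* \colon H^4_c(U,\Lambda(2)) \to H^4(X,\Lambda(2))$ and $j_* \colon C_0(U) \to \CH_0(X)$, together with the ordinary pull-backs on $H^2_\et$ and $\Br$.

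\textbf{Top, bottom and back faces.} The top face, made of the $\psi$'s and the restrictions, commutes because the Kummer sequence is natural in open immersions. For the bottom face I dualize and check that $\vartheta_X \circ j_* = j_* \circ \vartheta_U$ on $C_0(U)^{(\ell)}$. Both maps factor through $Z_0(U)$, which generates $C_0(U)$ modulo the idele part, and that part dies in $H^S_0(U)$. So it suffices to compare on a closed point $x \in U$. By diagram~\eqref{eqn:Tame-MCCS-0}, $\Psi$ sends $[x]$ to the Gysin image $\iota^x_*(1)$, and the Gysin maps for $X$ and $U$ are compatible with $j_*$ by the functoriality of \cite[Lem.~5.9, 5.10]{GKR}. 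The back face, made of the $\rho'$ and the restrictions, is the dual of the compatibility of the étale realization with $j_*$ (diagram~\eqref{eqn:MC-03}), combined with item~(3) of \thmref{thm:SD-1}: the left square there, for $Y = D$, identifies the dual of $j_*$ on $H^4_{\et,c}$ with restriction on $H^2_\et$.

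\textbf{Side faces.} For $Z \in \{X, U\}$ I must show $\vartheta'_Z \circ \rho'_Z = \Phi_Z \circ \psi_Z$. Unwinding the definitions, this amounts to the following: for $\chi \in H^2_\et(Z,{\Q_\ell}/{\Z_\ell}(1))$ and $\alpha \in C_0(Z)^{(\ell)}$, the Saito--Tate pairing of $\epsilon^*_Z\vartheta_Z(\alpha)$ with $\chi$ equals the Brauer--Manin pairing $\<\alpha, \psi_Z(\chi)\>$. This is precisely the commutativity of diagram~\eqref{eqn:BM-pair-def-3-0} in \lemref{lem:BM-pair-def-3}. The front face, made of the $\Phi$'s and the restrictions, commutes by \lemref{lem:BM-pair-functor}-type functoriality for the open immersion. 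Concretely, $\<j_*x, \chi\> = \<x, j^*\chi\>$ for closed points $x \in U$, since both sides are computed by evaluating at $\Spec(k(x))$. This extends to $C_0(U)$ because the idele components pair compatibly by the definition in \cite[Prop.~9.4]{KRS}.

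\textbf{Main obstacle.} The delicate point is the side faces. One has to be careful that the identifications of the $\ell$-completions with continuous duals (\lemref{lem:Elem-0}) and the duality isomorphism of \thmref{thm:SD-1} all use the same normalization of $\inv_k$ versus $\Tr_k$. Otherwise the face commutes only up to a sign or automorphism. I would reduce this check, as in \lemref{lem:BM-pair-def-3}, to the case $Z = \Spec(k')$ for a finite extension $k'/k$, via the Gysin maps. There it becomes the compatibility of the local invariant with the Tate trace, which holds under the standard normalization.
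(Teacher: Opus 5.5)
Your proposal is correct and follows the paper's proof essentially face by face: Kummer naturality for the top face, naturality of $\vartheta$ for the bottom face, the left square of \eqref{eqn:SD-1-0} for the back face, and functoriality of the pairing for the front face, while for the side faces you use diagram \eqref{eqn:BM-pair-def-3-0} (i.e.\ \eqref{eqn:Main-0-1-0}) together with \corref{cor:Elem-1}, which also gives the isomorphisms $\vartheta'$. One small correction for the bottom face: the whole idele part does not ``die'' in $H^S_0(U)$; the reason it suffices to check on closed points is that $\Ker(\kappa_U)$ is $\ell$-divisible by \propref{prop:Tame-part}, so $Z_0(U)$ surjects onto $C_0(U)/\ell^n$.
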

\begin{proof}
The top face commutes by the naturality of the Kummer sequence, the front face
  commutes by the naturality of the Brauer--Manin pairing, the bottom face commutes
  by the naturality of $\vartheta_U$ and the back face commutes by
  \thmref{thm:SD-1}.
  The commutativity of the left and the right faces follows from the
  commutative diagram~\ref{eqn:Main-0-1-0} and the definition of $\vartheta'_X$
  and $\vartheta'_U$. Finally, $\vartheta'_X$ and $\vartheta'_U$ are isomorphisms by
  \corref{cor:Elem-1}.
\end{proof}

\begin{lem}\label{lem:Elem-3}
  Let $A \in \{\Lambda, \Lambda_{\ell^n}| n \ge 1\}$. Then $M/{\ell^n}$ (resp. $M$) is
  finite if $A = \Lambda$ (resp. $A = \Lambda_{\ell^n}$) whenever
  $M \in \{H^4_c(X^o, A(2)),  H^4_c(U, A(2)), H^3_c(D^o, A(2)), H^3_c(X^o, A(2))\}$.
  \end{lem}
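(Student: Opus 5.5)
The plan is to reduce everything to the finite-coefficient case, and then get finiteness from the étale side via \cite[Cor.~7.3]{GKR} and \lemref{lem:Etale-fin}.

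\emph{Reduction from $\Lambda$ to $\Lambda_{\ell^n}$.} For $A = \Lambda$, the universal coefficient theorem gives an injection
\[
{H^i_c(Z, \Lambda(2))}/{\ell^n} \inj H^i_c(Z, \Lambda_{\ell^n}(2)).
\]
So finiteness of $M/\ell^n$ will follow once the groups with $\Lambda_{\ell^n}$-coefficients are known to be finite. We therefore treat only $A = \Lambda_{\ell^n}$.

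\emph{Groups already covered by injectivity of the realization map.} For $H^3_c(D^o, \Lambda_{\ell^n}(2))$ we have $d = 1$ and $i = 3 = j+1$. Hence \cite[Cor.~7.3]{GKR} (as recalled in \S\ref{sec:Rec-map}) makes the étale realization map into $H^3_{\et,c}(D^o, \Lambda_{\ell^n}(2))$ injective, and that target is finite by \lemref{lem:Etale-fin}. The same argument applies to $H^3_c(X^o, \Lambda_{\ell^n}(2))$, since $i = 3 \le j+1 = 3$. The group $H^4_c(U, \Lambda_{\ell^n}(2))$ was already shown to be finite in the proof of \lemref{lem:BM-fin} (and again in the proof of \thmref{thm:BM-main-pf}).

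\emph{The remaining group $H^4_c(X^o, \Lambda_{\ell^n}(2))$.} Here $i = 4 > j+1$, so the injectivity criterion does not apply directly; this is the only step needing real input. I would use the localization sequence \eqref{eqn:Mot-loc} for the closed immersion $S \inj X$ with complement $X^o$:
\[
H^3(S, \Lambda_{\ell^n}(2)) \to H^4_c(X^o, \Lambda_{\ell^n}(2)) \to H^4(X, \Lambda_{\ell^n}(2)).
\]
The right-hand term is isomorphic to ${\CH_0(X)}/\ell^n$, which is finite by the end of the proof of \lemref{lem:Main-Diag-1}. The scheme $S$ is a finite set of closed points, so by \lemref{lem:Vanishing-coh}(1) with $d = 0$, $a = 2$, $b = 1$ we get $H^3(S, \Lambda_{\ell^n}(2)) = 0$. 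If one prefers to avoid that lemma, the same term is finite by \cite[Cor.~7.3]{GKR} together with \lemref{lem:Etale-fin}. Either way $H^4_c(X^o, \Lambda_{\ell^n}(2))$ is finite.

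The same localization argument also works for $U$: use $D \inj X$, with $H^3(D, \Lambda_{\ell^n}(2))$ finite via the GKR injectivity for curves. I do not expect a genuine obstacle. The only care needed is to check that the hypothesis $i \le j+1$ of \cite[Cor.~7.3]{GKR} holds in each case where it is invoked, and to handle the degree-four groups through localization rather than through the realization map.
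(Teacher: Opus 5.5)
Your proposal is correct and follows essentially the same route as the paper: reduce to $\Lambda_{\ell^n}$-coefficients via the universal coefficient theorem, get finiteness of the two degree-three groups from the injectivity in \cite[Cor.~7.3]{GKR}, and handle $H^4_c(U,-)$ via the localization sequence through $H^3(D,-)$ and $H^4(X,-)\cong {\CH_0(X)}/{\ell^n}$. The differences are cosmetic. The paper identifies $H^4_c(X^o, A(2))$ with $H^4(X, A(2))$ directly by \lemref{lem:Vanishing-coh}(2), which is proved exactly by your vanishing of $H^3(S,-)$ plus localization. It also cites \thmref{thm:SD-1} rather than \lemref{lem:Etale-fin} for the finiteness of the {\'e}tale groups of $D^o$ and $X^o$.
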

\begin{proof}
By the universal coefficient theorem and \lemref{lem:Vanishing-coh}, it suffices to
  show that $M$ is finite if
  $M \in \Sigma = \{H^4(X, \Lambda_{\ell^n}(2)),  H^4_c(U, \Lambda_{\ell^n}(2)),
  H^3_c(D^o, \Lambda_{\ell^n}(2)), H^3_c(X^o, \Lambda_{\ell^n}(2))\}$.
  Now, the last two groups in $\Sigma$ are finite by \cite[Cor.~7.3]{GKR} and
  \thmref{thm:SD-1}. For the other two groups, we consider the exact sequence
  \[
  H^3(D, \Lambda_{\ell^n}(2)) \to H^4_c(U, \Lambda_{\ell^n}(2)) \to 
  H^4(X, \Lambda_{\ell^n}(2)).
  \]
  The first group in this sequence is finite by \cite[Cor.~7.3]{GKR} and
  \lemref{lem:Etale-fin}. It remains therefore to show that
  $H^4(X, \Lambda_{\ell^n}(2))$ is finite. But this follows from the
  isomorphism ${\CH_0(X)}/{\ell^n} \xrightarrow{\cong}  H^4(X, \Lambda_{\ell^n}(2))$
  due to Voevodsky \cite{Voevodsky-IMRN}, and the finiteness of
  ${\CH_0(X)}/{\ell^n}$, already shown earlier.
\end{proof}

\begin{lem}\label{lem:Elem-4}
  There exists a commutative diagram 
   \begin{equation}\label{eqn:Elem-4-0}
    \xymatrix@C1pc{
H^2_{\rm {\etl}}(X, {\Q_\ell}/{\Z_\ell}(1)) \ar[r]^-{j^*} \ar[d]_-{\beta_X} & 
H^2_{\rm {\etl}}(U, {\Q_\ell}/{\Z_\ell}(1)) \ar[r]^-{\partial'} \ar[d]^-{\beta_U} & 
H^1_{\rm {\etl}}(D^o, {\Q_\ell}/{\Z_\ell}) \ar[r]^-{v_*} \ar[d]^-{\beta_{D^o}} &
H^3_{\rm {\etl}}(X^o, {\Q_\ell}/{\Z_\ell}(1)) \ar[d]^-{\beta_{X^o}} \\
H^4(X, \Z_\ell(2))^\vee \ar[r]^-{(j_*)^\vee} \ar[d]_{\alpha_X} & H^4_c(U, \Z_\ell(2))^\vee
\ar[r]^-{\partial^\vee} \ar[d]^-{\alpha_{U}} & H^3_c(D^o, \Z_\ell(2))^\vee
\ar[r]^-{(v^*)^\vee}  \ar[d]^-{\alpha_{D^o}} &
H^3_c(X^o, \Z_\ell(2))^\vee \ar[d] \\
H^4(X, \Lambda(2))^\star\{\ell\} \ar[r]^-{(j_*)^\star} & 
H^4_c(U, \Lambda(2))^\star\{\ell\} \ar[r]^-{\partial^\star} &
H^3_c(D^o, \Lambda(2))^\star\{\ell\} \ar[r]^-{(v^*)^\star} &
H^3_c(X^o, \Lambda(2))^\star\{\ell\},}
   \end{equation}
   in which the top and the middle rows are exact, the bottom row is exact
   at its first two places, $(j_*)^\vee$ and $(j_*)^\star$
   are injective and $\alpha_Z$ is bijective for $Z \in \{X, U, D^o\}$. 
   \end{lem}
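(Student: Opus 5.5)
The plan is to build the diagram row by row, then compare the rows vertically. I will treat each row as coming from a localization sequence and each vertical map as either a duality isomorphism or a completion isomorphism.

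\emph{Top row.} I would take the colimit over $n$ of the top row of \eqref{eqn:SD-1-0} in \thmref{thm:SD-1}. For $\Lambda_{\ell^n}$-coefficients this is the étale localization sequence for the pair $(X^o, D^o)$, with open complement $U$ and codimension $e=1$. This gives the exact sequence
\[
H^2_{\rm {\etl}}(X^o,\Lambda_{\ell^n}(1)) \to H^2_{\rm {\etl}}(U,\Lambda_{\ell^n}(1)) \xrightarrow{\partial'} H^1_{\rm {\etl}}(D^o,\Lambda_{\ell^n}) \xrightarrow{v_*} H^3_{\rm {\etl}}(X^o,\Lambda_{\ell^n}(1)).
\]
The first term is for $X^o$, not $X$, so I must replace it by $X$. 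Since $S$ is a finite set of closed points of codimension two in the smooth surface $X$, purity gives $H^i_{S}(X,-(1))=H^{i-4}(S,-(-1))=0$ for $i\le 3$. Hence $H^2_{\rm {\etl}}(X,-)\cong H^2_{\rm {\etl}}(X^o,-)$, and $j^*$ factors through this isomorphism. Exactness survives the filtered colimit.

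\emph{Middle row.} The middle row comes from the motivic compact-support localization sequence \eqref{eqn:Mot-loc} with $\Lambda_{\ell^n}$-coefficients, applied to $U\subset X^o$ with closed complement $D^o$:
\[
H^3_c(X^o)\xrightarrow{v^*} H^3_c(D^o)\xrightarrow{\partial} H^4_c(U)\xrightarrow{u_*} H^4_c(X^o)\to H^4_c(D^o)=0.
\]
The last vanishing is \lemref{lem:Vanishing-coh}(1). Next I identify $H^4_c(X^o,\Lambda_{\ell^n}(2))\cong H^4(X,\Lambda_{\ell^n}(2))$ using \lemref{lem:Vanishing-coh}(2), since $\dim S=0$; call the resulting composite $j_*$. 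All groups here are finite by \lemref{lem:Elem-3}, so passing to $\varprojlim_n$ keeps the sequence exact. Taking continuous duals $(-)^\vee$, which is exact on profinite groups, gives an exact middle row with $(j_*)^\vee$ injective because $j_*$ is surjective.

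\emph{Vertical maps, first stage.} The maps $\beta_Z$ are defined as the composite
\[
H^{i'}_{\rm {\etl}}(Z,\Q_\ell/\Z_\ell(j')) \cong \big(\varprojlim_n H^i_{{\rm {\etl}},c}(Z,\Lambda_{\ell^n}(j))\big)^\vee \xrightarrow{(\epsilon)^\vee} H^i_c(Z,\Z_\ell(j))^\vee .
\]
The first isomorphism is \thmref{thm:SD-1}(2), which applies since $D^o, U, X^o$ are regular; $\epsilon$ is the étale realization. The two top squares commute because the realization is compatible with localization sequences \eqref{eqn:MC-03}, and \thmref{thm:SD-1}(3) makes duality compatible with them as well.

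\emph{Vertical maps, second stage, and the bottom row.} The maps $\alpha_Z$ are the isomorphisms $\gamma_M$ of \lemref{lem:Elem-0}, composed with $H^i_c(Z,\Lambda(2))^{(\ell)}\cong H^i_c(Z,\Z_\ell(2))$. The latter holds in the relevant degrees by \lemref{lem:Vanishing-coh}(3), since $a=2>d+b$ in each case; this uses the finiteness of $M/\ell^n$ from \lemref{lem:Elem-3}. Naturality of $\gamma_M$ gives commutativity of the bottom squares. Exactness of the bottom row at its first two places, and injectivity of $(j_*)^\star$, then follow by transporting exactness along the bijections $\alpha_X, \alpha_U, \alpha_{D^o}$.

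\emph{Main obstacle.} I expect the hardest step to be commutativity of the boundary square $\partial'$ versus $\partial^\vee$. Its signs and twists require tracking the purity isomorphism of $v$ through \propref{prop:SD-4}. Here I would invoke \thmref{thm:SD-1}(3) directly, with $X^o$ in the role of $X$ and $Y=D^o$.
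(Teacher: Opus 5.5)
Your proposal is correct and follows essentially the paper's route. The top squares and the top row come from \thmref{thm:SD-1}(3) for the triple $(X^o, D^o, U)$, with purity used to replace $X^o$ by $X$. The middle row comes from the finite-coefficient motivic localization sequence, \lemref{lem:Vanishing-coh} and Pontryagin duality. The lower vertical maps come from \lemref{lem:Elem-0} together with \lemref{lem:Vanishing-coh}(3).

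One small precision: the inequality $a>d+b$ fails for $H^3_c(X^o,\Lambda(2))$, where $d=2$ and $b=0$. This is exactly why the last lower vertical arrow is not bijective and the bottom row is only exact at its first two places. Your argument rightly uses bijectivity only for $X$, $U$ and $D^o$.
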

\begin{proof}
We first note that all cohomology groups with coefficient in
  $\Lambda_{\ell^n}(i)$ for $i \le 2$ appearing in the given diagram are finite by
  \thmref{thm:SD-1} and \lemref{lem:Elem-3}.
  We next observe that the vertical arrows in the upper level of
  ~\eqref{eqn:Elem-4-0}
  are  obtained by composing the duality isomorphisms of
\thmref{thm:SD-1} with the duals of the étale realization maps, and then passing
to the limit. Hence, every square in the upper level commutes by
\thmref{thm:SD-1}.

The vertical arrows in the lower level are obtained by taking the continuous duals
of the canonical inclusions
${H^i_c(Z, \Lambda(j))}/{\ell^n} \inj H^i_c(Z, \Lambda_{\ell^n}(j))$ for
  $Z \in \{X, U, D^o, X^o\}$, and precomposing them with the inverses of the
isomorphisms of \lemref{lem:Elem-0} for the corresponding cohomology groups.
It follows that every square in the lower level of \eqref{eqn:Elem-4-0} also
commutes.

Using the localization sequence and purity theorem for {\'e}tale cohomology,
  one checks that the map
  $H^2_\et(X, {\Q_\ell}/{\Z_\ell}(1)) \to H^2_\et(X^o, {\Q_\ell}/{\Z_\ell}(1))$ is
  an isomorphism. On the other hand, the map
  $H^4_c(X^o, \Lambda_{\ell^n}(2)) \to H^4(X, \Lambda_{\ell^n}(2))$ is an isomorphism
  for every $n \ge 1$ by \lemref{lem:Vanishing-coh}. The inclusion
  ${H^4_c(Z, \Lambda(2))}/{\ell^n} \inj H^4_c(Z, \Lambda_{\ell^n}(2))$
  is an isomorphism for $Z \in \{X, U, X^o\}$ and
  ${H^3_c(D^o, \Lambda(2))}/{\ell^n} \inj H^3_c(D^o, \Lambda_{\ell^n}(2))$
  is an isomorphism by the universal coefficient theorem and
  \lemref{lem:Vanishing-coh}. Together, these isomorphisms show that $\alpha_Z$ is
  bijective for $Z \in \{X, U, D^o\}$ and that the map
  ${H^4_c(X^o, \Lambda(2))}/{\ell^n} \to  {H^4(X, \Lambda(2))}/{\ell^n}$ is bijective
  for every $n \ge 1$. It follows from \lemref{lem:Elem-0} that all
  cohomology groups in  the first column of ~\eqref{eqn:Elem-4-0} remain unchanged
  if we replace $X$ by $X^o$.

Since the map $H^4_c(U, \Lambda_{\ell^n}(2)) \to H^4_c(X^o, \Lambda_{\ell^n}(2))$
  is a surjective map of finite groups by ~\eqref{eqn:Mot-loc} and
  \lemref{lem:Vanishing-coh}, it follows that
  $j_* \colon H^4_c(U, \Z_\ell(2)) \to H^4_c(X^o, \Z_\ell(2))$ is a surjective
  homomorphism of profinite groups. One deduces that $(j_*)^\vee$ is injective.
Equivalently, $(j_*)^\star$ is injective.
  The exactness of the top row is clear, and the same for the middle row
  follows by applying the localization sequence for the motivic cohomology
  with coefficient in $\Lambda_{\ell^n}$, passing to the limit and then using the
  Pontryagin duality for the profinite groups.
  This also implies that the bottom row is exact at the first and
  the second places. This completes the proof.
  \end{proof}

The key lemma we want to prove is the following.

\begin{lem}\label{lem:Br-Chow-diag}
  There exists a commutative diagram of complexes of abelian groups
  \begin{equation}\label{eqn:Br-Chow-diag-0}
    \xymatrix@C1pc{
    0 \ar[r] & \Br(X)\{\ell\} \ar[r]^-{j^*} \ar[d]_-{\beta'_X} &   
    \Br(U)\{\ell\} \ar[r]^-{\wt{\partial}'} \ar[d]^-{\beta'_U} &
    H^1_{\rm {\etl}}(D^o, {\Q_\ell}/{\Z_\ell})
    \ar[r]^-{v_*} \ar[d]^-{\beta_{D^o}} & H^3_{\rm {\etl}}(X^o, {\Q_\ell}/{\Z_\ell}(1))
    \ar[d]^-{\beta_{X^o}} \\
    0 \ar[r] & H^4(X, \Z_\ell(2))^\vee \ar[r]^-{(j_*)^\vee} \ar[d]_{\alpha'_X}^-{\cong} &
    H^4_c(U, \Z_\ell(2))^\vee
\ar[r]^-{\partial^\vee} \ar[d]^-{{\alpha}'_{U}}_-{\cong} & H^3_c(D^o, \Z_\ell(2))^\vee
\ar[r]^-{(v^*)^\vee}  \ar[d]^-{\alpha_{D^o}}_-{\cong} &
H^3_c(X^o, \Z_\ell(2))^\vee \ar@{->>}[d] \\
0 \ar[r] & \CH_0(X)^\star\{\ell\} \ar[r]^-{(j_*)^\star} & C_0(U)^\star\{\ell\}
    \ar[r]^-{\wt{\partial}^\star} &
    H^3_c(D^o, \Lambda(2))^\star\{\ell\}
        \ar[r]^-{(v^*)^\star} & H^3_c(X^o, \Lambda(2))^\star\{\ell\},}
  \end{equation}
  in which the top and the middle rows are exact and the bottom row is exact
  at the first two places. Moreover, we have $\alpha'_X \circ \beta'_X =
  \Phi_X, \ \alpha'_U \circ \beta'_U = \Phi_U$ and
  $\alpha_{D^o} \circ \beta_{D^o} = \rho'_{D^o}$.
\end{lem}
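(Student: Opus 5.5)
The plan is to derive the diagram \eqref{eqn:Br-Chow-diag-0} from \lemref{lem:Elem-4} by descending its top row along the Kummer surjections $\psi_X,\psi_U$ of \lemref{lem:Elem-2}, and replacing its bottom row using the isomorphisms $\vartheta'_X,\vartheta'_U$ of \corref{cor:Elem-1}.

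\textbf{Bottom two rows.} The middle row and the column $\alpha_{D^o}$ are copied verbatim from \eqref{eqn:Elem-4-0}. Set $\alpha'_X = \vartheta'_X\circ\alpha_X$ and $\alpha'_U=\vartheta'_U\circ\alpha_U$. These are isomorphisms because $\alpha_X,\alpha_U$ are bijective by \lemref{lem:Elem-4} and $\vartheta'$ is an isomorphism by \corref{cor:Elem-1}. Transport the bottom row of \eqref{eqn:Elem-4-0} through these isomorphisms:
\begin{itemize}
\item take $(j_*)^\star$ on $\CH_0(X)^\star\{\ell\}\to C_0(U)^\star\{\ell\}$, which is compatible with $\vartheta'$ by the bottom face of \eqref{eqn:Elem-2-1};
\item define $\wt{\partial}^\star := \partial^\star\circ(\vartheta'_U)^{-1}$.
\end{itemize}
Exactness of the middle row, exactness of the bottom row at its first two places, and injectivity of $(j_*)^\vee$ and $(j_*)^\star$ then all come from \lemref{lem:Elem-4}. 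The rightmost vertical arrow is surjective because it is the continuous dual of an injection ${H^3_c(X^o,\Lambda(2))}/{\ell^n}\inj H^3_c(X^o,\Lambda_{\ell^n}(2))$ of finite groups (\lemref{lem:Elem-3}), followed by $\gamma$ from \lemref{lem:Elem-0}.

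\textbf{Top row.} Define $\beta'_X$ and $\beta'_U$ by factoring $\beta_X$ and $\beta_U$ through $\psi_X$ and $\psi_U$. For this it must be checked that $\beta_Z$ kills $\Ker(\psi_Z) = \Pic(Z)\otimes{\Q_\ell}/{\Z_\ell}$. Since $\alpha'_Z$ is injective, it suffices that $\alpha'_Z\circ\beta_Z$ kills this kernel. By \lemref{lem:Elem-2} (together with diagram \eqref{eqn:Main-0-1-0}), $\alpha'_Z\circ\beta_Z = \vartheta'_Z\circ\rho'_Z = \Phi_Z\circ\psi_Z$, which vanishes on $\Ker(\psi_Z)$. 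This gives $\beta'_Z$ with $\alpha'_Z\circ\beta'_Z=\Phi_Z$.

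Next define $\wt{\partial}'$ by factoring $\partial'$ through $\psi_U$. This requires $\partial'(\Pic(U)\otimes{\Q_\ell}/{\Z_\ell})=0$. The plan is to use that $\Pic(X)\to\Pic(U)$ is surjective ($X$ regular), so a divisible Kummer class on $U$ comes from $X$ and is killed by $\partial'\circ j^*=0$. Alternatively, one can argue purity-wise that the residue of $\{f\}$-type classes along $D^o$ lands in $H^0$ rather than $H^1$.

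For the top row itself:
\begin{itemize}
\item Exactness at $\Br(U)\{\ell\}$ and at $H^1_\et(D^o,{\Q_\ell}/{\Z_\ell})$ follows from exactness of the étale row in \eqref{eqn:Elem-4-0} together with the surjectivity of $\psi_U$. Concretely, if $\wt{\partial}'(\psi_U(a))=0$, then $a=j^*b$, so $\psi_U(a)=j^*\psi_X(b)$.
\item Injectivity of $j^*\colon\Br(X)\{\ell\}\to\Br(U)\{\ell\}$ holds because $X$ is regular and integral, so both groups inject into $\Br(k(X))$.
\item Commutativity of all squares is inherited from \eqref{eqn:Elem-4-0} using the surjectivity of $\psi$.
\end{itemize}
Finally, $\alpha_{D^o}\circ\beta_{D^o}=\rho'_{D^o}$ holds by unwinding the definitions in \S\ref{sec:Rec-map} with $i=3$, $j=2$ and $d=1$, which gives $i'=1$ and $j'=0$.

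\textbf{Expected main obstacle.} The step I expect to be hardest is showing that $\partial'$ kills $\Pic(U)\otimes{\Q_\ell}/{\Z_\ell}$, since the identification of the residue map with the purity boundary must be handled carefully when $D$ is singular. One has to check that the boundary lands in $H^1(D^o)$ computed on $X^o$, where $D^o$ is regular and purity applies.
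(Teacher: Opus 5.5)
Your proposal is correct and follows the paper's proof in substance. The paper also obtains $\wt{\partial}'$ from the Kummer-sequence identification of the cokernels of $j^*$ on $H^2_{\rm {\etl}}(-,{\Q_\ell}/{\Z_\ell}(1))$ and on $\Br(-)\{\ell\}$, which is exactly your $\Pic(X)\surj\Pic(U)$ argument, so the purity issue you flag for singular $D$ never arises. It likewise sets $\alpha'_Z=\vartheta'_Z\circ\alpha_Z$ and $\wt{\partial}^\star=\partial^\star\circ(\vartheta'_U)^{-1}$ and reads off the rest from Lemmas~\ref{lem:Elem-2} and~\ref{lem:Elem-4}. The one difference is cosmetic: the paper defines $\beta'_Z=(\alpha'_Z)^{-1}\circ\Phi_Z$ outright and checks the middle square by a short computation, whereas your factorization $\beta'_Z\circ\psi_Z=\beta_Z$ gives the same map and makes that square immediate.
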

\begin{proof}
First of all, one observes using the Kummer sequence that the pull-back map
  $\coker\left(H^2_\et(X, {\Q_\ell}/{\Z_\ell}(1)) \xrightarrow{j^*}
H^2_\et(U, {\Q_\ell}/{\Z_\ell}(1))\right) \to
\coker\left(\Br(X)\{\ell\} \xrightarrow{j^*} \Br(U)\{\ell\}\right)$
is an isomorphism. It follows that there exists a unique homomorphism
$\wt{\partial}'$
as in the top row of ~\eqref{eqn:Br-Chow-diag-0} such that the diagram
\begin{equation}\label{eqn:Br-Chow-diag-1}
    \xymatrix@C1pc{
H^2_\et(X, {\Q_\ell}/{\Z_\ell}(1)) \ar[r]^-{j^*} \ar@{->>}[d]_-{\psi_X} & 
H^2_\et(U, {\Q_\ell}/{\Z_\ell}(1)) \ar[r]^-{\partial'} \ar@{->>}[d]^-{\psi_U} & 
H^1_\et(D^o, {\Q_\ell}/{\Z_\ell}) \ar[r]^-{v_*} \ar[d]^-{\id} &
H^3_\et(X^o, {\Q_\ell}/{\Z_\ell}(1)) \ar[d]^-{\id} \\
\Br(X)\{\ell\} \ar[r]^-{j^*} &  \Br(U)\{\ell\} \ar[r]^-{\wt{\partial}'}  &
H^1_\et(D^o, {\Q_\ell}/{\Z_\ell}) \ar[r]^-{v_*} &
H^3_\et(X^o, {\Q_\ell}/{\Z_\ell}(1))}
\end{equation}
commutes and the bottom sequence is exact. Moreover, $j^*$ in the bottom row is
injective (cf. \cite[Thm.~3.5.7]{CTS}).

The identity $\alpha_{D^o} \circ \beta_{D^o} = \rho'_{D^o}$ follows immediately from
the definition of $\rho'_{D^o}$ in \S~\ref{sec:Rec-map}.
To prove the remaining assertions, we define
\[
\alpha'_X = \vartheta'_X \circ \alpha_X, \ \ \alpha'_U =
\vartheta'_U \circ \alpha_U, \
\ \beta'_X = (\alpha_X)^{-1} \circ (\vartheta'_X)^{-1} \circ \Phi_X,
\]
\[
\beta'_U = (\alpha_U)^{-1} \circ (\vartheta'_U)^{-1} \circ \Phi_U, \ \
\wt{\partial}^\star = \partial^\star \circ (\vartheta'_U)^{-1}.
\]
Then, with the possible exception of the commutativity of the upper and lower
middle squares in ~\eqref{eqn:Br-Chow-diag-0}, all the remaining assertions of the
lemma follow immediately from a combination of Lemmas~\ref{lem:Elem-2} and
~\ref{lem:Elem-4}.

The commutativity of the upper middle square in ~\eqref{eqn:Br-Chow-diag-0}
follows from that of the composite
middle square. To prove the commutativity of the latter, it suffices, by the
surjectivity of $\psi_U$ and the definition of $\wt{\partial}'$,
to show that
$\wt{\partial}^\star \circ \alpha'_U \circ \beta'_U \circ \psi_U =
\alpha_{D^o} \circ \beta_{D^o} \circ \partial'$.
To that end, we compute
\[
\begin{array}{lll}
  \wt{\partial}^\star \circ \alpha'_U \circ \beta'_U \circ \psi_U & = &
  \wt{\partial}^\star \circ \vartheta'_U \circ \alpha_U \circ (\alpha_U)^{-1} \circ
  (\vartheta'_U)^{-1} \circ \Phi_U \circ \psi_U = 
  \wt{\partial}^\star \circ  \Phi_U \circ \psi_U \\
  & = & \partial \circ  (\vartheta'_U)^{-1} \circ \Phi_U \circ \psi_U  \ \ {=}^{1} 
  \ \ \partial \circ \rho'_U \\
  & {=}^{2} & \partial \circ \alpha_U \circ \beta_U \ \ {=}^{3} \ \ 
  \alpha_{D^o} \circ \beta_{D^o} \circ \partial'.
\end{array}
\]

Here, equality ${=}^1$ follows from \lemref{lem:Elem-2}, equality
${=}^{2}$ follows from the identity $\rho'_U =  \alpha_U \circ \beta_U$ which is the
definition of $\rho'_U$, and  equality ${=}^{3}$ from \lemref{lem:Elem-4}.
Finally, the lower middle square commutes because
$\wt{\partial}^\star \circ \alpha'_U = \partial^\star \circ  (\vartheta'_U)^{-1} \circ
\vartheta'_U \circ \alpha_U = \partial^\star \circ  \alpha_U = \alpha_{D^o} \circ
\partial^\vee$, where the last equality follows from  \lemref{lem:Elem-4}.
This completes the proof.
\end{proof}

\subsection{Proof of \thmref{thm:Main-10}}\label{sec:Main-10-prf}
We shall continue to work under the setting of \S~\ref{sec:Key-diag}.
We need a couple of more results to prove \thmref{thm:Main-10}.
The first is an application of a theorem of Esnault--Wittenberg
\cite{Esnault-Wittenberg}.

\begin{lem}\label{lem:EW-surjection}
Assume that $\Alb_X$ has potentially good reduction and the irreducible
  components of $Y$ satisfy the Tate conjecture. 
Then the map $\Phi_X \colon \Br(X)\{\ell\} \to \CH_0(X)^\star\{\ell\}$ is surjective.
\end{lem}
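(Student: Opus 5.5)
We will deduce the surjectivity of $\Phi_X$ from the injectivity of the $\ell$-adic cycle class map of Esnault--Wittenberg, using Saito--Tate duality and the left column of the diagrams of \S~\ref{sec:Key-diag}. By \lemref{lem:Br-Chow-diag}, we have $\Phi_X = \alpha'_X \circ \beta'_X$, and $\alpha'_X$ is an isomorphism. It therefore suffices to show that $\beta'_X \colon \Br(X)\{\ell\} \to H^4(X, \Z_\ell(2))^\vee$ is surjective. By construction, and using \lemref{lem:Elem-2} and \lemref{lem:Elem-4}, we have $\beta'_X \circ \psi_X = \beta_X$. Since $\psi_X$ is surjective, it is enough to show that
\[
\beta_X \colon H^2_{\rm {\etl}}(X, {\Q_\ell}/{\Z_\ell}(1)) \to H^4(X, \Z_\ell(2))^\vee
\]
is surjective.

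Next we identify $\beta_X$. By its construction in \lemref{lem:Elem-4}, $\beta_X$ is the composite of the duality isomorphism
\[
H^2_{\rm {\etl}}(X, {\Q_\ell}/{\Z_\ell}(1)) \cong {\varinjlim}_n H^4_{\rm {\etl}}(X, \Lambda_{\ell^n}(2))^\star \cong H^4_{\rm {\etl}}(X, \Z_\ell(2))^\vee
\]
of \thmref{thm:SD-1}, passed to the limit over finite groups, with the Pontryagin dual of the étale realization map $\epsilon^*_X \colon H^4(X, \Z_\ell(2)) \to H^4_{\rm {\etl}}(X, \Z_\ell(2))$. Both groups are profinite: the source is ${\varprojlim}_n {\CH_0(X)}/{\ell^n}$, a limit of finite groups by the end of the proof of \lemref{lem:Main-Diag-1}, and the target is a limit of finite groups by \lemref{lem:Etale-fin}. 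Also, $\epsilon^*_X$ is a continuous homomorphism. By Pontryagin duality for profinite abelian groups, $(\epsilon^*_X)^\vee$ is surjective exactly when $\epsilon^*_X$ is injective.

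Finally, under the identification $H^4(X, \Z_\ell(2)) \cong \CH_0(X)^{(\ell)}$, the map $\epsilon^*_X$ is the cycle class map $\rho_X$ of \lemref{lem:EW-main-3}. The hypotheses that $\Alb_X$ has potentially good reduction and that the components of $Y$ satisfy the Tate conjecture are exactly those of \cite[Thm.~3.1]{Esnault-Wittenberg}, so $\rho_X$ is injective. This completes the chain of reductions.

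The main obstacle is to check carefully that $\beta_X$ really is the dual of the realization map composed with duality, with no hidden divisible or non-continuous part, and that $\vartheta'_X$ and $\alpha_X$ agree with these identifications. This should follow from the commutativity already recorded in \eqref{eqn:Main-0-1-0} and \lemref{lem:Elem-2}. A second point to check is that \cite[Thm.~3.1]{Esnault-Wittenberg} needs no $(\star_\ell)$ hypothesis; as far as we know it does not, since it is stated for any semistable model.
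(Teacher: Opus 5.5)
Your proposal is correct and is essentially the paper's proof: the paper reduces via the commutative triangle $\Phi_X \circ \psi_X = \vartheta'_X \circ \rho'_X$ of \lemref{lem:Elem-2} instead of the left column of \lemref{lem:Br-Chow-diag}, but since $\rho'_X = \alpha_X \circ \beta_X$ with $\alpha_X$ and $\vartheta'_X$ isomorphisms, this is the same reduction. Both arguments then use \thmref{thm:SD-1} and Pontryagin duality to reduce to the injectivity of $\cyc_X \colon \CH_0(X)^{(\ell)} \to H^4_{\rm {\etl}}(X, \Z_\ell(2))$, which is \cite[Thm.~3.1]{Esnault-Wittenberg}; as you expected, the paper applies it without any $(\star_\ell)$-hypothesis.
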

\begin{proof}
It is easy to check that the diagram
  \begin{equation}\label{eqn:EW-surjection-0}
    \xymatrix@C1pc{
      H^2_\et(X, {\Q_\ell}/{\Z_\ell}(1)) \ar[r]^-{\psi_X} \ar[dr]_-{\vartheta'_X \circ
        \rho'_X} & \Br(X)\{\ell\}
      \ar[d]^-{\Phi_X} \\
      & \CH_0(X)^\star\{\ell\}}
  \end{equation}
  is commutative.
 Therefore, it suffices to show that $\vartheta'_X \circ \rho'_X$
  is surjective. By Lemmas~\ref{lem:Elem-0} and ~\ref{lem:Elem-3}, this is
  equivalent to showing that the homomorphism of discrete torsion groups
  $\rho'_X \colon H^2_\et(X, {\Q_\ell}/{\Z_\ell}(1)) \to (\CH_0(X)^{(\ell)})^\vee$
  is surjective. By \thmref{thm:SD-1} and Pontryagin duality, this in turn is
  equivalent to showing that the continuous homomorphism of profinite groups 
  $\rho_X = \cyc_X \colon \CH_0(X)^{(\ell)} \to H^4_\et(X, \Z_\ell(2))$ is injective.
  But this is precisely \cite[Thm.~3.1]{Esnault-Wittenberg}.
\end{proof}

The second result we need is the following. Recall the
$(\star \star_\ell)$-condition from \S~\ref{sec:Main-0-prf}.

\begin{lem}\label{lem:EW-injection}
  Assume that the $(\star \star_\ell)$-condition holds for $X$. Then the
  map
  \[
  \beta_{X^o} \colon H^3_{\rm {\etl}}(X^o, {\Q_\ell}/{\Z_\ell}(1)) \to H^3_c(X^o, \Z_\ell(2))^\vee
  \]
  is an isomorphism.
\end{lem}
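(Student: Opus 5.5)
The plan is to reduce the claim to the bijectivity of an $\ell$-adic étale realization map, and to prove that by comparing localization sequences for the pair $(X, S)$. By construction (see the proof of \lemref{lem:Elem-4}), $\beta_{X^o}$ is obtained in two steps. First one takes the limit over $n$ of the duality isomorphisms of \thmref{thm:SD-1}, namely $H^3_{\rm {\etl}}(X^o, \Lambda_{\ell^n}(1)) \cong H^3_{{\rm {\etl}},c}(X^o, \Lambda_{\ell^n}(2))^\star$. Then one composes with the Pontryagin dual of the limit of the étale realization maps $\epsilon^{3,2}_{X^o,n} \colon H^3_c(X^o, \Lambda_{\ell^n}(2)) \to H^3_{{\rm {\etl}},c}(X^o, \Lambda_{\ell^n}(2))$. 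All groups involved are finite by \lemref{lem:Elem-3}, \thmref{thm:SD-1} and \lemref{lem:Etale-fin}. Hence it suffices to show that
\[
\epsilon^{3,2}_{X^o} \colon H^3_c(X^o, \Z_\ell(2)) \to H^3_{{\rm {\etl}},c}(X^o, \Z_\ell(2))
\]
is an isomorphism of profinite groups; Pontryagin duality then gives the claim.

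Since $S = D \setminus D^o$ is a finite set of closed points and $X$ is proper, ~\eqref{eqn:MC-03} gives, for each $n$, a commutative ladder of exact sequences of finite groups
\[
H^2(X,\Lambda_{\ell^n}(2)) \to H^2(S,\Lambda_{\ell^n}(2)) \to H^3_c(X^o,\Lambda_{\ell^n}(2)) \to H^3(X,\Lambda_{\ell^n}(2)) \to H^3(S,\Lambda_{\ell^n}(2)),
\]
mapping to the analogous étale sequence. I will identify each vertical arrow in turn.
\begin{enumerate}
\item The arrows for $H^2(X)$ and $H^2(S)$ are isomorphisms. Here $i=2 \le j=2$, so this is the Beilinson--Lichtenbaum range: \cite[Cor.~7.3]{GKR} gives bijectivity for $i \le j$ (and injectivity for $i = j+1$).
\item Both groups in the $H^3(S)$ term vanish. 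The motivic group vanishes by \lemref{lem:Vanishing-coh}(1), since $\dim S = 0$, $a=2$, $b=1$ and $a > d+b$. The étale group vanishes because $\mathrm{cd}_\ell(k(s)) = 2$, by \thmref{thm:Tate-Duality-0}.
\end{enumerate}

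Since all these groups are finite, the rows stay exact after passing to $\varprojlim_n$. In the limit, the arrow for $H^3(X, \Z_\ell(2))$ is an isomorphism by \thmref{thm:Main-0-0}; this is exactly where the $(\star\star_\ell)$-condition enters. The five lemma then shows that $\epsilon^{3,2}_{X^o}$ is bijective. It has exactly the right inputs: the two arrows adjacent to the middle are isomorphisms, the first arrow is surjective and the last is injective.

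The main obstacle is to check that the identification of $\beta_{X^o}$ with (duality)$\circ$(dual of $\epsilon^{3,2}_{X^o}$) is compatible with the limits. This means the duality isomorphisms of \thmref{thm:SD-1} must be compatible with the transition maps $\Lambda_{\ell^{n+1}} \to \Lambda_{\ell^n}$ and $\Lambda_{\ell^n} \hookrightarrow \Lambda_{\ell^{n+1}}$, so that the direct limit on the étale side is dual to the inverse limit on the compact-support side. I expect this to follow from the naturality in $K$ in \thmref{thm:Tate-Duality-0}(3), applied to $K = \pi_!\Lambda_{X^o}(2)$, exactly as it was used in \lemref{lem:Elem-4}. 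The rest is the diagram chase above.
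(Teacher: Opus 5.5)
Your proposal is correct and follows essentially the same route as the paper. You reduce via Theorem~\ref{thm:SD-1} and Pontryagin duality to the bijectivity of $\epsilon^{3,2}_{X^o}$. You then compare the motivic and \'etale localization sequences for $S \subset X$, using \cite[Cor.~7.3]{GKR} for the $H^2$ terms, the vanishing of the $H^3(S)$ terms, and Theorem~\ref{thm:Main-0-0} for $H^3(X)$ after passing to the limit.

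The only cosmetic difference is in the last step. You keep the zero $H^3(S)$ terms in both rows and apply the five lemma directly. The paper instead truncates the motivic row at $H^3(X) \to 0$ and deduces the surjectivity of the \'etale $w_*$ from the bijectivity of $\epsilon^{3,2}_X$.
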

\begin{proof}
By \thmref{thm:SD-1} and Pontryagin duality for profinite groups,
  the lemma is equivalent to showing that the {\'e}tale realization map
  $\epsilon^{3,2}_{X^o} \colon H^3_c(X^o, \Z_\ell(2)) \to  H^3_{\et,c}(X^o, \Z_\ell(2))$ 
  is an isomorphism. To prove the latter assertion, we proceed as follows.

We let $n \ge 1$ and consider the commutative diagram of
  exact sequences (cf. ~\eqref{eqn:MC-03})
  \begin{equation}\label{eqn:EW-injection-0}
    \xymatrix@C0.8pc{
      H^2(X, \Lambda_{\ell^n}(2)) \ar[r] \ar[d] & H^2(S, \Lambda_{\ell^n}(2)) \ar[r]
      \ar[d]
      &   H^3_c(X^o, \Lambda_{\ell^n}(2)) \ar[r]^-{w_*} \ar[d] &
      H^3(X, \Lambda_{\ell^n}(2)) \ar[r] \ar[d] & 0 \\
    H^2_\et(X, \Lambda_{\ell^n}(2)) \ar[r] & H^2_\et(S, \Lambda_{\ell^n}(2)) \ar[r] 
      &   H^3_{\et,c}(X^o, \Lambda_{\ell^n}(2)) \ar[r]^-{w_*} &
    H^3_\et(X, \Lambda_{\ell^n}(2)), &}
  \end{equation}
  whose vertical arrows are the {\'e}tale realization maps.
Recall here that $S = X \setminus X^o$ and $w \colon X^o \inj X$ is the
  inclusion. The map $w_*$ in the top row is surjective because the next term in
  the localization sequence is $H^3(S, \Lambda_{\ell^n}(2))$ which is zero by the
  Tate duality (cf. \thmref{thm:SD-1}).

The first two vertical arrows in ~\eqref{eqn:EW-injection-0} are isomorphisms
  and the last two vertical arrows are injective by
  \cite[Cor.~7.3]{GKR}. As all groups in the bottom row are finite by
  \thmref{thm:SD-1}, it follows that  ~\eqref{eqn:EW-injection-0} is a commutative
  diagram of finite groups. As this is clearly compatible with the change in
  the values of $n \ge 1$, we can pass to the limit and still get a
  commutative diagram
  \begin{equation}\label{eqn:EW-injection-1}
    \xymatrix@C0.8pc{
      H^2(X, \Z_\ell(2)) \ar[r] \ar[d] & H^2(S, \Z_\ell(2)) \ar[r] \ar[d]
      &   H^3_c(X^o, \Z_\ell(2)) \ar[r]^-{w_*} \ar[d]^-{\epsilon^{3,2}_{X^o}} &
      H^3(X, \Z_\ell(2)) \ar[r] \ar[d]^-{\epsilon^{3,2}_{X}} & 0 \\
    H^2_\et(X, \Z_\ell(2)) \ar[r] & H^2_\et(S, \Z_\ell(2)) \ar[r]
      &   H^3_{\et,c}(X^o, \Z_\ell(2)) \ar[r]^-{w_*}  &
    H^3_\et(X, \Z_\ell(2)), &}
  \end{equation}
  whose rows remain exact and the first two vertical arrows are isomorphisms.
We are now done because $\epsilon^{3,2}_{X}$ is an isomorphism by
  \thmref{thm:Main-0-0}. In particular, $w_*$ is surjective also in the bottom row. 
\end{proof}

We are now ready to prove \thmref{thm:Main-10}. We restate it here to make the
assumptions and notation explicit. We let $\Br(U)^{\perp} = \Ker(\Phi_U)$ and
$H^1_\et(D^o)^{\perp} = \Ker(\rho'_{D^o}) \bigcap \Ker(v_*)$.

\begin{thm}\label{thm:Main-10*}
  Assume that $\sX$ is a projective $R$-scheme and
  the $(\star \star_\ell)$-condition holds. Then we have the
  following.
  \begin{enumerate}
    \item
      $H^1_{\rm {\etl}}(D^o)^{\perp}\{p'\} = \Ker(\rho'_{D^o})\{p'\} \cong
      {\underset{\ell \neq p}\bigoplus} ({\Q_\ell}/{\Z_\ell})^{r_\ell}$, where
      $0 \le r_\ell \le r$ for
  some integer $r$ not depending on $\ell$. 
    \item
    There exists an exact sequence
  \[
  0 \to \Br(\sX)\{p'\} \to  \Br(U)^{\perp}\{p'\} \to
  {\underset{\ell \neq p}\bigoplus} ({\Q_\ell}/{\Z_\ell})^{r_\ell} \to 0.
  \]
  \end{enumerate}
\end{thm}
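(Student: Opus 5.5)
The plan is to work one prime $\ell \neq p$ at a time using the diagram of \lemref{lem:Br-Chow-diag}, and then sum over $\ell$. Since the $(\star\star_\ell)$-condition holds, \lemref{lem:EW-injection} says that $\beta_{X^o}$ is an isomorphism, and in particular injective. Take $\chi \in \Br(U)^\perp\{\ell\}$, meaning $\Phi_U(\chi) = \alpha'_U\beta'_U(\chi) = 0$. Because $\alpha'_U$ is bijective, we get $\beta'_U(\chi)=0$. Commutativity of the middle squares then gives $\rho'_{D^o}(\wt{\partial}'(\chi)) = \alpha_{D^o}\beta_{D^o}\wt{\partial}'(\chi) = \wt{\partial}^\star\alpha'_U\beta'_U(\chi) = 0$. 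Exactness of the top row gives $v_*\wt{\partial}'(\chi)=0$. Hence $\wt{\partial}'$ maps $\Br(U)^\perp\{\ell\}$ into $H^1_\et(D^o)^\perp\{\ell\}$.

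The first step is to prove $\Ker(\rho'_{D^o})\{\ell\} \subseteq \Ker(v_*)$, which gives the equality in (1). Let $a \in \Ker(\rho'_{D^o})$. Since $\alpha_{D^o}$ is bijective, $\beta_{D^o}(a)=0$. Exactness of the middle row together with commutativity of the right square gives $\beta_{X^o}(v_*a) = (v^*)^\vee\beta_{D^o}(a) = 0$, and injectivity of $\beta_{X^o}$ then forces $v_*a=0$. The structure of $\Ker(\rho'_{D^o})\{p'\}$ comes from \thmref{thm:CFT-reg}. By construction, $\rho'_{D^o}$ is the Pontryagin dual of the reciprocity map $\rho_{D^o}$, whose topological cokernel is $\prod_{\ell\neq p}\Z_\ell^{r_\ell}$ with $r_\ell \le r$. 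The kernel of the dual map is the dual of that cokernel, namely $\bigoplus_\ell(\Q_\ell/\Z_\ell)^{r_\ell}$. Some care is needed about working with $\ell$-primary parts versus the full prime-to-$p$ completion; I would handle this by working $\ell$-adically and noting that the cokernel decomposes over primes.

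For (2), I next show that $\wt{\partial}' \colon \Br(U)^\perp\{\ell\} \to H^1_\et(D^o)^\perp\{\ell\}$ is surjective. Take $a \in H^1_\et(D^o)^\perp\{\ell\}$. Since $v_*a=0$, exactness of the top row gives $\chi \in \Br(U)\{\ell\}$ with $\wt{\partial}'\chi = a$. Then $\wt{\partial}^\star\Phi_U(\chi) = \rho'_{D^o}(a)=0$, so exactness of the bottom row at $C_0(U)^\star\{\ell\}$ gives $\Phi_U(\chi) = (j_*)^\star(\phi)$ for some $\phi\in\CH_0(X)^\star\{\ell\}$. By \lemref{lem:EW-surjection}, which applies because the $(\star\star_\ell)$-condition contains its hypotheses (in case (2) of that condition I would use the remark in the proof of \thmref{thm:Main-0-0} that the Tate conjecture descends to the components of $Y$), we can write $\phi = \Phi_X(\chi_0)$. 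Lemma~\ref{lem:Elem-2} gives $\Phi_U(j^*\chi_0) = (j_*)^\star\Phi_X(\chi_0)$, so $\chi - j^*\chi_0 \in \Br(U)^\perp$ and it still maps to $a$.

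Finally I identify the kernel. The kernel of $\wt{\partial}'$ restricted to $\Br(U)^\perp\{\ell\}$ is $j^*(\Br(X)\{\ell\}) \cap \Br(U)^\perp$. By injectivity of $j^*$ and of $(j_*)^\star$, an element $j^*\chi_0$ lies in $\Br(U)^\perp$ if and only if $\Phi_X(\chi_0)=0$, that is, $\chi_0\in\Br(X)^\perp\{\ell\}$. By Saito--Sato this group equals $\Br(\sX)\{\ell\}$; this is where projectivity of $\sX$ over $R$ enters. Summing over $\ell \neq p$ yields the exact sequence in (2). I expect the main subtle point to be the surjectivity argument for (2): it depends on matching the duality conventions in \lemref{lem:EW-surjection} and \lemref{lem:Elem-2} and on checking that the Esnault--Wittenberg input is available under both variants of the $(\star\star_\ell)$-condition.
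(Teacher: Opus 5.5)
Your proposal is correct and is essentially the paper's proof. Fixing $\ell$, the paper obtains the exact sequence $0 \to \Ker(\Phi_X)\{\ell\} \to \Ker(\Phi_U)\{\ell\} \to \Ker(\rho'_{D^o})\{\ell\} \to 0$ by exactly the chase you spell out in the diagram of \lemref{lem:Br-Chow-diag}: \lemref{lem:EW-injection} gives $\Ker(\rho'_{D^o})\{\ell\} \subseteq \Ker(v_*)$, and \lemref{lem:EW-surjection} corrects a lift by an element of $j^*\Br(X)\{\ell\}$; item (1) then comes from \thmref{thm:CFT-reg} by duality. The only difference is the citation for $\Ker(\Phi_X) = \Br(\sX)$: the paper uses \cite[Thm.~1.6]{KM-0}, which covers henselian $k$ of any characteristic, and invokes Saito--Sato only for complete $k$ of characteristic zero.
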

\begin{proof}
It suffices to prove the statements of the theorem for
  the $\ell$-primary torsion subgroups for each individual $\ell \neq p$.
  We now fix $\ell \neq p$.
Item (1) then follows from \thmref{thm:CFT-reg}, and Lemmas~\ref{lem:Br-Chow-diag}
  and ~\ref{lem:EW-injection}. For item (2), we note that a combination of
  Lemmas~\ref{lem:Br-Chow-diag}, ~\ref{lem:EW-surjection} and
  ~\ref{lem:EW-injection} implies by a diagram chase in
  ~\eqref{eqn:Br-Chow-diag-0} that there is an exact sequence
  \begin{equation}\label{eqn:Main-10*-0}
    0 \to \Ker(\Phi_X)\{\ell\} \to \Ker(\Phi_U)\{\ell\} \to
    \Ker(\rho'_{D^o})\{\ell\} \to 0.
  \end{equation}
 We now apply \cite[Thm.~1.6]{KM-0} (see \cite[Thm.~1.1.3]{Saito-Sato-ENS}
  when $k$ is complete and $\Char(k) = 0$), which says that the
  canonical map $\Br(\sX) \to \Ker(\Phi_X)$ is an isomorphism.
  This completes the proof.
\end{proof}

\begin{cor}\label{cor:Main-10*-1}
  Assume in \thmref{thm:Main-10*} that each irreducible component of $D$ is a
  smooth $k$-scheme and its Jacobian has good reduction. Then the map
  $\Br(\sX)\{p'\} \to  \Br(U)^{\perp}\{p'\}$ is an isomorphism.
\end{cor}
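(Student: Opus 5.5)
By Theorem~\ref{thm:Main-10*}(2), it is enough to show that $r_\ell = 0$ for every $\ell \neq p$, that is, $\Ker(\rho'_{D^o})\{\ell\} = 0$. By Theorem~\ref{thm:Main-10*}(1) and Theorem~\ref{thm:CFT-reg}, $\Ker(\rho'_{D^o})\{\ell\}$ is the Pontryagin dual of the $\ell$-part of the topological cokernel of $\rho_{D^o}$. That cokernel in turn equals the topological cokernel of $\rho_{\ov{D^o}}$ for a regular compactification $\ov{D^o}$ of $D^o$. Since $\Ker(\rho'_{D^o})$ does not depend on the choice of $D^o$ (the first claim of Theorem~\ref{thm:Main-10*} identifies it with a group depending only on the compactification), I will choose $D^o$ so that its compactification is $D' := \coprod_i D_i$, where $D_i$ runs over the irreducible components of $D$. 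By hypothesis each $D_i$ is smooth, and we may take $D^o$ to be the complement of the pairwise intersections of the $D_i$ and of the finitely many singular points of $D$.

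The claim then reduces to: for a smooth projective geometrically connected curve $C = D_i$ over $k$ (after Stein factorisation, a curve over a finite extension $k_i$ of $k$, which is again an hdvf with finite residue field) whose Jacobian has good reduction, the cycle class map $\epsilon^{3,2}_C \colon H^3(C, \Z_\ell(2)) \to H^3_{\et}(C, \Z_\ell(2))$ is surjective, i.e. its cokernel $\Z_\ell^{r_\ell}$ is zero. Here $H^3(C,\Z(2))$ is the group $SK_1(C)$ (i.e. $V(C)$ together with the norm part), and $H^3_{\et}(C,\Z_\ell(2))$ is dual to $\pi_1^{\ab}(C)^{(\ell)}$-type data. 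This is exactly Saito's class field theory for curves over local fields \cite{Saito-JNT}: the rank of the torsion-free part of the cokernel of the reciprocity map equals the rank of the split toric part of the special fiber of the Néron model of $\mathrm{Jac}(C)$. When the Jacobian has good reduction there is no toric part, so the cokernel is trivial; more precisely, the $\ell$-adic cokernel is $\Z_\ell^{r}$ with $r$ the $k$-split toric rank. I would invoke this through \cite[Thm.~2.6]{Saito-JNT} and \cite[Prop.~6.11]{GKR}, as in the proof of Theorem~\ref{thm:CFT-reg}, passing to the completion via \cite[Lem.~5.4]{JS-Doc}.

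The main obstacle is verifying that, for a smooth $D_i$, the integer $r'$ obtained in the proof of Theorem~\ref{thm:CFT-reg} is exactly Saito's toric rank. No purely inseparable alteration is needed here because $D_i$ is already smooth. A secondary bookkeeping step is checking that the cokernels for the disjoint union $D'$ are the direct sums of those for the components, which follows from the additivity of both motivic and étale cohomology on disjoint unions. Combining these, $r_\ell = 0$ for all $\ell$, and the exact sequence of Theorem~\ref{thm:Main-10*}(2) gives $\Br(\sX)\{p'\} \cong \Br(U)^\perp\{p'\}$.
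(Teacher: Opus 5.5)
Your proposal is correct and is essentially the paper's argument. The paper also reduces, via \thmref{thm:Main-10*}(2), to the vanishing of $r$, and then simply cites Saito's result (\S~6 of \cite{Saito-JNT}) that $r=0$ when the Jacobians of the components of $D$ have good reduction. Your extra bookkeeping makes explicit what the paper's one-line proof leaves implicit: passing to the regular compactification $\coprod_i D_i$ via \thmref{thm:CFT-reg}, splitting over components, and Stein factorisation to the field of constants $k_i$.
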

\begin{proof}
  It is known in this case that $r = 0$ (cf. \cite[\S~6]{Saito-JNT}).
  \end{proof}

\section{The remaining proofs}\label{sec:Remarks}
The goal of this section is to prove the remaining results stated in
\S~\ref{sec:Intro}.
We let the setting be of \S~\ref{sec:Key-diag} and let $\sD \subset \sX$
be the closure of $D$ with the reduced closed subscheme structure and let
$\sU = \sX \setminus \sD$ so that $\sU$ is the largest open subscheme of $\sX$
whose generic fiber is $U$.

\subsection{Proof of \thmref{thm:Main-8}}\label{sec:Pf-main-8**}
The following two examples will
prove \thmref{thm:Main-8}.
\subsection{Example~1:}\label{sec:Exm-1}
We let $R = \Z_2$ and $\sX = \P^2_R$ so that $X = \P^2_k$ and $Y = \P^2_{\F}$. We let
$D \subset X$ be the elliptic curve given by the homogeneous equation
$f(x,y,z) = y^2z +x^3 + x^2z - 2z^3$. Then all hypotheses of \thmref{thm:Main-10*}
are satisfied. Furthermore, it is classically known that $r_\ell =1$ for every
$\ell \neq 2$ so that we get
$\Br(\sX) \cong \Br(R) = 0$ (cf. \cite[Cor.~6.1.4]{CTS}) while
$\Br(U)^\perp\{\ell\} \cong {\Q_\ell}/{\Z_\ell}$ for every $\ell \neq 2$.

\subsection{Example~2:}\label{sec:Exm-2}
We let $k$ be any local field with the ring of integers $R$ and
let $\sX = \P^1_R \times_R \P^1_R$ and
$\sU = \G_{m,R} \times_R \P^1_R$ so that $X = \P^1_k \times_k \P^1_k, \
U = \G_{m,k} \times \P^1_k$ and $D = \{0, \infty\} \times \P^1_k$.
Since all  hypotheses of \thmref{thm:Main-10*} are satisfied and $r =0$, we get
$\Br(\sX)\{p'\} = \Br(U)^\perp\{p'\} = 0$ by \cite[Cor.~6.1.4]{CTS}, where $p$ is the
residue characteristic of $k$. We shall show that
$\Br(\sU)\{p'\} \cong {\Q}/{\Z}\{p'\}$. By op. cit., it suffices to show the
following.

\begin{lem}\label{lem:Exm-2**}
$\Br(\G_{m,R})\{p'\} \cong {\Q}/{\Z}\{p'\}$. 
\end{lem}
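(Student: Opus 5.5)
We must show $\Br(\G_{m,R})\{p'\} \cong {\Q}/{\Z}\{p'\}$, where $R$ is the ring of integers of a local field $k$ with finite residue field $\F$ of characteristic $p$. Write $\G_{m,R} = \Spec(R[t,t^{-1}])$. The plan is to compute $\Br(\G_{m,R})\{\ell\}$ for each prime $\ell \neq p$ through purity along the zero section of $\A^1_R$.

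First, $\A^1_R = \Spec(R[t])$ is regular of dimension two and $\G_{m,R} = \A^1_R \setminus Z$ with $Z = \{t=0\} \cong \Spec(R)$, a regular divisor. Gabber's absolute purity (\cite{Fujiwara}), applied to the localization sequence for $\mu_{\ell^n}$ coefficients and passing to the colimit, gives the Gysin sequence
\[
\Br(\A^1_R)\{\ell\} \to \Br(\G_{m,R})\{\ell\} \xrightarrow{\mathrm{res}} H^1_{\et}(R, {\Q_\ell}/{\Z_\ell}) \to H^3_{\et}(\A^1_R, {\Q_\ell}/{\Z_\ell}(1)).
\]
Here one compares with the $H^2(-,\mu_{\ell^n})$ version and uses that $\Pic(\G_{m,R}) = 0$ (since $R[t,t^{-1}]$ is a UFD), so the Kummer sequence identifies $\Br(\G_{m,R})\{\ell\}$ with $H^2_{\et}(\G_{m,R}, {\Q_\ell}/{\Z_\ell}(1))$. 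The same identification holds for $\A^1_R$.

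Next, $\A^1$-homotopy invariance of étale cohomology with $\ell$-torsion coefficients holds over the regular base $\Spec(R)$: it follows from the smooth base change theorem together with the computation $R\pi_*\Lambda = \Lambda$ for $\A^1$. Hence $H^i_{\et}(\A^1_R, -) \cong H^i_{\et}(R, -)$. Since $R$ is henselian, proper base change gives $H^i_{\et}(R,-) \cong H^i_{\et}(\F,-)$. Thus $\Br(\A^1_R)\{\ell\} \cong \Br(\F)\{\ell\} = 0$, and $H^3_{\et}(\A^1_R,-) \cong H^3_{\et}(\F,-) = 0$ because $\F$ has cohomological dimension one. Consequently
\[
\Br(\G_{m,R})\{\ell\} \cong H^1_{\et}(R, {\Q_\ell}/{\Z_\ell}) \cong H^1_{\et}(\F, {\Q_\ell}/{\Z_\ell}) \cong {\Q_\ell}/{\Z_\ell},
\]
using $\mathrm{Gal}(\ov{\F}/\F) \cong \wh{\Z}$. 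Summing over $\ell \neq p$ gives the lemma. For an explicit generator one can take the cyclic algebra $(\chi, t)$, with $\chi$ running through the unramified characters.

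The step I expect to need the most care is the first one: justifying purity and the identification with the Brauer group in mixed characteristic. I would invoke Gabber's purity for the regular pair $(\A^1_R, Z)$ exactly as it is used in the paper's coniveau spectral sequence~\eqref{eqn:Unram-2-0}.
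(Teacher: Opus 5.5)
Your proof is correct and follows the same route as the paper: the localization sequence along the zero section of $\A^1_R$, Gabber's purity giving $H^1_{\et}(R,\Q_\ell/\Z_\ell)\cong H^1_{\et}(\F,\Q_\ell/\Z_\ell)\cong\Q_\ell/\Z_\ell$, homotopy invariance, and the Kummer sequence. The one difference is bookkeeping. You work with $\Q_\ell/\Z_\ell(1)$-coefficients from the start, using $\Pic(R[t])=\Pic(R[t,t^{-1}])=0$ and $H^3_{\et}(\A^1_R,\Q_\ell/\Z_\ell(1))\cong H^3_{\et}(\F,\Q_\ell/\Z_\ell(1))=0$. The paper instead runs the sequence with $\G_m$-coefficients, so it must separately show that $H^3_{\et}(\A^1_R,\G_m)\to H^3_{\et}(\G_{m,R},\G_m)$ is injective on $p'$-torsion and that $H^2_{\{0\}}(\A^1_R,\G_m)$ is torsion; your version makes both steps unnecessary.
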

\begin{proof}
We write $A = R[x]$ and $B = R[x, x^{-1}])$ so that $\G_{m,R} = \Spec(B)$.
We then have the exact sequence
\begin{equation}\label{eqn:Exm-2-0}
  \Br(A) \to \Br(B) \xrightarrow{\partial} H^3_{\{0\}}(\A^1_R, \G_m)
    \xrightarrow{\alpha} H^3_\et(\A^1_R, \G_m) \xrightarrow{\beta}
    H^3_\et(\G_{m,R}, \G_m).
\end{equation}
The homotopy invariance of the {\'e}tale cohomology with coefficients in
${\Q_\ell}/{\Z_\ell}(1)$ and the Kummer sequence tell us that
$\Br(A)\{p'\} \xrightarrow{\cong} \Br(R)\{p'\} = 0$.
We claim that $\beta$ is injective on the $p'$-primary torsion subgroup.

Indeed, we have a commutative diagram
\begin{equation}\label{eqn:Exm-2-1-0}
  \xymatrix@C1pc{
    H^3_\et(A, {\Q_\ell}/{\Z_\ell}(1)) \ar@{->>}[r] \ar[d] &
    H^3_\et(\A^1_R, \G_m)\{\ell\}
    \ar[d]^-{\beta} \\
    H^3_\et(B, {\Q_\ell}/{\Z_\ell}(1)) \ar@{->>}[r] & H^3_\et(\G_{m,R}, \G_m)\{\ell\},}
\end{equation}
where the horizontal arrows are induced by the Kummer sequence. Since
$\Br(A) \otimes {\Q_\ell}/{\Z_\ell} \cong \Br(B) \otimes {\Q_\ell}/{\Z_\ell} = 0$,
we see that the horizontal arrows are bijective.
On the other hand, using the homotopy invariance of the
{\'e}tale cohomology with coefficients in ${\Q_\ell}/{\Z_\ell}(1)$ and
the existence of $R$-points on $\G_{m,R}$, one finds that the left vertical arrow
is injective. This proves the claim.

After the above reductions, it remains to show that
$H^3_{\{0\}}(\A^1_R, \G_m)\{p'\} \cong {\Q}/{\Z}\{p'\}$.
To that end, we use the Kummer sequence again to get an exact sequence
\begin{equation}\label{eqn:Exm-2-1}
  0  \to H^2_{\{0\}}(\A^1_R, \G_m) \otimes {\Q_\ell}/{\Z_\ell} \to
    H^3_{\{0\}}(\A^1_R, {\Q_\ell}/{\Z_\ell}(1))  \to
      H^3_{\{0\}}(\A^1_R, \G_m)\{\ell\} \to 0
\end{equation}
for any $\ell \neq p$. 
The exact sequence
$0 = \Pic(\G_{m,R}) \to H^2_{\{0\}}(\A^1_R, \G_m) \to \Br(A)$ says that
$H^2_{\{0\}}(\A^1_R, \G_m)$ is a torsion group. In particular,
$ H^2_{\{0\}}(\A^1_R, \G_m) \otimes {\Q_\ell}/{\Z_\ell} =0$.
This yields $H^3_{\{0\}}(\A^1_R, \G_m)\{\ell\} \cong
H^3_{\{0\}}(\A^1_R, {\Q_\ell}/{\Z_\ell}(1)) \cong H^1_\et(R, {\Q_\ell}/{\Z_\ell})
\cong H^1_\et(\F, {\Q_\ell}/{\Z_\ell}) \cong {\Q_\ell}/{\Z_\ell}$, where the second
isomorphism follows from Gabber's purity. The proof is complete.
\end{proof}

\subsection{Example~3:}\label{sec:Exm-3}
This example shows that the $(\star \star_\ell)$ condition in
item (2) of \thmref{thm:Main-10*} is necessary.
We let $p$ be a prime and let $k$ be a $p$-adic field.
Let $Y \subset \P^n_k$ be a smooth projective curve and let
$\wh{Y} \subset \P^{n+1}_k$ be the projective cone over $Y$. Let
$X$ denote the blow-up of $\wh{Y}$ at its vertex $P$ and let
$\pi \colon X \to \wh{Y}$ denote the blow-up map. We let
$U = X \setminus E$, where $E \subset X$ is the
exceptional divisor. Then one checks using
\cite[Props.~6.2.7, 8.4.1]{CTS} that the inclusion $U \inj X$ induces an
isomorphism $\Br(X) \xrightarrow{\cong} \Br(U)$.

Using the commutative diagram
\begin{equation}\label{eqn:Exm-3-0}
  \xymatrix@C1.6pc{
    \Br(X) \ar[r]^-{\cong} \ar[d]_-{\Phi_X} & \Br(U) \ar[d]^-{\Phi_U} \\
    \CH_0(X)^\star \ar@{^{(}->}[r] & C_0(U)^\star,}
\end{equation}
we get that the map $\Br(X)^\perp \to \Br(U)^\perp$ is an isomorphism. If we let $\sX$
denote a semistable model  of $X$ (which exists by \cite{Cossart-Piltant}), it
follows that the map $\Br(\sX) \to \Br(U)^\perp$ is an isomorphism.
On the other hand, we can choose $Y$ such that its rank is positive so that
$\Ker(\rho'_E)\{p'\} \neq 0$. In particular, the map
$\Br(U)^\perp\{p'\} \to \Ker(\rho'_E)\{p'\}$ is not
surjective.

\vskip.2cm

The above examples motivate the following.

\begin{ques}\label{ques:Exm-3}
  With the notation as above, is there an example in which 
  $0 \neq \Br(\sX) \neq \Br(U)^\perp \neq \Br(\sU)$?
\end{ques}

\vskip.2cm

\subsection{Comments on Question~\ref{ques:Exm-3}}\label{sec:Comments-3}
It is believable that the above question has a positive answer. Here, we sketch a
strategy for producing example in which $\Br(\sX) \neq \Br(U)^\perp \neq \Br(\sU)$.
Let $p$ be a prime and let $k$ be a $p$-adic field with the ring of integers $R$
and residue field $\F$. We let $\sX = \P^2_R$ so that $X = \P^2_k$ and
$Y = \P^2_\F$. We let $\sD \subset \sX$ be an integral regular closed subscheme of
dimension two which is faithfully flat over $R$. We let $E \subset Y$ denote the
reduced special fiber of $\sD$ and let $D \subset X$ denote the generic fiber
of $\sD$. We let $U = X \setminus D$ and $\sU = \sX \setminus \sD$.
We refer to \cite[Defn.~2.5]{Saito-JNT} for the definition of the rank of $D$.

We have the commutative diagram
\begin{equation}\label{eqn:Exm-4-1}
    \xymatrix@C1pc{
      H^1_\et(\sD, {\Q_\ell}/{\Z_\ell}) \ar[r]^-{\cong} \ar[d] &
      H^1_\et(E, {\Q_\ell}/{\Z_\ell}) \ar@{.>}[d] \\
      H^3_\et(\sX, {\Q_\ell}/{\Z_\ell}(1)) \ar[r]^-{\cong} &
      H^3_\et(Y, {\Q_\ell}/{\Z_\ell}(1)),}
\end{equation}
where the left vertical arrow is the Gysin homomorphism. This yields a
`Gysin homomorphism $u_* \colon H^1_\et(E, {\Q_\ell}/{\Z_\ell}) \to 
H^3_\et(Y, {\Q_\ell}/{\Z_\ell}(1))$ such that the above diagram commutes, where
$u \colon E \inj Y$ is the inclusion.

\begin{prop}\label{prop:Exm-4}
  Suppose we can choose $p$ and $\ell \neq p$ such that the following hold.
  \begin{enumerate}
  \item
    The rank of $D$ is at least one.
  \item
    $H^1_{\rm {\etl}}(E, {\Q_\ell}/{\Z_\ell}) \to
    H^1_{\rm {\etl}}(E_n,  {\Q_\ell}/{\Z_\ell}) \bigoplus  H^1_{\rm {\etl}}(E_\sing,  {\Q_\ell}/{\Z_\ell})
    \bigoplus H^3_{\rm {\etl}}(Y, {\Q_\ell}/{\Z_\ell}(1))$
    is injective.
\end{enumerate}
  Then  we have
  $\Br(\sX)\{\ell\} \neq \Br(U)^\perp\{\ell\} \neq \Br(\sU)\{\ell\}$.
\end{prop}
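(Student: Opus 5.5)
We prove the two inequalities separately, using the setting just described: $\sX = \P^2_R$, $X=\P^2_k$, $Y=\P^2_\F$, and $D$ the generic fiber of the regular flat surface $\sD$. In this setting $\Br(\sX)\{\ell\} = \Br(R)\{\ell\} = 0$ by \cite[Cor.~6.1.4]{CTS}.

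\textbf{First inequality.} It therefore suffices to show $\Br(U)^\perp\{\ell\} \neq 0$. The $(\star\star_\ell)$-condition holds trivially for $\P^2_R$: $\Alb_X = 0$, and $\P^2_\F$ satisfies the Tate conjecture. Moreover $\ell \neq 2$ can be arranged, or the $\F'$ variant used. We apply the argument of \thmref{thm:Main-10*} with $D^o = D_\reg$, and more precisely the exact sequence \eqref{eqn:Main-10*-0}. It gives $\Br(U)^\perp\{\ell\} \surj \Ker(\rho'_{D^o})\{\ell\}$, using $H^1_\et(D^o)^\perp\{\ell\} = \Ker(\rho'_{D^o})\{\ell\}$ from item (1). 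By \thmref{thm:CFT-reg}, the cokernel of $\rho_{D^o}$ agrees with that of the smooth compactification. Its $\ell$-rank is governed by the rank of $D$ in Saito's sense \cite[Defn.~2.5]{Saito-JNT}. Hypothesis (1) then gives $r_\ell \ge 1$, so $\Ker(\rho'_{D^o})\{\ell\} \supseteq \Q_\ell/\Z_\ell \neq 0$. Hence $\Br(U)^\perp\{\ell\} \neq 0 = \Br(\sX)\{\ell\}$.

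\textbf{Second inequality.} We show $\Br(\sU)\{\ell\}$ is strictly larger than $\Br(U)^\perp\{\ell\}$ by comparing residues along $\sD$. Gabber purity on the regular scheme $\sX$ with $\sD$ regular of codimension one gives an exact sequence
\[0 \to \Br(\sX)\{\ell\} \to \Br(\sU)\{\ell\} \to H^1_\et(\sD, \Q_\ell/\Z_\ell) \to H^3_\et(\sX, \Q_\ell/\Z_\ell(1)).\]
Here we use $H^2(\sX, \Q_\ell/\Z_\ell(1)) \surj \Br$ together with $\Pic(\sU)$ divisibility issues, which are handled exactly as in \lemref{lem:Exm-2**}. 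By proper base change and diagram \eqref{eqn:Exm-4-1}, the residue image in $\Br(\sU)\{\ell\}$ is $\Ker(u_*)$ inside $H^1_\et(E, \Q_\ell/\Z_\ell)$.

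Compatibly, elements of $\Br(U)^\perp\{\ell\}$ map under $\wt\partial'$ into $\Ker(\rho'_{D^o})$. Those coming from $\Br(\sU)$ have residue lying in the image of $H^1_\et(\sD) \cong H^1_\et(E) \to H^1_\et(D)$. The claim reduces to showing that $\Ker(u_*)$ is strictly bigger than the image of $\Br(U)^\perp\{\ell\}$, equivalently that $\Ker(u_*) \neq 0$ while its image in $\Ker(\rho'_{D^o})$ is not everything coming from $\Br(\sU)$.

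Hypothesis (2) is designed for exactly this point. An unramified class on $E$ restricting trivially to $E_n$ and $E_\sing$ is detected by $u_*$. So we use it to identify $\Ker(u_*)$ with the part of $H^1_\et(E)$ that contributes nontrivially: classes pulled back from $H^1(\F)$ or from the toric part of the reduction, which produce the ${\Q_\ell}/{\Z_\ell}$ of \lemref{lem:Exm-2**}-type, for instance $H^1_\et(\F,\Q_\ell/\Z_\ell)\cdot$(constant on $E$). Such classes pair nontrivially with zero-cycles of $C_0(U)$, via the local invariant at closed points of $D$ whose degree contributes, so they lie in $\Br(\sU)\{\ell\}$ but not in $\Br(U)^\perp\{\ell\}$.

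\textbf{Main obstacle.} The hardest step is this last separation: exhibiting an element of $\Br(\sU)\{\ell\}$ with residue in $\Ker(u_*)$ whose residue on $D^o$ lies outside $\Ker(\rho'_{D^o})$, i.e.\ it pairs nontrivially with $C_1(D^o)$. We would first try the unramified characters of $H^1(\F)$ pulled back to $E$. By reciprocity, their pairing with $C_1(D^o)$ is the degree map modulo $\ell^n$, which is nonzero. Hypothesis (2) is used to guarantee that such classes together with the rank-$\ge 1$ classes are not all killed by $u_*$.
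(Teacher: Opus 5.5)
Your first inequality is correct and matches the paper: $\Br(\sX)\{\ell\}=0$, while (1) together with \thmref{thm:Main-10*} gives $\Br(U)^\perp\{\ell\}\neq 0$. The second inequality is not proved, and the reasoning you give for it misuses hypothesis (2).

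By the normalization sequence for the reduced curve $E$, the kernel of $H^1_\et(E,\Q_\ell/\Z_\ell)\to H^1_\et(E_n,\Q_\ell/\Z_\ell)\oplus H^1_\et(E_\sing,\Q_\ell/\Z_\ell)$ is exactly the combinatorial (``toric'') part of $H^1_\et(E,\Q_\ell/\Z_\ell)$. So (2) says precisely that no nonzero toric class lies in $\Ker(u_*)$. It therefore excludes the very classes you propose to use. Being an injectivity statement, it also cannot guarantee that anything is ``not killed by $u_*$''.

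Your other candidates fail too. Take $\chi\in H^1_\et(\F,\Q_\ell/\Z_\ell)$ with pullback $\chi_E$ to $E$. The projection formula gives $u_*(\chi_E)=\deg(\sD)\cdot\chi$ under $H^3_\et(Y,\Q_\ell/\Z_\ell(1))\cong H^1_\et(\F,\Q_\ell/\Z_\ell)$, where $\deg(\sD)$ is the degree of $\sD$ as a divisor on $\P^2_R$. So these classes lie in $\Ker(u_*)$ only when killed by $\deg(\sD)$, and there are none if $\ell\nmid\deg(\sD)$. The nontrivial pairing you assert ``via local invariants at closed points of $D$'' is never computed. Your closing criterion is valid: if $\rho'_{D^o}(\wt\partial'(\alpha))\neq 0$ then $\alpha\notin\Br(U)^\perp$, by \lemref{lem:Br-Chow-diag}. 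But you never produce an $\alpha$ satisfying it.

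The missing idea is that no special element is needed. Since $0\neq\Br(U)^\perp\{\ell\}\subseteq\Br(\sU)\{\ell\}$, take any nonzero $\alpha\in\Br(\sU)\{\ell\}$. As $\Br(\sX)\{\ell\}=0$, its residue $\partial(\alpha)$ is a nonzero element of $\Ker(u_*)$, so by (2) it is nonzero on $E_\sing$ or on $E_n$. In the first case directly, and in the second by Chebotarev density on $E_n$, one finds a closed point $x\in E$ with $\partial(\alpha)|_{k(x)}\neq 0$. After a finite unramified extension, \cite[Lem.~6.2]{Ghosh-Krishna-Jussieu} gives $P\in U_{(0)}$ specializing to $x$. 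The compatibility \eqref{eqn:Exm-4-0} of evaluation at $P$ with the residue at $x$ then shows $\alpha(P)\neq 0$, i.e.\ $\alpha\notin\Br(U)^\perp$. (You could instead lift $x$ to a closed point of $D^o$ and use local duality to get your criterion $\rho'_{D^o}(\wt\partial'(\alpha))\neq 0$.)

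Note that this argument applies to every nonzero $\alpha\in\Br(\sU)\{\ell\}$, including the nonzero elements of $\Br(U)^\perp\{\ell\}$ supplied by (1). Combined with the first step, it therefore shows that (1) and (2) cannot hold simultaneously. This is the same tension you ran into: (1) produces toric residues inside $\Ker(u_*)$, and (2) forbids them.
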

\begin{proof}
  It follows from condition (1) of the proposition
  and \thmref{thm:Main-10*} (whose hypotheses are satisfied) that
  $\Br(U)^\perp\{\ell\} \neq 0$. In particular, $\Br(\sU) \{\ell\} \neq 0$.
Using the localization sequence
  \begin{equation}\label{eqn:Exm-4-2}
  0 \to \Br(\sU)\{\ell\} \xrightarrow{\partial} H^1_\et(\sD,  {\Q_\ell}/{\Z_\ell}) \to
  H^3_\et(\sX, {\Q_\ell}/{\Z_\ell}(1)),
  \end{equation}
  we can therefore find $\alpha \in \Br(\sU)\{\ell\}$ such that
  $\partial(\alpha) \neq 0$.
We choose one such $\alpha \in \Br(\sU) \{\ell\}$.

Using ~\eqref{eqn:Exm-4-2}, condition (2) of the proposition, it follows that
  the composite map
  \[
  \Br(\sU)\{\ell\} \to  H^1_\et(E,  {\Q_\ell}/{\Z_\ell})
  \to H^1_\et(E_n,  {\Q_\ell}/{\Z_\ell}) \bigoplus  H^1_\et(E_\sing,  {\Q_\ell}/{\Z_\ell})
  \]
  is injective.

If $\partial(\alpha)$ dies in $H^1_\et(E_\sing,  {\Q_\ell}/{\Z_\ell})$, then
  by the Chebotarev density theorem for $E_n$, we conclude that there exists
  $y \in (E_n)_{(0)}$ such that the pull-back of $\delta(\alpha)$ to
  $H^1_\et(k(y), {\Q_\ell}/{\Z_\ell})$ is not zero. Letting $x \in E$ denote the
  image of $y$, it follows that $\delta(\alpha)$ does not die in
  $H^1_\et(k(x), {\Q_\ell}/{\Z_\ell})$. We have thus concluded that there
  exists $x \in E_{(0)}$ such that $\delta(\alpha)$ does not die in
  $H^1_\et(k(x), {\Q_\ell}/{\Z_\ell})$. By going to a finite unramified extension
  and applying \cite[Lem.~6.2]{Ghosh-Krishna-Jussieu}), we can find a point
  $P \in U_{(0)}$ whose specialization is $x$.

The commutative diagram
  \begin{equation}\label{eqn:Exm-4-0}
    \xymatrix@C1pc{
      \Br(\sU)\{\ell\} \ar[r]^-{\partial} \ar[d] & H^1_\et(E,  {\Q_\ell}/{\Z_\ell})
      \ar[d] \\
      \Br(k(P))\{\ell\} \ar[r]^-{\inv}_-{\cong} &  H^1_\et(k(x), {\Q_\ell}/{\Z_\ell})}
    \end{equation}
  then tells us that $\alpha$ does not die in $\Br(k(P))$.
  But this is the same thing
  as saying that $\alpha \notin \Br(U)^\perp$.
To complete the proof, it remains to show that
  $\Br(\sX) =0$ but $\Br(U)^\perp\{\ell\} \neq 0$. But this is clear from
  condition (1) of the proposition and \thmref{thm:Main-10*}.
\end{proof}

\subsection{Proofs of Theorems~\ref{thm:Main-6} and ~\ref{thm:Main-7}}
  \label{sec:Normal-surface}
  We let $R$ be an excellent hdvr with quotient field
  $k$ and residue field $\F$. We let $X$ be an integral, normal,
  projective surface over $k$ and let $U = X_\reg$ denote the regular locus of $X$
  with inclusion $j \colon U \inj X$. We let $\pi \colon \wt{X} \to X$ be any
  resolution of singularities of $X$ such that the exceptional divisor $D \subset
  \wt{X}$ is an snc divisor on $\wt{X}$. We let $\iota \colon D \inj \wt{X}$ denote
  the inclusion. We let $\wt{j} \colon U \inj \wt{X}$ denote the inclusion.

  Recall that the Levine--Weibel Chow group $\CH^{\lw}_0(X)$ of $X$ is the quotient
  of $Z_0(U)$ by the subgroup $R^{\lw}_0(U)$ generated by $\divf(f)$, where
  $C \subset U$ is an integral curve which is closed in $X$ and $f$ is a nonzero
  rational function on $C$. This coincides with $\CH^{\Lci}_0(X)$ defined in
  \S~\ref{sec:Prf-2} (cf. \cite[Lem.~2.1]{Ghosh-Krishna-Jussieu}).
  It is easy to see that the identity map of $Z_0(U)$ induces a canonical
  surjection $\pi^* \colon \CH^{\lw}_0(X) \surj C_0(U)$. Using this map and
  \lemref{lem:BM-pair-functor}, we get a pairing
  $\CH^{\lw}_0(X) \times \Br(U) \to {\Q}/{\Z}$ which we call the Brauer--Manin
  pairing for $X$.
  We now prove Theorems~\ref{thm:Main-6} and ~\ref{thm:Main-7}.
 Let $\ell$ be a prime different from $\Char(\F)$.

\vskip.2 cm

  {{\bf(A)} \sl Proof of Theorem~\ref{thm:Main-6}:}
  We let $\CH_0(\wt{X}|mD)$ denote the Chow group of 0-cycles with modulus $mD$ on
  $\wt{X}$ for $m \ge 1$ (cf. \cite{Kerz-Saito-Duke}).
  The identity map of $Z_0(U)$ then induces canonical surjective maps
\begin{equation}\label{eqn:Main-6-0}
\CH^{\lw}_0(X) \xrightarrow{\pi^*} \CH_0(\wt{X}|mD)
  \surj  \CH_0(\wt{X}|D) \xrightarrow{\phi_{\wt{X}}} H^S_0(U),
\end{equation}
where the first map
 exists by \cite[Thm.~1.5]{Ghosh-Krishna-Jussieu} and the third map exists by
      \cite[Thm.~5.1]{Schmidt-ANT}. The second map is the canonical surjection for
      every $m \ge 1$. We let
      $\eta_X \colon \CH^{\lw}_0(X) \surj  H^S_0(U)$ denote the composite map.

In combination with \cite[Thm.~6.5]{Binda-Krishna-Ecole} (when $\Char(k) = 0$) and
      \cite[Lem.~8.3]{Ghosh-Krishna-Jussieu} (when $\Char(k) > 0$),
\propref{prop:Tame-part} says that
      the maps $\CH^{\lw}_0(X)^{(\ell)} \to C_0(U)^{(\ell)} \to H^S_0(U)^{(\ell)}$ 
      are isomorphisms. Meanwhile, the map
      $j_* \colon H^4_{\et,c}(U, \Z_\ell(2)) \to H^4_{\et}(X, \Z_\ell(2))$ is an
      isomorphism by the localization sequence ~\eqref{eqn:MC-03} and
      \thmref{thm:Tate-Duality-0}.
We now use \thmref{thm:BM-main-pf} to conclude that the
      cycle class map
      \begin{equation}\label{eqn:Main-6-1}
      \cyc_X \colon \CH^{\lw}_0(X)^{(\ell)} \to H^4_{\et}(X, \Z_\ell(2))
      \end{equation}
      is injective. Equivalently, the Brauer--Manin pairing for $X$ induces an
      injective map $\CH^{\lw}_0(X) \to \Hom(\Br(U)\{\ell\}, {\Q}/{\Z})$
      modulo the maximal
  $\ell$-divisible subgroup of $\CH^{\lw}_0(X)$.
  This proves Theorem~\ref{thm:Main-6}.
  \qed

\vskip.3 cm

  {{\bf(B)} \sl Proof of Theorem~\ref{thm:Main-7}:}
  We repeat the above argument verbatim (using \thmref{thm:BM-main-p5-f} instead of
  \thmref{thm:BM-main-pf}) to finish the proof.
If $\Char(k) = 0$ and $H^2_\zar(X, \sO_X) = 0$, then it easily follows from the
Leray spectral sequence for $\pi$ and the isomorphism $\sO_X \xrightarrow{\cong}
\pi_*\sO_{\wt{X}}$ (which one can deduce using the Stein factorization theorem)
that $H^2_\zar(\wt{X}, \sO_{\wt{X}}) = 0$. In particular, $H^2_\et(\wt{X}, \Q_\ell(1))$
is algebraic.
\qed

\vskip .4cm

\noindent\emph{Acknowledgments.}
Special cases of
Theorems~\ref{thm:Main-4} and~\ref{thm:Main-10} were previously obtained by
different methods in collaboration with Samiron Sadhukhan as a part of his
PhD thesis. The authors are grateful to Samiron for collaboration.
The authors would like to thank Olivier Wittenberg for helpful correspondence
concerning the Tate conjecture over finite fields. 
The success of this paper would not have been possible without the foundational
contributions of Saito, Saito--Sato and Esnault--Wittenberg to the study of
cycle class map and Brauer--Manin pairing for smooth projective varieties.

\end{document}